\newcommand{\E}{\mathbb{E}}
\newtheorem{define}{Definition}[section]
\newtheorem{lemma}[define]{Lemma}
\newtheorem{claim}[define]{Claim}
\newtheorem{cor}[define]{Corollary}
\newtheorem{thm}[define]{Theorem}
\newtheorem{prop}[define]{Proposition}
\newcommand{\Var}{\mathrm{Var}}
\newcommand{\induce}{\subseteq_i}
\begin{document}

\title{Random Perfect Graphs}

\author{
  Colin McDiarmid\\
  \texttt{cmcd@stats.ox.ac.uk}\\
    Department of Statistics\\
    University of Oxford
  \and
  Nikola Yolov\\
  \texttt{nikola.yolov@cs.ox.ac.uk}\\
  Department of Computer Science\\
  University of Oxford
}
\maketitle

\begin{abstract}
  We investigate the asymptotic structure of a random perfect graph $P_n$
  sampled uniformly from the set of perfect graphs on vertex set $\{1,\ldots,n\}$.
  Our approach is based on the result of Pr{\"o}mel and Steger
  that almost all perfect graphs are generalised split graphs,
  together with a method to generate such graphs almost uniformly.

  We show that the distribution of the maximum of the stability number
  $\alpha(P_n)$ and clique number $\omega(P_n)$ is close to
  a concentrated distribution $L(n)$
  which plays an important role in our generation method.
  We also prove that the probability that $P_n$ contains any given graph $H$
  as an induced subgraph is asymptotically $0$ or $\frac12$ or $1$.
  Further we show that almost all perfect graphs
  are $2$-clique-colourable, improving a result of Bacs\'o et al from 2004;
  they are almost all Hamiltonian;
  they almost all have connectivity $\kappa(P_n)$ equal to their minimum degree;
  they are almost all in class one
  (edge-colourable using $\Delta$ colours, where $\Delta$ is the maximum degree);
  and a sequence of independently and uniformly sampled perfect graphs
  of increasing size converges almost surely to the graphon
  $W_P(x, y) = \frac12(\mathds{1}[x \le 1/2] + \mathds{1}[y \le 1/2])$.\\
  \textbf{Keywords:} Perfect graphs, Edge-colouring,
  Clique-colouring, Hamiltonian, Graph limits
\end{abstract}

\section{Introduction}

A graph is \emph{perfect} if the chromatic number equals the clique number
in each of its induced subgraphs.
Perfect graphs have formed a central field in graph theory for some decades,
partly because of the challenging open problems
and partly because of the connections to polyhedral combinatorics,
linear optimisation and computational complexity,
see for example \cite{reed_perfect_book}.
Fundamental classes of graphs that are all perfect include
bipartite graphs, chordal graphs, comparability graphs and interval graphs.

Generalised split graphs form another important class of perfect graphs.
A graph $G$ is \emph{unipolar} if for some $k \geq 0$ its vertex set $V(G)$
can be partitioned into $k + 1$ cliques
$C_0, C_1, \ldots C_k$, so that there are no edges between $C_i$ and $C_j$
for $1 \le i < j \le k$.
We call $C_0$ the \emph{central clique},
and the $C_i$ for $i \geq 1$ the \emph{side cliques};
and we call the pair $(G, C_0)$ a \emph{unipolar arrangement of order} $v(G)$.
A graph $G$ is \emph{co-unipolar} if its complement $\overline{G}$ is unipolar; and it
is a \emph{generalised split graph} if it is unipolar or co-unipolar.
We denote the classes of perfect, unipolar, co-unipolar and generalised split graphs
by $\mathcal{P}$, $\mathcal{GS}^+$, $\mathcal{GS}^-$ and $\mathcal{GS}$ respectively.
Given a class $\mathcal G$ of graphs,
we let ${\mathcal G}_n$ denote the set of graphs in $\mathcal G$
on vertex set $[n]:=\{1,\ldots,n\}$.
We say that a sequence of events $(A_n)_{n \ge 1}$
holds \emph{with high probabilty (or whp)} if $\mathbb{P}(A_n) = 1 - o(1)$,
and that it holds \emph{with very high probabilty (or wvhp)}
if $\mathbb{P}(A_n) = 1 - e^{-\Omega(n)}$.

Our goal is to describe the asymptotic properties of perfect graphs.
We rely on the key theorem of Pr{\"o}mel and Steger~\cite{promelsteger}
that almost all perfect graphs are generalised split graphs.
We present and analyse an almost uniform generation process for the graphs in
$\mathcal{GS}_n$.
This generation process, together with the Pr{\"o}mel-Steger theorem,
yields a powerful method for working with uniformly sampled perfect graphs.
The usefulness of the method will be seen when we prove a range of results
about the asymptotic behaviour of perfect graphs, described in the next subsection.

\subsection{Plan of the paper}
\label{subsec.plan}
In \S~\ref{sec:generation}
we present our method for analysing random perfect graphs.
The main result, Theorem~\ref{thm:main}, states that the total variation distance between
a naturally generated random generalised split graph in $\mathcal{GS}_n$
and the uniformly sampled perfect graph $P_n$ is $e^{-\Theta(n)}$.
We introduce a %an important 
family of concentrated distributions $L(n)$ which are used in the generation process, and give a number of results essential for our analysis, including surveying
relevant results about random partitions, but we defer the
detailed proofs concerning the generation process to \S~\ref{sec:genlemma}
near the end of the paper.

In \S~\ref{sec:stabcl}
we study the stability number $\alpha(P_n)$  and clique number $\omega(P_n)$.
We show that the minimum of the two numbers is asymptotically normally distributed
(with specified mean and variance),
and the distribution of the maximum is very close to the distribution $L(n)$ mentioned above;
see Theorem~\ref{thm:homogeneous}.

In \S~\ref{sec:subgraphs}
we discuss which induced subgraphs are likely to be found in a random perfect graph,
and we find a trichotomy.
Theorem~\ref{thm:subgraph_lemma} states that
depending on whether
(i) $H \in \mathcal{P} \setminus \mathcal{GS}$,
(ii) $H \in \mathcal{GS} \setminus \mathcal{GS}^-$ or
$H \in \mathcal{GS} \setminus \mathcal{GS}^+$ and
(iii) $H \in \mathcal{GS}^+ \cap \mathcal{GS}^-$,
the probability that $P_n$ contains an induced copy of $H$ is respectively
(i) $e^{-\Theta(n)}$,
(ii) $1/2 \pm e^{-\Omega(n)}$ and
(iii) $1 - e^{-\Omega(n)}$.
For example, the complete bipartite graph
$K_{2,3} \in \mathcal{GS}^- \setminus \mathcal{GS}^+$,
so the probability that $P_n$ contains an induced $K_{2,3}$ is well-estimated by $\frac12$.

In \S~\ref{sec:clique} we focus on clique colourings, that is colourings of the vertices such that no maximal clique is monochromatic (ignoring isolated vertices).
The main result is Theorem~\ref{thm:cliques},
stating that almost all perfect graphs are 2-clique colourable,
improving the result of Bacs{\'o} et al \cite{cliques}
from 2004 that almost all perfect graphs are 3-clique colourable.

In \S~\ref{sec:hamiltonian}
we discuss Hamilton cycles, and find that the distribution
$L(n)$ appears again.  We show that, if  $X \sim L(n)$, the probability that $P_n$
is Hamiltonian is
\begin{equation} \label{eqn.haminsec1}
1 - \frac{1}{2}\mathbb{P}(X > n/2) \pm e^{-\Omega(n)}
= 1-2^{-\frac14 (1 + o(1)) \log^2 n} = 1 - o(1).
\end{equation}
The precise statement is given in Theorem~\ref{thm:hamilt}.

In \S~\ref{sec:conn-chrindex}
we discuss connectivity and the chromatic index of perfect graphs.
We show that for a random perfect graph $P_n$
the connectivity $\kappa(P_n)$ equals the minimum degree wvhp,
Theorem~\ref{thm:connectivity};
and the chromatic index equals the maximum degree whp,
Theorem~\ref{thm:chromatic_idx}.

In \S~\ref{sec:limits} we study the limit of a sequence of independently
sampled perfect graphs $P_n$.
A \emph{graphon} is a symmetric measurable function $W : [0,1]^2 \to [0, 1]$,
that could be seen as the adjacency matrix of
a graph with the set $[0, 1]$ as vertices.
The \emph{cut distance}, $\delta_{\square}$,
is a metric over the space of graphons, $\mathcal{W}_0$,
generalising the maximum absolute value of a rectangular difference
of two matrices to graphons.
The space $(\mathcal{W}_0, \delta_\square)$ has many important properties.  In particular, it is compact, under the assumption that graphons with cut distance $0$ are identified. This result has interesting corollaries, including some of the strongest generalisations of the Szemer\'{e}di regularity lemma.
%\begin{enumerate}
%\item
%  it is compact, under the assumption that graphons with cut distance $0$ are identified. This theorem has interesting corollaries, including some of the strongest generalisations of Szemer\'{e}di regularity lemma.
%\item
%  It has the same topology as the space
 % \m{I would not introduce $\delta_{samp}$ here, commented out}
%  $(\mathcal{W}_0, \delta_{samp})$, where $\delta_{samp}$ is a distance metric over $\mathcal{W}_0$,  generalising the metric measuring differences of subgraph densities of graphs  to graphons.
%\end{enumerate}
\noindent
Theorem~\ref{thm:limit_main} states that
\begin{align*}
  &\mathbb{P}(\delta_{\square}(P_n, W_P) \le n^{-1/2})
  = 1 - e^{-\Omega(\sqrt{n} \log n)} \text{ and}\\
  &\mathbb{P}(\delta_{\square}(P_n, W_P) \le (\log n)^{-2})
  = 1 - e^{-\Theta(n)},
\end{align*}
where $W_P(x, y)$ is the graphon
$ \frac12(\mathds{1}[x \le 1/2] + \mathds{1}[y \le 1/2])$.
This implies that a sequence of uniformly and independently sampled
perfect graphs of increasing size converges
almost surely to $W_P$
and provides an analytic tool for estimating subgraph densities of $P_n$.

In \S~\ref{sec:genlemma} we give the proofs concerning the generation process which were
deferred from \S~\ref{sec:generation}; and finally in \S~\ref{sec:concl}
we make a few concluding remarks and mention some open problems.

\section{Generating perfect graphs}
\label{sec:generation}
Before we go any further, let us check that generalised split graphs are perfect.
The weak perfect graph theorem states
that the complement of a perfect graph is perfect \cite{weak_perfect1, weak_perfect2},
so it suffices to consider a unipolar graph $G$, and show it is perfect.
Also, each induced subgraph of a unipolar graph is unipolar,
so it suffices to show that $\chi(G)=\omega(G)$.

Consider a unipolar arrangement $(G, C_0)$, where $C_0$ is the central clique,
and let $C_1, \ldots, C_k$ be the side cliques.
Let $G_j$ denote the induced subgraph $G[C_0 \cup C_j]$.
Bipartite graphs are perfect, so co-bipartite graphs are perfect,
and thus $\chi(G_j)=\omega(G_j)$ for each $j$.
Therefore we can colour each $G_j$ with $\max_j \omega(G_j)$ colours,
and then make sure that the different colourings agree on the central clique $C_0$.
It is easy to see that $\omega(G)= \max_j \omega(G_j)$
and it follows that $\chi(G)=\omega(G)$, as required.
\smallskip

We next discuss some probability distributions on the set $\mathcal{G}_n$
of all graphs on $[n]$.
Probably the most well-known
is that of the binomial random graph $G(n, p)$,
which assigns probability $p^{e(G)}(1-p)^{e(\overline{G})}$
to each graph $G$ in ${\mathcal G}_n$.
We are predominantly interested in the following
probability distribution on ${\mathcal G}_n$:
\[
\mathbb{P}_{\mathcal{P}_n}(G)
= \begin{cases} 1 / |\mathcal{P}_n| & \mbox{if } G \in \mathcal{P}_n \\
  0                   & \mbox{otherwise.} \end{cases}
\]
Here is another distribution on ${\mathcal G}_n$:
\[
\mathbb{P}_{\mathcal{GS}_n}(G)
= \begin{cases} 1 / |\mathcal{GS}_n| & \mbox{if } G \in \mathcal{GS}_n \\
  0                    & \mbox{otherwise.} \end{cases}
\]
We rely heavily on Theorem~2.4 from \cite{promelsteger},
which for our purposes can be reformulated as follows:
\begin{equation}
  \label{eq:pre_promel_steger}
  |\mathcal{GS}_n| = (1 - e^{-\Omega(n)})|\mathcal{P}_n|.
\end{equation}
In other words, the class of $\mathcal{GS}$-graphs
is a very good asymptotic approximation to the class of perfect graphs,
as all $\mathcal{GS}$-graphs are perfect
and almost all perfect graphs are $\mathcal{GS}$-graphs.
The bound in (\ref{eq:pre_promel_steger}) is sharp.
Fix any graph $U \in \mathcal{P} \setminus \mathcal{GS}$,
for example take the disjoint union of $K_{2, 3}$ and $K_3$,
and consider the family $\mathcal{F}^U$ of the graphs that are
a disjoint union of $U$ and a perfect graph.
We have $\mathcal{F}^U \subseteq \mathcal{P} \setminus \mathcal{GS}$
and $|\mathcal{F}^U_n| \ge |\mathcal{P}_{n - v(U)}|$,
which combined with estimates on the size of $\mathcal{P}_n$,
e.g. $|\mathcal{P}_n|/|\mathcal{P}_{n-1}| = 2 ^{(1+o(1))n/2}$,
implies
\begin{equation}
  \label{eq:promel_steger}
  |\mathcal{GS}_n| = (1 - e^{-\Theta(n)})|\mathcal{P}_n|.
\end{equation}

We can also express (\ref{eq:promel_steger}) using total variation distance:
\begin{align}
  \label{eq:dtv_perfect_gs}
  d_{TV}(\mathbb{P}_{\mathcal{GS}_n}, \mathbb{P}_{\mathcal{P}_n})
  &= \sup_{ A \subseteq {\cal G}_n}
  | \mathbb{P}_{\mathcal{GS}_n}(A) - \mathbb{P}_{\mathcal{P}_n}(A) | \nonumber
  = \mathbb{P}_{\mathcal{P}_n}(\mathcal{P}_n \setminus \mathcal{GS}_n) \\
  &= \frac{|\mathcal{P}_n \setminus \mathcal{GS}_n|}{|\mathcal{P}_n|}
  = e^{-\Theta(n)}.
\end{align}

How many $n$-vertex perfect graphs are there?
The number is very close to the number of $n$-vertex $\mathcal{GS}$-graphs,
which is approximately twice the number of $n$-vertex unipolar graphs.
The last class is easy to estimate: for each $k$ between $0$ and $n$
there are $n \choose k$ ways to choose which $k$ vertices to include in the central clique,
$B_{n-k}$ ways to specify the partition of the remaining vertices and
$2^{k(n-k)}$ ways to specify the edges between the central clique and the remaining vertices.
Here $B_n$ stands for the $n$-th Bell number,
which is the number of ways to partition an $n$-element set.
Note that the last calculation is not precise,
since unipolar graphs may be part of more than one unipolar arrangement.
If we denote ${n \choose k}2^{k(n-k)}B_{n-k}$ by $\ell_{n, k}$,
we would expect that the number of unipolar graphs is close to
$\mathscr{L}_n := \sum_{k = 0}^n \ell_{n, k}$.
This turns out to be a very precise estimate and we show in \S~\ref{sec:genlemma} that
\begin{align}
  \label{eq:double_counting}
  |\mathcal{GS}_n| = 2 \mathscr{L}_n (1 - e^{-\Omega(n)}).
\end{align}
(See Lemma 2.2 of~\cite{promelsteger} for an asymptotic estimate of
$|\mathcal{GS}_n|$.)

\begin{define}
  \label{def:ln}
  For each $n$ let $L(n)$ be the discrete integer-valued distribution
  with probability mass function
  \[
  p_{L(n)}(x) = \ell_{n, x} / \mathscr{L}_n
  \hspace{20pt} \text{ for } x= 0,\ldots, n.
  \]
\end{define}

\subsection{The generation process}
\label{subsec.genproc}

Now we describe our method to sample almost uniformly
from the set of generalised split graphs on $[n]$.

\begin{define}
  For each $n$ let $Gen(n)$ be a random quadruple $(B, E, (k, \sigma), \pi)$, such that
  \begin{enumerate}
  \item
    $B$ is a $\{-1, 1\}$-valued random variable, taking each value with probability half,
  \item
    $E$ is the set of edges of an Erd\H{o}s-R\'{e}nyi random graph $G(n, \frac12)$,
  \item
    $(k, \sigma)$ is the result of the following random experiment:
    choose a number $k \in [0, n]$ with distribution $L(n)$,
    set $W=\{k+1,\ldots,n\}$,
    and then sample $\sigma$ uniformly at random from $\Pi(W)$,
    the set of all partitions of $W$,
  \item
    $\pi$ is a uniformly sampled permutation from $Sym([n])$,
  \item
    the components of the quadruple are independent.
  \end{enumerate}
\end{define}

Let $\rho$ be the (deterministic) map from such quadruples
to $\mathcal{G}_n$ defined as follows.
Let $U = (B, E, (k, \sigma), \pi)$ be an outcome of $Gen(n)$.
Start with an $n$-vertex empty graph $G_0 = ([n], \emptyset)$.
It is convenient to write $C = [k]$ and $S = V(G) \setminus C$.
Create $G_1$ by adding edges to $G_0$,
so that $C$ induces a clique,
and $S$ induces a disjoint union of cliques corresponding to $\sigma$,
i.e. $ij \in E(G_1)$ for $1 \le i < j \le k$;
and $ij \in E(G_1)$ whenever $i$ and $j$ are contained in the same part of
$\sigma$ for $k < i < j \le n$.
Let $G_2$ be obtained from $G_1$ by adding the edges of $E$ between $C$ and $S$,
%with one endpoint in $C$ and the other in $S$ (i.e. $G_2 = G_1 + E \cap (C \times S)$),
and let $G_3$ be $G_2$ if $B = 1$ and $\overline{G_2}$ otherwise.
Finally let $G_4$ be obtained by permuting the vertices of $G_3$ according to $\pi$,
i.e. $\pi(i)\pi(j) \in E(G_4)$ iff $ij \in E(G_3)$.
We define $\rho(U) = G_4$ for each such vector $U$.

Consider $U = (B, E, (k, \sigma), \pi) \sim Gen(n)$
and the corresponding random graph $G = \rho(U)$.
It is easy to see that $G$ is unipolar if $B = 1$ and co-unipolar if $B = -1$.
Indeed, $(G, \pi(C))$ is a unipolar arrangement if $B = 1$
and $(\overline G, \pi(C))$ is a unipolar arrangement otherwise.
We often refer to the appropriate pair as the \emph{induced arrangement}
of $\rho(U)$ by the generation.
Whenever we want to test if $G$ has some isomorphism-closed graph property,
$\pi$ does not affect the membership,
so we can assume that $\pi$ is the identity permutation.
All graph properties in which we are interested are isomorphism-closed,
so $\pi$ will largely be ignored.

We define our last probability distribution, $\mathbb{P}_{Gen(n)}$ on ${\mathcal G}_n$,
as follows:
\[
\mathbb{P}_{Gen(n)}(G) = \mathbb{P}(\rho(Gen(n)) =G).
\]
By the above, $\mathbb{P}_{Gen(n)}(G) > 0$ if and only if $G \in \mathcal{GS}_n$.
We shall show that
\begin{equation}
  \label{eq:dtv_gs_gen}
  d_{TV}(\mathbb{P}_{\mathcal{GS}_n}, \mathbb{P}_{Gen(n)}) = e^{-\Omega(n)}.
\end{equation}

It is not hard to prove (\ref{eq:dtv_gs_gen}) from (\ref{eq:double_counting}).
However, both equations have technical proofs,
so they are postponed to \S~\ref{sec:genlemma}.
\begin{thm}
  \label{thm:main}
  The uniform measure $\mathbb{P}_{\mathcal{P}_n}$
  and the generated measure $\mathbb{P}_{Gen(n)}$ satisfy
  \[
  d_{TV}(\mathbb{P}_{\mathcal{P}_n}, \mathbb{P}_{Gen(n)}) = e^{-\Theta(n)}.
  \]
\end{thm}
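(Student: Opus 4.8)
\textbf{Proof proposal for Theorem~\ref{thm:main}.}
The plan is to factor the comparison between $\mathbb{P}_{\mathcal{P}_n}$ and $\mathbb{P}_{Gen(n)}$ through the intermediate distribution $\mathbb{P}_{\mathcal{GS}_n}$, using that total variation distance is a metric. The two edges of this triangle have already been recorded: $d_{TV}(\mathbb{P}_{\mathcal{GS}_n}, \mathbb{P}_{\mathcal{P}_n}) = e^{-\Theta(n)}$ by~(\ref{eq:dtv_perfect_gs}) (equivalently the Pr\"omel--Steger bound~(\ref{eq:promel_steger})), and $d_{TV}(\mathbb{P}_{\mathcal{GS}_n}, \mathbb{P}_{Gen(n)}) = e^{-\Omega(n)}$ by~(\ref{eq:dtv_gs_gen}). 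Granting these, the theorem is a two-line deduction.

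For the \emph{upper} bound, the triangle inequality gives
\[
d_{TV}(\mathbb{P}_{\mathcal{P}_n}, \mathbb{P}_{Gen(n)})
\le d_{TV}(\mathbb{P}_{\mathcal{P}_n}, \mathbb{P}_{\mathcal{GS}_n})
+ d_{TV}(\mathbb{P}_{\mathcal{GS}_n}, \mathbb{P}_{Gen(n)})
= e^{-\Theta(n)} + e^{-\Omega(n)} = e^{-\Omega(n)}.
\]
For the \emph{lower} bound I would not use the reverse triangle inequality (which would require the two exponents to be comparable) but argue directly: since $\rho(Gen(n))$ is always a generalised split graph, $\mathbb{P}_{Gen(n)}$ is supported on $\mathcal{GS}_n$, so testing the event $A = \mathcal{P}_n \setminus \mathcal{GS}_n$ in the definition of $d_{TV}$ yields
\[
d_{TV}(\mathbb{P}_{\mathcal{P}_n}, \mathbb{P}_{Gen(n)})
\ge \mathbb{P}_{\mathcal{P}_n}(\mathcal{P}_n \setminus \mathcal{GS}_n) - \mathbb{P}_{Gen(n)}(\mathcal{P}_n \setminus \mathcal{GS}_n)
= \frac{|\mathcal{P}_n \setminus \mathcal{GS}_n|}{|\mathcal{P}_n|} - 0 = e^{-\Theta(n)},
\]
again by~(\ref{eq:promel_steger}). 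Combining the two displays gives $d_{TV}(\mathbb{P}_{\mathcal{P}_n}, \mathbb{P}_{Gen(n)}) = e^{-\Theta(n)}$, as claimed.

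So Theorem~\ref{thm:main} itself is essentially immediate once its inputs are in place, and the real obstacle lies in the ingredient~(\ref{eq:dtv_gs_gen}) --- equivalently the counting identity~(\ref{eq:double_counting}), $|\mathcal{GS}_n| = 2\mathscr{L}_n(1 - e^{-\Omega(n)})$ --- which the excerpt explicitly defers to \S~\ref{sec:genlemma}. The difficulty there is that the clean heuristic "$|\mathcal{GS}^+_n| \approx \mathscr{L}_n$" over-counts in two ways: a unipolar graph may admit many unipolar arrangements, and a generalised split graph may be both unipolar and co-unipolar. To control this I would show that for all but an $e^{-\Omega(n)}$ fraction of outcomes of $Gen(n)$ the induced arrangement is the essentially unique unipolar arrangement of $\rho(Gen(n))$ --- exploiting that, with the central-clique size $k$ distributed as $L(n)$ and $\sigma$ a uniform partition of $\{k+1,\dots,n\}$, both $k$ and the largest part of $\sigma$ are typically of order $\Theta(\log n)$, which pins down the central clique up to an $e^{-\Omega(n)}$-rare ambiguity. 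One then concludes that $\rho$ is close to a bijection onto all but an $e^{-\Omega(n)}$ fraction of $\mathcal{GS}_n$, so that $\mathbb{P}_{Gen(n)}$ is within $e^{-\Omega(n)}$ in total variation of $\mathbb{P}_{\mathcal{GS}_n}$; the quantitative concentration statements for $L(n)$ and for uniform random set partitions that this requires are exactly the background material surveyed in \S~\ref{sec:generation}. With~(\ref{eq:dtv_gs_gen}) established, the argument above completes the proof of Theorem~\ref{thm:main}.
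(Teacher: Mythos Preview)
Your deduction of Theorem~\ref{thm:main} from its inputs~(\ref{eq:dtv_perfect_gs}) and~(\ref{eq:dtv_gs_gen}) is correct and matches the paper exactly: the upper bound is the triangle inequality through $\mathbb{P}_{\mathcal{GS}_n}$, and your explicit lower bound via the event $\mathcal{P}_n\setminus\mathcal{GS}_n$ is precisely what the paper means by ``arguments similar to these for~(\ref{eq:promel_steger})''.

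One correction to your supplementary sketch of the deferred ingredient~(\ref{eq:dtv_gs_gen}): the central-clique size $k\sim L(n)$ is \emph{not} of order $\Theta(\log n)$; by Theorem~\ref{thm:concentration} it is concentrated around $(n-\log n+\log\ln n)/2\approx n/2$. The central clique is the large piece and it is the side cliques that are small. This does not undermine the conclusion that the arrangement is essentially unique, but the mechanism is not the one you describe. The paper's actual route in \S~\ref{sec:genlemma} is also packaged differently from your uniqueness heuristic: rather than arguing directly that a typical outcome has a unique arrangement, it controls the over-counting through the random variable $R_+(G)$ (the number of unipolar arrangements of $G$) and shows $\|R_+-1\|_p^{\mathbb{P}_{\mathcal{GS}_n^+}}=e^{-\Omega(n)}$ via a moment computation (Lemma~\ref{thm:decomposition}) that bounds the number of $j$-tuples of arrangements sharing a common underlying graph.
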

\begin{proof}
  From the triangle inequality for total variation distance,
  (\ref{eq:dtv_perfect_gs}) and (\ref{eq:dtv_gs_gen}), we have
  \begin{align*}
    d_{TV}(\mathbb{P}_{\mathcal{P}_n}, \mathbb{P}_{Gen(n)})
    &\le d_{TV}(\mathbb{P}_{\mathcal{P}_n}, \mathbb{P}_{\mathcal{GS}_n})
    + d_{TV}(\mathbb{P}_{\mathcal{GS}_n}, \mathbb{P}_{Gen(n)}) =  e^{-\Theta(n)}.
  \end{align*}
  A lower bound follows from arguments similar to these for (\ref{eq:promel_steger}).
\end{proof}

Theorem~\ref{thm:main} describes the precision of our almost-uniform sampling.  Note that any approach relying on (\ref{eq:promel_steger}) cannot hope to achieve better precision.
The proof is rather technical, and is postponed to Section~\ref{sec:genlemma}. 
%(and of course does not depend on any results which use the theorem).
The properties of the generated graph $\rho((B, E, (k, \sigma), \pi))$
depend a lot on the value of $B$.
It is often the case that we break the analysis into different cases
depending on the value of $B$.
We introduce two new random $4$-tuples - $Gen^+(n)$ and $Gen^-(n)$,
which are the same as $Gen(n)$, but with overwritten values for $B$:

\begin{define}
  For each $n$ define $Gen^+(n)$ to be $(1, E, (k,\sigma), \pi)$
  and $Gen^-(n)$ to be $(-1, E, (k,\sigma), \pi)$,
  where $E$, $(k,\sigma)$ and $\pi$ are the same as in the definition of $Gen(n)$.
\end{define}

The following observation is easily verified:
Let $Q$ be an arbitrary graph property.
If $G \sim \rho(Gen(n))$, $G^+ \sim \rho(Gen^+(n))$ and $G^- \sim \rho(Gen^-(n))$, then
\begin{equation}
  \label{eq:partition_of_gs}
  \mathbb{P}(G \in Q) = \frac{1}{2}\mathbb{P}(G^+ \in Q) + \frac{1}{2}\mathbb{P}(G^- \in Q).
\end{equation}
Still with $G \sim \rho(Gen(n))$, we shall see in Section~\ref{sec:genlemma} that wvhp $G$ is not both unipolar and co-unipolar, that is
\begin{equation} \label{eqn.smallcap}
 \mathbb{P}(G \in \mathcal{GS}^+ \cap \mathcal{GS}^-) = e^{-\Omega(n)}.
\end{equation}

\subsection{On the parts of the generation process}
\label{subsec.genprocparts}

Theorem~\ref{thm:main} will prove to be very useful.
Let $P_n \in_u \mathcal{P}_n$.
Whenever we want to estimate
$q_n = \mathbb{P}_{\mathcal{P}_n}(Q) = \mathbb{P}(P_n \in Q)$
for some graph property $Q$, we can instead estimate
$q'_n = \mathbb{P}_{Gen(n)}(Q) = \mathbb{P}(\rho(Gen(n)) \in Q)$,
and use the fact that $|q_n - q'_n| = e^{-\Omega(n)}$.
The approximate sampling, $Gen(n)$,
is a convenient method for estimating probability,
because it decomposes the resulting graph into three independent parts --
a central clique, a partition of the remaining vertices
and a bipartite part in between.

\subsubsection{Central clique}
An essential part of the generation process involves determining
the size of the central clique.
For this purpose we introduced the distribution $L(n)$.
Recall that if $X \sim L(n)$ and $x$ is an integer between $0$ and $n$,
then $\mathbb{P}(X = x) = \ell_{n, x} / \mathscr{L}_n$,
where $\ell_{n, x} = {n \choose x}2^{x(n-x)}B_{n-x}$
and $\mathscr{L}_n = \sum_{k = 0}^n \ell_{n, k}$.
The term $2^{x(n-x)}$ dwarfs the other two terms and heavily concentrates the distribution. The term $B_{n-x}$ breaks the symmetry around $n/2$ and tilts the mean towards zero by about $\frac{1}{2}\log n$,
where throughout the paper $\log$ means $\log_2$
and $\ln$ denotes the natural logarithm, $\log_e$.

\begin{thm}
  \label{thm:concentration}
  Suppose $X \sim L(n)$. For sufficiently large $n$ we have
  \[
  2^{-(x+1)^2 -1} \le \mathbb{P}(|X - \mu| \ge x)
  \le 2^{-(x-2)^2 +2} +n^{-n},
  \]
  for each $x > 1$, where
  \[
  \mu = \mu(n) = \frac{n - \log n + \log \ln n}{2}.
  \]
\end{thm}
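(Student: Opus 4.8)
The plan is to analyse the ratio $\ell_{n,x}/\ell_{n,x-1}$ to locate the mode of $L(n)$ and then to compare $\ell_{n,x}$ with $\ell_{n,\lfloor\mu\rfloor}$ for $x$ away from $\mu$, bounding the tail sum by a geometric series dominated by the Gaussian-type factor $2^{x(n-x)}$. First I would write
\[
\frac{\ell_{n,x}}{\ell_{n,x-1}}
= \frac{n-x+1}{x}\cdot 2^{\,n-2x+1}\cdot\frac{B_{n-x}}{B_{n-x+1}},
\]
and invoke a standard estimate for the ratio of consecutive Bell numbers, namely $B_{m+1}/B_m = (1+o(1))\,m/\ln m$ (this follows from the de~Bruijn / Moser--Wyman asymptotics, or more simply from $B_{m+1}=\sum_k\binom{m}{k}B_k$ together with the concentration of the "number of blocks" around $m/\ln m$). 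Taking $\log_2$ of the ratio, the dominant term is $n-2x+1$, while the Bell-number factor contributes $-\log\!\big((n-x)/\ln(n-x)\big) = -\log n + \log\ln n + o(1)$ for $x$ in the relevant range, and the binomial factor contributes $o(1)$. Setting the log-ratio to zero gives $x \approx \tfrac12(n-\log n+\log\ln n) = \mu$, confirming that $\mu$ is (within $O(1)$ of) the mode, and that the log-ratio behaves like $2(\mu-x)+O(1)$ near the mode.

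Next, for $x>\mu$ I would telescope: $\log_2(\ell_{n,\mu+t}/\ell_{n,\mu}) = \sum_{j=1}^{t}\log_2(\ell_{n,\mu+j}/\ell_{n,\mu+j-1})$, and using the log-ratio estimate $\approx -2j+O(1)$ this sum is $-t^2+O(t)$, so $\ell_{n,\mu+t}\le \ell_{n,\mu}\,2^{-t^2+O(t)}$; symmetrically for $x<\mu$. Summing the geometric-like tail $\sum_{t\ge x}2^{-t^2+O(t)}$ is dominated by its first term up to a constant factor, which yields the upper bound $\mathbb{P}(|X-\mu|\ge x)\le \ell_{n,\mu}/\mathscr{L}_n\cdot 2^{-(x-2)^2+2}$ after absorbing the $O(t)$ into the constants; since $\ell_{n,\mu}/\mathscr{L}_n\le 1$ trivially this gives the stated bound, and the additive $n^{-n}$ term is a crude catch-all for the boundary regimes $x$ near $0$ or $n$ where the Bell-number asymptotics degrade (there $\ell_{n,x}$ is astronomically smaller, e.g. $\ell_{n,0}=B_n \le n^n \ll 2^{n^2/4}$, so the contribution is at most $n^{-n}$ of the total). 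For the lower bound I would note $\mathscr{L}_n \le (n+1)\,\ell_{n,\mu^*}$ where $\mu^*$ is the exact mode, so $\mathbb{P}(X=\mu^*)\ge 1/(n+1)$, and then run the telescoping estimate in reverse to get $\ell_{n,\mu+x}/\ell_{n,\mu}\ge 2^{-(x+1)^2}$ for a single term, hence $\mathbb{P}(|X-\mu|\ge x)\ge \mathbb{P}(X=\lceil\mu\rceil+x)\ge 2^{-(x+1)^2-1}$ for large $n$ (the constants are chosen with room to spare to swallow the $O(x)$ error and the $1/(n+1)$ normalisation, which is why the exponents are $(x+1)^2$ and $(x-2)^2$ rather than $x^2$).

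The main obstacle will be making the ratio estimate for consecutive Bell numbers sufficiently \emph{uniform} in $x$ over the whole window $|x-\mu|=O(\sqrt{n})$ (say) that matters for the tail — i.e. controlling the $o(1)$ in $B_{m+1}/B_m=(1+o(1))m/\ln m$ well enough that, after telescoping $t=\Theta(\sqrt n)$ times, the accumulated error stays $O(t)$ rather than $o(t^2)$ in a way that could spoil the clean quadratic exponent. I expect this to be handled by the explicit error term in the de~Bruijn asymptotic expansion for $B_m$ (which gives $\log_2(B_{m+1}/B_m) = \log_2(m/\ln m) + O(\log m / m)$, more than small enough after summing), but it requires care at the two ends of the range, which is precisely what the $n^{-n}$ slack term and the generous constants in the exponents are there to absorb.
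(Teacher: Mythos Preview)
Your overall strategy matches the paper's almost exactly: analyse the ratio $\ell_{k+1}/\ell_k$ via the Bell-number asymptotic $B_{m+1}/B_m\sim m/\ln m$, locate the mode at $\mu$, telescope to get the Gaussian exponent $2^{-t^2+O(t)}$, sum the tail, and handle the far ends with a crude bound that feeds the $n^{-n}$ term. The paper splits at $|k-\hat\mu|\ge n^{2/3}$ for the crude regime (where it shows $\ell_k/\ell_{\hat\mu}\le n^{-n-1}$ by brute force), and inside that window the uniformity you worry about is automatic because $n-k\sim n/2$, so the $o(1)$ in the Bell ratio is uniform and can simply be bounded by a factor of $2$ either way; no appeal to the full de~Bruijn expansion is needed.

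There is one genuine gap, in your lower bound. You propose to bound the normalising constant by $\mathscr{L}_n\le (n+1)\,\ell_{n,\mu^*}$ and then claim the slack in going from $x^2$ to $(x+1)^2$ absorbs the factor $1/(n+1)$. It does not: $(x+1)^2-x^2=2x+1$, so for $x$ of constant size (say $x=2$) you obtain only $\mathbb{P}(X=\hat\mu+x)\ge 2^{-(x+1)^2}/(n+1)$, which is far below the claimed $2^{-(x+1)^2-1}$ for large $n$. The fix is easy and uses nothing beyond what you already have: your own telescoping estimate $\ell_{\hat\mu+i}\le \ell_{\hat\mu}\,2^{-i^2+O(i)}$ (and the symmetric one on the left) shows that $\mathscr{L}_n\le C\,(\ell_{\hat\mu}+\ell_{\check\mu})$ for an absolute constant $C$ (the paper gets $C=3+\tfrac{1}{2\ln 2}$), since $\sum_{i\ge0}2^{-i^2+2i}$ converges. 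With that constant in the denominator instead of $n+1$, the single-term lower bound $\ell_{\hat\mu+x}\ge \ell_{\hat\mu}\,2^{-x^2-2x}$ immediately gives the stated $2^{-(x+1)^2-1}$.
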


The upper bound in the theorem above is essential for handling the generation
process.
Indeed, Theorem~\ref{thm:main} will always be used in combination with
Theorem~\ref{thm:concentration}.
Most often we take $x = \Omega(\sqrt n)$ in order to have an exponential decay.
However, sometimes there is a phase transition when the central clique becomes bigger
than the remaining graph,
most notably when we look at Hamiltonian cycles in co-unipolar graphs,
and then we take $x$ to be $n/2 - \mu \sim \frac{1}{2}\log n$.
The proof of Theorem~\ref{thm:concentration} is rather technical, and is postponed to \S~\ref{sec:concentration}.
In the proof we also show that if $X \sim L(n)$, then $|\mathbb{E}X - \mu(n)| < 1$.

\subsubsection{Bipartite part}
Let $U \sim Gen(n)$ and let $(G, C)$ be the arrangement induced by the generation process.  The edges between $C$ and $\overline C$ are best seen as a random bipartite graph with colour classes $C$ and $\overline C$ and edges in between, each present with probability $1/2$ and independently of the others. The sizes of $C$ and $\overline C$ are not necessarily identical, but Theorem~\ref{thm:concentration} shows that their difference is $O(\sqrt n)$ wvhp.
Random bipartite graphs share many properties with binomial random graphs $G(n, p)$, which have been exhaustively studied.

\subsubsection{Partition part}
Let $\Pi_n$ be the set of all partitions of $[n]$.
The elements of each partition will be referred to as \emph{parts}.
We turn $\Pi_n$ into a probability space by taking each partition with equal
probability, namely $1 / |\Pi_n|$.
Let $\sigma = \sigma(n)$ be an outcome from this probability space,
i.e. $\sigma \in_u \Pi_n$.
The structure of $\sigma$ can vary from having a single part with $n$
elements to having $n$ parts with one element each.
However, $\sigma$ has a well-defined shape with high probability.

Let $r = r(n)$ be the unique root of the equation $re^r = n$
and let $B_n = |\Pi_n|$ be the $n$th Bell number.
It can be found in \cite{asymptotic_methods} that
\begin{align}
  r(n) &= \ln n - (1 + o(1))\ln \ln n, \hspace{10pt} \text{ and}\\
  \label{eq:bell_asymp}
  B_n &= \frac{1 + o(1)}{\sqrt{r}} e ^ {n(r - 1 + 1/r) - 1}.
\end{align}

There is a sizeable literature for random partitions,
see in particular~\cite{pittel_new}.
Let $|\pi|$ be the number of parts in a partition $\pi$.
If $X$ is a random variable we will use
$\widehat{X}$ to denote $(X - \mathbb{E}X) / \sqrt{\Var X}$.
One of the earliest results on random partitions is by Harper:
\begin{thm}  (Harper \cite{stirling_is_normal})
  For $\sigma \in_u \Pi_n$
  \label{thm:harper_normal}
  \begin{enumerate}
  \item
    $\widehat{|\sigma|} \overset{d}{\to} N(0, 1)$,
  \item
    $\mathbb{E}|\sigma|=
    \frac{B_{n+1}}{B_n} - 1 = (1 + o(1)) \frac{n}{r} = (1 + o(1)) \frac{n}{\ln n}$,
  \item
    $\Var|\sigma| =
    \frac{B_{n+2}}{B_n} - \left(\frac{B_{n+1}}{B_n}\right)^2 - 1
    = \frac{n}{r(r + 1)} -1 + o(1)
    = (1 + o(1)) \frac{n}{(\ln n)^2}$.
  \end{enumerate}
\end{thm}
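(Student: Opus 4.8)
The plan is to establish the two exact moment identities for $|\sigma|$, then pass to their asymptotics, and finally deduce the limit law. First I would obtain the moments from elementary ``add a new element'' bijections. Every partition of $[n+1]$ arises from a unique partition $\sigma$ of $[n]$ by adding $n+1$ to one of the $|\sigma|$ parts of $\sigma$ or by creating the new singleton $\{n+1\}$, so $B_{n+1}=\sum_{\sigma\in\Pi_n}(|\sigma|+1)=B_n(\mathbb{E}|\sigma|+1)$, i.e.\ $\mathbb{E}|\sigma|=B_{n+1}/B_n-1$. Doing this twice, with new elements $n+1$ and $n+2$ and distinguishing whether $n+1$ lands in an old part or a new one, gives $B_{n+2}=\sum_\sigma(|\sigma|^2+2|\sigma|+2)=B_n(\mathbb{E}|\sigma|^2+2\mathbb{E}|\sigma|+2)$; together with the mean identity this yields $\mathbb{E}|\sigma|^2=B_{n+2}/B_n-2B_{n+1}/B_n$ and hence $\Var|\sigma|=B_{n+2}/B_n-(B_{n+1}/B_n)^2-1$. (Equivalently these come from the recursion $\stirlingtwo{n+1}{k}=k\stirlingtwo{n}{k}+\stirlingtwo{n}{k-1}$ summed against $1$ and against $k$.)

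Next I would feed the Bell-number expansion~(\ref{eq:bell_asymp}) into these exact formulas. Writing $r=r(n)$ and $r'=r(n+1)$ for the roots of $xe^x=n$ and $xe^x=n+1$, implicit differentiation gives $r'-r=(1+o(1))\,r/(n(1+r))$, and~(\ref{eq:bell_asymp}) gives $B_{n+1}/B_n=(1+o(1))e^{r}=(1+o(1))\,n/r=(1+o(1))\,n/\ln n$, which is part~(ii). Part~(iii) is more delicate: one expands $B_{n+1}/B_n$ and $B_{n+2}/B_{n+1}$ to one further order, after which the leading terms cancel in $B_{n+2}/B_n-(B_{n+1}/B_n)^2$ and one is left with $(1+o(1))\,e^{2r}(e^{r'-r}-1)=(1+o(1))\,e^{2r}(r'-r)=(1+o(1))\,n/(r(r+1))$; since $r\sim\ln n$ this gives $\Var|\sigma|=(1+o(1))\,n/(\ln n)^2$.

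For part~(i) I would use real-rootedness of the Bell polynomials. The Touchard polynomial $T_n(x):=\sum_k\stirlingtwo{n}{k}x^k$ satisfies $T_{n+1}(x)=x\bigl(T_n(x)+T_n'(x)\bigr)$ (multiply the Stirling recursion by $x^k$ and sum), and since the operator $T\mapsto T+T'=e^{-x}(e^xT)'$ preserves the property of having only real roots, induction from $T_0\equiv1$ shows $T_n$ has only real, non-positive roots. Hence $T_n(x)/B_n=\prod_j\frac{x+a_j}{1+a_j}$ with all $a_j\ge0$ (one factor being $x$ itself, from the root at the origin), so $|\sigma|$ has the distribution of a sum $\sum_j Y_j$ of independent Bernoulli variables with $\mathbb{P}(Y_j=1)=1/(1+a_j)$. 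Since the $Y_j$ are bounded, $\sum_j\mathbb{E}|Y_j-\mathbb{E}Y_j|^3\le\sum_j\Var Y_j=\Var|\sigma|$, so Lyapunov's condition is met as soon as $\Var|\sigma|\to\infty$, which holds by part~(iii); therefore $\widehat{|\sigma|}\overset{d}{\to}N(0,1)$.

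The two steps that require genuine work, rather than bookkeeping, are the second-order cancellation in the variance asymptotic --- which forces one to control $B_{n+1}/B_n$ beyond its leading term $e^r$, tracking the $1/\sqrt r$ prefactor and the shift $r'\ne r$ in~(\ref{eq:bell_asymp}) to relative order $1/r$ --- and the fact that $T\mapsto T+T'$ preserves real-rootedness, which is standard (via the Laguerre--P\'olya class applied to $e^xT_n$, or a Rolle count handling multiplicities and the behaviour at $\pm\infty$) but is the one ingredient that is not routine. I note that the applications of this theorem in the paper use only part~(i) together with the leading orders of parts~(ii)--(iii), and for those the delicate variance cancellation can be bypassed, since $\Var|\sigma|=\Theta(n/(\ln n)^2)\to\infty$ already suffices for the central limit theorem.
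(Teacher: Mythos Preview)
The paper does not prove this theorem: it is quoted as a result of Harper~\cite{stirling_is_normal} and used as a black box, so there is no ``paper's own proof'' to compare against. Your sketch is, however, a faithful reconstruction of Harper's original argument: the exact moment identities via the ``insert $n+1$'' bijection (equivalently the Stirling recursion), the real-rootedness of the Touchard polynomials via $T_{n+1}(x)=x(T_n+T_n')$ and the fact that $p\mapsto p+p'$ preserves real-rootedness, the resulting decomposition of $|\sigma|$ as a sum of independent Bernoullis, and the Lyapunov CLT once $\Var|\sigma|\to\infty$. These steps are all correct.

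One small caution on part~(iii): the statement asserts the sharp additive form $\Var|\sigma|=n/(r(r+1))-1+o(1)$, not merely the multiplicative $(1+o(1))\,n/(\ln n)^2$. Your sketch of the cancellation gives the leading term $n/(r(r+1))$ correctly, but to nail down the $o(1)$ error (rather than, say, $o(n/r^2)$) one really does need the expansion of $B_{n+1}/B_n$ to relative accuracy $o(r/n)$, which is finer than what the displayed asymptotic~(\ref{eq:bell_asymp}) alone provides; this is exactly the point you flag as ``the step that requires genuine work'', and it is worth being explicit that the paper's applications only use $\Var|\sigma|\to\infty$, for which your coarser estimate already suffices.
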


Let $Y_t(\pi)$ be the number of parts of size $t$ of an arbitrary partition $\pi \in \Pi_n$,
so that $|\pi| = \sum_{t=1}^n Y_t(\pi)$ and $n = \sum_{t=1}^n tY_t(\pi)$.
Suppose that $\sigma \in_u \Pi_n$ from now on.

\begin{thm} (Pittel \cite{pittel_new})
  \label{thm:pittel_poisson}
  Given a partition $\pi$ and an interval $I = [k_1, k_2]$
  where $1 \le k_1 \le k_2 \le n$, let
  $$
  Y_I(\pi) = \sum_{j \in I} Y_j(\pi),\hspace{35pt}\lambda_I
  = \sum_{j \in I}\frac{r^j}{j!}.
  $$
  Then, uniformly for $I = [k_1, k_2]$ and $\xi \ge 0$,
  \[
  \mathbb{P}(|Y_I(\sigma) - \lambda_I| \ge \xi)
  = O(e^{- \min(\frac{\xi^2}{4\lambda_I}, \frac{\xi}{2})}).
  \]
\end{thm}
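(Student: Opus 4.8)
The plan is to route everything through the classical \emph{conditioning relation} for uniform set partitions. Fix $r = r(n)$ as in the statement, so $re^r = n$, and let $Z_1,\dots,Z_n$ be independent with $Z_j \sim \mathrm{Poisson}(r^j/j!)$; set $T = \sum_{j=1}^n j Z_j$. Since the number of partitions of $[n]$ with exactly $a_j$ blocks of size $j$ equals $n!\big/\prod_j\bigl((j!)^{a_j}a_j!\bigr)$, a short computation shows that the block-size profile $\bigl(Y_1(\sigma),\dots,Y_n(\sigma)\bigr)$ of $\sigma \in_u \Pi_n$ has exactly the law of $(Z_1,\dots,Z_n)$ conditioned on $\{T = n\}$, and the equation $re^r = n$ is precisely what makes $\E T = n$. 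Under the \emph{un}conditioned product law the target functional is an honest Poisson variable, $\sum_{j\in I}Z_j \sim \mathrm{Poisson}(\lambda_I)$ with $\lambda_I = \sum_{j\in I} r^j/j!$, so the Bennett--Chernoff estimate $\mathbb{P}(X \ge \lambda+t) \le e^{-\lambda\phi(t/\lambda)}$ for $X \sim \mathrm{Poisson}(\lambda)$, with $\phi(u) = (1+u)\ln(1+u) - u \ge \min(u^2/4,\,u/2)$, gives
\[
\mathbb{P}\bigl(\,|{\textstyle\sum_{j\in I}}Z_j - \lambda_I| \ge \xi\,\bigr) \le 2\exp\!\bigl(-\min(\xi^2/(4\lambda_I),\,\xi/2)\bigr)
\]
uniformly over $I$ and $\xi \ge 0$ --- already the asserted shape, with the matching constants $4$ and $2$. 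It then remains to transfer this tail bound through the conditioning on $\{T=n\}$.

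For the transfer I would split $T = U+V$ with $U = \sum_{j\in I} jZ_j$ and $V = \sum_{j\notin I} jZ_j$ independent, noting that $W := \sum_{j\in I} Z_j$ --- the quantity to be controlled --- is independent of $V$. From $\mathbb{P}(Y_I(\sigma)\ge t) = \sum_u \mathbb{P}(W\ge t,\, U=u)\,\mathbb{P}(V=n-u)\big/\mathbb{P}(T=n)$ one sees that the crude estimate $\mathbb{P}(\,\cdot\mid T=n\,) \le \mathbb{P}(\cdot)/\mathbb{P}(T=n)$ is too lossy: a local limit theorem for $T$ --- equivalently, $\mathbb{P}(T=n) = e^{-(e^r-1)} r^n B_n/n!$ together with \eqref{eq:bell_asymp} and Stirling --- only gives $\mathbb{P}(T=n) = \Theta\bigl((n\ln n)^{-1/2}\bigr)$, a polynomial loss that would swamp the exponential for moderate $\xi$. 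The remedy is a local limit theorem for the \emph{complementary} sum $V$: if $\mathbb{P}(V=m)$ is flat, up to an absolute constant, over a window of width $\Theta(\sqrt{\Var U})$ about $m = n - \E U$, then bounding $\sum_u \mathbb{P}(W\ge t, U=u) \le \mathbb{P}(W\ge t)$ and using that $\mathbb{P}(T=n)$ is at least a constant multiple of $\mathbb{P}(V\approx n-\E U)$, the displayed ratio is at most an absolute constant times $\mathbb{P}(W\ge t)$, so the Poisson bound survives with only a changed $O(1)$ prefactor. This needs $\Var U = \sum_{j\in I} j^2 r^j/j! = o(\Var T)$, where $\Var T = n(r+1)$; since the weights $j^2 r^j/j!$ are Poisson-$r$-like and hence concentrated on a window of width $\Theta(\sqrt r)$ about $j=r$, this covers every interval $I$ other than comparably wide windows straddling the dominant block size $r$.

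The step I expect to be the real obstacle is precisely that remaining regime --- $I$ a wide window around $r$, where $\lambda_I$ is of order $\E|\sigma| = \Theta(n/\ln n)$ and $V$ is no longer flat on the relevant scale. When $I$ (or $I^c$) is an upper- or lower-tail window one can still reduce to the settled case by writing $Y_I(\sigma) = |\sigma| - Y_{I^c}(\sigma)$ and invoking a Bennett-type tail bound for $|\sigma|$ (the case $I=[1,n]$, classically known and underlying Harper's Theorem~\ref{thm:harper_normal}). For the general wide window, and to obtain a clean uniform statement, the route I would actually carry out is to work with the bivariate exponential generating function: $\mathbb{E}\bigl[u^{Y_I(\sigma)}\bigr] = (n!/B_n)\,[x^n]\exp\bigl((e^x-1)+(u-1)f_I(x)\bigr)$ with $f_I(x)=\sum_{j\in I}x^j/j!$, and to estimate this coefficient by a saddle-point (Cauchy-integral) analysis --- uniformly for $u$ on circles $|u|=e^{\pm\theta}$ and uniformly over the interval $I$ --- which delivers the sub-Gaussian-then-sub-exponential tail in exactly the stated form. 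The genuine work throughout is the uniformity over all pairs $(I,\xi)$; the Poissonisation in the first paragraph is what fixes the precise exponent, so the constants should come out as claimed.
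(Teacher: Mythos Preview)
The paper does not prove this theorem: it is stated and attributed to Pittel~\cite{pittel_new}, and then used as a black box (specifically, in the proof of Lemma~\ref{thm:stable_unipolar}). There is therefore no proof in the paper to compare your proposal against.

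For what it is worth, your outline is the right shape for a proof along Pittel's lines: the conditioning relation with independent Poissons $Z_j \sim \mathrm{Poisson}(r^j/j!)$ subject to $\sum_j jZ_j = n$ is the standard device, and the unconditioned Bennett--Chernoff bound does give exactly the exponent $\min(\xi^2/(4\lambda_I),\xi/2)$ you are aiming for. You have also correctly located the genuine difficulty --- transferring the bound through the conditioning uniformly in $I$, particularly when $I$ is a wide window near the dominant block size $r$ --- and the saddle-point route via the bivariate generating function is indeed how such uniform statements are obtained in the literature. But this is a sketch with declared gaps rather than a proof, and filling those gaps (the uniform local limit theorem for $V$, the uniform saddle-point analysis) is substantial; if you want a complete argument you should consult~\cite{pittel_new} directly.
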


The last theorem is useful for small deviations,
but rather crude for deviations proportional to the mean.
A counting argument yields a better bound:

\begin{lemma}
  \label{thm:chernoff_blocks_count}
  Let $\epsilon>0$ be fixed and let $\lambda = n / r = e^r$.  Then for large $n$
  \[
  \mathbb{P}\left(\big ||\sigma| - \lambda \big| \ge \epsilon \lambda \right)
  < n e^{-n (\epsilon - \ln (\epsilon + 1))}.
  \]
  In particular, $\big ||\sigma| - \lambda \big| < \epsilon \lambda$ wvhp.
\end{lemma}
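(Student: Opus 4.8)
The plan is to estimate the tail directly, using $\mathbb{P}(|\sigma|=m)=\stirlingtwo{n}{m}/B_n$, where $\stirlingtwo{n}{m}$ is the number of partitions of $[n]$ into exactly $m$ nonempty parts; thus
\[
\mathbb{P}\big(\big||\sigma|-\lambda\big|\ge\epsilon\lambda\big)=\sum_{m}\frac{\stirlingtwo{n}{m}}{B_n},
\]
the sum running over the at most $n$ integers $m\in\{0,\dots,n\}$ with $m\ge(1+\epsilon)\lambda$ or $m\le(1-\epsilon)\lambda$, so it suffices to bound each summand by $e^{-n(\epsilon-\ln(\epsilon+1))}$. For the numerator I would use the surjection bound $m!\,\stirlingtwo{n}{m}\le m^n$ together with $m!\ge\sqrt{2\pi m}\,(m/e)^m$, giving $\stirlingtwo{n}{m}\le(2\pi m)^{-1/2}m^{n-m}e^m$; for the denominator I would feed $n/r=\lambda$ into the estimate $(\ref{eq:bell_asymp})$ to get $B_n\ge\tfrac12 r^{-1/2}e^{nr-n+\lambda-1}$ for large $n$. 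Writing $\Psi(m)=(n-m)\ln m+m-nr+n-\lambda+1$, this yields $\stirlingtwo{n}{m}/B_n\le 2\sqrt{r}\,(2\pi m)^{-1/2}e^{\Psi(m)}$, with a prefactor that is at most polylogarithmic in $n$.

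Next I would analyse $\Psi$ over real $m>0$. As $\Psi'(m)=n/m-\ln m$ vanishes exactly where $m\ln m=n$, that is at $m=\lambda$ (recall $\ln\lambda=r$ and $\lambda r=n$), the function $\Psi$ increases on $(0,\lambda)$ and decreases on $(\lambda,\infty)$; hence over $m\ge(1+\epsilon)\lambda$ it is maximised at $m_0=\lceil(1+\epsilon)\lambda\rceil$ and over $m\le(1-\epsilon)\lambda$ at $m_1=\lfloor(1-\epsilon)\lambda\rfloor$, so only these two endpoints need to be bounded. Substituting $m=(1+\epsilon)\lambda$ and using $\ln((1+\epsilon)\lambda)=r+\ln(1+\epsilon)$ and $\lambda r=n$, the expression collapses to
\[
\Psi\big((1+\epsilon)\lambda\big)=-n\big(\epsilon-\ln(1+\epsilon)\big)-\lambda\big[(1+\epsilon)\ln(1+\epsilon)-\epsilon\big]+1,
\]
and the same computation at $(1-\epsilon)\lambda$ gives main term $-n\big(-\epsilon-\ln(1-\epsilon)\big)$ with the negative $\lambda$-coefficient $-\big[\epsilon+(1-\epsilon)\ln(1-\epsilon)\big]$ (the lower tail being empty when $\epsilon\ge1$). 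Passing from the true endpoints to their integer ceiling/floor changes $\Psi$ by only $O(\ln n)$, since $|\Psi'|=O(\ln n)$ on a unit interval there. Now both bracketed quantities are strictly positive for $\epsilon\in(0,1)$, so each $-\lambda[\cdots]=-\Theta(n/\ln n)$ term swamps the remaining $1+O(\ln n)$ and the polylogarithmic prefactor; hence for large $n$ both $\Psi(m_0)$ and $\Psi(m_1)$ are at most $-n(\epsilon-\ln(1+\epsilon))$ — for the lower tail one also uses the elementary inequality $-\epsilon-\ln(1-\epsilon)\ge\epsilon-\ln(1+\epsilon)$, i.e. $\ln\frac{1+\epsilon}{1-\epsilon}\ge2\epsilon$. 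Summing the at most $n$ terms gives the bound, which is $e^{-\Omega(n)}$ since $\epsilon-\ln(1+\epsilon)>0$, establishing the ``wvhp'' statement.

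The only real work is the bookkeeping in the displayed evaluation of $\Psi$ and checking that the lower-order leftovers — the constant $1$ coming from the bound on $B_n$, the $O(\ln n)$ integrality error, and the polylogarithmic prefactor — are genuinely absorbed by the favourable $-\Theta(n/\ln n)$ term, so that one lands on exactly $e^{-n(\epsilon-\ln(\epsilon+1))}$ rather than on a constant or polynomial multiple of it. (An alternative conceptual route is that the Touchard polynomials $\sum_m\stirlingtwo{n}{m}x^m$ are real-rooted, so $|\sigma|$ is a sum of independent Bernoulli variables and Chernoff bounds apply directly; but since $\mathbb{E}|\sigma|$ and $\Var|\sigma|$ are only of orders $n/\ln n$ and $n/(\ln n)^2$, that approach yields merely a bound of shape $e^{-\Omega(n/\ln n)}$, weaker than the lemma, so the counting argument is what is needed here.)
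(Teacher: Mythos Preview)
Your argument is correct and follows essentially the same route as the paper: bound $\stirlingtwo{n}{m}$ by $m^n/m!$, use Stirling on $m!$ and the asymptotic~(\ref{eq:bell_asymp}) on $B_n$, substitute $m=(1+c)\lambda$ and simplify using $\lambda r=n$ to extract the main term $n(\ln(1+c)-c)$ plus a nonpositive $\lambda$-term, then take a union bound. The paper packages the two tails together via the observation that $f(x)=\ln(1+x)-x$ satisfies $f(x)\le f(|x|)$, which is exactly your inequality $\ln\frac{1+\epsilon}{1-\epsilon}\ge 2\epsilon$; and it drops the $\sqrt{2\pi m}$ factor from Stirling, which only simplifies matters since that factor is already below~$1$ here.
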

\begin{proof}
  For each $0 \le k \le n$ we have
  \begin{align*}
    \mathbb{P}(|\sigma| = k) B_n
    & \le \frac{k^n}{k!}
    \le k^n \left( \frac{k}{e} \right)^{-k}
    = \exp( n \ln k -k \ln k + k).
  \end{align*}
  Suppose that $k = (1 + c)\lambda$ for some $c = c(n) > -1$.
  Substituting in the inequality above we get:
  \begin{align*}
    \mathbb{P}(|\sigma| = (1 + c)\lambda )
    \le~ B_n^{-1} \exp(~&n \ln (1 + c)\lambda -
    (1 + c)\lambda \ln (1 + c)\lambda + (1 + c)\lambda~) \\
    =~ B_n^{-1} \exp \{~&n \ln \lambda - \lambda \ln \lambda + \lambda \\
    +~ &n \ln (1 + c) - c \lambda \ln \lambda \\
    +~ &\lambda(c - \ln (1 + c) - c \ln (1 + c))~\} \\
    \le \exp(~&n \{\ln(1+c) - c\} + \lambda \{c - \ln (1 + c) - c \ln (1 + c)\}~)
  \end{align*}
  for sufficiently large $n$.
  Here we have used $\lambda e^\lambda = n$; and that (\ref{eq:bell_asymp}) implies
  that we have
  \[
    B_n^{-1} \exp(n (r - 1 + 1/r)) < 1 \;\; \mbox{ for sufficiently large $n$}.
  \]
  Next, note that $c - \ln (1 + c) - c \ln (1 + c) \le 0$ for all $c > -1$, so we can write
  \[
  \mathbb{P}(|\sigma| = (1 + c)\lambda) \le \exp(n \{\ln(1+c) - c\})
  \]
  for large $n$.
  Let $f(x) : (-1, +\infty) \to \mathbb{R}$ be defined as $f(x) = \ln(1+x) - x$,
  so that
  \[
  \mathbb{P}(|\sigma| = (1 + c)\lambda) \le \exp(n f(c)).
  \]
  The following properties are easily verified by taking derivatives:
  $f(x) \le f(|x|)$; $f(0) = 0$; $f(x) < 0$ for $x \neq 0$;
  and $f$ is strictly increasing in $(-1, 0]$ and strictly decreasing in $[0, \infty)$.
      We deduce that for every $I \subseteq (-1, \infty]$ we have
    $\sup_{x \in I} f(x) \le f(\inf_{x \in I} |x|)$.
    Let
    $I_\delta = [0, n] \setminus
    [ \lfloor ( 1 - \delta) \lambda \rfloor, \lceil (1 + \delta) \lambda \rceil]$,
    for $\delta > 0$.
    We deduce
    \[
    \mathbb{P}\left(\big ||\sigma| - \lambda \big| \ge \epsilon \lambda \right)
    = \mathbb{P}(|\sigma| \in I_\epsilon)
    \le \max_{k \in I_\epsilon} \mathbb{P}(|\sigma| = k)
    < n\exp\{f(\epsilon)n\}. \qedhere
    \]
\end{proof}

For $\pi \in \Pi_n$ let $L(\pi)$ be the maximum size of a part.
\begin{lemma}
  \label{thm:max_block}
  Suppose $0 \le x \le n/2$ and let $\sigma \in_u \Pi_n$.
  We have
  \[
  \mathbb{P}(L(\sigma) \ge x) < (1 + o(1)) e^{-x(\ln x - \ln r - 2) + \ln n}.
  \]
\end{lemma}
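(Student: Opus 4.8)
The plan is a first--moment (union--bound) argument. The key observation is that $L(\sigma)\ge x$ exactly when some $x$--element subset of $[n]$ is contained in a single part of $\sigma$, so by the union bound over the $\binom{n}{x}$ candidate subsets,
\[
  \mathbb{P}(L(\sigma)\ge x)\;\le\;\binom{n}{x}\,\mathbb{P}\bigl(\{1,\dots,x\}\text{ lies in a single part of }\sigma\bigr).
\]
Contracting $\{1,\dots,x\}$ to one point gives a bijection between partitions of $[n]$ in which $1,\dots,x$ share a part and partitions of an $(n-x+1)$--element set, so the probability on the right equals $B_{n-x+1}/B_n$. (The case $x=0$ is trivial, and for $x\ge1$ the set $\{1,\dots,x\}$ is well defined.) It therefore suffices to bound $\binom{n}{x}B_{n-x+1}/B_n$. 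One could instead union over part sizes and apply first moments to the counts $Y_t$, but then indices near $0$ appear in the Bell ratios and the estimates are messier, so I prefer the subset reduction.

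Next I estimate consecutive Bell numbers. By Harper's theorem (Theorem~\ref{thm:harper_normal}(2)), equivalently by~(\ref{eq:bell_asymp}), $B_{m+1}/B_m=(1+o(1))\,m/r(m)$ as $m\to\infty$; since $t\mapsto t/r(t)$ is increasing, this gives, uniformly for $m\ge n/2$,
\[
  \frac{B_{m+1}}{B_m}\;\ge\;(1-o(1))\,\frac{n}{2r}\;\ge\;\frac{n}{c\,r}\qquad\text{for any fixed }c>2\text{ and all large }n.
\]
Because $x\le n/2$, every index $m$ in the telescoping product $B_n/B_{n-x+1}=\prod_{m=n-x+1}^{n-1}(B_{m+1}/B_m)$ satisfies $m\ge n/2$, hence $B_{n-x+1}/B_n\le(cr/n)^{x-1}$. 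Combining with $\binom{n}{x}\le n^x/x!\le(en/x)^x$ (Stirling) yields
\[
  \binom{n}{x}\frac{B_{n-x+1}}{B_n}\;\le\;\frac{n}{c\,r}\left(\frac{e\,c\,r}{x}\right)^{\!x}\;=\;\frac{n}{c\,r}\,e^{-x(\ln x-\ln r-1-\ln c)}.
\]
Fixing $c$ in the nonempty window $2<c<e$ (say $c=2.1$) makes $1+\ln c<2$, so $e^{-x(\ln x-\ln r-1-\ln c)}\le e^{-x(\ln x-\ln r-2)}$, and we conclude
\[
  \mathbb{P}(L(\sigma)\ge x)\;\le\;\frac{n}{c\,r}\,e^{-x(\ln x-\ln r-2)}\;<\;n\,e^{-x(\ln x-\ln r-2)}\;\le\;(1+o(1))\,e^{-x(\ln x-\ln r-2)+\ln n},
\]
which is in fact a little stronger than the stated bound.

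The one delicate point — what I expect to be the main obstacle — is the choice of the constant $c$. It must exceed $2$ so that the per--step bound $B_{m+1}/B_m\ge n/(cr)$ survives after absorbing the $(1-o(1))$ factor of Harper's estimate at the worst index $m\approx n/2$, and it must be below $e$ so that $1+\ln c<2$ survives the final comparison of exponents; an incautious choice such as $c=3$ (giving $1+\ln3>2$) breaks the bound for large $x$. Two related remarks: the hypothesis $x\le n/2$ is used precisely to keep every factor of the telescoping product in the regime where $B_{m+1}/B_m\sim m/r(m)$ is available (so that a naive ``$\ge 1$'' estimate for small indices is never needed, and no multiplicative error is raised to an uncontrolled power); and the inequality has content only once $\ln x-\ln r-2>0$, i.e.\ $x>e^2r$, since otherwise its right--hand side already exceeds $n\ge 1\ge\mathbb{P}(L(\sigma)\ge x)$.
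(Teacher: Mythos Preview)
Your proof is correct, and it takes a genuinely different route from the paper. The paper union-bounds over possible \emph{part sizes} $z\ge x$, bounding $\mathbb{P}(E_z)$ (the event that some part has size exactly $\lfloor z\rfloor$) by $\binom{n}{\lfloor z\rfloor}B_{n-\lfloor z\rfloor}/B_n$, and then estimates this ratio in one step via the de~Bruijn asymptotic~(\ref{eq:bell_asymp}) together with the monotonicity of $t\mapsto t-1+1/t$; the final sum over $z\ge x$ is what produces the extra factor~$n$. You instead union-bound over \emph{$x$-subsets} and contract, which dispenses with the sum over sizes at the outset; your factor~$n$ comes from the single leftover step in the telescope $B_{n-x+1}/B_n$. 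Your treatment of the Bell ratio is also different: rather than plugging into~(\ref{eq:bell_asymp}), you telescope using Harper's $B_{m+1}/B_m\sim m/r(m)$, which is arguably more elementary but forces the somewhat delicate window $2<c<e$ that you correctly identify. The paper's direct use of~(\ref{eq:bell_asymp}) avoids that balancing act and yields the constant~$2$ in the exponent without a free parameter, at the cost of a terminal sum; your approach trades that sum for the constant-tuning step. Both routes use $x\le n/2$ for the same reason: to keep the Bell-number indices in the regime where the asymptotics are uniform.
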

\begin{proof}
  We may assume that $x \ge e^2r$ (since otherwise the RHS is $\ge 1$).
  Let $E_x$ be the event that $\sigma$ contains a part of size $\lfloor x \rfloor$.
  Then by~(\ref{eq:bell_asymp})
  and noting that $r_{n-x} - 1 + \frac{1}{r_{n-x}} \le r_n - 1 + \frac{1}{r_n}$
  (this holds because $f(t) = t - 1 + t^{-1}$ and $r_t$
  are increasing in $t \in [1, \infty)$)
    we get
  \begin{align*}
    \mathbb{P}(E_x) &\le {n \choose \lfloor x \rfloor}\frac{B_{n-\lfloor x \rfloor}}{B_n}
    \le \left(\frac{en}{x}\right)^x
    (1 + o(1))\sqrt{\frac{r_n}{r_{n-x}}} e^{-x(r_n - 1 + 1/r_n)}\\
    &\le (1 + o(1)) \left(\frac{en}{x}\right)^x \left( \frac{n}{er_n}\right)^{-x}\\
    &= (1 + o(1)) e^{-x(\ln x - \ln r -2)}.
  \end{align*}
  Thus
  \begin{align*}
    \mathbb{P}(L(\sigma) \ge x) &= \mathbb{P}\left(\bigcup_{z \ge x}E_z\right)
    \le \sum_{z \ge x} \mathbb{P}(E_z)
    < n \max_{z \ge x}\mathbb{P}(E_z)\\
    &\le(1 + o(1)) e^{-x(\ln x - \ln r - 2) + \ln n},
  \end{align*}
  since $\ln x - \ln r - 2 \ge 0$.
\end{proof}

\begin{lemma}
  Suppose $a = a(n) = o(n)$ is integral for all $n$.
  Then
  \[
  \frac{B_{n-a}}{B_n} = \exp( -a r_{n-a} + o(n)).
  \]
\end{lemma}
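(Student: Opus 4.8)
The plan is to estimate $B_{n-a}/B_n$ directly from the asymptotic formula~(\ref{eq:bell_asymp}) for the Bell numbers, being careful about how the implicit parameter $r$ varies when the index changes from $n$ to $n-a$. Applying~(\ref{eq:bell_asymp}) at both indices and taking logarithms, we get
\[
\ln \frac{B_{n-a}}{B_n}
= (n-a)\!\left(r_{n-a} - 1 + \tfrac{1}{r_{n-a}}\right)
- n\!\left(r_n - 1 + \tfrac{1}{r_n}\right)
+ O(\ln n),
\]
where the $O(\ln n)$ absorbs the $\sqrt{r}$ prefactors and the $-1$ in the exponent; since $r_m = \ln m - (1+o(1))\ln\ln m$ for any $m\to\infty$ (valid here because $n-a = n(1-o(1))\to\infty$), this error term is certainly $o(n)$. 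So the whole task reduces to showing that the main difference of the two bracketed expressions equals $-a\, r_{n-a} + o(n)$.

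The key step is to control $r_n - r_{n-a}$. I would use the defining relation $r_m e^{r_m} = m$. Writing $g(t) = t e^t$, we have $g(r_n) - g(r_{n-a}) = a$, and since $g'(t) = (1+t)e^t = (1+t)\cdot m/t$ at $t = r_m$, the mean value theorem gives $r_n - r_{n-a} = a / g'(\xi)$ for some $\xi$ between $r_{n-a}$ and $r_n$; as $g'(\xi) \geq (1+r_{n-a}) e^{r_{n-a}} \geq e^{r_{n-a}} = (n-a)/r_{n-a}$, we obtain
\[
0 \le r_n - r_{n-a} \le \frac{a\, r_{n-a}}{n-a} = \frac{a\,r_{n-a}}{n}(1+o(1)) = o(r_n),
\]
using $a = o(n)$ and $r_{n-a} = \Theta(\ln n)$. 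Now expand: write $h(t) = t - 1 + 1/t$, so the difference of brackets is $(n-a) h(r_{n-a}) - n\, h(r_n) = -a\, h(r_{n-a}) - n\big(h(r_n) - h(r_{n-a})\big)$. The first piece is $-a r_{n-a} + O(a) = -a r_{n-a} + o(n)$ since $h(r_{n-a}) = r_{n-a} + O(1)$. For the second piece, $h'(t) = 1 - 1/t^2 = 1 + o(1)$ on the relevant range, so $h(r_n) - h(r_{n-a}) = (1+o(1))(r_n - r_{n-a})$, whence $n\big(h(r_n)-h(r_{n-a})\big) = O(n(r_n - r_{n-a})) = O(a\, r_{n-a}) = o(n\ln n)$. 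Hmm — this last bound is $o(n\ln n)$, not obviously $o(n)$, so I need to be more careful: I should instead keep $-a r_n$ as a candidate main term and show $r_n$ and $r_{n-a}$ are interchangeable at the cost of $o(n)$. From the estimate above, $n(r_n - r_{n-a}) \le a r_{n-a}(1+o(1))$, and separately $a(r_n - r_{n-a}) = o(a r_n)$; combining, $-a r_{n-a} - n(h(r_n)-h(r_{n-a})) = -a r_{n-a} + O(a r_{n-a})$.

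This reveals where the real care is needed: the claimed error is $o(n)$, but $a r_{n-a}$ can be as large as $\Theta(n \ln\ln n \,/\, \ldots)$ depending on $a$, so I cannot afford a sloppy $O(a r_{n-a})$ bound on the correction. The honest route is to write $r_n - r_{n-a}$ with a sharper constant. Differentiating $r_m = W(m)$ (Lambert $W$), one has $r_m' = r_m/(m(1+r_m))$, so $r_n - r_{n-a} = \int_{n-a}^n \frac{r_t}{t(1+r_t)}\,dt$; since the integrand is between $\frac{r_{n-a}}{n(1+r_n)}$ and $\frac{r_n}{(n-a)(1+r_{n-a})}$, and $a = o(n)$, we get $r_n - r_{n-a} = (1+o(1))\frac{a}{n}\cdot\frac{r_n}{1+r_n} = (1+o(1))\frac{a}{n}$ (as $r_n\to\infty$). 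Therefore $n(r_n - r_{n-a}) = (1+o(1))a$ and $a(r_n - r_{n-a}) = o(a) = o(n)$, and now every correction term is genuinely $O(a) = o(n)$. So the difference of brackets is $-a r_{n-a} + o(n)$, giving the claim; the main obstacle, as this shows, is extracting the precise $r_n - r_{n-a} = (1+o(1))a/n$ estimate rather than a crude one, since the naive bound $O(a r_{n-a})$ is not $o(n)$.
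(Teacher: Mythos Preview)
Your proof is correct and follows essentially the same route as the paper: apply the Bell-number asymptotic~(\ref{eq:bell_asymp}) at both indices, and reduce the question to showing $n(r_n - r_{n-a}) = (1+o(1))a = o(n)$. The only cosmetic difference is that you obtain this estimate via the derivative $r_m' = r_m/(m(1+r_m))$ of the Lambert $W$ function, whereas the paper (tersely) uses the identity $r_m = \ln m - \ln r_m$ to write $n(r_{n-a}-r_n) = (1+o(1))\,n\ln(1-a/n) \sim -a$; both arrive at the same conclusion, and your explicit flagging of why the cruder $O(a\,r_{n-a})$ bound is insufficient is a point the paper leaves implicit.
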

\begin{proof}
  We have
  \begin{align*}
    \frac{B_{n-a}}{B_n}
    &= (1 + o(1)) \exp\left((n-a)\left(r_{n-a} - 1 + \frac{1}{r_{n-a}}\right)
    -n\left(r_n - 1 + \frac 1 {r_n}\right)\right) \\
    &=\exp( (1+o(1))n\ln(1- a/n) - a (r_{n-a} - 1) + o(n)) \\
    &= \exp( (1+o(1))(-a) -a (r_{n-a} - 1) + o(n))\\
    &= \exp( -a r_{n-a} + o(n)). \qedhere
  \end{align*}
\end{proof}

\begin{cor}
  We have
  \begin{equation}
    \label{eqn.sigmamax}
    \mathbb{P}(L(\sigma) \ge n/\log n) = e^{-\Theta(n)}.
  \end{equation}
  and further, if $\epsilon(n)=o(1)$, then
  \begin{equation}
    \label{eqn.sigmamax2}
    \mathbb{P}(L(\sigma) \ge \epsilon(n) n/\log n) = e^{-o(n)}.
  \end{equation}
\end{cor}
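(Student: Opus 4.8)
The plan is to trap $\mathbb{P}(L(\sigma)\ge n/\log n)$ between matching bounds of the form $2^{-(1+o(1))n}$ --- the upper one coming straight from Lemma~\ref{thm:max_block}, the lower one from a direct count of partitions containing a prescribed large part --- and then to re-run both estimates with the threshold scaled by $\epsilon(n)=o(1)$, checking that each side then degrades only to $e^{-o(n)}$.

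For the upper bound in \eqref{eqn.sigmamax} I would put $x=n/\log n$ into Lemma~\ref{thm:max_block}. With $\log=\log_2$, $r=\ln n-(1+o(1))\ln\ln n$ and $\ln\log n=\ln\ln n+O(1)$ we get $\ln r=\ln\ln n+o(1)$ and
\[
x(\ln x-\ln r-2)=\frac{n}{\log n}\bigl(\ln n-2\ln\ln n+O(1)\bigr)=(1-o(1))\,n\ln 2,
\]
which swamps the additive $\ln n$ in the exponent, so $\mathbb{P}(L(\sigma)\ge n/\log n)\le 2^{-(1-o(1))n}$. For the lower bound set $m=\lceil n/\log n\rceil$; since $L(\sigma)$ is integer-valued, $\mathbb{P}(L(\sigma)\ge n/\log n)=\mathbb{P}(L(\sigma)\ge m)$. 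The event that $\{1,\ldots,m\}$ is one of the parts of $\sigma$ is contained in $\{L(\sigma)\ge m\}$ and has probability exactly $B_{n-m}/B_n$ (the remaining $n-m$ vertices being partitioned arbitrarily). As $m=o(n)$, the lemma immediately preceding this corollary gives $B_{n-m}/B_n=\exp(-m\,r_{n-m}+o(n))$, and $r_{n-m}=(1-o(1))\ln n$ since $n-m=(1-o(1))n$; hence $m\,r_{n-m}=(1+o(1))n\ln 2$ and $\mathbb{P}(L(\sigma)\ge n/\log n)\ge 2^{-(1+o(1))n}$. The two bounds give \eqref{eqn.sigmamax}; in fact the probability equals $2^{-(1+o(1))n}$.

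For \eqref{eqn.sigmamax2}, write $x=x(n)=\epsilon(n)n/\log n$ and read $\{L(\sigma)\ge x\}$ as $\{L(\sigma)\ge\lceil x\rceil\}$; note that $\lceil x\rceil\le x+1=o(n)$ in every case. Running the lower-bound computation above with $\lceil x\rceil$ in place of $m$ gives
\[
\mathbb{P}(L(\sigma)\ge x)\ \ge\ \frac{B_{n-\lceil x\rceil}}{B_n}=\exp\bigl(-\lceil x\rceil\,r_{n-\lceil x\rceil}+o(n)\bigr)=e^{-o(n)},
\]
because $r_{n-\lceil x\rceil}\le r_n<\ln n$ forces $\lceil x\rceil\,r_{n-\lceil x\rceil}\le\lceil x\rceil\ln n\le x\ln n+\ln n=\epsilon(n)\,n\ln 2+\ln n=o(n)$ (using $\epsilon(n)=o(1)$). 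For the matching upper side, either the right-hand side of Lemma~\ref{thm:max_block} is $\ge 1$ (when $\ln x-\ln r-2<0$), in which case $\mathbb{P}\le 1=e^{-o(n)}$ trivially, or Lemma~\ref{thm:max_block} applies and $0\le x(\ln x-\ln r-2)\le x\ln x\le x\ln n=\epsilon(n)n\ln 2=o(n)$ shows the exponent is again $o(n)$. Either way $\mathbb{P}(L(\sigma)\ge\epsilon(n)n/\log n)=e^{-o(n)}$.

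Both arguments are short; the substance lies in Lemma~\ref{thm:max_block} and the $B_{n-a}/B_n$ estimate just above. The one point that needs a little care --- and which I expect to be the main, if minor, obstacle --- is keeping the $o(\cdot)$ terms uniform while $\epsilon(n)$ is allowed to tend to $0$ (equivalently, while the scaled threshold $x$ is allowed to grow, stay bounded, or even drop below $1$) at an unspecified rate.
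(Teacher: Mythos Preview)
Your proof is correct and follows essentially the same route as the paper: identical lower bounds via $B_{n-\lceil x\rceil}/B_n$ together with the preceding lemma on $B_{n-a}/B_n$, and an upper bound of the same strength. The only cosmetic difference is that for the upper bound you invoke Lemma~\ref{thm:max_block}, whereas the paper writes down the crude count $\binom{n}{a}\,n\,B_{n-a}/B_n$ directly and then applies the $B_{n-a}/B_n$ lemma; since Lemma~\ref{thm:max_block} is itself proved via $\binom{n}{x}B_{n-x}/B_n$, the two arguments are the same underneath. (Your upper-bound discussion for \eqref{eqn.sigmamax2} is not actually needed---once you have the lower bound $\mathbb{P}\ge e^{-o(n)}$, the trivial $\mathbb{P}\le 1$ already gives $-\ln\mathbb{P}=o(n)$---but it does no harm.)
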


\begin{proof}
Let $a=\lceil n/\log n\rceil$.  Then $\mathbb{P}(L(\sigma) \ge n/\log n)$ is at least $\frac{B_{n-a}}{B_n}$; and is at most $\binom{n}{a} n \frac{B_{n-a}}{B_n}$ (where the factor $n$ is to account for choosing a part (or the empty set) to add to the chosen part of size $a$).  Now we can use the last lemma.
\end{proof}

To close this section on generating random perfect graphs, let us briefly consider the graph $G$ output by the generation process, and how to recover a unipolar arrangement for $G$ or its complement (or how to seek such arrangements for the random perfect graph $P_n$).
On input an arbitrary $n$-vertex graph $H$, in $O(n^2)$-time we can test if $H$ is unipolar or co-unipolar, and if it is either then output a corresponding vertex partition.  This result is due the present authors~\cite{recog15}, 
%\m{Colin McDiarmid and Nikola Yolov, Recognition of Unipolar and Generalised Split Graphs, {\em Algorithms} {\bf 8} (2015) 46 -- 59.}
improving on earlier $O(n^3)$-time approaches.  The method is not very complicated, but in the present case
% for the output $G$ of the generation process or the random perfect graph $P_n$
there is a trivial $O(n^2)$-time heuristic.  

Consider $G$.
%the random graph $G$ output by the generation process (or the random perfect graph $P_n$).
Let $A$ be the set of vertices with degree at most $n/2$, and let $B$ consist of the remaining vertices.
Then wvhp exactly one of the following two events happens:
% wvhp either of the following holds:} 
%with probability $\frac12 \pm e^{-\Omega(n)}$:}
(a) $B$ is a clique and $A$ is a disjoint union of cliques (forming a unipolar arrangement for $G$), or (b) $A$ is stable and $B$ is a disjoint union of stable sets (forming a unipolar arrangement for $\overline{G}$);
and so, since by symmetry the two events have the same probability, each holds with probability $\frac12 \pm e^{-\Omega(n)}$.
Given a unipolar arrangement of $G$ or $\overline{G}$, we can efficiently calculate graph invariants like $\alpha$,
$\omega$ and $\chi$~\cite{recog15}.

To check this, let $0<\epsilon< 1/4$.  We have seen that, in the generation process, wvhp $|k-n/2| < \epsilon n/2$ (by Theorem~\ref{thm:concentration}) and each part in $\sigma$ has size $< \epsilon n/2$ (by Lemma~\ref{thm:max_block}).  Hence, in the unipolar case ($B=1$),
by standard Chernoff bounds, wvhp each vertex in the central clique has degree in $(3n/4- \epsilon n, 3n/4+ \epsilon n)$ and each other vertex has degree in $(n/4- \epsilon n, n/4+ \epsilon n)$, so (a) holds.
Similarly, in the co-unipolar case, wvhp each vertex in the central stable set has degree in $(n/4- \epsilon n, n/4+ \epsilon n)$ and each other vertex has degree in $(3n/4- \epsilon n, 3n/4+ \epsilon n)$, so (b) holds.  Also, by~(\ref{eqn.smallcap}) wvhp 
%$G$ is not both unipolar and co-unipolar, so 
(a) and (b) cannot both hold.

%%%%%%%%%%%%%%%%%%%%%%%%%%%%%%%%%

\section{Stability and Clique number}
\label{sec:stabcl}

In this section we discuss the stability number $\alpha(P_n)$ and clique number $\omega(P_n)$ of the random perfect graph $P_n$. Our main result is Theorem~\ref{thm:homogeneous}. Of course everything we say also applies to the clique covering number and chromatic number, as they equal the former two parameters for perfect graphs.

\begin{lemma}
  \label{thm:diff_alpha_sigma}
  Let $U = (1, E, (k, \sigma), \pi)$ be an arbitrary outcome of $Gen^+(n)$
  and let $G^+ = \rho(U)$.
  Then $0 \le \alpha(G^+) - |\sigma| \le 1$.
\end{lemma}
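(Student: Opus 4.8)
The plan is to unwind the definition of $\rho$ and exploit the rigid structure of $G^+$. Since $B=1$ in $Gen^+(n)$ we have $G_3=G_2$, and $G_4$ is just $G_2$ permuted by $\pi$; as $\alpha$ is an isomorphism invariant, $\pi$ is irrelevant and I may take $G^+=G_2$. In that graph the vertex set splits as $C=[k]$ and $S=\{k+1,\dots,n\}$, where $C$ induces a clique, $S$ induces a disjoint union of cliques with one clique for each part of $\sigma$ (and, crucially, no edges between distinct parts of $\sigma$), and the edges between $C$ and $S$ are arbitrary (given by $E$). The point is that forming $G_2$ only adds $C$--$S$ edges, so it never creates an edge inside $S$; hence the "side cliques'' of $G^+$ are exactly the parts of $\sigma$ and are pairwise non-adjacent.

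For the lower bound $\alpha(G^+)\ge|\sigma|$ I would exhibit an explicit stable set: pick one vertex from each part of $\sigma$. No two chosen vertices lie in the same part, and there are no $G^+$-edges between distinct parts, so this is a stable set of size $|\sigma|$.

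For the upper bound $\alpha(G^+)\le|\sigma|+1$ I would take an arbitrary stable set $A$ and count how it meets the pieces of the structure. Since $C$ is a clique, $|A\cap C|\le 1$. Since $A\cap S$ is a stable set inside a disjoint union of cliques whose parts are precisely the parts of $\sigma$, it contains at most one vertex of each part, so $|A\cap S|\le|\sigma|$. Summing, $|A|\le|\sigma|+1$; maximising over $A$ gives the claim. The degenerate cases $k=0$ (where $C=\emptyset$ and equality holds at the lower end) and $k=n$ (where $\sigma$ is the empty partition, $|\sigma|=0$, and $G^+$ is a clique so $\alpha(G^+)=1$) are handled by the same argument.

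There is essentially no obstacle here; the only subtlety worth stating explicitly is that the parts of $\sigma$ both supply the stable set for the lower bound and cap the intersection $|A\cap S|$ for the upper bound, which is exactly because adding the $C$--$S$ edges of $E$ leaves $S$ inducing a disjoint union of cliques indexed by $\sigma$.
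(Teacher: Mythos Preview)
Your proof is correct and follows the same approach as the paper: the paper's two-sentence proof exhibits a stable set of size $|\sigma|$ from the side cliques and observes that $G^+$ can be covered by $|\sigma|+1$ cliques, which is exactly your clique cover $\{C\}\cup\sigma$ spelled out in detail.
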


\begin{proof}
Is is easy to find a stable set of size $|\sigma|$ from the side cliques.
For the other direction, note %\m{was: recall}
that $G^+$ can be covered by $|\sigma| + 1$ cliques.
\end{proof}

The last lemma pins down $\alpha(G^+)$ to only two values,
namely $|\sigma|$ and $|\sigma| + 1$.
We next show that the second outcome is very unlikely.

\begin{lemma}
  \label{thm:stable_unipolar}
  Let $U = (1, E, (k, \sigma), \pi) \sim Gen^+(n)$, let $G^+ = \rho(U)$,
  and let $0 < \delta < 1 - \ln 2 \approx 0.3$.
  Then $\mathbb{P}(\alpha(G^+) = |\sigma| + 1) = O(e^{-n^\delta})$.
\end{lemma}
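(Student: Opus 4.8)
The plan is to show that if $\alpha(G^+) = |\sigma| + 1$, then the bipartite graph $E$ between the central clique $C = [k]$ and the side-clique part $S$ must contain a rather special configuration, and then bound the probability of that configuration using the concentration of $k$ near $n/2$ from Theorem~\ref{thm:concentration}. Recall that any stable set in $G^+$ picks at most one vertex from each side clique and at most one vertex from $C$. So $\alpha(G^+) = |\sigma|+1$ exactly when there is a vertex $v \in C$ together with a choice of one vertex $w_j$ from each part $\sigma_j$ of $\sigma$ such that $v$ is non-adjacent (in $E$) to every $w_j$, and moreover the $w_j$'s already form a stable set (which they do automatically, since distinct side cliques have no edges between them). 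In other words, $\alpha(G^+) = |\sigma|+1$ iff there exists $v \in C$ and a \emph{system of non-neighbours} of $v$ hitting every part of $\sigma$ — equivalently, iff some $v \in C$ has, in each side clique, at least one non-neighbour.

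First I would fix the outcome of $(k,\sigma)$ and work conditionally, with $E$ still a uniform random bipartite graph on $C \times S$, each edge present independently with probability $1/2$. For a fixed $v \in C$ and a fixed part $\sigma_j$ of size $t_j$, the probability that $v$ is adjacent to \emph{all} of $\sigma_j$ is $2^{-t_j}$, so the probability that $v$ has a non-neighbour in $\sigma_j$ is $1 - 2^{-t_j}$. By independence over parts, the probability that $v$ has a non-neighbour in every part is $\prod_j (1 - 2^{-t_j})$. Taking a union bound over the at most $k \le n$ choices of $v$,
\[
  \mathbb{P}\big(\alpha(G^+) = |\sigma|+1 \,\big|\, (k,\sigma)\big) \;\le\; n \prod_{j} \big(1 - 2^{-t_j}\big).
\]
The parts of size $1$ contribute a factor $(1 - 1/2) = 1/2$ each, so this is at most $n \cdot 2^{-Y_1(\sigma)}$, where $Y_1(\sigma)$ is the number of singleton parts. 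Now I would invoke Theorem~\ref{thm:pittel_poisson} (or the surrounding random-partition facts): for $\sigma \in_u \Pi_m$ with $m = n - k$, the number of singletons $Y_1(\sigma)$ is concentrated around $\lambda_{\{1\}} = r_m \sim \ln m$, and in fact $Y_1(\sigma) \ge \tfrac12 r_m$ wvhp in $m$; combined with the fact that $m = n - k \ge n^{1-o(1)}$ (indeed $m \ge \epsilon n$ wvhp by Theorem~\ref{thm:concentration}), we get $Y_1(\sigma) = \Omega(\log n)$ with probability $1 - e^{-\Omega(\log n)}$ — which is not yet strong enough. To reach the stated $O(e^{-n^\delta})$ I would instead use a larger range of part sizes: the number of parts of size at most $C_0$ (for a constant $C_0$, or even size at most $n^{\delta'}$ for a small $\delta'$) is, by Theorem~\ref{thm:pittel_poisson} applied to $I = [1, C_0]$, concentrated around $\lambda_I = \sum_{j \le C_0} r_m^j / j!$, which grows like a power of $r_m \sim \ln n$; and each such part of size $t \le C_0$ contributes a factor $(1 - 2^{-t}) \le 1 - 2^{-C_0}$. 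Pushing $C_0$ up to something like $\delta \log n$ makes the product bound of the form $n \cdot (1 - n^{-\delta})^{N}$ with $N = $ (number of parts of size $\le \delta\log n$) $\ge \lambda_I$; a careful choice of the threshold (balancing $2^{-t}$ against the count $r_m^t/t!$, which is maximised around $t \approx r_m$) yields a bound of the shape $\exp(-n^{\delta})$.

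The cleanest route, which I would actually follow: let $t^* = \lfloor \log\log n\rfloor$ or so, consider parts of size exactly $t^*$, note $(1-2^{-t^*}) \le \exp(-2^{-t^*}) = \exp(-n^{-o(1)})$, and observe that $Y_{t^*}(\sigma)$ is concentrated around $r_m^{t^*}/t^*!$. Choosing $t^*$ to make $r_m^{t^*}/t^*! \cdot 2^{-t^*}$ as large as possible — this is essentially optimising $t \log r_m - \log t! - t$, giving the bulk contribution $\sim e^{r_m/2}$ when summed, i.e.\ a super-polynomial but sub-exponential count — one gets that $\prod_j(1-2^{-t_j}) \le \exp(-n^{\delta})$ wvhp in the relevant sense for any $\delta < 1 - \ln 2$ (the constant $1 - \ln 2$ appearing precisely because $e^{r_m/2}$ with $r_m \sim \ln m$ and $m \sim n/2$ gives $m^{1/2} = (n/2)^{1/2}$... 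I would track the exact exponent here). Finally I would remove the conditioning: the bad events where $k$ is not within $\epsilon n/2$ of $n/2$, or where the partition is atypical, each have probability $e^{-\Omega(n^{\delta})}$ or better by Theorems~\ref{thm:concentration} and~\ref{thm:pittel_poisson}, and on the complement the conditional bound above applies; a union bound finishes the argument.

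The main obstacle is the quantitative balancing in the last step: getting the exponent in $O(e^{-n^\delta})$ to come out with the sharp constant $1 - \ln 2$ requires choosing the part-size threshold $t^*$ optimally and controlling the lower tail of $Y_{t^*}(\sigma)$ (or of $Y_{[1,t^*]}(\sigma)$) sharply via Theorem~\ref{thm:pittel_poisson} — the Poisson parameter $\lambda_I = \sum_{j \le t^*} r_m^j/j!$ and the target factor-size $2^{-t^*}$ pull in opposite directions, and one must verify that the product $\lambda_{t^*} \cdot 2^{-t^*}$ genuinely reaches $n^{\delta}$ for every $\delta$ below the threshold while the corresponding concentration error stays smaller. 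Everything else (the combinatorial characterisation of $\alpha(G^+) = |\sigma|+1$, the conditional independence of $E$, and the union bound over $v \in C$) is routine.
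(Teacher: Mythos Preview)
Your approach is essentially the same as the paper's: you correctly identify that $\alpha(G^+)=|\sigma|+1$ iff some $v\in C$ has a non-neighbour in every side clique, compute the conditional probability $\prod_j(1-2^{-t_j})$, invoke Pittel's concentration (Theorem~\ref{thm:pittel_poisson}) to control the number of small parts, and finish with a union bound over $v$ and over the atypical events for $k$ and $\sigma$.

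The only wobble is in your choice of threshold and the resulting arithmetic. The paper's formal proof takes the simplest possible choice: it uses the interval $I=[0,\,r+r^{3/4}]$ (so essentially \emph{all} parts, since $Y_I\approx e^r$), getting
\[
\mathbb{P}(A_v)\le\bigl(1-2^{-r(1+o(1))}\bigr)^{e^r(1+o(1))}\le\exp\bigl(-(e/2)^{r(1+o(1))}\bigr)=\exp\bigl(-m^{(1-\ln 2)(1+o(1))}\bigr),
\]
which is exactly where the constant $1-\ln 2$ comes from. Your ``cleanest route'' (summing $(r/2)^t/t!$ to get $e^{r/2}\approx m^{1/2}$) is in fact the paper's \emph{informal} argument and would yield the better range $\delta<1/2$, not $1-\ln 2$ as you write; the paper settles for $1-\ln 2$ precisely because that cruder bound is what Pittel's theorem makes immediately rigorous via a single interval $I$. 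So your proposal is sound, but to match the statement as written you should just take $I=[0,r+r^{3/4}]$ rather than hunt for the optimal $t^*$.
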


We give an informal proof first.
Condition on the induced unipolar arrangement $(G^+, C)$
and on the partition $\sigma$. We have $\alpha(G^+) = |\sigma| + 1$
if and only if there is a vertex from $C$ whose neighbourhood
does not contain any side clique in $\overline C$.
Let $A_v$ be the event that $v$ is such a vertex,
and hence $\{\alpha(G^+) = |\sigma| + 1\} \equiv \bigcup_v A_v$.
For a fixed vertex $u \in C$ and a fixed side clique $R$ we have
$\mathbb{P}(R \not\subseteq N(u)) = 1 - 2^{-|R|}$. Therefore
\[
\mathbb{P}(A_v) = \prod_{R} \left( 1 - 2^{-|R|} \right),
\]
where that product is taken over the side cliques in $\overline C$.
Up until now everything is rigorous.
Observe from Theorem~\ref{thm:pittel_poisson} that whp the number of side
cliques of size $i$ is close to $r^i / i!$,
where $r$ is the unique root of the equation $n - k = r e^r$.
For a moment assume that the number of side cliques of size $i$ equals this value,
for each $i$.  Then
\begin{align*}
  \mathbb{P}(A_v)
  &= \prod_{i=1}^n \prod_{R: |R| = i}(1 - 2^{-i})
  = \prod_{i=1}^n (1 - 2^{-i})^{r^i/i!} \\
  & \le \prod_{i=1}^n \left( e^{- 2^{-i}} \right) ^{r^i/i!} \;\;\;
  \mbox{ since } \; 1 + x \le e^x \\
  &= \exp{\left( - \sum_{i=1}^n (r/2)^i/i! \right)}
  \; \le \; \exp{\left( - e^{r/2} + 2 \right)} \\
  &= \exp{\left( - \sqrt{ (n - k) /r} + 2 \right)}
  = \exp \left( - (1 + o(1)) \sqrt{n / (2\ln n)} \right).
\end{align*}
For the last step we assumed that $k \sim n/2$,
which is easily justifiable from the concentration theorem, Theorem~\ref{thm:concentration}.
We use the union bound to finalise our arguments:
\begin{align*}
  \mathbb{P}(\alpha(G^+)
  &= |\sigma| + 1) \le \sum_v \mathbb{P}(A_v)
  \leq \exp \left( - (1 + o(1)) \sqrt{n / (2\ln n)} \right)\\
  &= O(\exp(-n^{1/2 - \epsilon})).
\end{align*}
We observe that $\delta$ from the statement is $1/2 - \epsilon$ in this case,
which may be the best possible.

\begin{proof}
  [Proof of Lemma~\ref{thm:stable_unipolar}]
Let $\mathcal{J} = [n/2 \, - 2\sqrt{n}, n/2 \, + 1 + 2\sqrt{n}]$. We have $\mathbb{P}(k \in \mathcal{J}) \ge 1 - 2^{- n}$ from Theorem~\ref{thm:concentration}.  Condition $\sigma$ on $k = k_0$ for some $k_0 \in \mathcal{J}$.  We use Theorem~\ref{thm:pittel_poisson} for $\sigma$ with $I = [0, r + r^{3/4}]$, $\lambda_I = e^r \mathbb{P}[Po(r) \le r + r^{3/4}] \le e^r$ and $\xi = 2(e^r)^{3/4}$, where $r$ is the unique root of $n - k_0 = r e^r$. The theorem says that 
\[
  \mathbb{P}(Y_I(\sigma) \ge \lambda_I - \xi \big | k = k_0)
  \ge 1 - c \exp{(-\sqrt{e^r})},
\]
for some constant $c$. In order to give a lower bound on $\lambda_I$, we use a standard Chernoff bound for a Poisson random variable:
$\mathbb{P}(Po(r) \ge (1 + \eta) r) < \exp{(-r \eta^2/4)}$, for $0 < \eta \le 1$. Taking $\eta = r^{-1/4}$ we obtain:
\[
  \lambda_I - \xi > e^r (1 - \exp{(-\sqrt{r}/4)} - 2 (e^r)^{-1/4}) = e^r(1 - o(1)).
\]
Define the events $A_v$ as in the preceding informal discussion. The event \{$Y_I(\sigma) \ge \lambda_I - \xi$\} implies that
  in $\sigma$ there are at least $e^r(1 + o(1))$ parts of size at most $r(1 + o(1))$,
  hence
  \begin{align*}
    \mathbb{P}&(A_v  \big| (Y_I(\sigma) \ge \lambda_I - \xi) \cap (k = k_0))\\
    &\le (1 - 2^{-r(1 + o(1))})^{e^r(1 + o(1))} \\
    &\le \exp{\{- 2^{-r(1 + o(1))} e^r(1 + o(1))\}} \\
    &= \exp{\{-(e/2)^{r(1 + o(1))}\}} \\
    &= \exp{\{-\exp{[r \cdot (1 - \ln 2)(1+ o(1)) ]}\}}\\
    & = \exp{\{
      \left((n - k_0) / r \right)
      ^ { (1 - \ln 2)(1+ o(1))}
      \}}   \;\;\; \mbox{ since } \; e^r = (n-k_0)/r \\
    &=O(\exp {(-n^\delta)}).
  \end{align*}
  From the union bound we get
  \begin{align*}
    \mathbb{P}(\alpha(G^+) = |\sigma| + 1 \big| k = k_0)
    &\le \mathbb{P}(Y_I(\sigma) < \lambda_I - \xi \big | k = k_0)\\
    &+ \sum_v \mathbb{P}(A_v  \big| (Y_I(\sigma) \ge \lambda_I - \xi) \cap (k = k_0))\\
    &=O(\exp {(-n^\delta)}).
  \end{align*}
  Hence
  \begin{align*}
    \mathbb{P}(\alpha(G^+) = |\sigma| + 1)
    &\le \mathbb{P}(k \not\in \mathcal{J}) +
    \max_{k_0 \in \mathcal{J}} \mathbb{P}(\alpha(G^+) = |\sigma| \big | k = k_0) \\
    &=O(\exp {(-n^\delta)}). \qedhere
  \end{align*}
\end{proof}

Note that the bound for $\delta$ in the last proof is $1 - \ln 2 \approx 0.3$,
which is slightly worse than the constant in the informal discussion.

We shorten $G^+ \sim \rho(Gen^+(n))$ to $G^+ \sim Gen^+(n)$.
\begin{prop}
  \label{thm:stable_normal}
  Let $G^+ \sim Gen^+(n)$, and denote $\alpha(G^+)$ by $X$.  Then, as $n \rightarrow \infty$,
  $\widehat{X} \overset{d}{\to} N(0, 1)$,
  $\mathbb{E}[X] \sim \frac{n}{2\ln n}$, and
  $\Var[X]  \sim \frac{n}{2\ln^2 n}$.
\end{prop}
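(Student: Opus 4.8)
The plan is to push everything through the number of parts $|\sigma|$ of the partition produced by the generation process, and then invoke Harper's theorem (Theorem~\ref{thm:harper_normal}). Write $U=(1,E,(k,\sigma),\pi)\sim Gen^+(n)$, $X=\alpha(G^+)$, and $m=n-k$, so that, conditionally on $k$, the variable $|\sigma|$ has the distribution of the number of parts $Z_m$ of a uniformly random partition of an $m$-element set; set $h(m)=\mathbb E Z_m$ and $v(m)=\Var Z_m$. By Lemma~\ref{thm:diff_alpha_sigma} we have $X-|\sigma|\in\{0,1\}$, and by Lemma~\ref{thm:stable_unipolar}, $\mathbb P(X\neq|\sigma|)=O(e^{-n^{\delta}})$. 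Since $0\le |\sigma|\le X\le n$, this gives $\mathbb E X=\mathbb E|\sigma|+O(e^{-n^\delta})$ and, expanding $X^2-|\sigma|^2=(X-|\sigma|)(X+|\sigma|)$, also $\mathbb E X^2=\mathbb E|\sigma|^2+O(ne^{-n^\delta})$; using $\mathbb E|\sigma|=O(n/\log n)$ (established below) these combine to $\Var X=\Var|\sigma|+o(1)$. Moreover $X-|\sigma|\to 0$ in probability. Hence, once we know $\Var|\sigma|\to\infty$, the normalisations of $X$ and $|\sigma|$ agree up to $o_{\mathbb P}(1)$, and it suffices to prove the three assertions for $Y:=|\sigma|$.

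For the moments of $Y$ I use the law of total variance, $\Var Y=\mathbb E[\,v(n-k)\,]+\Var\!\big(h(n-k)\big)$. Harper's theorem gives $h(m)=(1+o(1))\,m/\ln m$ and $v(m)=(1+o(1))\,m/(\ln m)^2$, uniformly for $m\to\infty$. By Theorem~\ref{thm:concentration}, $\mathbb P(|k-n/2|>n^{2/3})=e^{-\Omega(n^{4/3})}$, and on the complementary event $n-k\ge n/2-n^{2/3}\to\infty$, so there $h(n-k)=(1+o(1))\tfrac{n}{2\ln n}$ and $v(n-k)=(1+o(1))\tfrac{n}{2\ln^2 n}$ uniformly, while the bad event contributes at most $n\,e^{-\Omega(n^{4/3})}=o(1)$ to the relevant expectations. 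This yields $\mathbb E Y\sim\tfrac{n}{2\ln n}$ and $\mathbb E[\,v(n-k)\,]\sim\tfrac{n}{2\ln^2 n}$. It remains to bound the ``between'' term $\Var(h(n-k))$. Here I use that $m\mapsto B_{m+1}/B_m$ is nondecreasing (log-convexity of the Bell numbers) with increments $B_{m+1}/B_m-B_m/B_{m-1}=O(1/\log m)$, a routine consequence of the refined asymptotics underlying~(\ref{eq:bell_asymp}); thus $|h(n-k)-h(\lceil n/2\rceil)|=O(|k-n/2|/\log n)$ for $k\in[n/4,3n/4]$. Combined with $\mathbb E[(k-n/2)^2]=\Var k+(\mathbb E k-n/2)^2=O(1)+O(\log^2 n)=O(\log^2 n)$ — the bound $\Var k=O(1)$ coming from the quasi-Gaussian tail in Theorem~\ref{thm:concentration} — and the trivial bound on the event $k\notin[n/4,3n/4]$, we get $\Var(h(n-k))=O(1)=o(n/\log^2 n)$. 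Hence $\Var Y\sim\tfrac{n}{2\ln^2 n}$, and in particular $\Var Y\to\infty$, closing the loop with the previous paragraph.

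For asymptotic normality, note that conditionally on $k$ we have $(Y-h(n-k))/\sqrt{v(n-k)}=\widehat{Z_{n-k}}$, which converges in distribution to $N(0,1)$ as its index tends to infinity by Harper's theorem. Since $n-k\to\infty$ with super-polynomially small failure probability, splitting the sum over $k$ at the cutoff $k=n-M_0$ shows $\mathbb P\!\big((Y-h(n-k))/\sqrt{v(n-k)}\le t\big)\to\Phi(t)$ unconditionally for every $t$. Writing
\[
\widehat Y=\frac{Y-h(n-k)}{\sqrt{v(n-k)}}\cdot\frac{\sqrt{v(n-k)}}{\sqrt{\Var Y}}+\frac{h(n-k)-\mathbb E Y}{\sqrt{\Var Y}},
\]
the last summand has mean $0$ and variance $\Var(h(n-k))/\Var Y=O(1)/\Theta(n/\log^2 n)\to 0$, hence tends to $0$ in probability, and the ratio $\sqrt{v(n-k)}/\sqrt{\Var Y}\to 1$ in probability (it is $1+o(1)$ uniformly on $|k-n/2|\le n^{2/3}$, using the uniform Harper estimates and $\Var Y\sim \tfrac{n}{2\ln^2 n}$). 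By Slutsky's theorem $\widehat Y\overset{d}{\to}N(0,1)$, and by the first paragraph so does $\widehat X$.

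The main obstacle is the ``between'' estimate $\Var(h(n-k))=o(n/\log^2 n)$ in the second paragraph: the crude concentration $|k-n/2|=O(\sqrt n)$ used elsewhere is not enough here, as it would only give $\Var(h(n-k))=O(n/\log^2 n)$, exactly the order of $\Var Y$ itself; one genuinely needs both the $O(1)$ bound on $\Var k$ from the quasi-Gaussian tail in Theorem~\ref{thm:concentration} and a Lipschitz-type control of $m\mapsto B_{m+1}/B_m$ from the Bell-number asymptotics. Everything else is bookkeeping with Slutsky's theorem and the super-polynomial concentration of $k$.
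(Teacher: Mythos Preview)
Your argument is correct and uses the same three ingredients as the paper's proof: the concentration of $k$ from Theorem~\ref{thm:concentration}, Harper's CLT (Theorem~\ref{thm:harper_normal}), and the Lipschitz estimate $h(m+1)-h(m)=(1+o(1))/\ln m$ (which the paper states as a separate Claim just before its proof). The difference is in the packaging. The paper does not invoke the law of total variance or Slutsky; instead it fixes $\epsilon>0$, uses Theorem~\ref{thm:concentration} to trap $k$ in an interval $\mathcal I$ of \emph{bounded length} $2\ell(\epsilon)+1$ independent of $n$, and then shows directly that $|\mathbb E Y-\mathbb E Y_{n-q}|\le\epsilon/2+o(1)$ and $\Var Y=(1+o(1))\Var Y_{n-q}$ uniformly for $q\in\mathcal I$, finishing by an $\epsilon$--$\delta$ argument on the CDF. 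Your route computes $\Var\big(h(n-k)\big)=O(1)$ from $\Var k=O(1)$ (which indeed follows from the sub-Gaussian tail in Theorem~\ref{thm:concentration}) and then applies Slutsky; this is cleaner probabilistically and makes explicit the point you flag as the ``main obstacle'', namely that the crude $|k-n/2|=O(\sqrt n)$ concentration would not suffice. The paper's version sidesteps computing $\Var k$ altogether by working on a fixed-length window, which is a slightly more elementary way to exploit the same tail bound. Either approach yields the moment asymptotics and the CLT.
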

We write $Y$ for $|\sigma|$ and $Y_m$ for $|\sigma_m|$ when $\sigma_m \in_u \Pi_m$.
To prove the last result we will use the following claim.
\begin{claim}
  $\mathbb{E}Y_{n+1} = \mathbb{E}Y_n + \frac{1 + o(1)}{\ln n}$.
\end{claim}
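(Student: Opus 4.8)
$\mathbb{E}Y_{n+1} = \mathbb{E}Y_n + \frac{1+o(1)}{\ln n}$, where $Y_m = |\sigma_m|$ for $\sigma_m \in_u \Pi_m$.

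The plan is to exploit the exact identity from Harper's theorem (Theorem~\ref{thm:harper_normal}), namely $\mathbb{E}Y_m = \frac{B_{m+1}}{B_m} - 1$. Thus
\[
\mathbb{E}Y_{n+1} - \mathbb{E}Y_n = \frac{B_{n+2}}{B_{n+1}} - \frac{B_{n+1}}{B_n},
\]
and the whole task reduces to estimating how fast the ratio $B_{m+1}/B_m$ grows by one step. I would write $R_m = B_{m+1}/B_m$, so the target is $R_{n+1} - R_n = \frac{1+o(1)}{\ln n}$. From part (2) of Harper's theorem we already know $R_m - 1 = \mathbb{E}Y_m = (1+o(1))\frac{m}{\ln m}$, so $R_n \sim n/\ln n$ and $R_{n+1} \sim (n+1)/\ln(n+1)$; naively subtracting gives something of order $1/\ln n$, but the $o(1)$ error terms in the two estimates are each of order $n/\ln n$ individually, which completely swamps the desired $1/\ln n$ difference. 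So that crude approach fails, and this is exactly the main obstacle: I need an estimate of $R_m$ that is accurate to within $o(1/\ln n)$, i.e.\ with additive error $o(1/\ln n)$, not merely multiplicative error $1+o(1)$.

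To get this, I would use the sharp asymptotic for the Bell numbers, (\ref{eq:bell_asymp}): $B_m = \frac{1+o(1)}{\sqrt{r_m}} e^{m(r_m - 1 + 1/r_m) - 1}$, where $r_m e^{r_m} = m$. Writing $g(m) = m(r_m - 1 + 1/r_m)$, we get
\[
\ln R_m = \ln B_{m+1} - \ln B_m = g(m+1) - g(m) + O\!\left(\tfrac{1}{m}\right) + \tfrac12\ln\tfrac{r_m}{r_{m+1}},
\]
where the last two terms are $O(1/m)$ and hence negligible; and $g(m+1) - g(m)$ is well-approximated by $g'(m)$. A short computation with $r_m e^{r_m} = m$ gives $r_m'(m) = \frac{r_m}{m(1+r_m)}$, from which $g'(m) = r_m - 1 + 1/r_m + m r_m'(m)(1 - 1/r_m^2) = r_m + O(1/r_m) = r_m + o(1)$. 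Therefore $\ln R_m = r_m + o(1)$, i.e.\ $R_m = e^{r_m + o(1)} = (1+o(1)) e^{r_m} = (1+o(1)) m/r_m$, consistent with Harper. Now for the difference I need one more derivative:
\[
R_{n+1} - R_n = \frac{d}{dm}\Big(e^{r_m}(1+o(1))\Big)\Big|_{m\approx n} = e^{r_n} r_n'(n)(1+o(1)) = \frac{n}{r_n}\cdot \frac{r_n}{n(1+r_n)}(1+o(1)) = \frac{1+o(1)}{1 + r_n}.
\]
Since $r_n = \ln n - (1+o(1))\ln\ln n \sim \ln n$, this is $\frac{1+o(1)}{\ln n}$, as claimed.

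The one technical point requiring care is that all the $o(1)$ bookkeeping must be done at the level of $\ln R_m$ or $R_m$ directly, not propagated through a Taylor expansion of a difference of two separately-estimated large quantities — i.e.\ I should establish $R_{m+1} - R_m = (1+o(1))/r_m$ by a single mean-value-theorem argument applied to a function whose derivative I control, rather than subtracting two asymptotic estimates of $R_m$. Concretely: show $R_m = h(m)(1+\epsilon(m))$ where $h$ is the explicit smooth function $e^{r_m}$ (or $m/r_m$), $\epsilon(m) = o(1)$, and $h'(m) = (1+o(1))/r_m$ with $h'$ monotone-ish, then bound $R_{m+1}-R_m$ between $\inf_{[m,m+1]} h'$ and $\sup_{[m,m+1]} h'$ up to the $\epsilon$ corrections, checking that $h(m+1)\epsilon(m+1) - h(m)\epsilon(m)$ is also $o(1/\ln n)$ — this last step needs $\epsilon(m+1) - \epsilon(m) = o(1/(m\ln n / \ln n)) = o(1/m)$ type control, which follows because $\epsilon(m)$ itself comes from the $\frac{1+o(1)}{\sqrt{r_m}}$ prefactor and the error in $g(m+1)-g(m)\approx g'(m)$, both of which vary slowly. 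Alternatively, and perhaps more cleanly, one can avoid differentiation entirely and use a known second-order expansion of $B_{m+1}/B_m$ from the literature on Bell numbers (e.g.\ in \cite{asymptotic_methods}), reading off the claim directly.
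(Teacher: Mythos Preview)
Your approach via Bell-number asymptotics is workable in principle but is much more laborious than the paper's, and it has a gap you yourself flag but do not actually close: to pass from $R_m = h(m)(1+\epsilon(m))$ with $\epsilon(m)=o(1)$ to the desired difference estimate you need $\epsilon(m+1)-\epsilon(m) = o(1/m)$, and this does not follow from $\epsilon(m)=o(1)$ alone. Justifying it would require going back into the proof of~(\ref{eq:bell_asymp}) or citing a sharper expansion; as written, the argument is incomplete at precisely the point you identified as the main obstacle.

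The paper sidesteps all of this with a one-line algebraic trick. From Harper's theorem, $\mathbb{E}Y_n + 1 = B_{n+1}/B_n$ and $\Var Y_n = B_{n+2}/B_n - (B_{n+1}/B_n)^2 - 1$. Factoring $B_{n+2}/B_n = (B_{n+2}/B_{n+1})(B_{n+1}/B_n) = (\mathbb{E}Y_{n+1}+1)(\mathbb{E}Y_n+1)$ gives the \emph{exact} identity
\[
\mathbb{E}Y_{n+1} - \mathbb{E}Y_n \;=\; \frac{\Var Y_n + 1}{\mathbb{E}Y_n + 1}.
\]
Now the first-order asymptotics from parts 2 and 3 of Harper's theorem are enough: the right-hand side is $\sim \dfrac{n/\ln^2 n}{n/\ln n} = 1/\ln n$. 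No differentiation of asymptotics, no control on how error terms vary --- the exact variance formula absorbs the cancellation that caused you trouble.
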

\begin{proof}
  From Theorem~\ref{thm:harper_normal},
  $\mathbb{E}Y_n = \frac{B_{n+1}}{B_n} - 1$
  and $\Var Y_n = \frac{B_{n+2}}{B_n} - \left(\frac{B_{n+1}}{B_n}\right)^2 - 1$.
  Hence
  \[
  \Var Y_n
  = \frac{B_{n+2}}{B_{n+1}}\frac{B_{n+1}}{B_n} - \left(\frac{B_{n+1}}{B_n}\right)^2 - 1
  = (\mathbb{E} Y_{n+1} + 1)(\mathbb{E}Y_n + 1) - (\mathbb{E}Y_n + 1)^2 - 1.
  \]
  Therefore
  \begin{align*}
    (\mathbb{E}Y_{n+1} + 1) - (\mathbb{E}Y_n + 1)
    &= \frac{\Var Y_n + 1}{\mathbb{E}Y_n + 1}
    = \frac{1 + o(1)}{\ln n}.
    \qedhere
  \end{align*}
\end{proof}
\begin{proof}
  [Proof of Proposition~\ref{thm:stable_normal}]
  Recall that $\sigma$ is a random partition created by first choosing a value
  for $k \sim L(n)$, and then taking $\sigma \in_u \Pi_{n-k}$.
  It suffices to show that $\widehat{Y} \overset{d}{\to} N(0, 1)$.
  Indeed, if this were the case we get
  $Y \leq X \leq Y+1, \Var(Y) \to \infty, \widehat{Y} \overset{d}{\to} N(0, 1)$,
  and hence $\widehat{X} \overset{d}{\to} N(0, 1)$.

  Let $\Phi(x)$ be the CDF of $N(0, 1)$.
  For an arbitrary fixed $x$ and $\epsilon>0$ we show that for large $n$
  \[
  |\mathbb{P}(\widehat{Y} < x) - \Phi(x)| < \epsilon.
  \]

  Find $\ell = \ell(\epsilon)$ such that
  $\mathbb{P}(k \not\in \mathcal{I}) < \epsilon / 2 + n^{-n}$
  for $\mathcal{I} = \mathcal{I}(n) =$
  $[\lfloor \mathbb{E}k - \ell \rfloor, \lceil \mathbb{E}k + \ell\rceil]$.
  Such $\ell$ exists by Theorem~\ref{thm:concentration}.
  We emphasise that the length of $\mathcal{I}$ is at most $2\ell + 1$,
  which is independent of $n$.
  \begin{align*}
    \left| \mathbb{P}(\widehat{Y} < x) - \Phi(x) \right|
    &= \sum_{c = 0}^n
    \left| (\mathbb{P}(\widehat{Y} < x \big| k = c)
    - \Phi(x)) \, \mathbb{P}(k = c) \right|\\
    &< \sum_{c \in \mathcal{I}}
    \left| (\mathbb{P}(\widehat{Y} < x \big| k = c)
    - \Phi(x)) \right| \mathbb{P}(k = c) + \epsilon / 2 + n^{-n}\\
    &\le \max_{c \in \mathcal{I}}
    \left| \mathbb{P}(\widehat{Y} < x \big| k = c) - \Phi(x) \right|
    + \epsilon / 2 + n^{-n}.
  \end{align*}
Fix any integer valued function $q(n)$ with $q(n) \in \mathcal{I}(n)$.
We shorten $q(n)$ to $q$ for readability.
\begin{align}
    \big| \mathbb{E}Y - \mathbb{E}Y_{n - q} \big|
    &= \big| \mathbb{E} \left\{\mathbb{E} \left[Y \big| k\right] \right\}
    - \mathbb{E}Y_{n - q} \big| \nonumber \\
    &\le \sum_{i=0}^n
    \big| \mathbb{E}Y_{n-i} - \mathbb{E}Y_{n - q} \big| \, \mathbb{P}(k = i) \nonumber \\
    &\le \sum_{i \in \mathcal{I}}
    \big| \mathbb{E}Y_{n-i} - \mathbb{E}Y_{n - q} \big| \, \mathbb{P}(k = i)
    + \epsilon/2 + n^{-n} \nonumber \\
    &\le \max_{i \in \mathcal{I}}
    \big| \mathbb{E}Y_{n-i} - \mathbb{E}Y_{n - q} \big| + \epsilon/2 + n^{-n} \nonumber \\
    &\le (1 + o(1))\frac{2\ell + 1}{2\ln n} + \epsilon/2 + n^{-n} \nonumber \\
    &= \epsilon/2 + o(1) \label{eq:diff_exp}.
\end{align}
We proceed with the variance in a similar fashion.
\begin{equation*}
    \big| \Var Y - \Var Y_{n - q} \big|
    \le \max_{i \in \mathcal{I}}
    \big| \Var Y_{n-i} - \Var Y_{n - q} \big| + \epsilon/2 + n^{-n}
    = o(\Var Y_{n - q}).
\end{equation*}
For the last equality we have used part 3 of Theorem~\ref{thm:harper_normal}. We conclude
\begin{equation}
    \label{eq:diff_var}
    \Var Y = (1 + o(1))\, \Var Y_{n - q}.
\end{equation}
Write  $y = (x (\Var Y)^{1/2} + \mathbb{E}Y - \mathbb{E}Y_{n - q}) (\Var Y_{n - q})^{-1/2}$, so that  %\m{displayed}
\[ \mathbb{P}(\widehat{Y} < x \big| k = q) = \mathbb{P}(\widehat{Y_{n - q}} < y).\] 
By Theorem~\ref{thm:harper_normal} part 3, we have $\Var Y \to \infty$ as $n \to \infty$; and so by (\ref{eq:diff_exp}) and (\ref{eq:diff_var}) 
  %and Theorem~\ref{thm:harper_normal}
it follows that  $\lim_{n \to \infty} y = x$.  Find $\delta > 0$, so that $\Phi(x) \le \Phi(x - \delta) + \epsilon/6$. From Theorem~\ref{thm:harper_normal} we have
$\mathbb{P}(\widehat{Y_{n-q}} < x - \delta) \to \Phi(x - \delta)$
if $n - q \rightarrow \infty$. For large $n$
\[
  \mathbb{P}(\widehat{Y_{n-q}} < y)
  \ge \mathbb{P}(\widehat{Y_{n-q}} < x - \delta)
  \ge \Phi(x - \delta) - \epsilon / 6
  \ge \Phi(x) - \epsilon / 3.
\]
We can do the same in the other direction and deduce that for large $n$
\[
  |\mathbb{P}(\widehat{Y} < x \big| k = q) - \Phi(x)| < \epsilon / 3.
\]
But $q$ was an arbitrary function, hence for large $n$
\[
  \max_{c \in \mathcal{I}}
  \left| \mathbb{P}(\widehat{Y} < x \big| k = c) - \Phi(x) \right| < \epsilon/3,
\]
and the proposition is proven.
\end{proof}

We continue with $\omega(G^+)$.
The overview is that
usually there is a unique clique of maximum size in $G^+$ -- the central clique.
%\m{amend?  move to end of section?}
%\m{\mr{I think it gives a good overview of the proof below?}}
%\mr{As we saw above,}
The vertices in the central clique have degree $(1 + o(1))3n/4$ whp,
while the vertices outside have degree $(1 + o(1))n/4$ whp,
so the central clique $C$ from the generation is easily recognisable
in the resulting graph (and in the 4-tuple $U$, as well as $k$ and $B$,
we know $\sigma$ up to relabelling of $\overline{C}$,
we know a little about $E$ and we know $\pi([k])=C$).

\begin{lemma}
  \label{thm:cliqe_unipolar}
  Suppose $G^+ = \rho(U)$, where $U = (B, E, (k, \sigma), \pi) \sim Gen^+(n)$.
  Then both the probabilty that $C$ is not the unique clique of maximum
  size and $\mathbb{P}(\omega(G^+) \neq k)$ are $2^{-(\frac{1}{2} + o(1))n}$.
\end{lemma}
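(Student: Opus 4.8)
The plan is to work with $G^+=\rho(U)$ and to ignore $\pi$, since both $\omega$ and the property ``$C$ is the unique largest clique'' are isomorphism--invariant; throughout I condition on $(k,\sigma)$, so that the edges between $C=[k]$ and $\overline C=\{k+1,\dots,n\}$ are independent and each present with probability $1/2$. The structural fact I will use is that every clique $K$ of $G^+$ writes uniquely as $K=K_C\cup K_R$ with $K_C\subseteq C$ and $K_R$ contained in one part $R$ of $\sigma$ (distinct side cliques have no edge between them), and $K$ is a clique exactly when all $|K_C|\,|K_R|$ edges between $K_C$ and $K_R$ are present. In particular $C$ is itself a clique, so $\omega(G^+)\ge k$, and hence $\{\omega(G^+)\neq k\}\subseteq\{C\text{ is not the unique largest clique}\}$. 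It therefore suffices to bound the probability of the latter event from above by $2^{-(1/2+o(1))n}$ and $\mathbb{P}(\omega(G^+)\neq k)$ from below by $2^{-(1/2+o(1))n}$.

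For the upper bound I union--bound over all ``bad'' sets, namely the $K=K_C\cup K_R$ with $\emptyset\neq K_R\subseteq R$ for some part $R$ of $\sigma$ and $|K_C|+|K_R|\ge k$; any bad set that is a clique witnesses that $C$ is not the unique largest clique. Writing $j=|K_R|\ge 1$ and $i=k-|K_C|$, so $0\le i\le j$, and using $\sum_R\binom{|R|}{j}\le\binom{n-k}{j}$ together with the monotonicity of $\binom{k}{i}2^{ij}$ in $i\le j$, the conditional probability of the bad event is at most
\[
\sum_{R}\ \sum_{j=1}^{|R|}\binom{|R|}{j}\sum_{i=0}^{j}\binom{k}{i}\,2^{-(k-i)j}
\;\le\;\sum_{j=1}^{L(\sigma)}(j+1)\bigl((n-k)k\bigr)^{j}\,2^{-j(k-j)}.
\]
On the event that $k\ge n/4$ and $L(\sigma)<n/32$ (the good event) we have $j\le k/8$, so consecutive terms of the last series have ratio at most $2^{-\Omega(n)}$ and the sum is dominated by its $j=1$ term $2(n-k)k\,2^{-(k-1)}=O(n^2 2^{-k})$. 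It remains to average over $(k,\sigma)$. By Lemma~\ref{thm:max_block} the probability that $L(\sigma)\ge n/32$ is $e^{-\Omega(n\log n)}$, and by Theorem~\ref{thm:concentration} $\mathbb{P}(k<n/4)=n^{-\Omega(n)}$, so the complement of the good event contributes $e^{-\Omega(n\log n)}$. Finally, since $\mu=\tfrac12(n-\log n+\log\ln n)$ and $\mathbb{P}(|k-\mu|\ge n^{0.6})=n^{-\Omega(n)}$ by Theorem~\ref{thm:concentration},
\[
\mathbb{E}\bigl[2^{-k}\bigr]\ \le\ 2^{-\mu+n^{0.6}}+\mathbb{P}\bigl(|k-\mu|\ge n^{0.6}\bigr)\ =\ 2^{-(1/2+o(1))n}.
\]
Hence $\mathbb{P}(C\text{ is not the unique largest clique})=2^{-(1/2+o(1))n}$ as an upper bound, and a fortiori the same for $\mathbb{P}(\omega(G^+)\neq k)$.

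For the lower bound on $\mathbb{P}(\omega(G^+)\neq k)$, given $(k,\sigma)$ let $A_v$, for $v\in\overline C$, be the event that $v$ is adjacent to every vertex of $C$; on $\bigcup_v A_v$ the set $C\cup\{v\}$ is a clique of size $k+1$, so $\omega(G^+)>k$. The events $(A_v)_{v\in\overline C}$ are independent (they depend on disjoint edge sets) with $\mathbb{P}(A_v\mid k,\sigma)=2^{-k}$, so $\mathbb{P}(\bigcup_v A_v\mid k,\sigma)=1-(1-2^{-k})^{n-k}\ge\tfrac12(n-k)2^{-k}$ once $(n-k)2^{-k}\to 0$. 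On the event $|k-\mu|\le n^{0.6}$, which has probability $1-n^{-\Omega(n)}$ by Theorem~\ref{thm:concentration}, we have $n-k\ge n/4$ and $2^{-k}\ge 2^{-\mu-n^{0.6}}=2^{-(1/2+o(1))n}$, whence $\mathbb{P}(\omega(G^+)\neq k)\ge 2^{-(1/2+o(1))n}$; combined with the inclusion $\{\omega(G^+)\neq k\}\subseteq\{C\text{ is not the unique largest clique}\}$ this shows both probabilities in the statement equal $2^{-(1/2+o(1))n}$.

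I expect the main obstacle to be the bookkeeping in the union bound. Crude estimates such as $\binom{n-k}{j}\binom{k}{j}\le 2^n$ are far too lossy, so one really must exploit the good event on $\sigma$ to confine $j$ to the range where $2^{-j(k-j)}$ decays geometrically in $j$, making every term with $j\ge 2$ negligible beside the $j=1$ term; and because the target probability is of size $2^{-n/2}$, an $e^{-\Omega(n)}$ bound on $L(\sigma)$ being large would not suffice --- one genuinely needs the stronger $e^{-\Omega(n\log n)}$ tail from Lemma~\ref{thm:max_block}. The remaining ingredients, namely the concentration of $k$ from Theorem~\ref{thm:concentration} and the independence of the events $A_v$, are routine.
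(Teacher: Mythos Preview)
Your proof is correct, and the lower bound is essentially identical to the paper's. The upper bound, however, follows a genuinely different route. The paper splits into the two cases $|M\cap\overline C|=1$ and $|M\cap\overline C|\ge 2$: the first is handled by the direct count $(n-k_0)k_0\,2^{-(k_0-1)}$, while for the second it observes that two vertices of $\overline C$ must then have degree at least $k_0$, bounds the side clique sizes by $n/\sqrt{\log n}$ via Lemma~\ref{thm:max_block}, and applies a Chernoff bound to show this has probability $e^{-(1+o(1))k_0}$. Your argument instead carries out a single first-moment union bound over all candidate cliques $K_C\cup K_R$, and shows directly that on the good event $\{k\ge n/4,\ L(\sigma)<n/32\}$ the terms with $j=|K_R|\ge 2$ are geometrically negligible compared to the $j=1$ term.

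What each buys: your route is more elementary (no Chernoff bound is needed) and uniform over $j$, but as you correctly flag, it genuinely requires the $e^{-\Omega(n\log n)}$ tail for $L(\sigma)$ from Lemma~\ref{thm:max_block} so that the complement of the good event is absorbed into $2^{-(1/2+o(1))n}$. The paper's approach, by contrast, only needs the much weaker bound $L(\sigma)\le n/\sqrt{\log n}$ with probability $1-e^{-n}$, because the Chernoff argument already gives $e^{-(1+o(1))n/2}$ for the $j\ge 2$ case. Both are clean; yours is arguably the more direct first-moment computation one would write down first.
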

\begin{proof}
  As usual, let $\mathcal{I} = [n/2 - \sqrt{n}, n/2 + \sqrt{n}]$,
  and recall that $k \in \mathcal{I}$ wvhp by Theorem~\ref{thm:concentration}.
  Let $k_0 \in \mathcal{I}$, and condition on $k = k_0$.

  We first give a lower bound for $\mathbb{P}(\omega(G^+) \neq k_0)$.
  With probability at least $2^{-k_0}$ there is a vertex outside
  the central clique which is adjacent to all vertices from the central clique,
  and hence $\mathbb{P}(\omega(G^+) > k_0) \ge 2^{-(\frac{1}{2} + o(1))n}$.

We now prove an upper bound.
Of course,  $\mathbb{P}(\omega(G^+) \neq k)$ is at most the probabilty that $C$ is not the unique clique of maximum size:
we upper bound the latter probability. 
Let $(G^+, C)$ be the unipolar arrangement induced by $U$. There are two cases in which $G^+$ contains a clique $M \neq C$ of size at least $k_0$.
The first case is when $|\overline C \cap M| = 1$, when also 
%$M \supseteq C$.
$|C \setminus M| \leq 1$.
The probability of that happening is at most
$(n -k_0) k_0 2^{-(k_0-1)} = 2^{-(\frac{1}{2} + o(1))n}$.

  Let us focus on the second case; $|\overline C \cap M| > 1$.
  This implies that there are at least two vertices from $\overline C$
  with degree at least $k_0$.
  If we substitute $n/\sqrt{\log n}$ for $x$ in Lemma~\ref{thm:max_block},
  we get that the probability that there is a part in $\sigma$ of size more than
  $n/\sqrt{\log n}$ is less than $e^{-n}$ for large $n$.
  Assume there is no such part in $\sigma$.

  For every $v \in \overline C$ write $d(v) = d_C(v) + d_{\overline C}(v)$.
  We have $d_C(v) \sim Bin(k_0, 1/2)$
  and $d_{\overline C}(v) \le n/\sqrt{\log n} - 1$.
  Let $E_v$ be the event $\{d_C(v) \ge k_0 - n/\sqrt{\log n}\}$.
  Next we use the Chernoff bound in the form:
  $\mathbb{P}(X  \ge mp + t) \le e^{-2t^2/m}$,
  for $X \sim Bin(m, p)$ and $t \geq 0$.
  We obtain
  \[
  \mathbb{P}(E_v) \le e^{-(1 + o(1))k_0/2}.
  \]
  The events $E_v$ and $E_u$ are independent for $u \neq v$,
  hence $\mathbb{P}(E_v \cap E_u) \le e^{-(1 + o(1))k_0}$
  for every pair $u \neq v$.
  We deduce that the probability that there are two vertices from
  $\overline C$ with degree at least $k_0$ is at most
  $n^2e^{-(1 + o(1))k_0} + e^{-n} = e^{-(\frac{1}{2} + o(1))n}$.

  Combining the two  cases completes the proof of the upper bound.
%we conclude that $\mathbb{P}(\omega(G^+) > k_0) \le 2^{-(\frac{1}{2} + o(1))n}$.
\end{proof}

\begin{lemma}
  \label{thm:intersect}
  For $G^+ \sim Gen^+(n)$ and a fixed $\epsilon > 0$ we have wvhp
  \begin{align*}
    (1 - \epsilon)\frac{n}{2\ln n} ~< ~&\alpha(G^+) ~< ~(1 + \epsilon)\frac{n}{2\ln n}
    \mbox{ , and}\\
    \frac{n}{2} - \epsilon \sqrt n ~< ~&\omega(G^+) ~< ~\frac{n}{2} + \epsilon \sqrt n.
  \end{align*}
  In particular, $\alpha(G^+) < \omega(G^+)$ wvhp.
\end{lemma}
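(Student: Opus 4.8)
The plan is to read both statements off the structural results already proved, handling the randomness of the central-clique size $k$ by conditioning. For the clique number I would combine two wvhp events. First, Lemma~\ref{thm:cliqe_unipolar} gives $\mathbb{P}(\omega(G^+)\neq k) = 2^{-(1/2+o(1))n} = e^{-\Omega(n)}$. Second, since $\mu=\mu(n) = n/2 - O(\log n)$, applying Theorem~\ref{thm:concentration} with $x = \tfrac12\epsilon\sqrt n$ gives $|k - n/2| < \epsilon\sqrt n$ with probability $1 - e^{-\Omega(n)}$. Intersecting these two events yields the displayed bounds on $\omega(G^+)$.

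For the stability number, Lemma~\ref{thm:diff_alpha_sigma} gives the deterministic sandwich $|\sigma| \le \alpha(G^+) \le |\sigma|+1$ (so we do not even need Lemma~\ref{thm:stable_unipolar} here), and the additive $1$ is negligible since the quantities involved tend to infinity; hence it suffices to prove $|\sigma| = (1\pm\tfrac12\epsilon)\,n/(2\ln n)$ wvhp. Recall that $\sigma$ is produced by first drawing $k\sim L(n)$ and then taking $\sigma\in_u\Pi_{n-k}$. By Theorem~\ref{thm:concentration}, wvhp $k$ lies in the window $\mathcal{I} = [n/2 - \sqrt n,\, n/2 + \sqrt n]$; condition on $k = k_0 \in \mathcal{I}$ and set $m = n - k_0$, so that $m = (\tfrac12 + o(1))n$ uniformly over $k_0 \in \mathcal{I}$. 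Now $\sigma \in_u \Pi_m$, and Lemma~\ref{thm:chernoff_blocks_count} applied with $m$ in place of $n$ (for a fixed small $\epsilon'>0$; its bound is $m\,e^{-m(\epsilon'-\ln(1+\epsilon'))} = e^{-\Omega(n)}$ as $m=\Theta(n)$) shows $\big||\sigma| - \lambda_m\big| < \epsilon'\lambda_m$ wvhp, where $\lambda_m = m/r_m = e^{r_m}$. Since $m = (\tfrac12+o(1))n$ one has $r_m = (1+o(1))\ln n$ and hence $\lambda_m = (1+o(1))\,n/(2\ln n)$; choosing $\epsilon'$ small enough relative to $\epsilon$ gives the required two-sided bound on $|\sigma|$, and a union bound over ``$k\notin\mathcal{I}$'' and the partition-concentration failure (taken uniformly over $k_0\in\mathcal{I}$) keeps the total failure probability at $e^{-\Omega(n)}$.

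Finally, the two bullet estimates imply that wvhp $\alpha(G^+) = o(n)$ while $\omega(G^+) = (\tfrac12 - o(1))n$, so wvhp $\alpha(G^+) < \omega(G^+)$. The routine parts are the partition asymptotics ($r_m\sim\ln n$, $\lambda_m\sim n/(2\ln n)$) and the elementary manipulation turning $|\sigma| = (1\pm\epsilon')\lambda_m$ into the stated bound with constant $\epsilon$; the only step needing a little care is the conditioning on $k$, where one must check that the Chernoff-type estimate of Lemma~\ref{thm:chernoff_blocks_count} is uniform across the $O(\sqrt n)$-wide window of admissible values of $k_0$, which is immediate since $n-k_0 = \Theta(n)$ throughout. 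I do not anticipate any real obstacle.
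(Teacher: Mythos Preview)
Your proposal is correct and follows essentially the same approach as the paper: the paper's proof simply cites Lemma~\ref{thm:diff_alpha_sigma}, Lemma~\ref{thm:chernoff_blocks_count} and Theorem~\ref{thm:concentration} for the $\alpha$-bound, and Theorem~\ref{thm:concentration} with Lemma~\ref{thm:cliqe_unipolar} for the $\omega$-bound. You have spelled out the conditioning on $k$ and the uniformity check in more detail than the paper does, but the ingredients and the logical structure are identical.
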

\begin{proof}
  The first line follows from Lemma~\ref{thm:diff_alpha_sigma},
  Lemma~\ref{thm:chernoff_blocks_count} and Theorem~\ref{thm:concentration};
  the second line follows from Theorem~\ref{thm:concentration}
  and Lemma~\ref{thm:cliqe_unipolar}.
\end{proof}
\noindent
Observe that the proof of Theorem~\ref{thm:concentration} will %suffice to 
complete the proof of Lemma~\ref{thm:intersect}.
For every graph $G$ we have $\omega(G) = \alpha(\overline{G})$
and therefore everything we showed for $\alpha(G^+)$ and $\omega(G^+)$
directly translates to $\omega(G^-)$ and $\alpha(G^-)$.
Let $h(G) = \min \{\alpha(G), \omega(G)\}$ and $H(G) = \max \{\alpha(G), \omega(G)\}$
for every graph $G$.
(Here $h$ and $H$ come from \underline{h}omogeneous.)
\bigskip

\begin{thm}
  \label{thm:homogeneous}
  For $P_n \in_u \mathcal{P}_n$,
  \begin{enumerate}
  \item
    $h(P_n)$ is asymptotically normal
    with mean $\sim \frac{n}{2\ln n}$
    and variance $\sim \frac{n}{2\ln^2 n}$,
  \item
    $d_{TV}(H(P_n), L(n)) = e^{-\Omega(n)}$.
  \end{enumerate}
\end{thm}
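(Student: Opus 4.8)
The plan is to transfer the whole question back to the generation process via Theorem~\ref{thm:main}, collapse the mixture~(\ref{eq:partition_of_gs}) using a symmetry, and then quote the facts already established for $G^+$. The symmetry to exploit is that $h$ and $H$ are invariant under complementation: for every graph $G$ one has $\{\alpha(\overline{G}),\omega(\overline{G})\}=\{\omega(G),\alpha(G)\}$, so $h(\overline{G})=h(G)$ and $H(\overline{G})=H(G)$. Since $Gen^+(n)$ and $Gen^-(n)$ share the same $E$, $(k,\sigma)$ and $\pi$ and differ only by flipping $B$, we have $\rho(Gen^-(n))=\overline{\rho(Gen^+(n))}$; hence $h(G^-)=h(G^+)$ and $H(G^-)=H(G^+)$ on the common probability space. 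Taking $Q=\{G:h(G)\in A\}$ in~(\ref{eq:partition_of_gs}) then shows that $h(\rho(Gen(n)))$ has the same distribution as $h(G^+)$, and likewise $H(\rho(Gen(n)))$ has the same distribution as $H(G^+)$. So it suffices to analyse $h(G^+)$ and $H(G^+)$ and then push everything through the $e^{-\Theta(n)}$ total variation gap of Theorem~\ref{thm:main}, noting that $h$ and $H$ are deterministic functions of the graph, so the $d_{TV}$ of the induced laws is at most $d_{TV}(\mathbb{P}_{\mathcal{P}_n},\mathbb{P}_{Gen(n)})$.

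For part (ii): Lemma~\ref{thm:intersect} gives $H(G^+)=\omega(G^+)$ outside an event of probability $e^{-\Omega(n)}$, and Lemma~\ref{thm:cliqe_unipolar} gives $\omega(G^+)=k$ outside an event of probability $2^{-(1/2+o(1))n}=e^{-\Omega(n)}$. Since the outcome of $Gen^+(n)$ determines both $H(G^+)$ and $k$, and $k\sim L(n)$ by construction, this coupling yields $d_{TV}(H(G^+),L(n))\le \mathbb{P}(H(G^+)\ne k)=e^{-\Omega(n)}$. Combining this with $d_{TV}(H(\rho(Gen(n))),L(n))=d_{TV}(H(G^+),L(n))$, with the triangle inequality, and with Theorem~\ref{thm:main}, we get $d_{TV}(H(P_n),L(n))\le e^{-\Theta(n)}+e^{-\Omega(n)}=e^{-\Omega(n)}$.

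For part (i): Lemma~\ref{thm:intersect} also gives $h(G^+)=\alpha(G^+)$ outside an event of probability $e^{-\Omega(n)}$, and Proposition~\ref{thm:stable_normal} gives $\widehat{\alpha(G^+)}\overset{d}{\to}N(0,1)$ with $\mathbb{E}\alpha(G^+)\sim\frac{n}{2\ln n}$ and $\Var\alpha(G^+)\sim\frac{n}{2\ln^2 n}\to\infty$. Because $h(G^+)$ and $\alpha(G^+)$ are integer-valued, bounded by $n$, and differ with probability $e^{-\Omega(n)}$, their first two moments agree up to $o(1)$; hence $\mathbb{E}h(G^+)\sim\frac{n}{2\ln n}$, $\Var h(G^+)=(1+o(1))\Var\alpha(G^+)$, and $\widehat{h(G^+)}\overset{d}{\to}N(0,1)$ by a small-perturbation argument of the kind used in the proof of Proposition~\ref{thm:stable_normal} (write $\mathbb{P}(\widehat{h(G^+)}<x)=\mathbb{P}(\alpha(G^+)<x\sqrt{\Var h(G^+)}+\mathbb{E}h(G^+))\pm e^{-\Omega(n)}$, observe that the argument of the last probability equals $(x+o(1))\sqrt{\Var\alpha(G^+)}+\mathbb{E}\alpha(G^+)$, and use continuity of $\Phi$). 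The identical moment comparison, now across the $e^{-\Theta(n)}$ gap from $\rho(Gen(n))$ to $P_n$ and using $h(\rho(Gen(n)))\overset{d}{=}h(G^+)$, gives $\mathbb{E}h(P_n)\sim\frac{n}{2\ln n}$, $\Var h(P_n)\sim\frac{n}{2\ln^2 n}$, and $\widehat{h(P_n)}\overset{d}{\to}N(0,1)$.

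There is no serious obstacle here: the theorem is essentially a repackaging of Theorem~\ref{thm:main}, Lemma~\ref{thm:intersect}, Lemma~\ref{thm:cliqe_unipolar} and Proposition~\ref{thm:stable_normal}. The one point requiring a little care is that asymptotic normality must survive two replacements of one law by a nearby one -- first replacing $\alpha(G^+)$ by $h(G^+)$, then the law of $\rho(Gen(n))$ by that of $P_n$ -- and in each case the two laws differ by only $e^{-\Omega(n)}$ in total variation while the variable is bounded by $n$ and has variance tending to infinity, so the mean shifts by $o(1)$ and the variance by a $(1+o(1))$ factor, which is absorbed after normalisation.
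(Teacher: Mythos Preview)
Your proof is correct and follows essentially the same route as the paper's: both exploit the complementation invariance $h(\overline{G})=h(G)$, $H(\overline{G})=H(G)$ together with the coupling $\rho(Gen^-(n))=\overline{\rho(Gen^+(n))}$ to reduce to $G^+$, then invoke Lemma~\ref{thm:intersect}, Lemma~\ref{thm:cliqe_unipolar}, Proposition~\ref{thm:stable_normal} and Theorem~\ref{thm:main}. You are in fact more careful than the paper on one point: the paper simply says ``the first part now follows from Proposition~\ref{thm:stable_normal}'' after establishing $d_{TV}(h(P_n),\alpha(G_n^+))=e^{-\Omega(n)}$, whereas you correctly spell out that the moment asymptotics also need to be transferred, and explain why boundedness by $n$ together with an $e^{-\Omega(n)}$ total variation gap makes this automatic.
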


\begin{proof}
  Let $U = (B, E, (k, \sigma), \pi) \sim Gen(n)$.
  We couple $G_n$, $G_n^+$ and $G_n^-$ by writing $U^+ = (1, E, (k, \sigma), \pi)$,
  $U^- = (-1, E, (k, \sigma), \pi)$, $G_n = \rho(U)$, $G_n^+ = \rho(U^+)$
  and $G_n^- = \rho(U^-)$.
  We have
\begin{align*}
    d_{TV}(h(P_n), \alpha(G_n^+))
    &\le d_{TV}(h(P_n), h(G_n)) + d_{TV}(h(G_n), \alpha(G_n^+))\\
    &\le d_{TV}(\mathbb{P}_{\mathcal{P}_n}, \mathbb{P}_{Gen(n)})
    + \mathbb{P}(h(G_n) \neq \alpha(G_n^+)).
\end{align*}
But $d_{TV}(\mathbb{P}_{\mathcal{P}_n}, \mathbb{P}_{Gen(n)}) = e^{-\Omega(n)}$
from Theorem~\ref{thm:main}; and, since $h(G_n)=h(G^+_n)$,
\[
\mathbb{P}(h(G_n) \neq \alpha(G_n^+))
= \mathbb{P}(\omega(G_n^+) < \alpha(G_n^+))
= e^{-\Omega(n)}\] from Lemma~\ref{thm:intersect}.
Thus $d_{TV}(h(P_n), \alpha(G^+)) = e^{-\Omega(n)}$.
The first part in the statement now follows from
Proposition~\ref{thm:stable_normal}.

Similar arguments establish the second part of the theorem.
Much as above we may write
\[
d_{TV}(H(P_n), L(n)) \le d_{TV}(H(P_n), H(G_n)) + d_{TV}(H(G_n),
\omega(G_n^+)) +  d_{TV}(\omega(G_n^+), L(n)).
\]
The first and third terms in the upper bound here are $e^{-\Omega(n)}$
by Theorem~\ref{thm:main} and Lemma~\ref{thm:cliqe_unipolar}.
Since $H(G_n)=H(G^+_n)$, the second term is at most
$\mathbb{P}(\alpha(G_n^+) > \omega(G_n^+))$,
which is $e^{-\Omega(n)}$ by Lemma~\ref{thm:intersect}.
Thus $d_{TV}(H(P_n), L(n)) = e^{-\Omega(n)}$, as required.
\end{proof}

The last result
shows that the distribution $L(n)$ is not only needed for the approximate generation process,
but also corresponds to an important property of perfect graphs.
\medskip

Finally here in this section on stability and clique numbers, let us briefly consider algorithmic aspects, following the comments at the end of the last section, and using Lemma~\ref{thm:intersect}.  

Given a random perfect graph $P_n$,
let $A$ be the set of vertices with degree at most $n/2$, and let $B$ consist of the remaining vertices.   Then wvhp exactly one of the following two cases holds. %If $\alpha< \omega$ then wvhp $B$ is a clique and $A$ is a disjoint union of cliques; and indeed 
\begin{description}
\item{(a)}
$B$ is a clique and indeed is the unique maximum clique (by Lemma~\ref{thm:cliqe_unipolar}), so $\omega=|B|$; and $A$ is a disjoint union of cliques, so $\alpha$ is either the number of parts (cliques) in $A$ or this number plus 1 (by Lemma~\ref{thm:diff_alpha_sigma}), and we can easily tell which in $O(n^2)$-time. 
\item{(b)} %Similarly, if $\alpha \geq \omega$ then wvhp both 
$A$ is the unique maximum stable set, so $\alpha=|A|$; and $B$ induces a complete multipartite graph, so $\omega$ is either the number of parts or this number plus 1, and we can easily tell which in $O(n^2)$-time. 
\end{description}
In the first case, to tell if $\alpha$ is the number of parts plus 1 we just test if some vertex in $B$ is non-adjacent to at least one vertex in each side clique; and similarly for the second case.  In particular, wvhp in $O(n^2)$ time we can determine $\omega$ and $\alpha$ (and know they are correct).

%%%%%%%%%%%%%%%%%%%%%%%%%%%%%%%%%%
%%%%%%%%%%%%%%%%%%%%%%%%%%%%%%%%%%

\section{Induced Subgraphs}
\label{sec:subgraphs}
Suppose $P_n \in_u \mathcal{P}_n$.
From Lemma~\ref{thm:intersect} and Theorem~\ref{thm:main} we have
$\omega(P_n) \ge \frac{(1-\epsilon)n}{2\ln n}$ wvhp,
so we can of course find any fixed graph $H$ as a subgraph of $P_n$ wvhp.

We use  $H \induce G$ to denote that $H$ is an induced subgraph of $G$.
We observed already that the generated graphs, $\rho(Gen(n))$,
are symmetric with respect to taking complements.
The same is true for perfect graphs, that is  $P_n \sim \overline{P_n}$, by
the weak perfect graph theorem~\cite{weak_perfect1, weak_perfect2}.
Thus we see that, if $H$ is fixed and $P_n \in_u \mathcal{P}_n$, then
\begin{align}
  \label{eq:induced_symmetry}
  \mathbb{P}(H \induce P_n) = \mathbb{P}(\overline{H} \induce P_n).
\end{align}

\begin{lemma}
  \label{thm:elaborate_subgraph}
  Suppose that $G^+_n \sim Gen^+(n)$ and that $H$ is fixed.  Then
  $$
  \mathbb{P}(H \induce G^+_n) =
  \begin{cases}
    0 &\mbox{ if } H \notin \mathcal{GS}^+, \\
    1 - e^{-\Omega(n \ln n)} &\mbox{ if } H \in    \mathcal{GS}^+.
  \end{cases}
  $$
\end{lemma}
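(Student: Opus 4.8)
The plan is to split on the two cases. The case $H \notin \mathcal{GS}^+$ is immediate: since $\rho(Gen^+(n))$ is always unipolar (as observed after the definition of $\rho$), and induced subgraphs of unipolar graphs are unipolar, any induced subgraph of $G_n^+$ lies in $\mathcal{GS}^+$; hence $\mathbb{P}(H \induce G_n^+) = 0$ when $H \notin \mathcal{GS}^+$. So the substance is the second case, where $H \in \mathcal{GS}^+$: fix a unipolar arrangement $(H, D_0)$ of $H$ with central clique $D_0$ and side cliques $D_1, \ldots, D_m$, and write $h = v(H)$, $h_0 = |D_0|$.

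The idea for the lower bound is to locate a fixed induced copy of $H$ inside the generated structure, using the central clique $C = [k]$ of $G_n^+$ to host $D_0$ and using $m$ of the side cliques of $\sigma$ to host $D_1, \ldots, D_m$. First I would condition on the high-probability event that $k \in \mathcal{I} = [n/2 - \sqrt n, n/2 + \sqrt n]$ (Theorem~\ref{thm:concentration}, contributing an $e^{-\Omega(n)}$ error), and on the event that $\sigma$ has at least $m$ parts of size exactly $1$ — indeed by Theorem~\ref{thm:pittel_poisson} (or Theorem~\ref{thm:harper_normal}) the number of singleton parts is $\Theta(n/\log n)$ wvhp, so there are $\ge m$ singletons except with probability $e^{-\Omega(n)}$, and by enlarging them one at a time (each enlargement using the edges within a part, which are forced) one obtains parts of the prescribed sizes $|D_1|, \ldots, |D_m|$. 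Actually it is cleanest to note that a fixed constant-size "pattern of parts" of the shape required by $H$ appears in $\sigma$ wvhp. Then one fixes $h_0$ vertices of $C$ and the chosen side-clique blocks, and asks that the bipartite edges between these $h_0$ central vertices and the at most $h - h_0$ chosen side-clique vertices, together with the (forced) internal edges, realise exactly the adjacency pattern of $H$. The internal edges of $C$ and of each side clique are already correct by construction; the only freedom is the $\le h_0(h-h_0)$ bipartite edges in $E$, and each of the finitely many required patterns occurs with probability $\ge 2^{-h^2} =: \Theta(1)$.

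To boost this constant success probability to $1 - e^{-\Omega(n\ln n)}$ I would use disjointness and independence: partition (most of) $C$ into $\Theta(n)$ blocks of size $h_0$ and partition the singleton (or suitably-sized) parts of $\sigma$ into $\Theta(n/\log n)$ disjoint groups of the right shape, pairing them up to get $t = \Theta(n/\log n)$ disjoint candidate copies of $H$; the bipartite-edge events for these candidates are mutually independent (they involve disjoint pairs of vertices), so the probability that none of them is an induced copy of $H$ is at most $(1 - 2^{-h^2})^t = e^{-\Omega(n/\log n)}$. Hmm — that only gives $e^{-\Omega(n/\log n)}$, not $e^{-\Omega(n\ln n)}$; to get the stated stronger bound one should instead exploit that we may use $\Theta(n)$ disjoint pairs each of whose central block has size $h_0$ while letting each side block be a singleton expanded inside a single part — more care is needed, but the cleanest route to the exponent $n\ln n$ is to first pick $\Theta(n)$ disjoint central blocks and, for each, ask only that \emph{one} particular vertex of a particular already-present singleton part of $\sigma$ has the correct $h_0$-bit adjacency vector to that block together with the forced structure, reducing the per-trial failure probability but more importantly running $\Theta(n)$ independent trials where the "hard" structural event (existence of the right side-clique shape, bounded via Lemma~\ref{thm:max_block} and Theorem~\ref{thm:pittel_poisson}) already holds with failure probability $e^{-\Omega(n\ln n)}$ coming from the Bell-number ratio estimates. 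The main obstacle is precisely pinning down this exponent: one must verify that the dominant error term — the probability that $\sigma$ fails to contain the required constant-size configuration of small parts — is $e^{-\Omega(n\ln n)}$ rather than merely $e^{-\Omega(n)}$, and that the bipartite-edge trials can genuinely be made $\Theta(n)$-fold independent so their combined failure probability $e^{-\Omega(n)}$ is absorbed; assembling these, and adding the $e^{-\Omega(n)}$ from conditioning on $k \in \mathcal I$, yields $\mathbb{P}(H \induce G_n^+) = 1 - e^{-\Omega(n \ln n)}$.
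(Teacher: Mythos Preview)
Your first case is correct and matches the paper. For the second case there are two concrete gaps that prevent you from reaching the exponent $n\ln n$.

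First, you condition on $k \in [n/2-\sqrt n,\,n/2+\sqrt n]$ and absorb an $e^{-\Omega(n)}$ error. But $e^{-\Omega(n)}$ is already much larger than the target $e^{-\Omega(n\ln n)}$, so this single step destroys the bound regardless of what follows. The paper instead takes the wider window $[n/2-n^{2/3},\,n/2+n^{2/3}]$ and uses the $n^{-n}$ tail in Theorem~\ref{thm:concentration}, which is $e^{-\Theta(n\ln n)}$. Second, your independence scheme produces at most $\Theta(n)$ (or $\Theta(n/\log n)$) disjoint trials, giving at best $e^{-\Omega(n)}$ from the bipartite edges, and you never isolate a concrete partition event whose failure probability is genuinely $e^{-\Omega(n\ln n)}$; the final paragraph gestures at Bell-number ratios but does not identify which event carries that exponent.

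The paper's mechanism is different and cleaner. Fix a unipolar arrangement $(H,A)$ and let $l$ be the largest side-clique size in $H[\overline A]$. The key lemma is Corollary~\ref{thm:good_partitions}: the probability that $\sigma$ has fewer than $n^{1/2}$ parts of size exceeding $l$ is $e^{-\Theta(n\ln n)}$, and this is the dominant error term. Conditioned on having that many large parts, one carves out $t_1=\lceil n^{1/2}\rceil$ disjoint subsets $T_j\subseteq\overline C$ with $G_n^+[T_j]\cong H[\overline A]$, and $t_0=\lceil n/(3|A|)\rceil$ disjoint $|A|$-subsets $S_i\subseteq C$. The $t_0 t_1=\Theta(n^{3/2})$ events $\{G_n^+[S_i\cup T_j]\cong H\}$ involve pairwise disjoint bipartite edge sets and are therefore independent, each with probability at least $2^{-|A||\overline A|}$; so the bipartite failure is $e^{-\Theta(n^{3/2})}$, negligible next to $e^{-\Theta(n\ln n)}$. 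The two ideas you are missing are (i) the correct conditioning window for $k$, and (ii) the product structure $t_0\times t_1$ of central blocks against side blocks, rather than a one-to-one pairing.
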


To prove this lemma, we first establish two preliminary claims.
\begin{claim}
  Suppose $\sigma \in_u \Pi_n$,
  and let $a = a(n) \le n^\delta$ for some $0<\delta <1$.
  The probability that $\sigma$ contains at most $a$ parts is
  at most $\exp\{-(1-\delta+o(1))n \ln n\}$.
\end{claim}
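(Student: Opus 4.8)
The plan is to bound the number of partitions of $[n]$ having at most $a$ parts, and compare it with $B_n$. If $\sigma \in_u \Pi_n$ has at most $a$ parts, then since each part has size at most $n$, we crudely have at most $a^n$ ways to assign each of the $n$ elements to one of the (at most $a$) parts; so the number of such partitions is at most $a^n$. Hence $\mathbb{P}(|\sigma| \le a) \le a^n / B_n$. Using the asymptotic estimate~(\ref{eq:bell_asymp}), namely $B_n = \exp(n(r - 1 + 1/r) - 1)(1+o(1))/\sqrt r$ with $re^r = n$, and recalling $r = \ln n - (1+o(1))\ln\ln n$ so that $r = (1+o(1))\ln n$, we get $\ln B_n = (1+o(1)) n r = (1+o(1)) n \ln n$.

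Next I would estimate $\ln(a^n) = n \ln a \le n \ln(n^\delta) = \delta n \ln n$, using the hypothesis $a \le n^\delta$ (and that the bound $a^n$ is monotone in $a$, so replacing $a$ by $n^\delta$ is legitimate; for $a$ bounded this is even easier). Combining,
\[
\ln \mathbb{P}(|\sigma| \le a) \le n \ln a - \ln B_n \le \delta n \ln n - (1+o(1)) n \ln n = -(1 - \delta + o(1)) n \ln n,
\]
which is exactly the claimed bound $\exp\{-(1-\delta+o(1)) n \ln n\}$.

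The main thing to be careful about — rather than a genuine obstacle — is matching the $(1+o(1))$ factors cleanly: the $o(1)$ in $\ln B_n = (1+o(1)) n r$ and the $o(1)$ coming from $r/\ln n \to 1$ must be absorbed into a single $o(1)$ multiplying $n \ln n$, and one should check that the subleading terms in $B_n$ (the $-1$, the $1/r$, the $1/\sqrt r$) are all $o(n \ln n)$ and so harmless. One might worry whether the crude count $a^n$ (which ignores that the parts are unordered and nonempty) is wasteful, but since we are comparing against $B_n = e^{\Theta(n \ln n)}$ and we only need to lose a $(1-\delta)$ fraction of the exponent, the slack of at most $n \ln a = O(n \ln n)$ from overcounting by orderings is exactly within tolerance; no sharper count is needed. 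An alternative, if one prefers to avoid quoting~(\ref{eq:bell_asymp}), is to use the lower bound $B_n \ge \binom{n}{\lfloor n/r\rfloor}^{\text{-type}}$ estimates or simply $B_n \ge (n/(e \ln n))^n$, which already gives $\ln B_n \ge n \ln n - O(n \ln\ln n)$ and suffices; but since~(\ref{eq:bell_asymp}) is available in the excerpt, invoking it directly is cleanest.
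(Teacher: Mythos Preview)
Your proposal is correct and follows essentially the same approach as the paper: bound the number of partitions with at most $a$ parts by $a^n$, divide by $B_n$, and use $\ln B_n = (1+o(1))n\ln n$. The paper's proof is just those two sentences; your version simply spells out the asymptotic computation more carefully.
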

\begin{proof}
  The probability of the event in the statement is
  at most $a^n/B_n$, and the bound follows,
  since $\ln B_n = (1+o(1)) n \ln n$.
\end{proof}

\begin{claim}
  Suppose $\sigma \in_u \Pi_n$,
  and let $l \ge 1$ be a fixed integer.
  The probability that all parts of $\sigma$ are of size at most $l$
  is at most $\exp\{ - (\frac 1 l + o(1)) n \ln n\}$.
\end{claim}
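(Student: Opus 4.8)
The plan is to turn the constraint that all parts of $\sigma$ are small into the constraint that $\sigma$ has many parts, and then bound the number of partitions with many parts by the counting estimate that underlies Lemma~\ref{thm:chernoff_blocks_count}. First I would note that if every part of $\sigma$ has size at most $l$, then $|\sigma|\ge n/l$, so it suffices to bound $\mathbb{P}(|\sigma|\ge k_0)$, where $k_0:=\lceil n/l\rceil$. Writing this as $\sum_{k=k_0}^{n}\mathbb{P}(|\sigma|=k)\le n\max_{k_0\le k\le n}\mathbb{P}(|\sigma|=k)$, the task reduces to bounding a single term $\mathbb{P}(|\sigma|=k)$ for $k\ge k_0$.

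For each such $k$ I would use $\mathbb{P}(|\sigma|=k)\,B_n\le k^n/k!\le\exp(n\ln k-k\ln k+k)=:e^{\phi(k)}$, exactly as in the proof of Lemma~\ref{thm:chernoff_blocks_count}. Since $\phi'(k)=n/k-\ln k$ is negative whenever $k\ln k>n$, and $k_0\ln k_0=(1+o(1))(n/l)\ln n\gg n$ for large $n$, the function $\phi$ is decreasing on $[k_0,n]$, so the maximum is attained at $k=k_0$. Evaluating, with $k_0=(1+o(1))n/l$ and $l$ fixed, gives $\phi(k_0)=n\ln(n/l)-(n/l)\ln(n/l)+n/l+O(\ln n)=(1-1/l)\,n\ln n+O(n)$, the $\ln l$ and $n/l$ contributions all being $O(n)$.

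Finally I would combine this with $\ln B_n=(1+o(1))\,n\ln n$ (recalled in the preceding claim, and following from (\ref{eq:bell_asymp}) together with $r=\ln n-(1+o(1))\ln\ln n$) to get
\[
\ln\mathbb{P}\big(|\sigma|=k_0\big)\le-\ln B_n+(1-1/l)\,n\ln n+O(n)=-\big(\tfrac1l+o(1)\big)\,n\ln n ,
\]
and then absorb the spare factor $n$ from the union over $k$ into the $o(1)$ term, which yields the claimed bound $\exp\{-(\tfrac1l+o(1))\,n\ln n\}$.

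I expect the only delicate point to be the monotonicity of $\phi$ on $[k_0,n]$: one must check that $k_0$ already lies to the right of the maximiser $k\approx n/\ln n$ of $k^n/k!$, so that discarding all terms with $k<k_0$ and keeping only the (now largest) term at $k_0$ is legitimate. Everything else is routine manipulation of the Bell-number asymptotics.
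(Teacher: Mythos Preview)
Your proof is correct and takes a somewhat different route from the paper. The paper counts partitions with all parts of size at most $l$ directly: for each ``type'' $(a_i)_{i=1}^k$ of part sizes with every $a_i\le l$ (and hence $k\ge n/l$), it bounds the number of partitions of that type by $n!/(k!\prod a_i!)\le n!/k!$, applies Stirling with $k\ge n/l$ to obtain $\exp\{(1-1/l)n\ln n+O(n)\}$, and absorbs the at most $l^n$ choices of type into the $O(n)$ term before dividing by $B_n$. Your argument instead relaxes to the event $|\sigma|\ge\lceil n/l\rceil$ and reuses the bound $\mathbb{P}(|\sigma|=k)\,B_n\le k^n/k!$ from the proof of Lemma~\ref{thm:chernoff_blocks_count}, together with the monotonicity of $\phi(k)=n\ln k-k\ln k+k$ past $k\approx n/\ln n$. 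Both routes land on the same exponent $(1-1/l)n\ln n+O(n)$; yours is a bit more economical in that it recycles existing machinery and avoids the type enumeration, at the harmless cost of bounding a strictly larger event.
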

\begin{proof}
  We call a finite sequence $(a_i)_{i=1}^k$ \emph{type}
  if $\sum_{i=1}^k a_i = n$.
  We say that $\sigma$ has type $(a_i)$ if
  $(a_i)$ corresponds to the sizes of the parts of $\sigma$
  taken in any order.
  The number of partitions with type $(a_i)$ with $a_i \le l$ for all $i$ is
  \[
  \frac{n!}{k! \prod_{i=1}^{k}a_i!} \le \frac{n!}{k!}
  \le \frac{e \sqrt n}{\sqrt{2\pi k}} \cdot \frac{n^n e^{-n}}{k^k e^{-k}}
  = \frac{n^n}{(\frac n l)^{\frac n l}}2^{O(n)}
  = \exp \left \{(1 - \frac 1 l)n \ln n + O(n)\right\}.
  \]
  The number of types $(a_i)$ with $a_i \le l$ for all $i$
  is at most $l^n = \exp(O(n))$,
  and hence the number of partitions with parts at most $l$
  is $\exp \left \{(1 - \frac 1 l)n \ln n + O(n)\right\}$.
  The statement in the lemma follows after dividing by $B_n$.
\end{proof}

\begin{cor}
  \label{thm:good_partitions}
  For fixed $\delta > 0$ and $l \ge 1$,
  the probability that a random partition $\sigma \in_u \Pi_n$
  contains at most $n^\delta$ parts of size more than $l$
  is $\exp(-\Theta(n \ln n))$.
\end{cor}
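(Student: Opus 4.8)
The plan is to prove the two halves of $e^{-\Theta(n\ln n)}$ separately, the lower bound being trivial. For the lower bound I would just note that the all-singletons partition lies in the event (it has $0\le n^\delta$ parts of size more than $l$), so the probability is at least $1/B_n=e^{-(1+o(1))n\ln n}$; I note in passing that the statement should be read with $0<\delta<1$, since for $\delta\ge 1$ we have $n^\delta\ge n\ge|\sigma|$ and the event is certain. So the work is the upper bound: writing $A_n\subseteq\Pi_n$ for the set of partitions with at most $n^\delta$ parts of size more than $l$, I want $|A_n|\le e^{(c+o(1))n\ln n}$ for some constant $c<1$, since $B_n=e^{(1+o(1))n\ln n}$.

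Next I would decompose $A_n$ over the total size of the ``large'' parts. For $\sigma\in A_n$ let $T(\sigma)$ be the union of the parts of $\sigma$ of size more than $l$ and put $t=|T(\sigma)|$; then $\sigma$ is reconstructed from the triple consisting of $T(\sigma)$, the partition of $T(\sigma)$ into the parts of $\sigma$ of size more than $l$, and the partition of $[n]\setminus T(\sigma)$ into the parts of size at most $l$, so $|A_n|\le\sum_{t=0}^{n}\binom{n}{t}\,p(t)\,q(n-t)$, where $p(t)$ is the number of partitions of a $t$-set into at most $n^\delta$ (nonempty) parts and $q(m)$ is the number of partitions of an $m$-set into parts of size at most $l$. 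By the label-assignment bound used in the first of the two claims just proved, $p(t)\le(n^\delta)^t=e^{\delta t\ln n}$; and by the counting in the proof of the second claim just proved (applied to an $m$-set, with a universal implied constant), $q(m)\le e^{(1-1/l)m\ln m+O(m)}$.

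Finally I would substitute and optimise over $t$. Using $\binom{n}{t}\le 2^n$ and $(n-t)\ln(n-t)\le(n-t)\ln n$, each summand is at most $\exp\{\delta t\ln n+(1-1/l)(n-t)\ln n+O(n)\}$; the exponent is affine in $t\in[0,n]$, hence maximised at an endpoint, with value at most $\max(1-1/l,\delta)\,n\ln n+O(n)$. Folding the $n+1$ terms of the sum into the error term gives $|A_n|\le\exp\{(\max(1-1/l,\delta)+o(1))\,n\ln n\}$, so $\mathbb{P}(\sigma\in A_n)=|A_n|/B_n\le\exp\{-(\min(1/l,\,1-\delta)+o(1))\,n\ln n\}=e^{-\Omega(n\ln n)}$, since $\min(1/l,1-\delta)>0$. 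The one delicate point, and the reason the corollary is not completely immediate from the two claims, is this decomposition: \emph{a priori} the parts of size more than $l$ could cover almost all of $[n]$, leaving the ``parts of size at most $l$'' counting bound with nearly nothing to act on; the resolution is that in exactly that regime there are at most $n^\delta$ large parts, and the factor $p(t)\le e^{\delta t\ln n}$ with $\delta<1$ still loses against $B_n$. Everything after the decomposition is a routine endpoint optimisation.
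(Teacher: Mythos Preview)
Your proof is correct and follows essentially the same approach as the paper: decompose each partition into the union $T(\sigma)$ of its large parts and the complement, then bound the count by summing over subsets and applying the two preceding claims. The paper writes this decomposition more tersely as $\sum_{S\subseteq[n]}|\Pi'_S||\Pi''_{\overline S}|$ without spelling out the endpoint optimisation, but the argument is the same; your explicit constant $\min(1/l,1-\delta)$ and your observation that the statement implicitly requires $\delta<1$ are welcome additions.
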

\begin{proof}
  Let $\Pi_Q^\prime$ and $\Pi_Q''$ be the classes of partitions of $Q$
  with at most $n^\delta$ parts and with parts of size at most $l$
  respectively.
  The number of partitions described in the statement can be bounded by
  $\sum_{S \subseteq [n]}|\Pi_S'||\Pi_{\overline{S}}''| = \exp(-O(n \ln n))$
  by the previous two claims.
  For a lower bound consider the partition $\pi$ with $n$ parts;
  $\mathbb{P}(\sigma = \pi) = B_n^{-1} = \exp(-\Theta(n \ln n))$.
\end{proof}

\begin{proof}[Proof of Lemma~\ref{thm:elaborate_subgraph}]
  Since $\mathcal{GS}^+$ is a hereditary class of graphs
  and $G^+_n$ takes as values only $\mathcal{GS}^+$-graphs with non-zero probability,
  we get that if $H \notin \mathcal{GS}^+$,
  then $\mathbb{P}(H \induce G^+_n) = 0$ for any $n$.

  Now suppose that $H \in \mathcal{GS}^+$.
  Fix a unipolar arrangement $(H, A)$.
  Suppose that $\pi \in \Pi_{\overline A}$ corresponds to $H[\overline A]$,
  and let $l = L(\pi)$ be the maximum size of a part in $\pi$.
  Let $I_n$ be the interval $[n/2 - n^{2/3}, n/2 + n^{2/3}]$.
  Recall that we use $k$ to denote the size of the central clique
  in the generation process.
  By Theorem~\ref{thm:concentration},
  $\mathbb{P}(k \not\in I_n) \leq 2n^{-n}$ for $n$ sufficiently large;
  so we may condition on $k \in I_n$.
  Let $(G_n^+, C)$ be the unipolar arrangement induced by
  the generation process.

By Corollary~\ref{thm:good_partitions} (with $1/2 <\delta<1$) there are at least $| \pi| n^{1/2}$ side %$(n/3)^{1/2}$
%\m{we do not want to say `maximal'}
 cliques in $\overline C$ of size at least $l$ with probability $1 - \exp(-\Theta(n \ln n))$, so we may condition on this event. Hence we can find a collection of at least $t_1 = \lceil n^{1/2} \rceil$ 
%$t_1 = \lceil n^{1/2} / (2|\pi|) \rceil$
disjoint sets $T_j \subseteq \overline C$ such that $G^+_n[T_j] \cong H[\overline A]$.  Pick $t_0 = \lceil n/(3|A|) \rceil $ disjoint subsets $S_i$ of $C$, each of size $|A|$. Then $G^+_n[S_i] \cong H[A]$ for every $i$, and hence for $1 \le i \le t_0$ and $1 \le j \le t_1$ we have
$p_0 = \mathbb P(G^+_n[S_i \cup T_j] \cong H)  \geq  2^{-|A||\overline A|}$.  %\m{was $=$ not $\geq$}
  The probability that no such pair induces $H$ is
  $(1 - p_0)^{t_0 t_1} = \exp(-\Theta(n^{3/2}))$.
  The failure probability is dominated by the term $\exp(-\Theta(n \ln n))$.
\end{proof}
\bigskip

\noindent
The bound in Lemma~\ref{thm:elaborate_subgraph} cannot be improved.
For suppose $H$ is the 4-edge path: then
%\m{was $\log n$}
\[
\mathbb{P}(H \not \subseteq_i G_n^+) \geq
\mathbb{P}(\sigma \mbox{ has only parts of size }1)
\geq 1/B_n = e^{- O(n \ln n).}
\]
It is now immediate that for a co-unipolar graph $H$ we have
\[
\mathbb{P}(H \subseteq_i G_n^-) = \mathbb{P}(\overline H \subseteq_i G_n^+)
= 1 - e^{\Omega(n \ln n)},
\]
%\m{was $\lg n$}
and hence we arrive at the following theorem:

\begin{thm}
  \label{thm:subgraph_lemma}
  Let $H$ be any (fixed) graph, and let $P_n \in_u \mathcal{P}_n$.  Then
  \[
  \mathbb{P}(H \induce P_n) =
  \begin{cases}
    e^{-\Omega(n)}                    &\mbox{ if } H \notin \mathcal{GS}, \\
    1/2 \pm e^{-\Omega(n)}
    &\mbox{ if } H \in    \mathcal{GS} \setminus (\mathcal{GS}^+ \cap \mathcal{GS}^-), \\
    1   - e^{-\Omega(n)} &\mbox{ if } H \in      (\mathcal{GS}^+ \cap \mathcal{GS}^-).
  \end{cases}
  \]
\end{thm}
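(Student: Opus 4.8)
The plan is to move from $P_n$ to the generated graph via Theorem~\ref{thm:main}, split into its unipolar and co-unipolar halves with (\ref{eq:partition_of_gs}), and then quote Lemma~\ref{thm:elaborate_subgraph}. Concretely: since $\{G : H \induce G\}$ is an isomorphism-closed graph property, Theorem~\ref{thm:main} gives
\[
\mathbb{P}(H \induce P_n) = \mathbb{P}(H \induce \rho(Gen(n))) \pm e^{-\Omega(n)},
\]
and (\ref{eq:partition_of_gs}) gives $\mathbb{P}(H \induce \rho(Gen(n))) = \tfrac12\, p^+ + \tfrac12\, p^-$, where $p^{\pm} = \mathbb{P}(H \induce G^{\pm}_n)$ with $G^{\pm}_n \sim Gen^{\pm}(n)$.

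Next I would record the two halves. By Lemma~\ref{thm:elaborate_subgraph}, $p^+$ equals $0$ if $H \notin \mathcal{GS}^+$ and $1 - e^{-\Omega(n\ln n)}$ if $H \in \mathcal{GS}^+$. As already observed just above the theorem, $p^- = \mathbb{P}(\overline H \induce G^+_n)$, because $\rho(Gen^-(n))$ is precisely the complement of $\rho(Gen^+(n))$ when both are built from the same $(E,(k,\sigma),\pi)$; and $\overline H \in \mathcal{GS}^+$ exactly when $H \in \mathcal{GS}^-$, so $p^-$ equals $0$ if $H \notin \mathcal{GS}^-$ and $1 - e^{-\Omega(n\ln n)}$ if $H \in \mathcal{GS}^-$. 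The three cases now fall out. If $H \notin \mathcal{GS}$, then $p^+ = p^- = 0$, so $\mathbb{P}(H \induce P_n) \le e^{-\Omega(n)}$ (and when $H \in \mathcal{P}\setminus\mathcal{GS}$ a matching lower bound $e^{-\Theta(n)}$ comes, just as for (\ref{eq:promel_steger}), from graphs that are a disjoint union of $H$ with a perfect graph). If $H$ lies in exactly one of $\mathcal{GS}^+,\mathcal{GS}^-$ — and by the symmetry (\ref{eq:induced_symmetry}) it suffices to treat $H \in \mathcal{GS}^+ \setminus \mathcal{GS}^-$ — then $p^+ = 1 - e^{-\Omega(n\ln n)}$ and $p^- = 0$, so $\mathbb{P}(H \induce \rho(Gen(n))) = \tfrac12 \pm e^{-\Omega(n)}$ and thus $\mathbb{P}(H \induce P_n) = \tfrac12 \pm e^{-\Omega(n)}$. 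If $H \in \mathcal{GS}^+ \cap \mathcal{GS}^-$, then $p^+ = p^- = 1 - e^{-\Omega(n\ln n)}$, so $\mathbb{P}(H \induce P_n) = 1 - e^{-\Omega(n)}$.

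The theorem is thus essentially a corollary: the real work has already been done in Lemma~\ref{thm:elaborate_subgraph} (and in the generation machinery behind Theorem~\ref{thm:main}), so I do not expect a genuine obstacle here. The only points needing care are the complement bookkeeping — keeping straight which of $\mathcal{GS}^+$, $\mathcal{GS}^-$ contains $H$, which (\ref{eq:induced_symmetry}) lets us reduce to a single sub-case — and observing that it is the $e^{-\Omega(n)}$ error from Theorem~\ref{thm:main}, rather than the smaller $e^{-\Omega(n\ln n)}$ error from the lemma, that governs the final estimates.
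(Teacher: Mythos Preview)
Your proposal is correct and follows essentially the same approach as the paper: the paper's own proof simply says the result follows from Lemma~\ref{thm:elaborate_subgraph}, Equation~(\ref{eq:partition_of_gs}) and Theorem~\ref{thm:main}, and you have spelled out exactly these steps with the complement bookkeeping made explicit. Your parenthetical about the matching lower bound when $H \in \mathcal{P}\setminus\mathcal{GS}$ is not needed for the theorem as stated (only $e^{-\Omega(n)}$ is claimed), but it is correct and mirrors the discussion around~(\ref{eq:promel_steger}).
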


\begin{proof}
  Consider $Q_n = \{G \in \mathcal{G}_n: H \induce G\}$
  and $\mathbb{P}_{\mathcal{P}_n}(Q_n)$.
  Now the result follows from Lemma~\ref{thm:elaborate_subgraph},
  Equation~(\ref{eq:partition_of_gs}) and Theorem~\ref{thm:main}.
\end{proof}

Note that Theorem~\ref{thm:subgraph_lemma} accords with~(\ref{eq:induced_symmetry}).
It also follows from Theorem~\ref{thm:subgraph_lemma} that for every graph $H$:
\[
\mathbb{P}\left( (H \induce P_n) \bigcup (\overline{H} \induce P_n) \right) =
\begin{cases}
  e^{-\Omega(n)}   &\mbox{ if } H \notin \mathcal{GS}, \\
  1 - e^{-\Omega(n)}  &\mbox{ if } H \in    \mathcal{GS}.
\end{cases}
\]
and
\[
\mathbb{P}\left((H \induce P_n) \bigcap (\overline{H} \induce P_n) \right) =
\begin{cases}
  e^{-\Omega(n)}   &\mbox{ if } H \notin \mathcal{GS}^+ \cap \mathcal{GS}^-, \\
  1 - e^{-\Omega(n)}  &\mbox{ if } H \in    \mathcal{GS}^+ \cap \mathcal{GS}^-.
\end{cases}
\]

%%%%%%%%%%%%%%%%%%%%%%%%%%%%%%%%%%%%%%

\section{Clique Colouring}

\label{sec:clique}
A $j$-\emph{clique colouring} of a graph is a colouring of the vertices with $j$ colours
so that no maximal clique is monochromatic (ignoring any isolated vertices).
We improve Corollary~6 of \cite{cliques}, which states that almost all perfect
graphs are $3$-clique-colourable.

\begin{thm}
  \label{thm:cliques}
  Almost all perfect graphs are $2$-clique-colourable.
\end{thm}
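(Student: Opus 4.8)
The plan is to use Theorem~\ref{thm:main} to reduce the statement to showing that $\rho(Gen(n))$ is $2$-clique-colourable wvhp, and then by~(\ref{eq:partition_of_gs}) and the complement-symmetry of clique-colouring it suffices to treat the unipolar case: I will show that $G^+ \sim Gen^+(n)$ is $2$-clique-colourable wvhp. So let $(G^+, C)$ be the induced unipolar arrangement, with $C$ the central clique, $\overline C$ partitioned into side cliques, and all $C$--$\overline C$ edges present independently with probability $1/2$. I would colour all side cliques entirely with colour $1$ and try to $2$-colour $C$ so that every maximal clique is bichromatic. A maximal clique of $G^+$ is either contained in $C$ (only $C$ itself, which we must make bichromatic) or consists of a nonempty subset of one side clique $R$ together with the set $N_C(R) = \bigcap_{v\in R} N_C(v)$ of common neighbours in $C$ (maximality forces $R$ to be all of the side clique $R$ and forces the $C$-part to be exactly $N_C(R)$). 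Since the side clique part already carries colour $1$, such a clique is monochromatic only if $N_C(R)$ is empty or entirely coloured $1$; so it suffices to give colour $2$ to a set $D \subseteq C$ that is a transversal hitting $C$ (trivial, just make $D$ and $C\setminus D$ both nonempty) and hitting $N_C(R)$ for every side clique $R$ with $N_C(R)\neq\emptyset$.

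The key step is therefore: wvhp there is a set $D\subseteq C$ with $\emptyset \neq D$, $D\neq C$, that intersects $N_C(R)$ for every side clique $R$ with $N_C(R) \neq \emptyset$. By Lemma~\ref{thm:stable_unipolar} (and its proof via Theorem~\ref{thm:pittel_poisson}), wvhp every vertex $u \in C$ is a common neighbour of some side clique, equivalently $u \in N_C(R)$ for some $R$ — so in fact wvhp the sets $N_C(R)$ cover $C$. Moreover, using Theorem~\ref{thm:concentration} we have $k = |C| \sim n/2$, and by Lemma~\ref{thm:max_block} wvhp every side clique has size at most, say, $\sqrt n$, so $|N_C(R)| \sim 2^{-|R|}k$ is typically a positive proportion of $k$; in any case the relevant obstruction is small side cliques. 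I would take $D$ to be a random subset of $C$ where each vertex is included independently with a suitable constant probability $p$ (say $p = 1/2$): for a fixed side clique $R$ with $|N_C(R)| = m \geq 1$, the probability that $D$ misses $N_C(R)$ is $(1-p)^m$. Summing over the at most $n$ side cliques and conditioning on the wvhp event that all side cliques of size $\le \log_2\log_2 n$ (say) have $|N_C(R)|$ large — which follows because such $R$ have $2^{-|R|} \ge 1/\log_2 n$ and $k\sim n/2$, so $|N_C(R)| \ge n/(3\log_2 n)$ wvhp by a Chernoff bound and a union bound over the $O(n)$ side cliques — gives that $\mathbb{P}(D$ misses some $N_C(R)) \le n(1-p)^{n/(3\log n)} + (\text{wvhp-failure terms}) = e^{-\Omega(n/\log n)}$, which is $o(1)$ (indeed $e^{-\Omega(n/\log n)}$, slightly weaker than wvhp but ample for "almost all"). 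Then $D \neq \emptyset$ and $D \neq C$ also hold whp, so such a $D$ exists.

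Putting it together: colour $C$ with $2$ on $D$ and $1$ on $C\setminus D$, and colour all of $\overline C$ with $1$; then $C$ is bichromatic (as $\emptyset \neq D \neq C$), and every other maximal clique contains a vertex of $D$ (colour $2$) from $N_C(R)$ and a colour-$1$ vertex from the side clique $R$ (which is nonempty by maximality), hence is bichromatic. So $G^+$ is $2$-clique-colourable with probability $1 - e^{-\Omega(n/\log n)} = 1-o(1)$; by~(\ref{eq:partition_of_gs}) the same holds for $\rho(Gen(n))$, and by Theorem~\ref{thm:main}, $d_{TV}(\mathbb{P}_{\mathcal{P}_n},\mathbb{P}_{Gen(n)}) = e^{-\Theta(n)}$, so $P_n$ is $2$-clique-colourable with probability $1-o(1)$, proving the theorem.

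The main obstacle is the small side cliques: a side clique consisting of a single vertex $v$ gives a maximal clique $\{v\} \cup N_C(v)$ with $|N_C(v)| \sim k/2$, which is fine, but the real subtlety is making the hitting-set argument robust across \emph{all} side cliques simultaneously — one must rule out that $N_C(R)$ is empty for a side clique $R$ with an actual private maximal clique structure, and control that the union bound over $O(n)$ side cliques beats the $(1-p)^{|N_C(R)|}$ miss-probabilities even though the smallest $N_C(R)$ could in principle be as small as $\Theta(n/\log n)$. This is exactly where Lemma~\ref{thm:max_block} (bounding side-clique sizes) and a Chernoff bound on the binomial $|N_C(R)| \sim \mathrm{Bin}(k, 2^{-|R|})$ enter; I expect this bookkeeping, rather than any conceptual difficulty, to be the crux, and it is why the final bound is $e^{-\Omega(n/\log n)}$ rather than $e^{-\Omega(n)}$.
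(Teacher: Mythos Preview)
Your proposal has two genuine gaps.

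First, clique-colouring is \emph{not} invariant under complementation: the maximal cliques of $\overline G$ are the maximal stable sets of $G$, so ``by complement-symmetry it suffices to treat $G^+$'' is unjustified. The paper handles $G^-$ by a separate (and easier) argument.

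Second, and more seriously, your unipolar colouring fails. You colour all of $\overline C$ with colour~$1$; but any side clique $R$ with $N_C(R) = \bigcap_{v \in R} N_C(v) = \emptyset$ is itself a maximal clique of $G^+$, and under your colouring it is monochromatic. You explicitly set this case aside (``hits $N_C(R)$ for every side clique $R$ with $N_C(R)\neq\emptyset$''), yet it is \emph{not} rare. With $r \sim \ln(n/2)$ the partition $\sigma$ whp has many parts of size about $1.5\,r$ (this is well inside the typical range, since the expected number of parts of size $j$ is $\approx r^j/j!$ and $(er/j)^j \to \infty$ for $j=1.5r$); and for any such side clique $R$ one has $|R| > \log_2 k$, hence $\mathbb{E}|N_C(R)| = k\cdot 2^{-|R|} \to 0$ and $N_C(R)=\emptyset$ whp. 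So whp there are maximal side cliques, and your scheme leaves them monochromatic. (Your assertion that ``maximality forces $R$ to be all of the side clique'' is also false---proper subsets $S \subsetneq R$ with $N_C(S)\supsetneq N_C(R)$ can yield maximal cliques $S \cup N_C(S)$---but that error is harmless for the hitting-set step since $N_C(R) \subseteq N_C(S)$.) The paper's remedy is to colour $\overline C$ non-trivially: it finds a single vertex $x \in C$ that sees every \emph{big} side clique (size at least $\log n - 2\log\log n$), gives colour~$2$ to one $x$-neighbour in each such side clique, and separately shows that every \emph{small} side clique is extendible into $C$ and hence not maximal. The resulting failure probability is $2^{-\Omega(\log^2 n)}$.
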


Corollary~6 of \cite{cliques} is proved by showing that all generalised split graphs
are $3$-clique-colourable,
and then using the theorem of Pr\"omel and Steger discussed earlier.
In the same article it is shown that there are generalised split graphs
which are not $2$-clique-colourable.
There are several other subclasses of perfect graphs for which
the clique-chromatic number is known to be at most 3,
and it was conjectured in~\cite{cliques_conjecture} that perfect graphs
have bounded clique-chromatic number.
This was recently disproved in~\cite{cliques_disproff}.
See~\cite{colin_clique_colouring} for recent work on clique-colourings
of binomial random graphs and of geometric graphs.
We prove Theorem~\ref{thm:cliques} using Theorem~\ref{thm:main}.
We consider unipolar and co-unipolar graphs separately.

\subsection{Clique colouring of unipolar graphs}

We start with a deterministic lemma.
We say that a vertex \emph{sees} a set of vertices if it has a neighbour from the set.
\begin{lemma} \label{thm:cliques_deterministic}
  Let $G$ be a unipolar graph, with given central clique $C$ and thus given side cliques.
  If there is a vertex $x \in C$ which sees each side clique which is a maximal clique in $G$,
  then $G$ is $2$-clique-colourable.
\end{lemma}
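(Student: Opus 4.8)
The plan is to construct an explicit 2-colouring. Colour the distinguished vertex $x \in C$ (and hence we will see, many other vertices) with colour $1$, and try to push as much as possible onto colour $2$. The natural first attempt is: colour $x$ with colour $1$, and colour everything else with colour $2$. Then I would check which maximal cliques could be monochromatic. A maximal clique $M$ of $G$ is, by the structure of unipolar graphs, either a side clique $C_i$ that happens to be maximal, or else $M = (C_0 \cap M) \cup (C_i \cap M)$ for a unique side clique $C_i$ with $M \cap C_0 \neq \emptyset$ (since $M$ meets at most one side clique, being a clique), or $M \subseteq C_0$. If $M$ contains $x$ it is not monochromatic (it has colour $1$ on $x$ and, unless $M = \{x\}$, colour $2$ elsewhere; and $\{x\}$ is not maximal because $x$ sees each maximal side clique, so in particular $C$ has $\ge 2$ vertices or some side clique is nonempty — one should handle the degenerate tiny cases separately). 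So the only danger is a monochromatic (all colour $2$) maximal clique $M$ not containing $x$. Such an $M$ cannot be contained in $C_0$ or meet $C_0$, because any maximal clique meeting $C_0$ must actually contain all of $C_0$ hence contain $x$; wait — that is true only if $C_0$ is itself the whole clique's $C_0$-part, which holds because $M$ maximal and $C_0$ a clique forces $C_0 \subseteq M$. Good. Hence $M$ must be entirely inside some side clique $C_i$, i.e. $M = C_i$ and $C_i$ is a maximal clique of $G$.

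So the remaining obstacle is exactly: side cliques $C_i$ that are maximal cliques of $G$ — each is currently monochromatic in colour $2$. The hypothesis says $x$ sees every such $C_i$, i.e. $x$ has a neighbour $y_i \in C_i$. The fix: recolour one vertex of each maximal side clique with colour $1$. Concretely, for each side clique $C_i$ that is a maximal clique, pick a neighbour $y_i$ of $x$ in $C_i$ and give $y_i$ colour $1$; leave everything else as before (so $C$-vertices other than $x$ keep colour $2$, and all non-chosen side-clique vertices keep colour $2$). Now re-examine: the maximal side cliques $C_i$ are no longer monochromatic (they contain $y_i$ of colour $1$ and, since $|C_i| \ge 2$ — a single-vertex side clique can't be a maximal clique of a unipolar graph with $x$ seeing it and $x \notin C_i$... actually $|C_i|=1$ with $C_i$ maximal would force $C_i$ to be an isolated vertex, which we ignore in clique colouring — handle this remark explicitly, other vertices of $C_i$ with colour $2$). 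I then need to verify that recolouring $y_i$ to colour $1$ does not create a new monochromatic colour-$1$ clique: the colour-$1$ vertices are now $\{x\} \cup \{y_i\}$. Any clique among these contains at most $x$ and the $y_i$'s; but the $y_i$ lie in distinct side cliques, so no two of them are adjacent, and thus the only cliques on colour-$1$ vertices are $\{x\}$, $\{y_i\}$, and $\{x, y_i\}$. None of these is a maximal clique of $G$: $\{x,y_i\}$ extends into $C_0 \cup C_i$ (it's inside the co-bipartite piece $G[C_0 \cup C_i]$ which has more vertices, or if it doesn't, that's again a degenerate case), and singletons $\{x\}$, $\{y_i\}$ are non-maximal for the same degeneracy reasons as before.

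The main obstacle — really the only one — is bookkeeping the degenerate cases cleanly: empty central clique, side cliques of size $1$, the graph being a single clique, isolated vertices. In a truly careful write-up I would first dispose of these (e.g. if some side clique is itself a maximal clique of size $1$ it is an isolated vertex and ignored; if $G$ has no maximal side clique the trivial colouring "$x \mapsto 1$, rest $\mapsto 2$" already works provided $G$ is not a single clique, in which case $2$ colours trivially suffice anyway), and then run the argument above in the generic case. Everything else is a short, direct verification using the two structural facts about unipolar graphs: a clique meets at most one side clique, and a maximal clique meeting $C_0$ contains all of $C_0$. No probabilistic input is needed; this lemma is purely deterministic, as stated.
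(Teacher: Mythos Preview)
Your approach has a genuine gap: the structural claim ``a maximal clique meeting $C_0$ contains all of $C_0$'' is false, and without it your colouring can fail.  Take the path $x$--$a$--$b$ with central clique $C=\{x,a\}$ and side clique $\{b\}$.  Then $\{a,b\}$ is a maximal clique meeting $C$ but not containing $x$.  The side clique $\{b\}$ is \emph{not} a maximal clique (it sits inside $\{a,b\}$), so the hypothesis is vacuous and you recolour nothing: your colouring is $x\mapsto 1$, $a\mapsto 2$, $b\mapsto 2$, leaving the maximal clique $\{a,b\}$ monochromatic.  A second, independent, gap: you assert that $\{x,y_i\}$ always extends, but it need not.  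With $C=\{x,a\}$, $C_1=\{y_1,b\}$ and the single cross edge $xy_1$, the set $\{x,y_1\}$ is a maximal clique, monochromatic of colour~1 under your scheme.

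The paper's colouring avoids both problems by reversing the default on the side cliques: $x\mapsto 1$, the rest of $C$ gets colour~2, and each side clique is coloured~1 \emph{except} that one neighbour $y_Q$ of $x$ (when it exists) gets colour~2.  Now a monochromatic maximal clique $M$ lying in $C\cup Q$ is forced into either $\{x\}\cup(Q\setminus\{y_Q\})$ (extend by $y_Q$) or $(C\setminus\{x\})\cup\{y_Q\}$ (extend by $x$), disposing of exactly the cases that defeat your colouring.  The moral: the correct colouring needs the bulk of each side clique to share $x$'s colour, so that any ``straddling'' maximal clique through $C\setminus\{x\}$ picks up both colours.
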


\begin{proof}
  We may suppose that $G$ contains at least $2$ vertices.
  Let $x$ be the vertex of $C$ that sees each maximal side clique.
  Choose a vertex $y_Q \in N(x) \cap Q$ for each side clique $Q$
  with $N(x) \cap Q \neq \emptyset$.
  Define a colouring $c : V(G) \rightarrow \{1, 2\}$ in the following way:
  $$
  c(v) =
  \begin{cases}
    1 & \mbox{if } v = x,\\
    2 & \mbox{if } v \in C - x,\\
    2 & \mbox{if } v = y_Q \mbox{ for some side clique } Q,\\
    1 & \mbox{otherwise}.
  \end{cases}
  $$

  We claim that $c$ is a proper clique colouring.
  Note first that $N(x) \neq \emptyset$: this is trivial if $|C| \geq 2$;
  and if $C=\{x\}$ then there is a side clique $Q$,
  and $Q$ must contain a neighbour of $x$ (whether it is a maximal clique or not).

Suppose for a contradiction that $M$ is a monochromatic maximal clique.
Since $M$ is a clique, it must lie in the co-bipartite graph induced by $C$
and a side clique $Q$.
Further $M \not\subseteq C$: for if so then $M=C$, and $C$ is not monochromatic
if $|C| \geq 2$ and $C$ is not a maximal clique if $C=\{x\}$.

If $N(x) \cap Q$ is empty, then $M$ contains a vertex of colour $1$
in $Q$, does not contain $x$, and since $M$ is monochromatic,
$M$ is contained in $Q$.
By the definition of $x$, $Q$ is not maximal;
hence $M$ is not maximal, but this is a contradiction.

Thus we may assume that $N(x) \cap Q$ is nonempty,  and so contains $y_Q$.
Then either (1) $M \subseteq \{x\} \cup (Q \setminus \{y_Q\})$ or
(2) $M \subseteq (C \setminus \{x\}) \cup \{y_Q\}$.
In case (1), we could add $y_Q$ to $M$, and in case (2) we could add $x$.
Thus again $M$ is not maximal, and we have our final contradiction.
\end{proof}

Let $t=t(n)= \log n - 2 \log\log n$
%Fix $t(n) = \log (n / 2 - \sqrt{n}) - \log \ln n$ 
($t$ is for threshold).
Given a unipolar arrangement $(G, C)$ of order $n$, we use $t$ to split the side cliques of $(G, C)$ into two categories - the \emph{big} side cliques with at least $t$ vertices, and the \emph{small} side cliques with less than $t$ vertices.  We shall see in the next two lemmas that, for $G^+ \sim Gen^+(n)$, whp the big side cliques can all be seen by some $x \in C$ and the small side cliques are not maximal, so we can apply the above deterministic lemma to deduce that whp $G^+$ is 2-clique-colourable.
\begin{lemma}
Let $G^+ \sim Gen^+(n)$, with induced unipolar arrangement $(G,C)$.
Then with probability $1-2^{-\Omega(\log^2 n)}$ there is a vertex $x \in C$ which sees each big side clique. 
\end{lemma}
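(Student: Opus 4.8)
The plan is to condition on the size $k$ of the central clique and on the partition $\sigma$, and then exploit the fact that, across the vertices of $C$, the bipartite edges between $C$ and $\overline C$ are independent. By Theorem~\ref{thm:concentration} we may restrict to the event $k\in I$, where $I=[n/2-\sqrt n,\,n/2+\sqrt n]$, since its complement has probability $2^{-\Omega(n)}$. So fix $k_0\in I$, condition on $k=k_0$ and on $\sigma$, and let $B_1,\dots,B_m$ be the big side cliques; each $|B_i|\ge t$, and since the $B_i$ are disjoint subsets of $\overline C$, which has $n-k_0$ vertices, we have $m\le (n-k_0)/t$. Conditionally the edges between $C$ and $\overline C$ are independent fair coin flips, so for a fixed $x\in C$,
\[
\mathbb{P}\bigl(x\text{ sees every }B_i\bigr)=\prod_{i=1}^m\bigl(1-2^{-|B_i|}\bigr)\ \ge\ (1-2^{-t})^m\ \ge\ (1-2^{-t})^{(n-k_0)/t}.
\]

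Now I would substitute $2^{-t}=(\log n)^2/n$ and $t=(1+o(1))\log n$ and use $n-k_0=(\tfrac12+o(1))n$ to get $(n-k_0)2^{-t}/t\le(\tfrac12+o(1))\log n$; since $2^{-t}=o(1)$, the quantity above is then at least $\exp\bigl(-(\tfrac12+o(1))\log n\bigr)=n^{-1/(2\ln 2)+o(1)}\ge n^{-3/4}$ for large $n$. Since for distinct $x\in C$ the event that $x$ sees every $B_i$ depends on a set of edges disjoint from those for the other vertices, these events are independent, and hence
\[
\mathbb{P}\bigl(\text{no }x\in C\text{ sees every big side clique}\mid k=k_0,\ \sigma\bigr)\ \le\ (1-n^{-3/4})^{k_0}\ \le\ e^{-n^{1/4}/3}
\]
for large $n$, using $k_0\ge n/3$. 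This estimate does not depend on the choice of $k_0\in I$ or of $\sigma$, so combined with $\mathbb{P}(k\notin I)=2^{-\Omega(n)}$ it yields a failure probability of $2^{-\Omega(n)}+e^{-\Omega(n^{1/4})}$, which is $2^{-\Omega(\log^2 n)}$.

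The step I expect to be delicate is passing from a single $x$ to all of $C$. A fixed $x$ fails to see every big side clique with probability that is only polynomially small (indeed tending to $1$, since a vertex misses $\Theta(\log n)$ big cliques on average), and a union bound over the $B_i$ of $\mathbb{P}(\text{no }x\text{ sees }B_i)$ is equally useless. The argument succeeds only because there are $\Theta(n)$ independent candidate vertices, so that the exponent $p\,k_0$ in $(1-p)^{k_0}$ tends to infinity; and this relies on $|C|\sim n/2$, which makes $m\,2^{-t}\sim\tfrac12\log n$ and hence $p\ge n^{-1/(2\ln 2)+o(1)}$ with $1/(2\ln 2)<1$ — exactly the reason the threshold $t$ is taken just below $\log n$.
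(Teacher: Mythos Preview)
Your proof is correct and essentially identical to the paper's: condition on $k\in[n/2-\sqrt n,\,n/2+\sqrt n]$ and on $\sigma$, bound the number of big side cliques by $(n-k_0)/t$, show a fixed $x\in C$ sees all of them with probability at least $n^{-c+o(1)}$ for some $c<1$, and then use independence across the vertices of $C$ to conclude. Your exponent $c=1/(2\ln 2)\approx 0.72$ is in fact the correct value (the paper's stated $\alpha=1/(2\log e)\approx 0.35$ appears to be a slip), but either way the argument goes through since the exponent is below~$1$.
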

\begin{proof}
Condition on $k =k_0 \in \mathcal{I} = [n/2 - \sqrt{n}, n/2 + \sqrt{n}]$, as usual. Condition also on the partition $\sigma$ of $W=[n] \setminus [k]$; and assume wlog that the permutation $\pi$ is the identity, so $C=[k]$.
%Condition also on $(G^+, C)$ being the induced arrangement from the generation, and 
Let $\mathcal B$ be the set of all big side cliques, so $|\mathcal B| \le (n - k_0) / t$.

For $v \in C$ and $Q \in \mathcal B$, the probability that $v$ does not see $Q$ is
  \[ 2^{-|Q|} \le 2^{-t} = \frac{\log^2 n}{n} . \] %\frac{\ln n}{n/2 - \sqrt{n}}.\]
Thus the probability that $v$ sees each $Q \in {\mathcal B}$ is at least
\begin{eqnarray*}
   \left(1 - \frac{\log^2 n}{n}\right)^{\frac{n\!-\!k_0}{t}} & = & 
  \exp \left(-\frac{\log^2 n}{n} \frac{n\!-\!k_0}{t} +O(\frac{\log^3 n}{n}) \right)\\
  & = & \exp\left(- (\frac12+o(1)) \log n \right) = n^{-\alpha+o(1)}
\end{eqnarray*}
where $\alpha = 1/(2 \log e) \approx 0.35$.
Then, the probability that for each vertex $v \in C$ there is some $Q \in \mathcal B$ which $v$ fails to see is at most
\[ (1- n^{-\alpha+o(1)})^{k_0} \leq e^{- n^{\frac12 - \alpha +o(1)}},\]
which is easily at most $2^{-\Omega(\log^2 n)}$.
%Thus the probability that $v$ fails to see all $Q \in {\mathcal B}$ is at most
%  \[ 1 - \left(1 - \frac{\ln n}{n/2 - \sqrt{n}}\right)^{|\mathcal B|} \le 1 - e^{-\frac{\ln n}{n/2 - \sqrt{n}} \frac{n - k}{t(n)}} < 1 - e^{-\delta},\]
%for any $\delta \in (\ln 2, 1) $ and $n$  sufficiently large. Then, the probability that each vertex $v \in C$ fails to see all $Q \in \mathcal B$ is at most $(1 - e^{-\delta})^{k_0} = e^{-\Omega(n)}$.
\end{proof}

\begin{lemma}
  Let $G^+ \sim Gen^+(n)$, with induced unipolar arrangement $(G,C)$.  Then with probability $1-2^{-\Omega(\log^2 n)}$ no small side clique is a maximal clique.
\end{lemma}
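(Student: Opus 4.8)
The plan is to show that whp every small side clique $Q$ (with $|Q| < t = \log n - 2\log\log n$) has a vertex in the central clique $C$ that dominates it, i.e.\ is adjacent to all of $Q$; such a vertex makes $Q \cup \{x\}$ a larger clique, so $Q$ is not maximal. First I would condition, as usual, on $k = k_0 \in \mathcal{I} = [n/2 - \sqrt n, n/2 + \sqrt n]$ (valid by Theorem~\ref{thm:concentration}), and on the partition $\sigma$ of $W = [n]\setminus[k]$, taking $\pi$ to be the identity so $C = [k_0]$. For a fixed small side clique $Q$ and a fixed vertex $v \in C$, the probability that $v$ is adjacent to every vertex of $Q$ is $2^{-|Q|} \ge 2^{-t} = (\log^2 n)/n$, and these events are independent over the $k_0 \sim n/2$ vertices of $C$. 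Hence the probability that \emph{no} vertex of $C$ dominates $Q$ is at most
\[
\left(1 - \frac{\log^2 n}{n}\right)^{k_0} \le \exp\!\left(-\frac{\log^2 n}{n}\cdot k_0\right) \le \exp\!\left(-(\tfrac12 + o(1))\log^2 n\right) = 2^{-\Omega(\log^2 n)}.
\]

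Next I would take a union bound over all small side cliques. There are at most $n - k_0 \le n$ side cliques in total, so the probability that some small side clique is not dominated by any vertex of $C$ is at most $n \cdot 2^{-\Omega(\log^2 n)}$, which is still $2^{-\Omega(\log^2 n)}$ since the polynomial factor $n = 2^{\log n}$ is absorbed by the $\log^2 n$ exponent. A small side clique that is dominated by some $x \in C$ is strictly contained in the clique $Q \cup \{x\}$, hence is not a maximal clique. Finally I would remove the conditioning: the bound is uniform over $k_0 \in \mathcal{I}$ and over $\sigma$, so $\mathbb{P}(k \notin \mathcal{I}) \le 2n^{-n}$ is negligible, and averaging over $\sigma$ preserves the bound. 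This yields the claimed probability $1 - 2^{-\Omega(\log^2 n)}$.

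The main point to be careful about — rather than a genuine obstacle — is the choice of the threshold $t$: it is calibrated precisely so that $2^{-t} = (\log^2 n)/n$ makes the single-clique failure probability $2^{-\Theta(\log^2 n)}$, which must beat the union-bound factor of $n$; with $t$ any larger (cliques "smaller") the domination probability would be too small, and with $t$ smaller the companion lemma (big cliques all seen by one vertex) would fail. One should also note that the independence used above is across distinct vertices $v \in C$ for a \emph{fixed} $Q$ — which is exactly the independence present in $E \sim G(n,\tfrac12)$ restricted to the bipartite part between $C$ and $Q$ — and that the events for different side cliques need only be handled by a union bound, not by independence. Combining this lemma with the previous one and Lemma~\ref{thm:cliques_deterministic} shows that whp $G^+$ is $2$-clique-colourable.
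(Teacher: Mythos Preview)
Your proof is correct and follows essentially the same approach as the paper's: condition on $k=k_0\in\mathcal I$, use that a fixed $v\in C$ dominates a small side clique $Q$ with probability $2^{-|Q|}\ge 2^{-t}=(\log^2 n)/n$, bound the probability that no $v\in C$ dominates $Q$ by $(1-(\log^2 n)/n)^{k_0}=e^{-(\frac12+o(1))\log^2 n}$, and union-bound over at most $n$ side cliques. The paper uses the word ``extends'' where you say ``dominates'', but the argument is identical.
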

\begin{proof}
Condition on $k =k_0 \in \mathcal{I} = [n/2 - \sqrt{n}, n/2 + \sqrt{n}]$, as usual.
%Let $\mathcal{B}$ be the set of all small side cliques. From Lemma~\ref{thm:chernoff_blocks_count} wvhp there are at most $n\ln^{-1} n$ side cliques in $\overline C$ (in Lemma~\ref{thm:chernoff_blocks_count} take $\epsilon$ to be $1$).
% Condition on this event, so that $|\mathcal{B}| \le n / \ln n$.
 % and condition on the arrangement $(G^+, C)$ induced by the generation process.
Let us say that a vertex $v \in C$ \emph{extends} a side clique $Q$ if $Q \subseteq N(v)$. For $v \in C$ and $Q \in \mathcal{B}$, the probability that $v$ does not extend $Q$ is 
  \[ 1 - 2^{-|Q|} \le 1 - 2^{-t} = 1 - \frac{\log^2 n}{n}
  \le e^{-\frac{\log^2 n}{n}}.\]
Thus the probability that no $v \in C$ extends $Q$ is at most
  \[ \left(e^{-\frac{\log^2 n}{n}}\right)^{k_0} = e^{-(\frac12+o(1))\log^2 n}.  \]
But there are at most $n$ small side cliques, so the same upper bound holds for the probability that some small side clique is a maximal clique.
% is at most  \[ n  e^{-(\frac12+o(1))\log^2 n} = e^{-(\frac12+o(1))\log^2 n}.\]
\end{proof}

\begin{lemma}
  \label{thm:cliques_unipolar}
%With probability $1-2^{-\Omega(\log^2 n)}$ %With high probability 
$G^+ \sim Gen^+(n)$ is $2$-clique-colourable with probability $1-2^{-\Omega(\log^2 n)}$.
\end{lemma}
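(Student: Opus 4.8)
The plan is to combine the deterministic criterion of Lemma~\ref{thm:cliques_deterministic} with the two preceding probabilistic lemmas by a simple union bound. Recall that Lemma~\ref{thm:cliques_deterministic} says that, given the induced unipolar arrangement $(G,C)$ of $G^+$, it suffices to exhibit a single vertex $x \in C$ that sees every side clique which happens to be a maximal clique of $G^+$. So the task reduces to controlling which side cliques are maximal and whether one vertex of $C$ can see all of them.

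First I would name the two relevant events. Let $\mathcal{E}_1$ be the event that some vertex $x \in C$ sees every \emph{big} side clique, and let $\mathcal{E}_2$ be the event that no \emph{small} side clique is a maximal clique of $G^+$. The two preceding lemmas give $\mathbb{P}(\mathcal{E}_1) \ge 1 - 2^{-\Omega(\log^2 n)}$ and $\mathbb{P}(\mathcal{E}_2) \ge 1 - 2^{-\Omega(\log^2 n)}$ (both unconditionally), so by the union bound $\mathbb{P}(\mathcal{E}_1 \cap \mathcal{E}_2) \ge 1 - 2^{-\Omega(\log^2 n)}$, the two error terms being of the same form and thus absorbed into one.

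Then I would argue that on the event $\mathcal{E}_1 \cap \mathcal{E}_2$ the hypothesis of Lemma~\ref{thm:cliques_deterministic} is satisfied: any side clique that is a maximal clique of $G^+$ must be big by $\mathcal{E}_2$, and the witnessing vertex $x$ supplied by $\mathcal{E}_1$ sees every big side clique, hence sees every maximal side clique. Lemma~\ref{thm:cliques_deterministic} then yields that $G^+$ is $2$-clique-colourable, which is exactly the claim (for $n$ large enough that $G^+$ has at least two vertices, as required there).

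Honestly there is no real obstacle at this stage, since the work has all been discharged in the two preceding lemmas and in the deterministic Lemma~\ref{thm:cliques_deterministic}; the only point needing a word of care is the implication ``maximal side clique $\Rightarrow$ big side clique'' on the good event, which is precisely the content of $\mathcal{E}_2$, together with checking that the $2^{-\Omega(\log^2 n)}$ failure bounds combine cleanly.
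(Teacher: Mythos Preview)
Your proposal is correct and follows exactly the paper's approach: the paper's proof is the single line ``This follows from the last three lemmas,'' and you have simply (and accurately) spelled out how the deterministic criterion and the two probabilistic lemmas combine via a union bound.
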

\begin{proof}
  This follows from the last three lemmas.
\end{proof}

\subsection{Clique colouring of co-unipolar and perfect graphs}
Co-unipolar graphs are easier to clique-colour than unipolar graphs.

\begin{lemma}
  \label{thm:cliques_co-unipolar}
  The random graph $G^- \sim Gen^-(n)$ is $2$-clique-colourable wvhp.
\end{lemma}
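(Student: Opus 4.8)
The plan is to exploit the explicit structure of $G^-$ together with a single, simple two-colouring. Write $G^- = \rho(U)$ with $U=(-1,E,(k,\sigma),\pi)$; since $2$-clique-colourability is isomorphism-invariant we may ignore $\pi$, so the vertex set is $[n]$ with $C=[k]$ and $\overline C=\{k+1,\dots,n\}$. By construction $C$ is a stable set of $G^-$, the set $\overline C$ induces the complete multipartite graph whose parts $Q_1,\dots,Q_m$ are exactly the parts of $\sigma$, and each $C$--$\overline C$ pair is an edge of $G^-$ independently with probability $\tfrac12$ (the complement of a $G(n,\tfrac12)$ between the two sides is again a $G(n,\tfrac12)$). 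First I would condition on the event, which holds wvhp by Theorem~\ref{thm:concentration}, that $k\in\mathcal I=[n/2-\sqrt n,\,n/2+\sqrt n]$, so that $|\overline C|=n-k\ge n/2-\sqrt n$, and also on $m\ge 2$, which again holds wvhp since conditionally $\mathbb{P}(m=1)=1/B_{n-k}=2^{-\Omega(n)}$. Let $Q_1$ be a part of $\sigma$ of smallest size, so $|Q_1|\le (n-k)/m\le (n-k)/2$ and hence $|\overline C\setminus Q_1|\ge (n-k)/2\ge n/5$ for large $n$.

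The colouring I propose is $c\equiv 2$ on $C\cup Q_1$ and $c\equiv 1$ on $\overline C\setminus Q_1 = Q_2\cup\dots\cup Q_m$, and the claim is that wvhp this is a proper $2$-clique-colouring. No maximal clique of $G^-$ is monochromatic in colour $1$, deterministically: a colour-$1$ clique lies in the complete multipartite graph on the parts $Q_2,\dots,Q_m$, so it is a partial transversal of those parts, but any vertex of $Q_1$ is adjacent in $G^-$ to all of $\overline C\setminus Q_1$ and hence to every vertex of the clique, so the clique is not maximal. For colour $2$, observe that $C\cup Q_1$ induces a bipartite subgraph of $G^-$ (both $C$ and $Q_1$ are stable), so a colour-$2$ clique $M$ has $|M|\le 2$, with at most one vertex in $C$ and at most one in $Q_1$. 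If $M\cap C=\emptyset$ then $M\subseteq Q_1$ and $M$ is extended by any vertex of the nonempty set $\overline C\setminus Q_1$; and if $M\cap C=\{v\}$ then, since every vertex of $\overline C\setminus Q_1$ is adjacent to every vertex of $Q_1$, $M$ can fail to be maximal only when $v$ has no neighbour in $\overline C\setminus Q_1$, an event of probability $2^{-|\overline C\setminus Q_1|}\le 2^{-n/5}$. A union bound over the at most $n$ choices of $v\in C$ then shows that wvhp no maximal clique of $G^-$ lies inside $C\cup Q_1$, so wvhp there is no monochromatic colour-$2$ maximal clique either; combined with the colour-$1$ case this proves the lemma.

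The only part requiring a probabilistic estimate is the exclusion of maximal colour-$2$ cliques of size two, and this is straightforward: such a clique has $\Omega(n)$ candidate extensions, each present through an independent fair coin, so the failure probability is $e^{-\Omega(n)}$ even after the union bound over the $C$-vertex. Hence I do not anticipate a real obstacle; the only things to watch are the routine conditionings ($k\in\mathcal I$, $m\ge 2$, and choosing $Q_1$ smallest) that guarantee $|\overline C\setminus Q_1|=\Omega(n)$, and the pleasant fact that --- in contrast with the unipolar case --- this argument delivers the stronger wvhp bound stated in the lemma.
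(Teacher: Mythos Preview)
Your proof is correct and follows the paper's strategy: colour $C\cup Q$ with one colour and $\overline C\setminus Q$ with the other for a single side stable set $Q$, then argue that wvhp every vertex of $C$ has a neighbour in $\overline C\setminus Q$ (the paper phrases the deterministic criterion as ``every vertex of $C$ has neighbours in at least two side independent sets''). The only difference is cosmetic --- you take $Q=Q_1$ to be a smallest part and use pigeonhole to get $|\overline C\setminus Q_1|\ge(n-k)/2$, whereas the paper allows $Q$ arbitrary and instead invokes Lemma~\ref{thm:max_block} to bound the maximum part size; note also the slip in your write-up: ``$M$ can fail to be maximal only when \dots'' should read ``$M$ can be maximal only when \dots''.
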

\begin{proof}
  We first prove an auxiliary claim.
\begin{claim}
    Let $G$ be a co-unipolar graph, with a unipolar arrangement $(\overline G, C)$ for $\overline{G}$.  Suppose that $C$ is non-empty,
    and every vertex of $C$ has neighbours in at least two
    side independent sets of $\overline C$.
    Then $G$ is $2$-clique-colourable.
  \end{claim}
  \begin{proof}
    Since $C$ is non-empty,
    there are at least two side independent sets in $\overline C$.
    Let $Q$ be a side independent set.
    Colour the vertices in $C \cup Q$ with $1$
    and the remaining vertices with $2$.
    A maximal clique contained completely in $\overline C$
    contains a vertex from each
    side independent set and therefore is not monochromatic.
    There are no maximal cliques contained completely in $C$.
    A monochromatic clique which intersects both $C$ and $\overline C$
    must contain precisely two vertices -- $c \in C$ and $q \in Q$.
    However, such a clique can always be extended with a neighbour of $c$
    from a different side independent set.
  \end{proof}
  Let us check that $G^-$ satisfies the condition of the claim wvhp.
  By Theorem~\ref{thm:concentration}
  we may assume $k \in \mathcal{I} = [n/2 - \sqrt{n}, n/2 + \sqrt{n}]$.
  Let $(G^-, C)$ be the co-unipolar arrangement induced by the generation.
  By Lemma~\ref{thm:max_block} wvhp
  the maximum size of an independent set of $\overline C$
  is at most $n/\sqrt{\ln n} = o(n)$.
  On the other hand every vertex of $C$ has degree at least $n/5$
  wvhp from Chernoff's inequality and the union bound.
  If the minimum degree of the vertices in $C$ is bigger than the
  maximum size of a side independent set $\overline C$,
  then every vertex of $C$ must see at least two side independent sets.
\end{proof}

Theorem~\ref{thm:cliques} now follows from Equation~(\ref{eq:partition_of_gs}), Lemma~\ref{thm:cliques_unipolar}, Lemma~\ref{thm:cliques_co-unipolar} and Theorem~\ref{thm:main}.
Indeed we see that the probability that $P_n$ fails to be 2-clique-colourable
is $2^{-\Omega(\log^2 n)}$.
Further, we may see from the proofs that there is an $O(n^2)$-time
%\m{check! \m{\mr{Checked!}}
algorithm that on input $P_n$ first generates a unipolar representation for the graph or its complement, and then finds a 2-clique-colouring, with failure probability $2^{-\Omega(\log^2 n)}$.

%%%%%%%%%%%%%%%%%%%%%%%%%%%%%%%

\section{Hamilton cycles in the random perfect graph~$P_n$}
\label{sec:hamiltonian}

A \emph{Hamilton} cycle in a graph is a cycle visiting every vertex exactly once.
Recall the distribution $L(n)$ in Definition~\ref{def:ln}.
In this section we prove the following theorem.
\begin{thm} \label{thm:hamilt}
  Almost all perfect graphs are Hamiltonian.
  Indeed, for $P_n \in_u \mathcal{P}_n$ and $X \sim L(n)$,
  \[
  \mathbb{P}(P_n \text{ is Hamiltonian} )
  = 1 - \frac{1}{2} \mathbb{P}(X > n/2) \pm e^{-\Omega(n)}.
  \]
\end{thm}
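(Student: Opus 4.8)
The plan is to use Theorem~\ref{thm:main} to replace $P_n$ by $\rho(Gen(n))$ at a cost of $e^{-\Omega(n)}$, and then to use~(\ref{eq:partition_of_gs}) to split into the unipolar case ($G^+ \sim Gen^+(n)$) and the co-unipolar case ($G^- \sim Gen^-(n)$), so that it suffices to show
\[
\mathbb{P}(G^+ \text{ is Hamiltonian}) = 1 - e^{-\Omega(n)}
\quad\text{and}\quad
\mathbb{P}(G^- \text{ is Hamiltonian}) = 1 - \mathbb{P}(X > n/2) \pm e^{-\Omega(n)}.
\]
For the unipolar case, first condition on $k \in \mathcal{I}=[n/2-\sqrt n, n/2+\sqrt n]$ (valid wvhp by Theorem~\ref{thm:concentration}) and, by Lemma~\ref{thm:max_block}, on every side clique having size $o(n)$ (say at most $n/\sqrt{\ln n}$). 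Then $G^+$ consists of a central clique $C$ of size $\sim n/2$ together with side cliques in $\overline C$, each vertex of $\overline C$ joined to $C$ by an independent $G(\cdot,\tfrac12)$ bipartite graph. The natural strategy is to build a Hamilton cycle by threading through the side cliques one at a time: for a side clique $Q$ one wants two distinct vertices $u_Q, w_Q \in C$ and distinct endpoints $a_Q, b_Q \in Q$ with $u_Qa_Q, w_Qb_Q$ edges, traversing $Q$ internally (any clique has a Hamilton path between any two vertices), and then closing up the remaining $C$-vertices into the cycle (a clique is trivially Hamilton-connected, so any leftover vertices of $C$ are easily absorbed). Since $|C|\sim n/2$ and the number of side cliques is at most $|\overline C| \le n/2 + \sqrt n < |C|$, there are enough $C$-vertices; the one thing to check is that one can choose the "attachment" vertices consistently, e.g.\ greedily or via a Hall-type / system-of-distinct-representatives argument, using that for each side clique the probability of failing to find a suitable pair of $C$-attachments is $e^{-\Omega(\text{size of }Q)}$ and $e^{-\Omega(n)}$ overall (being careful that a giant side clique, excluded above, is the only obstruction). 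This gives $\mathbb{P}(G^+\text{ Hamiltonian}) = 1 - e^{-\Omega(n)}$.

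For the co-unipolar case, write $(G^-, C)$ for the induced arrangement, so $\overline{G^-}$ is unipolar with central clique $C$: thus $C$ is a \emph{stable} set in $G^-$, the side sets of $\overline C$ are \emph{cliques} in $G^-$, so $\overline C$ induces a complete multipartite graph in $G^-$, and the bipartite graph between $C$ and $\overline C$ is $G(\cdot,\tfrac12)$. Here the behaviour splits at $k=n/2$. If $k = |C| \le n/2$, i.e.\ $|C| \le |\overline C|$, then the complement-unipolar structure is "balanced or side-heavy", and a random Hamilton cycle should exist wvhp: intuitively $\overline C$ induces a dense complete multipartite graph (each part of size $o(n)$ by Lemma~\ref{thm:max_block}, so it is Hamilton-connected and even highly linked), and the $C$-vertices, each with $\sim |\overline C|/2$ neighbours in $\overline C$ and forming a stable set, can be inserted one per "gap" along a Hamilton path of $G^-[\overline C]$ — again a greedy/SDR argument, using that $|C| \le |\overline C|$ means there are enough gaps and that each required pair of neighbours exists except with probability $e^{-\Omega(n)}$. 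If instead $k > n/2$, then $C$ is a stable set larger than half the graph, so by counting ($G^-$ has an independent set of size $> n/2$) $G^-$ simply \emph{cannot} be Hamiltonian. Hence
\[
\mathbb{P}(G^- \text{ Hamiltonian}) = \mathbb{P}(k \le n/2) \pm e^{-\Omega(n)} = \mathbb{P}(X \le n/2)\pm e^{-\Omega(n)} = 1 - \mathbb{P}(X > n/2) \pm e^{-\Omega(n)},
\]
since $k \sim L(n) \overset{d}{=} X$. Combining the two cases via~(\ref{eq:partition_of_gs}):
\[
\mathbb{P}(\rho(Gen(n))\text{ Hamiltonian}) = \tfrac12(1) + \tfrac12\big(1 - \mathbb{P}(X>n/2)\big) \pm e^{-\Omega(n)} = 1 - \tfrac12\mathbb{P}(X>n/2)\pm e^{-\Omega(n)},
\]
and Theorem~\ref{thm:main} transfers this to $P_n$. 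The final numerical identity $1-\tfrac12\mathbb{P}(X>n/2) = 1 - 2^{-\frac14(1+o(1))\log^2 n}$ from~(\ref{eqn.haminsec1}) follows from Theorem~\ref{thm:concentration} with $x = n/2-\mu \sim \tfrac12\log n$, so that $\mathbb{P}(X > n/2) = 2^{-(\frac14+o(1))\log^2 n}$.

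The main obstacle is the two Hamiltonicity constructions, especially the co-unipolar balanced case: one must show that a complete multipartite "spine" $G^-[\overline C]$ (with all parts of size $o(n)$, hence with a long Hamilton path with many gaps) can absorb all $k \le n/2$ independent $C$-vertices simultaneously, each being attached through two of its random $\overline C$-neighbours that are non-adjacent in $\overline C$ (so they lie in distinct parts and can serve as consecutive-via-$v$ vertices). The cleanest route is probably a Hall's-theorem / SDR matching between $C$-vertices and edges (or suitable vertex-pairs) of a fixed Hamilton path of the spine, verifying the Hall condition fails only with probability $e^{-\Omega(n)}$ via a union bound over "bad" subsets together with the Chernoff estimate that each $C$-vertex has $\Omega(n)$ suitable neighbour-pairs; the side-clique size bound from Lemma~\ref{thm:max_block} is what guarantees "many parts, hence many gaps" and prevents a single huge part from wrecking the argument. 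The unipolar case is analogous but easier since $C$ is itself a clique and absorbs leftovers trivially.
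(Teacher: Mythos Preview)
Your high-level plan matches the paper: reduce to $\rho(Gen(n))$ via Theorem~\ref{thm:main}, split via~(\ref{eq:partition_of_gs}), and show $G^+$ is Hamiltonian wvhp while $G^-$ is Hamiltonian iff $k\le n/2$ up to $e^{-\Omega(n)}$.

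For $G^+$ your idea (thread the side cliques into $C$ via two attachments each) is exactly Lemma~\ref{thm:unipolar_deterministic}, but your counting slips: each side clique consumes \emph{two} distinct $C$-vertices, so you need $2\cdot|\sigma|\le|C|$, and the trivial bound $|\sigma|\le|\overline C|$ is too weak (also $|\overline C|<|C|$ is not guaranteed by $k\in\mathcal I$). The paper fixes this by invoking Lemma~\ref{thm:chernoff_blocks_count} to get $|\sigma|\le n/16$ wvhp, then splits $C$ into two halves and finds the two complete matchings via Lemma~\ref{thm:matching_bip_graph}.

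For $G^-$ with $k\le n/2$ your route is genuinely different from the paper's. You build a Hamilton cycle in the complete multipartite spine $G^-[\overline C]$ and insert the $k$ stable $C$-vertices into distinct cycle edges via a Hall/SDR argument. The paper instead chooses $Q\subseteq\overline C$ with $|Q|=k$, proves a standalone theorem that the random balanced bipartite graph between $C$ and $Q$ is Hamiltonian wvhp (Theorem~\ref{thm:probabilistic_hamiltonian}, via a P\'osa-rotation criterion $\delta\ge 2\widetilde\alpha+1$), and then inserts the leftover $T=\overline C\setminus Q$ of size $\le 2\sqrt n$ --- chosen with one vertex per part of $\sigma$ so that $G^-[T]$ is a clique --- as a single block. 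Your approach is attractive in that it avoids the P\'osa machinery, but the Hall verification is less routine than you indicate: adjacent cycle edges share a vertex, so for fixed $v$ the events ``$v$ is good for $e$'' are dependent, and in the tight case $k=n-k$ you need a genuinely perfect matching. A three-range split does work (small $|S|$ via an Azuma bound on the per-$v$ good-edge count, large $|S|$ via Chernoff on the per-edge good-$v$ count, and middle $|S|$ via the crude estimate $\mathbb P(v\text{ bad for all of }U)\le(3/4)^{|U|/2}$ obtained from a vertex-disjoint sub-matching of $U$), so your plan can be completed; the paper's route trades this computation for the bipartite Hamiltonicity theorem, which is of independent interest.
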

\noindent
It follows from this result and Theorem~\ref{thm:concentration} that
\[
\mathbb{P}(P_n \text{ is Hamiltonian} )
= 1-2^{-(\frac14 + o(1)) \log^2 n} = 1 -  o(1),
\]
which will complete the proof of~(\ref{eqn.haminsec1}).
Indeed, we may see from the proofs that there is an $O(n^3)$-time
%\m{check!} \m{$O(n^{3})$?}
%\m{\mr{I think this is correct. In the unipolar case we can easily find a matching in $O(n^3)$ time.}}
%\m{\mr{In the co-unipolar case we only need to find a Ham cycle in a bip graph.
%    Start with $P={v_0}$.
%    Calculate $W_1$ in $O(n)$ and $W_2$ in $O(n^2)$.
%    $W_2$ will contain a vertex outside $P$ or close $P$ to a Ham cycle,
%    so the total time is $O(n^3)$?}}
algorithm that succeeds wvhp on input $P_n$, and either outputs a Hamilton cycle, or a stable set of size $>n/2$ (proving there is no Hamilton cycle), or fails.

To prove Theorem~\ref{thm:hamilt} we consider unipolar and co-unipolar graphs
separately,
and in order to handle co-unipolar graphs we consider random bipartite graphs.

\subsection{Hamilton cycles in random unipolar graphs}
\begin{define}
  Given a graph $G$ and two disjoint sets of vertices, $S$ and $T$,
  we say that an $S$--$T$ matching $M$ is a \emph{complete matching from $S$ to $T$}
  if the edges of $M$ cover $S$.
\end{define}
  
\begin{lemma}
  \label{thm:unipolar_deterministic}
  Suppose that $(G, C)$ is a unipolar arrangement.
  Let $T_1, T_2 \subseteq \overline C$ each contain exactly one vertex
  from each side clique and be such that $T_1 \cap T_2$ consists
  of the vertices of $\overline C$ forming side cliques of size one.
  Let $A_1$ and $A_2$ be disjoint subsets of $C$. Suppose that in $G$
  there are complete matchings $M_1$ from $T_1$ into $A_1$ and $M_2$ from $T_2$
  into $A_2$. Then $G$ is Hamiltonian.
\end{lemma}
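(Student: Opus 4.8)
The plan is to construct a Hamilton cycle explicitly: traverse each side clique by a path that enters at its $T_1$-vertex and leaves at its $T_2$-vertex, route the matching edges $M_1,M_2$ so that these paths have their ends in $C$, and then stitch all the pieces together inside the central clique, mopping up the leftover vertices of $C$ at the end.

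First I would fix notation. Let $Q_1,\dots,Q_m$ be the side cliques of $(G,C)$, so they partition $\overline C$; let $\phi_1:T_1\to A_1$ and $\phi_2:T_2\to A_2$ be the injections given by the complete matchings $M_1,M_2$. For each $i$ let $u_i$ be the unique vertex of $T_1\cap Q_i$ and $w_i$ the unique vertex of $T_2\cap Q_i$; the hypothesis on $T_1\cap T_2$ says that $u_i=w_i$ precisely when $|Q_i|=1$, and $u_i\neq w_i$ otherwise. Put $a_i=\phi_1(u_i)\in A_1$ and $b_i=\phi_2(w_i)\in A_2$. Since $Q_i$ is a clique it has a Hamilton path from $u_i$ to $w_i$ (the single vertex $u_i$ if $|Q_i|=1$, the edge $u_iw_i$ if $|Q_i|=2$); prepending the edge $a_iu_i\in M_1$ and appending $w_ib_i\in M_2$ yields a path $P_i$ from $a_i$ to $b_i$ whose vertex set is exactly $Q_i\cup\{a_i,b_i\}$.

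Next comes the bookkeeping that makes the pieces fit. Because $M_1$ is a matching covering $T_1$ and the $u_i$ are distinct, the $a_i$ are pairwise distinct; likewise the $b_i$; and $A_1\cap A_2=\emptyset$ forces $a_i\neq b_j$ for all $i,j$. The interiors of the $P_i$ lie in the pairwise disjoint sets $Q_i\subseteq\overline C$, so the paths $P_1,\dots,P_m$ are vertex-disjoint and together cover $\overline C\cup\{a_1,\dots,a_m\}\cup\{b_1,\dots,b_m\}$. Let $R=C\setminus(\{a_i\}_i\cup\{b_i\}_i)$ be the remaining vertices of $C$. Assuming $v(G)\geq 3$ (for $v(G)\leq 2$ the hypotheses cannot be satisfied together with the existence of a side clique, and if $m=0$ then $G=C$ is a clique on $v(G)\geq 3$ vertices and is trivially Hamiltonian), we take $m\geq 1$ and form the closed walk
\[
a_1,\ P_1,\ b_1,\ a_2,\ P_2,\ b_2,\ \dots,\ a_m,\ P_m,\ b_m,\ r_1,\dots,r_{|R|},\ a_1,
\]
where $r_1,\dots,r_{|R|}$ is any enumeration of $R$. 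Every connector edge used here, namely $b_ia_{i+1}$ for $1\le i<m$, $b_mr_1$, the edges $r_jr_{j+1}$, and $r_{|R|}a_1$ (or $b_ma_1$ if $R=\emptyset$), joins two vertices of the clique $C$ and so is present in $G$. By the distinctness established above this walk visits each vertex exactly once, so it is a Hamilton cycle.

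There is no deep obstacle here; the proof is a direct construction. The one place to be careful is exactly the distinctness argument of the previous paragraph — that all $2m$ endpoints $a_i,b_i$ are distinct — together with the correct treatment of size-one side cliques, where $u_i=w_i$ so the "path through $Q_i$" degenerates to a single vertex, yet the two matching edges $a_iu_i$ and $u_ib_i$ still supply two distinct endpoints $a_i\neq b_i$ in $C$ (this is precisely why the hypothesis requires $A_1$ and $A_2$ disjoint and $T_1\cap T_2$ to be exactly the singleton side cliques).
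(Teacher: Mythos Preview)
Your proof is correct and follows essentially the same construction as the paper: enter each side clique via an $M_1$-edge, traverse it, exit via an $M_2$-edge, then sweep up the remaining vertices of $C$. You supply considerably more detail than the paper's three-sentence sketch, in particular the explicit verification that the $2m$ endpoints $a_i,b_i$ are pairwise distinct and the careful treatment of singleton side cliques and the $m=0$ case, all of which the paper leaves implicit.
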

\begin{proof}
  Fix some ordering of the side cliques of $\overline C$.
  There is a Hamilton cycle which enters each clique from an edge of $M_1$,
  visits all vertices inside and then leaves the clique from an edge of $M_2$.
  After all side cliques are visited,
  the Hamilton cycle visits any remaining vertices in $C$ and finishes at the
  initial vertex.
\end{proof}

\begin{define}
  Let $G_{n, m, \frac{1}{2}}$ denote a random $n \times m$ bipartite graph
  with the edges present independently with probability $1/2$.
\end{define}

\begin{lemma}
  \label{thm:matching_bip_graph}
  Suppose that $1 \le n \le m$.
  Then $G = (V_1, V_2, E) \sim G_{n, m, \frac{1}{2}}$ contains a complete matching
  from $V_1$ to $V_2$ with probability $1 - 2^{-m(1 + o_m(1))}$.
\end{lemma}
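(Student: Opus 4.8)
The plan is to show that the complement event---that $G \sim G_{n,m,\frac12}$ has \emph{no} complete matching from $V_1$ into $V_2$---has probability $2^{-m(1+o_m(1))}$, using Hall's theorem and a union bound over the "Hall witnesses." By Hall's theorem, a complete matching from $V_1$ to $V_2$ fails to exist exactly when there is a set $S \subseteq V_1$ with $|N(S)| < |S|$. I would first reduce to a \emph{minimal} such witness $S$: if $S$ is minimal with $|N(S)| \le |S|-1$, then every vertex of $S$ has at least one neighbour in $N(S)$ that is "private" in a suitable sense, and in particular $|N(S)| = |S| - 1$ and every vertex of $N(S)$ has at least two neighbours in $S$ (otherwise one could delete a vertex from $S$ and stay a witness). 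The key consequence I want is a lower bound forcing $|S|$ to be small: counting edges between $S$ and $N(S)$ from the $N(S)$ side gives $e(S, N(S)) \ge 2(|S|-1)$, while... actually the cleaner route is just to bound the probability directly for each candidate pair of sizes.

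Concretely, fix $s = |S|$ and $j = |N(S)| = s-1$ (we may assume $1 \le s \le$ something, and note $s \le n$). The number of ways to choose $S$ and a candidate neighbourhood set $J$ of size $s-1$ is at most $\binom{n}{s}\binom{m}{s-1}$. Given such a choice, the event $N(S) \subseteq J$ means every one of the $s(m-s+1)$ potential edges from $S$ to $V_2 \setminus J$ is absent, which has probability $2^{-s(m-s+1)}$. Hence
\[
\mathbb{P}(\text{no complete matching}) \le \sum_{s=1}^{n} \binom{n}{s}\binom{m}{s-1} 2^{-s(m-s+1)}.
\]
Now I would estimate this sum. For $s$ bounded (say $s = O(1)$), the dominant term is $s=1$, giving $\binom{n}{1}\cdot 1 \cdot 2^{-m} = n 2^{-m} = 2^{-m + \log n} = 2^{-m(1+o_m(1))}$ provided $n$ is, say, sub-exponential in $m$ --- and since $n \le m$ this is automatic: $\log n \le \log m = o(m)$. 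For larger $s$ one checks the terms only get smaller: $\binom{n}{s}\binom{m}{s-1} \le (nm)^s \le 2^{2s\log m}$, while the exponential factor is $2^{-s(m-s+1)} \le 2^{-sm/2}$ once $s \le m/2$ (and $s \le n \le m$, with the regime $s$ close to $m$ handled crudely since then $m-s+1 \ge 1$ and the binomials are still at most $2^{2m\log m}$ against a factor that one must treat separately --- but in fact $s \le n \le m$ and when $s = n = m$ the term is $\binom{m}{m}\binom{m}{m-1}2^{-m} = m 2^{-m}$, fine). So every term is $2^{-sm/2 + 2s\log m} = 2^{-s m(1/2 + o_m(1))}$, and the whole sum is at most $n \cdot 2^{-m(1/2+o_m(1))}$... which only gives exponent $m/2$, not $m$.

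The fix, and what I expect to be the main technical point, is to be more careful about the count of candidate neighbourhood sets: we do not need to union over all $\binom{m}{s-1}$ sets $J$, because $N(S)$ is determined by the random graph, not chosen adversarially. Instead I would condition on $S$ only (there are $\binom{n}{s}$ choices) and bound $\mathbb{P}(|N(S)| \le s-1)$ directly: this is the probability that the random bipartite graph between $S$ (size $s$) and $V_2$ (size $m$) has at most $s-1$ non-isolated vertices on the $V_2$ side, i.e. at least $m - s + 1$ vertices of $V_2$ are isolated from $S$. For a fixed vertex of $V_2$ the chance of being isolated from $S$ is $2^{-s}$, independently across $V_2$, so this probability is $\binom{m}{m-s+1}(2^{-s})^{m-s+1}(1-2^{-s})^{s-1} \le \binom{m}{s-1} 2^{-s(m-s+1)}$ --- the same bound, so this refinement alone does not help either. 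The genuinely better bound comes from noting that when $s \ge 2$ a minimal Hall witness forces every vertex of $N(S)$ to have $\ge 2$ neighbours in $S$; combined with the $m-s+1$ forced isolated vertices, one gets the stronger estimate, and more simply: the $s=1$ term $n2^{-m}$ already dominates, and for $s \ge 2$ one uses $\binom{n}{s}\binom{m}{s-1}2^{-s(m-s+1)} \le 2^{-(2m - O(s) - O(\log m))} \le 2^{-m(1+o_m(1))}$ because for $s \ge 2$, $s(m-s+1) \ge 2(m-s+1) \ge 2m - 2s \ge 2m - 2n \ge \dots$ --- here one does need $s$ not too large, which holds since a term with $s$ close to $m$ is separately tiny as $\binom{m}{s-1}$ is then small again. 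Summing the geometric-type tail over $2 \le s \le n$ keeps us at $2^{-m(1+o_m(1))}$, and adding the $s=1$ term gives the claimed $1 - 2^{-m(1+o_m(1))}$. I would organise the write-up as: (1) Hall + reduction to minimal witness; (2) the per-$s$ probability bound; (3) splitting the sum into $s=1$ and $s \ge 2$ and checking each piece is $2^{-m(1+o_m(1))}$, with the $s\ge2$ estimate being the part that needs the "$\ge 2$ neighbours" observation to beat the naive $m/2$ exponent.
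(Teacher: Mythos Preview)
Your approach is the same as the paper's: Hall's theorem plus the per-$s$ bound $\mathbb{P}(|N(S)|\le s-1)\le \binom{m}{s-1}2^{-s(m-s+1)}$, then a union bound over $s$. You also correctly identify the lower bound (an isolated vertex in $V_1$) and that the $s=1$ term already gives $n\,2^{-m}=2^{-m(1+o_m(1))}$.

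The gap is in controlling the sum for $s\ge 2$. Your inequality $s(m-s+1)\ge 2(m-s+1)=2m-2s+2$, combined with $\binom{n}{s}\binom{m}{s-1}\le (nm)^s$, only yields terms of size $2^{O(s\log m)-2m+2s}$, which is useless once $s$ is of order $m/\log m$ or larger; and the minimal-witness observation (each vertex of $N(S)$ has $\ge 2$ neighbours in $S$) contributes no useful extra factor here --- indeed you never actually deploy it. The paper instead uses a clean three-way split. The cases $s\le 2$ are handled term by term. For $3\le s\le \min\{n,m-2\}$ one uses that the parabola $s\mapsto s(m-s+1)$ on that range takes its minimum at the endpoints, so $s(m-s+1)\ge 3(m-2)$; hence the entire middle sum is at most $2^{n+m}\,2^{-3(m-2)}\le 2^{-m+O(1)}$ (using $n\le m$). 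Finally $s\in\{n-1,n\}$ only needs attention when $m-n\le 1$, and those two terms are checked directly to be $O(n^2 2^{-m})$. Drop the minimal-witness detour and replace your $s=1$ / $s\ge 2$ dichotomy with this three-way split, and your argument is complete.
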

\begin{proof}
  We can find an isolated vertex in $V_1$ with probability at least $2^{-m}$,
  hence $2^{-m}$ is a lower bound on the probability of failing to have a
  complete matching
  (from $V_1$ to $V_2$).
  Hall's marriage theorem states that $G$ contains a complete matching
  if and only if $|N(S)| \ge |S|$ for every $S \subseteq V_1$.
Let $E_S$ be the event that $|N(S)| < |S|$. We have $\mathbb{P}(E_S) \le {m \choose m - s + 1}2^{-s(m-s+1)}$, where $s = |S|$. Thus
\begin{align*}
    \mathbb{P}(\cup_S E_S)
    &\le \sum_S \mathbb{P}(E_S)
  = %\sum_{\substack{|S| \le 2 \text{ or} \\ |S| \ge n - 1}} \mathbb{P}(E_S)
    \sum_{|S| \le 2} \mathbb{P}(E_S) + \sum_{3 \leq |S| \leq n - 2} \mathbb{P}(E_S) + \sum_{ |S| \ge n - 1} \mathbb{P}(E_S). %\\
%    &\le (mn)^2 2^{-m} + 2^{n+m}2^{-3(m-2)} = 2^{-m(1 + o(1))}
    %\qedhere
\end{align*}
%For $s=1$ we have $\mathbb{P}(E_S) = 2^{-m}$, and for $s=2$ we have $\mathbb{P}(E_S) \leq m 2^{-2(m-1)}$.
Clearly
\[ \sum_{|S| \le 2} \mathbb{P}(E_S) \leq n 2^{-m} + \binom{n}{2} m 2^{-2(m-1)} \leq n \, 2^{-m +O(1)}.\]
For $3 \leq s \leq \min \{n,m-2\}$ we have $2^{-s(m-s+1)} \leq 2^{-3(m-2)}$, so
\[\sum_{3 \leq |S| \leq n - 2} \mathbb{P}(E_S)  \leq\!\!\sum_{3 \leq |S| \leq \min \{n,m-2\}} \mathbb{P}(E_S) \leq 2^{n+m} 2^{-3(m-2)} \leq 2^{-m+O(1)}. \]
If $m \geq n+2$ we are done, so assume that $m=n+\delta$ where $\delta$ is 0 or 1, and consider $s=n-1$ and $s=n$.
We have $(n-1)(m-n+2)=(m-\delta-1)(\delta+2) \geq m$ for $m \geq 3$, and $n(m-n+1) = (m-\delta)(\delta+1) \geq m$ for $m \geq 2$.  Hence
\[ \sum_{ |S| \ge n - 1} \mathbb{P}(E_S) \leq n^2 2^{-m}\]
for $m \geq 3$, which completes the proof.
\end{proof}

\begin{lemma}
  The random graph $G^+ \sim Gen^+(n)$ is Hamiltonian wvhp.
\end{lemma}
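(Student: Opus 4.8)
The plan is to apply the deterministic criterion of Lemma~\ref{thm:unipolar_deterministic} to the induced unipolar arrangement $(G^+, C)$ coming from the generation process, so that the task reduces to finding, wvhp, two suitable disjoint subsets $A_1, A_2 \subseteq C$ together with the two required complete matchings into them. First I would condition on $k = k_0 \in \mathcal{I} = [n/2 - \sqrt n, n/2 + \sqrt n]$, which holds wvhp by Theorem~\ref{thm:concentration}, and also condition on the partition $\sigma$ of $\overline C$; by Lemma~\ref{thm:max_block} (applied with, say, $x = n/\sqrt{\ln n}$) wvhp every side clique has size $o(n)$, and by Lemma~\ref{thm:chernoff_blocks_count} wvhp the number of side cliques is $|\sigma| = (1+o(1))(n-k_0)/\ln(n-k_0) = o(n)$. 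Choose $T_1$ and $T_2$ inside $\overline C$ as in Lemma~\ref{thm:unipolar_deterministic}: one representative vertex from each side clique, overlapping exactly on the singleton side cliques; then $|T_1|, |T_2| \le |\sigma| = o(n)$.

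Next I would split the central clique into two disjoint sets $A_1, A_2 \subseteq C$ of size $\lfloor k_0/2 \rfloor$ each (so each has size $(1/4 + o(1))n$, which comfortably exceeds $|T_i|$). The bipartite graph of edges between $T_i$ and $A_i$ is exactly a random bipartite graph with parts of sizes $|T_i| = o(n)$ and $|A_i| = \Theta(n)$, with each edge present independently with probability $1/2$ — this is precisely the setting of Lemma~\ref{thm:matching_bip_graph} with "$n$" $= |T_i|$ and "$m$" $= |A_i|$. That lemma gives a complete matching from $T_i$ into $A_i$ with probability $1 - 2^{-|A_i|(1+o(1))} = 1 - 2^{-\Omega(n)}$. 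The two matchings $M_1, M_2$ use disjoint vertex sets of $C$ by construction, and they are determined by disjoint sets of potential edges, so the two matching events are independent (and in any case a union bound suffices). By Lemma~\ref{thm:unipolar_deterministic}, $G^+$ is then Hamiltonian.

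Finally I would collect the exceptional events: $k \notin \mathcal{I}$ has probability $e^{-\Omega(n)}$ by Theorem~\ref{thm:concentration}; the failure of the side-clique-size bound has probability $e^{-\Omega(n)}$ by Lemma~\ref{thm:max_block}; the failure of the block-count bound has probability $e^{-\Omega(n)}$ by Lemma~\ref{thm:chernoff_blocks_count}; and, conditioned on the good event, each matching fails with probability $2^{-\Omega(n)}$. A union bound over this finite list gives that $G^+$ is Hamiltonian wvhp. The only mildly delicate point — and the closest thing to an obstacle — is bookkeeping: one must make sure $C$ is large enough to host two disjoint sets $A_i$ each dominating the corresponding $|T_i|$, which is immediate once we know $k_0 = (1/2+o(1))n$ and $|\sigma| = o(n)$; everything else is a direct invocation of the lemmas already proved, so no genuinely new idea is needed here.
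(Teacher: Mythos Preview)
Your proposal is correct and follows essentially the same route as the paper: condition on $k$ near $n/2$ and on $|\sigma|$ being $o(n)$, split $C$ into two near-equal halves $A_1, A_2$, apply Lemma~\ref{thm:matching_bip_graph} to get complete matchings from $T_i$ into $A_i$, and invoke Lemma~\ref{thm:unipolar_deterministic}. The appeal to Lemma~\ref{thm:max_block} is superfluous (the argument only needs the bound on $|\sigma|$, not on the size of individual side cliques), but it does no harm.
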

\begin{proof}
  Condition on $|k - n/2| < \sqrt{n}$, which happens with wvhp
  by Theorem~\ref{thm:concentration}.
  By Lemma~\ref{thm:chernoff_blocks_count},
  the number of side cliques is at most $n/16$ wvhp.
  Condition on this event
  and on the arrangement $(G^+, C)$ induced by the generation process.
  Partition $C$ into two near equal parts $A, B \subseteq C$ with $||A| - |B|| \le 1$,
  so that $|A| \sim |B| \sim n/4$.
  Find $T_A, T_B \subset \overline C$ as in Lemma \ref{thm:unipolar_deterministic}.
  By Lemma \ref{thm:matching_bip_graph}, we can find complete matchings between $T_A$ and $A$
  and between $T_B$ and $B$ wvhp.
  We now see that $G^+$ satisfies the conditions of Lemma \ref{thm:unipolar_deterministic}
  wvhp and therefore $G^+$ is Hamiltonian wvhp.
\end{proof}

\subsection{Hamilton cycles in random bipartite graphs}

Hamilton cycles in random bipartite graphs have been studied in
Frieze~\cite{ham_bipartite}
and in Bollob\'{a}s and Kohayakawa~\cite{bollobas_ham_cycle}.
However, we are interested in the dense case and
seek an exponentially small failure probability,
which does not appear to have been done before.
We shall show:

\begin{thm}
  \label{thm:probabilistic_hamiltonian}
  If $G \sim G_{n, n, \frac{1}{2}}$
  then $G$ is Hamiltonian with probability $1 - 2^{-(1 +o(1))n}$.
\end{thm}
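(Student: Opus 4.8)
The plan is to show that, up to a sub\-exponential factor, $G\sim G_{n,n,\frac12}$ fails to be Hamiltonian precisely when it has a vertex of degree at most $1$. Write $G=(A,B,E)$ with $|A|=|B|=n$. For the lower bound: a graph with an isolated vertex is not Hamiltonian, and since the events $\{d_G(v)=0\}$ for distinct $v\in A$ depend on disjoint sets of potential edges, inclusion--exclusion gives $\mathbb{P}(\exists\, v\in A:\ d_G(v)=0)\ge n2^{-n}-\binom n2 2^{-2n}=2^{-(1+o(1))n}$; hence $\mathbb{P}(G\ \text{not Hamiltonian})\ge 2^{-(1+o(1))n}$, and it remains to prove the matching upper bound $\mathbb{P}(G\ \text{not Hamiltonian})\le 2^{-(1-o(1))n}$.

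For the upper bound I would combine a rotation--extension (P\'osa) argument with two-round exposure. Write $G=G'\cup G''$ with $G'\sim G_{n,n,p'}$ and $G''\sim G_{n,n,p''}$ independent, where $p''=n^{-1/2}$ and $1-p'=\frac12/(1-p'')$, so that an edge is absent from $G$ with probability $(1-p')(1-p'')=\frac12$ and $1-p'=\frac12(1+o(1))$. Let $\mathcal E'$ be the event that $G'$ is connected, has minimum degree at least $2$, and satisfies the bipartite expansion property $|N_{G'}(S)|>2|S|$ for every $S\subseteq A$ and every $S\subseteq B$ with $2\le|S|\le n/4$. A routine union bound gives $\mathbb{P}(\overline{\mathcal E'})=2^{-(1-o(1))n}$: the term $\mathbb{P}(\delta(G')\le1)$ is $2^{-n+O(\sqrt n)}$ and dominates, while the probability that $G'$ is disconnected with $\delta(G')\ge2$, and the probability that expansion fails for some $S$ (at most $\sum_{2\le s\le n/4}\binom ns\binom n{2s}(1-p')^{s(n-2s)}$), are each $2^{-(2-o(1))n}$.

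The heart of the argument is the deterministic claim that, on $\mathcal E'$, every graph $F$ with $G'\subseteq F$ that is not Hamiltonian has at least $n^2/16$ \emph{boosters} --- non-edges $ab$, $a\in A$, $b\in B$, such that a longest path of $F+ab$ is strictly longer than one of $F$ (indeed, if $F$ is connected and non-Hamiltonian and adding $ab$ closes a longest path $P$ into a cycle on $V(P)$, then since $V(P)\ne V(F)$ some vertex outside $P$ is joined to $V(P)$, giving a path on $|V(P)|+1$ vertices). To produce them, take a longest path $P$ of $F$ with ends $u\in A$ and $v$; rotations at the $v$-end keeping $u$ fixed yield an endpoint set $W_1$, with $W_1$ and $N_F(W_1)$ in opposite colour classes, so P\'osa's structural lemma gives $|N_F(W_1)|<2|W_1|$, and with the expansion property (and $\delta(F)\ge2$, which forces $|W_1|\ge2$) this gives $|W_1|>n/4$; for each $w\in W_1$ choose a longest path from $u$ to $w$ and rotate at the $u$-end keeping $w$ fixed, obtaining $W_2(w)\subseteq A$ with $|W_2(w)|>n/4$; if $w$ were adjacent in $F$ to a vertex of $W_2(w)$ we would already get a cycle on $V(P)$ and hence a contradiction, so each of the more than $(n/4)^2$ pairs $(w',w)$ with $w\in W_1$, $w'\in W_2(w)$ is a non-edge and a booster. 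On $\mathcal E'$ I would then expose $G''$ and absorb boosters one at a time: edge-additions preserve connectivity, $\delta\ge2$ and expansion, so at each of $O(n)$ rounds the current graph is Hamiltonian or has $\ge n^2/16$ boosters, and $G''$ misses all of them with probability at most $(1-p'')^{n^2/16}$; so a Hamilton cycle is reached except with probability $O(n)(1-p'')^{n^2/16}=2^{-\Omega(n^{3/2})}$. Hence $\mathbb{P}(G\ \text{not Hamiltonian})\le\mathbb{P}(\overline{\mathcal E'})+2^{-\Omega(n^{3/2})}=2^{-(1-o(1))n}$, which with the lower bound gives $\mathbb{P}(G\ \text{Hamiltonian})=1-2^{-(1+o(1))n}$.

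The main obstacle is the deterministic core: carrying out the bipartite rotation--extension mechanics carefully --- tracking the parity of path lengths so that the rotated endpoint sets lie in the intended colour class, and establishing the bipartite form of P\'osa's lemma $|N(W_1)|<2|W_1|$ --- and checking that the two-round exposure can be run with the required independence while edge-additions preserve the properties defining $\mathcal E'$. Everything else (the union bound defining $\mathcal E'$ and the exposure bookkeeping) is routine.
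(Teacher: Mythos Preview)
Your lower bound and the overall sprinkling framework are fine, but the approach diverges from the paper's and, as written, does not reach the precise exponent $2^{-(1+o(1))n}$. The paper does \emph{not} use two-round exposure or boosters at all: it proves a clean deterministic criterion, introducing the bipartite-hole parameter $\widetilde{\alpha}(G)$ (the largest $k$ with $k$-sets $S_1\subseteq V_1$, $S_2\subseteq V_2$ and $E(S_1,S_2)=\emptyset$) and showing via a two-level P\'osa flip argument that any balanced bipartite graph with $\delta(G)\ge 2\widetilde{\alpha}(G)+1$ is Hamiltonian. The probabilistic step is then a single union bound: with $t=\sqrt n$, both $\{\delta<2t+1\}$ and $\{\widetilde{\alpha}>t\}$ have probability $2^{-(1+o(1))n}$. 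No sprinkling is needed, and the parity issue is dispatched cleanly inside the deterministic lemma (if both ends of a longest path lie in $V_1$, one shows every vertex of $V_2$ is adjacent to the two-flip endpoint set $W_2$, so any $u\in V_2\setminus V(P)$ extends the path, a contradiction).

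The concrete gap in your plan is exactly the parity case you flag. When the longest path has both endpoints in the same class, say $A$, your endpoint sets $W_1$ and $W_2(w)$ both lie in $A$, so the pairs $(w',w)$ you produce are \emph{never} potential edges and hence are not boosters. The only evident boosters are then $W_1\times(B\setminus V(P))$, and when the path is near-spanning one has $|B\setminus V(P)|=1$, giving only about $n/4$ boosters. With $p''=n^{-1/2}$, the probability that $G''$ misses all of them in such a round is $(1-p'')^{n/4}=e^{-\Theta(\sqrt n)}$, and these odd-parity rounds recur (closing an even cycle and extending by one vertex lands you back in the same-class case), so the sprinkling contribution is $e^{-\Omega(\sqrt n)}$ rather than the $2^{-\Omega(n^{3/2})}$ you claim. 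This swamps $\mathbb{P}(\overline{\mathcal E'})=2^{-(1-o(1))n}$ and you only get $\mathbb{P}(G\text{ not Hamiltonian})\le e^{-\Omega(\sqrt n)}$. No choice of $p''$ fixes this: making $p''$ a constant recovers an exponential sprinkling bound but pushes $1-p'$ strictly above $1/2$, so $\mathbb{P}(\delta(G')\le1)$ already exceeds $2^{-(1-o(1))n}$. The paper's $\widetilde{\alpha}$ criterion sidesteps the whole issue; if you want to salvage your route, you would need a genuinely different source of $\Theta(n^2)$ boosters in the odd-parity case, which your sketch does not provide.
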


This theorem will follow from the deterministic
Theorem~\ref{thm:deterministic_hamiltonian} below,
based on `Posa flips', but first we need some preparation.

\begin{define}
  Suppose $G$ is a bipartite graph.
  We define $\widetilde{\alpha}(G)$ to be the maximum integer $k$
  such that we can find $k$-sets $S_1 \subseteq V_1$ and $S_2 \subseteq V_2$
  with $E(S_1, S_2) = \emptyset$ for a bipartition $(V_1, V_2)$ of $G$.
  We call the pair $(S_1, S_2)$ \emph{a bipartite hole} of $G$.
\end{define}

\noindent
We denote the minimum degree of a graph $G$ by $\delta(G)$.

\begin{lemma}
  \label{thm:ham_connected}
Suppose $G$ is a bipartite graph such that $\delta(G) > \widetilde{\alpha}(G)$. Then $G$ is $(\delta(G) - \widetilde{\alpha}(G))$-connected.
\end{lemma}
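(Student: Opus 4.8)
The plan is to show that removing fewer than $\delta(G) - \widetilde{\alpha}(G)$ vertices leaves a connected graph, by a standard ``small separator forces a large bipartite hole'' argument adapted to the bipartite setting. Suppose for contradiction that $S \subseteq V(G)$ is a cutset with $|S| \le \delta(G) - \widetilde{\alpha}(G) - 1$, and write $V_1, V_2$ for the bipartition classes. Let $A$ and $B$ be two parts of $G - S$ that are separated, i.e.\ $E(A,B) = \emptyset$, with $A \cup B = V(G) \setminus S$. I would first observe that, because $G$ is bipartite, I can split $A = A_1 \cup A_2$ and $B = B_1 \cup B_2$ according to the bipartition ($A_i = A \cap V_i$, etc.), and there are no edges between $A_1$ and $B_2$, nor between $A_2$ and $B_1$.

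The key step is a degree count to show both $A$ and $B$ are ``large on both sides''. Take any vertex $a \in A_1$. All of its neighbours lie in $A_2 \cup (S \cap V_2)$, so $|A_2| \ge \delta(G) - |S| \ge \widetilde{\alpha}(G)+1$; symmetrically, if $A_2$ is nonempty then $|A_1| \ge \widetilde{\alpha}(G)+1$, and the same for $B_1, B_2$. One has to rule out degenerate cases where, say, $A \subseteq V_1$: but if $A \subseteq V_1$ then every vertex of $A$ has all neighbours in $S$, forcing $|S| \ge \delta(G)$, a contradiction. Hence each of $A$ and $B$ meets both sides, and all four sets $A_1, A_2, B_1, B_2$ have size $\ge \widetilde{\alpha}(G)+1$.

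Now I produce a bipartite hole that is too big. Since $E(A_1, B_2) = \emptyset$, the pair $(A_1, B_2)$ witnesses $\widetilde{\alpha}(G) \ge \min(|A_1|, |B_2|) \ge \widetilde{\alpha}(G)+1$ — a contradiction. (One should phrase the definition check carefully: a bipartite hole is a pair of $k$-sets $S_1 \subseteq V_1$, $S_2 \subseteq V_2$ with no edges between them, so I take $S_1$ to be any $(\widetilde\alpha(G)+1)$-subset of $A_1$ and $S_2$ any $(\widetilde\alpha(G)+1)$-subset of $B_2$.) This contradiction shows no such $S$ exists, so $G$ is $(\delta(G) - \widetilde{\alpha}(G))$-connected.

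I expect the only real subtlety to be the bookkeeping over which bipartition class each piece lands in, and making sure the degenerate configurations ($A$ or $B$ entirely inside one class, or one of $A_1, A_2, B_1, B_2$ empty) are handled — each such case quickly collapses to $|S| \ge \delta(G)$, contradicting the bound on $|S|$. Everything else is the routine Chvátal–Erdős-style separator argument, just with ``independent set'' replaced by ``bipartite hole''.
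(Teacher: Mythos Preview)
Your proof is correct and uses essentially the same idea as the paper's: both arguments hinge on showing that after deleting fewer than $\delta(G)-\widetilde\alpha(G)$ vertices, any failure of connectivity would produce a bipartite hole of size $\widetilde\alpha(G)+1$. The only difference is organizational: the paper argues directly that any two surviving vertices $v_1\in V_1$, $v_2\in V_2$ are joined by a path of length at most~$3$ (since $N(v_1)\setminus S$ and $N(v_2)\setminus S$ each have size $\ge\widetilde\alpha(G)+1$ and hence cannot be edge-disjoint), whereas you argue by contradiction from a putative separation $A\mid B$ and exhibit the hole $(A_1,B_2)$ --- but these are two packagings of the same observation.
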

\begin{proof}
Let $S \subseteq V(G)$ be a set of less than $\delta(G) - \widetilde{\alpha}(G)$  vertices. We must show that $G'=G[V(G) \setminus S]$ is connected. Let $(V_1, V_2)$ be a bipartition of $G$.

Let $v_1 \in V_1 \setminus S$ and $v_2 \in V_2 \setminus S$.  Then $|N(v_1) \setminus S| \ge \widetilde{\alpha}(G) + 1$ and $|N(v_2) \setminus S| \ge \widetilde{\alpha}(G) + 1$; and hence either $v_1$ and $v_2$ are adjacent, or a neighbour of $v_1$ is adjacent to a neighbour of $v_2$ in $G'$.  Thus $v_1$ and $v_2$ are connected  in $G'$ by a path of length at most $3$.
If $v_1, v_2 \in V_1 \setminus S$, then $v_1$ has a neighbour $u \in V_2 \setminus S$, and we have already seen that $v_2$ and $u$ must be connected in $G'$, so $v_1$ and $v_2$ are connected in $G'$.  The same conclusion holds if $v_1, v_2 \in V_2 \setminus S$.
\end{proof}

\begin{thm}
  \label{thm:deterministic_hamiltonian}
  An $n \times n$ bipartite graph $G$ with $n \ge 2$ is Hamiltonian if
  $\delta(G) \ge 2\widetilde{\alpha}(G) + 1$.
\end{thm}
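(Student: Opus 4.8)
The plan is to argue by contradiction: assuming $G$ has no Hamilton cycle, I will produce disjoint sets $S\subseteq V_1$, $T\subseteq V_2$ with $|S|=|T|>\widetilde\alpha(G)$ and no edges between them, contradicting the maximality in the definition of $\widetilde\alpha$. First the easy reductions. If $\widetilde\alpha(G)=0$ then $G=K_{n,n}$, which is Hamiltonian for $n\ge 2$; so write $\widetilde\alpha:=\widetilde\alpha(G)\ge 1$ and $\delta:=\delta(G)\ge 2\widetilde\alpha+1\ge 3$. Since $\delta>\widetilde\alpha$, Lemma~\ref{thm:ham_connected} gives that $G$ is $(\delta-\widetilde\alpha)$-connected, hence at least $(\widetilde\alpha+1)$-connected, and in particular $2$-connected. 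Note that a bipartite hole of size $\lceil\delta/2\rceil$ already does the job, since $\delta\ge 2\widetilde\alpha+1$ forces $\lceil\delta/2\rceil\ge\widetilde\alpha+1$; the target size $2\widetilde\alpha$ is thus exactly tuned to the factor $2$ that will appear below.

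The engine is the rotation--extension (Pós\'a) technique. Take a longest path $P=v_0v_1\cdots v_\ell$ in $G$; by connectivity every neighbour of an endpoint of $P$, and of any path obtained from $P$ by rotations, lies on that path. Using rotations at an endpoint together with $2$-connectivity, I would first reduce to the case that $P$ is a Hamilton path: a vertex off $V(P)$ adjacent to an endpoint of some rotated path would extend it, contradicting maximality, so if $V(P)\ne V(G)$ one instead gets (via Pós\'a's rotation lemma) a rotation-endpoint set $A$ with all neighbours inside $V(P)$, with $A$ on one side of the bipartition, $|A|\ge\delta$, and $|N_G(A)|<2|A|$, which already yields a large bipartite hole by the argument of the next paragraph. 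So assume $P=v_0\cdots v_{2n-1}$ is a Hamilton path, $v_0\in V_1$, $v_{2n-1}\in V_2$. Rotating with the right end $v_{2n-1}$ fixed produces the set $A\subseteq V_1$ of left endpoints of Hamilton paths reachable from $P$; here $v_0\in A$ and each neighbour $v_i$ of $v_0$ with $i\ge 2$ contributes a distinct vertex $v_{i-1}$, so $|A|\ge\delta$. If some $a\in A$ had $av_{2n-1}\in E(G)$ we would close the corresponding Hamilton path into a Hamilton cycle; hence $N_G(v_{2n-1})\cap A=\emptyset$, i.e. $v_{2n-1}\notin N_G(A)$. Symmetrically, rotating with $v_0$ fixed gives $B\subseteq V_2$ with $|B|\ge\delta$ and $v_0\notin N_G(B)$. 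Pós\'a's lemma bounds $|N_G(A)|<2|A|$ (with $N_G(A)\subseteq V_2$) and $|N_G(B)|<2|B|$ (with $N_G(B)\subseteq V_1$).

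It remains to extract the bipartite hole. The clean case is when $A$ is not too large: then $S:=A$ and $T:=V_2\setminus N_G(A)$ have no edges between them by definition of $N_G(A)$, and $|S|=|A|\ge\delta$ while $|T|=n-|N_G(A)|>n-2|A|$, so $\min(|S|,|T|)\ge\lceil\delta/2\rceil>\widetilde\alpha$ as soon as $2|A|\le n-\lceil\delta/2\rceil$. The remaining case, where the rotation set $A$ is large, is the main obstacle: here one must bring in the second set $B$ and, via a further round of rotations (rotating each reachable Hamilton path from $a$ to $v_{2n-1}$ with $a$ fixed, and arguing that the right endpoints so reached cover $B$ up to a controlled defect), show that \emph{no} edge joins $A$ to $B$; then $(A,B)$ is itself a bipartite hole of size $\ge\delta>\widetilde\alpha$ with no balancing needed. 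The delicate point throughout is bookkeeping the bipartition classes so that the witnessing sets really form a \emph{bipartite} hole rather than merely an independent set, and ensuring the size estimate survives with the tight constant — the ``$2$'' in $\delta\ge 2\widetilde\alpha+1$ being precisely the ``$2$'' in Pós\'a's neighbourhood bound $|N_G(A)|<2|A|$, so there is essentially no slack. Everything else (the two easy reductions, the counting behind Pós\'a's lemma, and the closing arguments) is routine.
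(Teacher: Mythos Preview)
Your approach has a genuine gap. The proof is structured as a case split on the size of the full rotation closure $A$, and you explicitly flag the large-$|A|$ case as ``the main obstacle'' without actually resolving it. The sketched idea --- for each $a\in A$ rotate $P_a$ with $a$ fixed to obtain a set $B_a$, and argue that $B_a$ ``covers $B$ up to a controlled defect'' --- is not a proof: the sets $B_a$ are rotation closures of \emph{different} paths with \emph{different} fixed endpoints, and there is no reason they should all contain (or nearly contain) the single set $B$ obtained by rotating $P$ with $v_0$ fixed. Without that, you cannot conclude $E(A,B)=\emptyset$. The same unresolved case split also undermines your reduction to a Hamilton path: there too you invoke ``$|N_G(A)|<2|A|$ yields a large bipartite hole'', which only works when $|A|$ is not too large.

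The paper avoids this difficulty by \emph{not} using the full rotation closure at all. It works with the bounded-depth sets $W_1$ (endpoints after at most one flip) and $W_2$ (after at most two flips), and proves directly that $|W_2|\ge n-2\widetilde\alpha$. The key trick is to choose a pivot $v_{t+1}\in W_1$ so that $W_1$ splits into two pieces $M_1,M_2$ of size $>\widetilde\alpha$ lying on opposite sides of the pivot along $P$; then for each $v_j\in V_2$ on the appropriate side, adjacency to the opposite $M_k$ forces (via the predecessor/successor rules~(\ref{eq:succ}),~(\ref{eq:pred})) a specific neighbour of $v_j$ into $W_2$. The non-neighbours $N_k^c$ of each $M_k$ form one side of a bipartite hole with $M_k$ on the other, so $|N_k^c|\le\widetilde\alpha$, giving $|W_2|\ge n-2\widetilde\alpha$ with no case analysis. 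Once $|W_2|\ge n-2\widetilde\alpha$, every vertex of $V_2$ (having degree $\ge 2\widetilde\alpha+1$) sees $W_2$, which closes the cycle or extends the path. In short, the constant $2$ in $\delta\ge 2\widetilde\alpha+1$ is used not via P\'osa's neighbourhood bound $|N(A)|<2|A|$ (as you suggest) but via the pivot split of $W_1$ into two halves each exceeding $\widetilde\alpha$.
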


\begin{proof}
  All paths in this proof have their terminal vertices labelled as \emph{start} and \emph{end}. Suppose $P = \{v_1, \ldots, v_l\}$ is a path with start $v_1$ and end $v_l$, and suppose $v_l$ is adjacent to $v_i$. A \emph{flip of $P$ around $v_i$} is the path $\{v_1, \ldots, v_i, v_l, \ldots, v_{i+1}\}$ over $V(P)$, starting at $v_1$ and ending at $v_{i+1}$.
  (If $i=l-1$ the flip does nothing.)

Let $P = \{v_1, \ldots, v_l\}$ be a path starting at $v_1$, ending at $v_l$, and having maximum length of any path in $G$.  For $k = 0, 1, 2$ let $W_k$ be the set of ends of paths obtained from $P$ by at most $k$ flips. Clearly $W_0 = \{v_l\}$. Since $P$ is maximal, $v_l$ is adjacent to vertices of $P$ only, hence $W_1 = \{v_{i+1} : v_iv_l \in E(G)\}$ satisfies $|W_1| \geq \delta(G)$.

Assume that $\delta(G) \ge 2\widetilde{\alpha}(G) + 1$, and let $(V_1, V_2)$ be a bipartition of $G$.
Without loss of generality suppose that $v_l \in V_1$.
Thus each set $W_k \subseteq V_1$.
To complete the proof it is sufficient to show that
every vertex of $V_2$ is adjacent to a vertex of $W_2$.
Indeed, suppose this is the case.
If $v_1 \in V_2$, then $P$ can be closed to a cycle, which must be Hamiltonian
since $G$ is connected by Lemma~\ref{thm:ham_connected} and $P$ has maximum length.
If $v_1 \in V_1$, then there is a vertex $u \in V_2 \setminus V(P)$.
But this contradicts the maximality of $P$,
because there is a path $Q$ with $V(Q) = V(P)$
ending at a neighbour of $u$ obtained from $P$ by two flips,
and $Q$ can be extended to contain $u$, yielding a longer path.

We claim that
\begin{equation} \label{claim.bipham}
|W_2| \geq n- 2 \widetilde{\alpha}(G).
\end{equation}
Assuming this, and recalling that $\delta(G) \ge 2\widetilde{\alpha}(G) + 1$, we see that every vertex of $V_2$ must be adjacent to a vertex of $W_2$, and we are done.  So it remains to establish the claim~(\ref{claim.bipham}).

  Suppose $v_{i+1} \in W_1$ is adjacent to $v_j \neq v_i$.
  Then
  \begin{align}
    v_{j+1} \in W_2 \hspace{15pt} &\mbox{ if } j < i; \label{eq:succ}\\
    v_{j-1} \in W_2 \hspace{15pt} &\mbox{ if } j > i  \label{eq:pred}.
  \end{align}
  More precisely, the path obtained from $P$ by first flipping around $v_i$ and
  then $v_j$ ends with $v_{j+1}$ or $v_{j-1}$ depending on whether $v_j$
  comes before $v_i$ in $P$.

Since $|W_1| \ge 2\widetilde{\alpha}(G) + 1$, we can find an integer $t$ such that $v_{t+1} \in W_1$ and if we write
\begin{align*}
    F_1 &= \{v_i \in V(G) : i > t+1 \} \text{ and} \\
    F_2 &= \{v_i \in V(G) : i < t+1 \},
\end{align*}
then $|F_1 \cap W_1| \ge \widetilde{\alpha}(G)$ and
$|F_2 \cap W_1| \ge \widetilde{\alpha}(G)$.
%Let $M_k = (F_k \cap W_1) \cup \{v_{t+1}\}$. 
Let $M_1 = (F_1 \cap W_1) \cup \{v_{t+1}\}$ and $M_2 = (F_2 \cap W_1) \cup \{v_{t+1}\}$.
It follows from (\ref{eq:pred}) that if $v_j \in F_1$ has a neighbour in $M_2$, then its predecessor, $v_{j-1}$, is contained in $W_2$;
and from (\ref{eq:succ}) it follows that if $v_j \in F_2, v_j \neq v_t$ has a neighbour in $M_1$, then its successor, $v_{j+1}$, is contained in $W_2$.  But $v_{t+1} \in W_1$, and hence $v_{t+1} \in W_2$,
%\m{$v_{t+1} \in W_2$ - is this what the referee was asking about?
%\mr{Yes.}}
 so no special care is required for the case $v_j = v_t$.
  For $k=1, 2$ let $N_k = N[M_{3-k}] \cap F_k$ and
  $N_k^c = (F_k \cap V_2 ) \setminus N_k$
  (and hence $F_k \cap V_2 = N_k \cup N_k^c$);
  and let $N_1^-$ be the set of predecessors in $P$ of vertices in $N_1$,
  and let $N_2^+$ be the set of successors of $N_2$.
  We see that
\begin{enumerate}
  \item
    $N_1^- \cup N_2^+ \subseteq W_2 \setminus \{v_l\}$;
  \item
    $|N_1^-| = |N_i|$, $|N_2^+| = |N_2|$;
  \item
    $N_1^- \cap N_2^+ = \{v_{t+1}\}$;
  \item
    $v_l \in W_2$.
\end{enumerate}
Since $F_k \cap V_2 = N_k \cup N_k^c$ and $v_{t+1} \in V_1$, we observe that
$n = |V_2| = |N_1| + |N_1^c| + |N_2| + |N_2^c|$, and therefore
\begin{align*}
    |W_2|
    &\ge |N_1^- \cup N_2^+| + 1
    = (|N_1^-| + |N_2^+| - 1) + 1 \\
    &= |N_1| + |N_2|
    = n - (|N_1^c| + |N_2^c|).
\end{align*}
However, $E(M_{3-k}, N_k^c) = \emptyset$ (by the definition of $N_k$) and $|M_k| > \widetilde{\alpha}(G)$, so $|N_k^c| \le \widetilde{\alpha}(G)$, and hence $|W_2| \ge n - 2\widetilde{\alpha}(G)$.
Thus we have proved~(\ref{claim.bipham}), and the proof is complete.
%  But the degree of every vertex of $V_2$ is at least $2\widetilde{\alpha}(G) + 1$, so every vertex of $V_2$ is adjacent to $W_2$.
\end{proof}

Theorem~\ref{thm:probabilistic_hamiltonian} now follows because if $G \sim G_{n, n, \frac{1}{2}}$ and $t = \sqrt n$,
then the events $\{\delta(G) < 2t+1\}$ and $\{\widetilde{\alpha}(G) > t\}$ both have probability $2^{-(1 + o(1))n}$. As a lower bound consider the event that $G$ contains an isolated vertex.
For similar results, concerning non-bipartite graphs, we refer to \cite{ham_random}. The paper \cite{ham_random} explains how Theorem~\ref{thm:probabilistic_hamiltonian} could be extended to state that we can find $n^\delta$, $0< \delta < \frac12$,
edge-disjoint Hamilton cycles with probability $1 - 2^{-(1 + o(1))n}$.

\subsection{Hamilton cycles in random co-unipolar graphs}

\begin{lemma}
  Suppose that $G^- \sim Gen^-(n)$ and $X \sim L(n)$.
  Then $G^-$ is Hamiltonian with probability $\mathbb{P}(L(n) \le n/2) - e^{-\Omega(n)}$.
\end{lemma}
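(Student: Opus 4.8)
The plan is to prove matching bounds on $\mathbb{P}(G^- \text{ is Hamiltonian})$. For the upper bound, I would observe that in $G^-$ the central set $C=\pi([k])$ is a stable set of size $k$, while any $n$-vertex graph with a Hamilton cycle contains $C_n$ as a spanning subgraph and so has stability number at most $\lfloor n/2\rfloor$. Hence if $k>n/2$ then $\alpha(G^-)\ge k>\lfloor n/2\rfloor$ and $G^-$ is not Hamiltonian; since $k\sim L(n)$ this already gives $\mathbb{P}(G^-\text{ Hamiltonian})\le \mathbb{P}(L(n)\le n/2)$.

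For the lower bound I would work conditionally on $k=k_0$ for integers $k_0$ in the interval $\mathcal{J}:=[\,n/2-\sqrt n,\,n/2\,]$. Since $n/2-\sqrt n<\mu(n)$ with $\mu(n)-(n/2-\sqrt n)\sim\sqrt n$, Theorem~\ref{thm:concentration} gives $\mathbb{P}(k<n/2-\sqrt n)=e^{-\Omega(n)}$, so $\mathbb{P}(k\in\mathcal{J})=\mathbb{P}(L(n)\le n/2)-e^{-\Omega(n)}$, and it suffices to show $\mathbb{P}(G^-\text{ Hamiltonian}\mid k=k_0)=1-e^{-\Omega(n)}$ uniformly for $k_0\in\mathcal{J}$; combined with the upper bound this yields exactly the claimed value $\mathbb{P}(L(n)\le n/2)-e^{-\Omega(n)}$.

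So fix $k_0\in\mathcal{J}$, condition on $k=k_0$, and take $\pi$ to be the identity, so that $C=[k_0]$ and $S=\overline C$ with $|S|=n-k_0\ge k_0$. In $G^-$ the set $C$ is stable, $S$ induces the complete multipartite graph given by $\sigma\in_u\Pi_{n-k_0}$, and the $C$--$S$ edges form a random bipartite graph $B\sim G_{k_0,\,n-k_0,\,\frac12}$ independent of $\sigma$. Since $k_0\le n-k_0$, put $S_1=\{k_0+1,\dots,2k_0\}$ and $S_2=S\setminus S_1=\{2k_0+1,\dots,n\}$, so $|S_1|=k_0$ and $|S_2|=n-2k_0\le 2\sqrt n$. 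Now $B|_{C\times S_1}\sim G_{k_0,\,k_0,\,\frac12}$ is independent of $\sigma$ and of $B|_{C\times S_2}$, so by Theorem~\ref{thm:probabilistic_hamiltonian} it is Hamiltonian with probability $1-2^{-(1+o(1))k_0}=1-e^{-\Omega(n)}$; this gives a Hamilton cycle $\mathcal{H}$ of $G^-[C\cup S_1]$ which, being a cycle in a balanced bipartite graph, alternates between $C$ and $S_1$. By Lemma~\ref{thm:max_block} applied in $\Pi_{n-k_0}$ (e.g.\ with $x=(n-k_0)/\log n$), wvhp every part of $\sigma$ has size $o(n)$; condition on this and on $\mathcal{H}$. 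For each $w\in S_2$, the set of vertices of $C$ that are $G^-$-adjacent to $w$ lies in $B|_{C\times S_2}$, hence is independent of $\mathcal{H}$ and contains each vertex of $C$ independently with probability $\tfrac12$, so wvhp $w$ has at least $k_0/3$ neighbours in $C$; each such neighbour lies on exactly two edges of $\mathcal{H}$, and at most $o(n)$ edges of $\mathcal{H}$ have their $S_1$-endpoint in the same $\sigma$-part as $w$ (to which $w$ is non-adjacent in $G^-$). Thus wvhp there are at least $n/4$ edges $\{u,s\}$ of $\mathcal{H}$, with $u\in C$ and $s\in S_1$, such that $w$ is $G^-$-adjacent to both $u$ and $s$; a union bound over the at most $2\sqrt n$ vertices of $S_2$ keeps this wvhp. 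Finally I would absorb $S_2$ into $\mathcal{H}$ greedily: processing the vertices of $S_2$ one at a time, absorb $w$ by choosing one of its ``good'' edges $\{u,s\}$ not already used and replacing it by the path $u\,w\,s$ (valid in $G^-$ since $uw$ is a $B$-edge and $ws$ is an edge of the multipartite graph on $S$). As $|S_2|\le 2\sqrt n\ll n/4$, an unused good edge is always available, and the result is a Hamilton cycle of $G^-$. This gives the uniform conditional estimate and finishes the proof.

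The main obstacle is this last part: promoting the Hamilton cycle of the balanced bipartite ``core'' $C\cup S_1$ to one of all of $G^-$ by absorbing the leftover vertices $S_2\subseteq S$. What makes it work is that $k_0\le n/2$ is exactly the condition allowing a balanced core of size $2k_0$ to fit inside $C\cup S$; that the $O(\sqrt n)$-concentration of $k$ from Theorem~\ref{thm:concentration} bounds $|S_2|$ by $2\sqrt n$, negligible against the $\Theta(n)$ supply of good absorption edges; and that the $C$--$S_2$ edges are independent of $\mathcal{H}$, which reduces the absorption to a routine greedy argument once $\mathcal{H}$ and $\sigma$ are fixed.
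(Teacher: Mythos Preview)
Your proof is correct and follows essentially the same strategy as the paper's: upper bound from $\alpha(G^-)\ge k$, then conditionally on $k\in[n/2-\sqrt n,\, n/2]$ build a Hamilton cycle in a balanced bipartite core of size $2k$ via Theorem~\ref{thm:probabilistic_hamiltonian} and absorb the at most $2\sqrt n$ leftover vertices of $\overline C$.

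The only tactical difference is in the absorption step. The paper chooses the leftover set $T$ so that its vertices lie in distinct parts of $\sigma$ (invoking Lemma~\ref{thm:chernoff_blocks_count} to ensure there are enough parts), which makes $G^-[T]$ a clique; it then splices a single Hamilton path of $T$ into the core cycle at one carefully chosen edge. You instead take $S_2$ to be a fixed block of vertices, with no control on its internal structure, and absorb its vertices one at a time by replacing an edge $\{u,s\}$ of the current cycle by the path $u\,w\,s$. Your route avoids Lemma~\ref{thm:chernoff_blocks_count} and the clique trick at the cost of a short greedy iteration; both are equally valid and of comparable length.
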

\begin{proof}
  Recall that $Gen^-(n)$ is a random quadruple $(-1, E, (k, \sigma), \pi)$.
  If $k > n/2$, then $\alpha(G^-) > n/2$,
  which makes it impossible to contain a Hamilton cycle.
  We show that conditional on $k \le n/2$, $G^-$ is Hamiltonian wvhp,
  which will complete the proof.

  From Theorem~\ref{thm:concentration} and Lemma~\ref{thm:chernoff_blocks_count} we may condition on the event that $k \ge n/2 - \sqrt n$ and the event that the number of parts in $\sigma$ is at least $n / (3 \ln n)$ (since both hold wvhp).  Condition also on $(G^-, C)$ being the co-unipolar arrangement induced by the generation.  Partition $\overline C$ as $Q \cup T$, so that $|C| = |Q| = k$, $|T| = n - 2k \le 2 \sqrt n$, and $T$ contains vertices only from different parts of $\sigma$. By Corollary~\ref{thm:probabilistic_hamiltonian}, there is a Hamilton cycle in the induced subgraph $G^-[C \cup Q]$ wvhp. Condition on this event and on $H$ being a Hamiltonian cycle. If $T$ is empty we are done, so assume not.

  Now we have a Hamilton cycle $H$ in $G^-[C\cup Q]$, and a non-empty set
  $T$ of vertices inducing a clique. Let $u, v$ be vertices in $T$,
  and note that there is a $u$--$v$ Hamilton path in $G^-[T]$
  (where $u = v$ if $|T| = 1$).
  We may assume that the maximum size of a side stable set is at most
  $n/\log n$ by Lemma~\ref{thm:max_block}.
  Let $S_u, S_v$ be the side stable sets containing $u$, $v$ respectively,
  and let $R$  be the set of vertices in $Q \setminus S_u$.
  Then  $u$ is adjacent to each vertex in $R$,
  and $|R| \geq k-|S_u| \geq n/3+ |S_v|$ for $n$ sufficiently large.

  The vertices in $R$ have at least $|R| -|S_v| \geq n/3$ neighbours along $H$
  which are not in $S_v$, and if $v$ is adjacent to any of these neighbours
  then we have a Hamilton cycle in the whole graph.  But the probability that
  this fails is at most $2^{-n/3}$, since any neighbour in $\overline{C}$
  must be adjacent to $v$ and so we may assume there are none.
\end{proof}

%%%%%%%%%%%%%%%%%%%%%%%%%%%%%

\section{Connectivity and Chromatic index}
\label{sec:conn-chrindex}

\subsection{Connectivity}
A set of vertices in a graph $G$ is a \emph{cutset}
if removing these vertices leaves a disconnected graph.
The \emph{(vertex) connectivity} of $G$ is the minimum size of a cutset,
except that, if $G$ is complete, then by convention $\kappa(G) = |V(G)| - 1$.

We use $\delta(G)$ to denote the minimum vertex degree.
Observe that $\delta(G)$ is natural upper bound for $\kappa(G)$.
We prove the following theorem:
\begin{thm}
  \label{thm:connectivity}
  The connectivity of almost all perfect graphs equals their minimum degree.
  Indeed, wvhp $\kappa(P_n) = \delta(P_n)$, where $P_n \in_u \mathcal{P}_n$.
\end{thm}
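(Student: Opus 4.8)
The plan is to invoke Theorem~\ref{thm:main} to replace $P_n$ by the generated graph $\rho(Gen(n))$, and then to split via~(\ref{eq:partition_of_gs}) into the unipolar case $G^+ \sim Gen^+(n)$ and the co-unipolar case $G^- \sim Gen^-(n)$; it then suffices to prove $\kappa(G^+) = \delta(G^+)$ wvhp and $\kappa(G^-) = \delta(G^-)$ wvhp. Since $\kappa(G) \le \delta(G)$ for every graph $G$, in each case I only need to rule out, wvhp, a cutset of size smaller than the minimum degree. Throughout I would condition on the wvhp events that $k \in [n/2 - \sqrt n, n/2 + \sqrt n]$ (Theorem~\ref{thm:concentration}), that every part of $\sigma$ has size at most $n/\sqrt{\log n}$ (Lemma~\ref{thm:max_block}), and that $\sigma$ has $\Omega(n/\log n)$ parts, hence in particular some part of size $< \sqrt n$ (Lemma~\ref{thm:chernoff_blocks_count}); I also condition on the induced arrangement.

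For $G^+$, with induced unipolar arrangement $(G^+, C)$, the set $C$ is a clique of size $k$, the set $\overline C$ induces a disjoint union of side cliques $Q_1, \dots, Q_p$, and the edges between $C$ and $\overline C$ form an independent $\tfrac12$-random bipartite graph. Taking $u^\ast$ in a side clique of size $< \sqrt n$, a Chernoff bound gives $\delta(G^+) \le d(u^\ast) < n/4 + O(\sqrt n)$ wvhp. Now suppose $S$ is a cutset with $|S| < \delta(G^+)$. Then $|S| < |C|$, so $C \setminus S$ is a nonempty clique lying in one component $A_0$ of $G^+ - S$; and since $G^+[\overline C] - S$ is a disjoint union of cliques, any other component equals $Q_i \setminus S$ for exactly one $i$, so $N(Q_i \setminus S) \cap C \subseteq S$. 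If $|Q_i \setminus S| \ge 2$, fix distinct $u, u' \in Q_i \setminus S$; a Chernoff bound plus a union bound over the $O(n^3)$ triples $(i,u,u')$ shows wvhp $|S| \ge |(N(u) \cup N(u')) \cap C| \ge (\tfrac34 - o(1))k > n/4 + O(\sqrt n) \ge \delta(G^+)$, a contradiction. If instead $Q_i \setminus S = \{u\}$, then $S$ contains the disjoint sets $Q_i \setminus \{u\}$ and $N(u) \cap C$, so $|S| \ge |Q_i| - 1 + |N(u) \cap C| = d(u) \ge \delta(G^+)$, again a contradiction. Hence $\kappa(G^+) = \delta(G^+)$ wvhp.

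For $G^-$, with induced co-unipolar arrangement $(G^-, C)$, the set $C$ is a stable set of size $k$, the vertices of $\overline C$ induce a complete multipartite graph whose parts all have size $o(n)$ wvhp, and the edges between $C$ and $\overline C$ again form an independent $\tfrac12$-random bipartite graph. For any $v \in C$ we have $d(v) \sim \mathrm{Bin}(n-k, \tfrac12)$, so $\delta(G^-) < n/4 + O(\sqrt n)$ wvhp. Suppose $S$ is a cutset with $|S| < \delta(G^-)$. Then $|\overline C \setminus S| \ge (n-k) - |S| \ge n/4 - O(\sqrt n)$, which exceeds the maximum part size $o(n)$, so $\overline C \setminus S$ meets at least two parts and hence induces a connected subgraph of $G^- - S$; thus $\overline C \setminus S$ lies in one component $A_0$, and (as each vertex of $\overline C \setminus S$ is adjacent in $G^-$ to all of $\overline C$ outside its own part) every component other than $A_0$ is contained in $C$. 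Since $C$ is stable, such a component is a single vertex $\{v\}$ with $N(v) \subseteq S$, so $|S| \ge d(v) \ge \delta(G^-)$, a contradiction. Thus $\kappa(G^-) = \delta(G^-)$ wvhp, and combining the two cases via~(\ref{eq:partition_of_gs}) and Theorem~\ref{thm:main} yields $\kappa(P_n) = \delta(P_n)$ wvhp.

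The step I expect to require the most care is keeping every estimate \emph{with very high probability} rather than merely whp. In particular one must argue that $\sigma$ wvhp has a part small enough to push $\delta(G^+)$ comfortably below the lower bound $(\tfrac34 - o(1))k$ obtained in the ``$|Q_i \setminus S| \ge 2$'' case --- a singleton side clique would be ideal but exists only whp, which is why I would instead use a part of size $< \sqrt n$, justified wvhp via Lemma~\ref{thm:chernoff_blocks_count} --- and that the union bound over the $O(n^3)$ triples $(i,u,u')$ costs only a polynomial factor against an $e^{-\Omega(n)}$ Chernoff tail. The rest is routine: a huge clique (resp.\ huge stable set) of size $\approx n/2$ dominates the structure, so the only way to disconnect $G^+$ (resp.\ $G^-$) cheaply would be to isolate a single low-degree vertex, which costs at least $\delta$.
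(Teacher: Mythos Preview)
Your proof is correct and follows essentially the same approach as the paper: reduce via Theorem~\ref{thm:main} and~(\ref{eq:partition_of_gs}) to $G^+$ and $G^-$, condition on the usual wvhp events, and in each case use that a cutset smaller than $\delta$ would have to separate some piece whose neighbourhood is already too large (via the union-of-two-neighbourhoods Chernoff bound in the unipolar case, and the connectedness of $\overline C\setminus S$ in the co-unipolar case). The only cosmetic differences are that the paper phrases the unipolar argument as ``every vertex outside $S$ reaches $C\setminus S$'' rather than your ``any non-central component equals some $Q_i\setminus S$'' case split, and the paper bounds $\delta(G^+)$ by $(1+5\epsilon)n/4$ using the maximum side-clique size directly rather than first locating a small side clique; your handling of the singleton case $Q_i\setminus S=\{u\}$ via $|S|\ge d(u)$ is arguably cleaner than the paper's treatment.
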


\begin{proof}
  We first consider unipolar graphs.
  Let $(G^+, C)$ be the unipolar arrangement induced by the generation of $Gen^+(n)$.
  Then, for each fixed $\epsilon$ with $0 < \epsilon < \frac{1}{20}$,
  $G^+$ satisfies each of the following properties wvhp:
  \begin{description}
  \item{(a1)}
    $\forall u \in {\overline C}$, $d_C(u) \le (1 + \epsilon)\frac{1}{4}n$.
  \item{(a2)}
    $\forall u \in {\overline C}$, $d_{\overline C}(u) \le \epsilon n$.
  \item{(a3)}
    $\forall u, v \in {\overline C}$, $\; |(N(v) \cup N(u)) \cap C| \ge (1 - \epsilon)\frac{3}{8}n$.
  \item{(a4)}
    $|C| > (1 - \epsilon)\frac{n}{2}$.
  \end{description}
  Chernoff's inequality is sufficient to prove (a1) and (a3);
  for (a2) we can use Lemma~\ref{thm:max_block},
  and (a4) follows from the concentration theorem, Theorem~\ref{thm:concentration}.

  These conditions are sufficient to ensure $\kappa(G^+) = \delta(G^+)$.
  Note first that if ${\overline C}= \emptyset$ then $G^+$ is a clique and the result holds;
  so we may assume that ${\overline C} \neq \emptyset$.
  Suppose for contradiction that $Q \subset V=V(G^+)$ is a separator of $G^+$ with
  $|Q| < \delta(G^+)$.  Note that
  $\delta(G^+) \le (1 + 5 \epsilon)\frac{n}{4} \leq |C|$, and so $C \not \subseteq Q$.
  We claim that every vertex of $V \setminus Q$
  is connected to a vertex in $C \setminus Q$,
  which will yield our contradiction.
  This is obvious for vertices $u \in C \setminus Q$.
  If $u \in {\overline C}$, then since $|Q| < \delta(G^+)$, $u$ must have a neighbour $v$ outside $Q$.
  If $v \in C \setminus Q$ we are done, so suppose that $v \in N(u) \cap {\overline C}$.
  From (a3), $u$ and $v$ see together at least $(1 - \epsilon)\frac{3}{8}n$
  vertices of $C$, while $|Q|$ is at most $(1 + 5 \epsilon)\frac{n}{4}$,
  so there must be a vertex from $C \setminus Q$ connected to either $u$ or $v$.
  \smallskip

  The situation is similar for co-unipolar graphs.
  Let $(G^-, C)$ be the co-unipolar arrangement induced by the generation of $Gen^-(n)$.
  Then, for each fixed $\epsilon$ with $0 < \epsilon < \frac{1}{20}$,
  $G^-$ satisfies the following conditions wvhp.
  \begin{description}
  \item{(b1)}
    $\forall u \in C$, $d(u) \le (1 + \epsilon)\frac{1}{4}n$.
  \item{(b2)}
    Each side independent set in ${\overline C}$ has size at most $\epsilon n$.
  \item{(b3)}
    $|{\overline C}| > (1 - \epsilon)\frac{n}{2}$.
  \end{description}
  To prove these, we can use a Chernoff bound for (b1),
  Lemma~\ref{thm:max_block} for (b2),
  and Theorem~\ref{thm:concentration} for (b3).

  These conditions are sufficient to ensure $\kappa(G^-) = \delta(G^-)$.
  Note first that if $C = \emptyset$ then $G^-$ is a
  complete multipartite graph and the result holds; so we may assume that
  $C \neq \emptyset$.  Thus $\delta(G^-) \le (1 + \epsilon)\frac{1}{4}n$.
  Suppose that $Q \subset V=V(G^-)$ is a cutset of $G^-$ and $|Q| < \delta(G^-)$.
  Fix some vertex $w \in V \setminus Q$.
  We claim that every vertex of $V \setminus Q$ is connected to $w$.

  First consider $u \in {\overline C}$.
  Then $w$ and $u$ are not neighbours (if and) only if they are contained in the
  same side independent set $I$.
  But then there is a vertex $x \in {\overline C} \setminus Q$ in a different side independent set,
  since $|{\overline C} \setminus Q| > (1 - 3 \epsilon) \frac{n}{4} \geq |I|$;
  and the path $u,x,w$ connects $u$ to $w$.
  Now consider $u \in C$.
  Since $|Q| < \delta(G)$,
  $u$ must have a neighbour in ${\overline C}$, and we just seen that all vertices in ${\overline C}$
  are connected to~$w$.

  We have now shown that $\mathbb{P}_{Gen(n)}(\kappa(G) = \delta(G)) = 1 - e^{-\Omega(n)}$,
  and Theorem~\ref{thm:main} completes the proof.
\end{proof}

\subsection{Chromatic Index}
The \emph{chromatic index} of a graph $G$, $\chi^\prime(G)$,
is the minimum number of colours
required to colour the edges, so that no edges with the same colour share a vertex.
Vizing proved in \cite{vizing} that $\chi^\prime(G) = \Delta(G)$ or
$\chi^\prime(G) = \Delta(G) + 1$.
If $\chi^\prime(G) = \Delta(G)$, then $G$ is said to be class one,
otherwise it is said to be class two.

In this subsection we prove
\begin{thm}
  \label{thm:chromatic_idx}
  Almost all perfect graphs are class one graphs.
\end{thm}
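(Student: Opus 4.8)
The plan is to use Theorem~\ref{thm:main} to reduce to analysing $G \sim \rho(Gen(n))$, and then, via~(\ref{eq:partition_of_gs}), to handle the unipolar and co-unipolar cases separately. In each case the goal is to show that wvhp $G$ is class one, so that $\mathbb{P}_{Gen(n)}(G \text{ is class one}) = 1 - e^{-\Omega(n)}$, and hence the same holds for $P_n$ up to an additive $e^{-\Theta(n)}$. The natural tool is a sufficient condition for class one due to Chetwynd and Hilton (or Plantholt): a graph $G$ on an even number of vertices with $\delta(G)$ large, in particular $\delta(G) \ge \tfrac12(|V(G)| + \text{something})$ or $\delta(G) \ge \tfrac{7}{8}n$-type bounds, is class one; alternatively one can use the fact that a graph with a Hamilton-type decomposition or with $\Delta(G) - \delta(G)$ small and $\Delta(G)$ large is class one. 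Since we have already established strong degree control in the generation process, I expect to invoke such a result almost as a black box.

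First I would recall the degree profile established in \S\ref{sec:stabcl} and in the proof of Theorem~\ref{thm:connectivity}: wvhp in the unipolar case every central vertex has degree $(3/4 + o(1))n$ and every side vertex has degree $(1/4 + o(1))n$, with all deviations $O(\sqrt n)$ coming from Theorem~\ref{thm:concentration}, the Chernoff bound, and Lemma~\ref{thm:max_block}; the co-unipolar case is the complement. The key obstacle is that $\Delta(G) - \delta(G)$ is \emph{not} small here — it is $(1/2 + o(1))n$ — so one cannot use the crude "$\Delta - \delta$ small" criterion directly. Instead I would work with each co-bipartite piece $G[C \cup C_j]$ separately: one shows that wvhp an edge-colouring of $G_1$ (the clique-plus-disjoint-cliques skeleton) can be merged with a good colouring of the bipartite part $E$ between $C$ and $\overline C$. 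Concretely, decompose $E(G)$ into the edges inside $C$, the edges inside the side cliques, and the bipartite edges; colour $C$ (a clique on $\sim n/2$ vertices, so $K_m$ with $m$ even is class one, giving a proper edge-colouring with $m-1$ colours) and each side clique optimally, then extend across the bipartite part using the fact that a random bipartite graph $G_{a,b,1/2}$ with $a \asymp b \asymp n$ has, wvhp, the property needed to merge (e.g. it has a proper edge-colouring compatible with prescribed colours on few vertices, or one applies a theorem on list edge-colouring / the bipartite König bound $\chi'=\Delta$).

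The main obstacle, as I see it, is the bookkeeping to show that the total colour count does not exceed $\Delta(G)$: the central clique vertices already use up $\sim 3n/4$ colours among themselves and with the bipartite edges, and one must verify that no vertex is forced above $\Delta(G) = (3/4+o(1))n$. I would handle this by the standard trick of treating $G$ (when $n$ is even — and if $n$ is odd, delete a vertex, colour, and add it back using $\Delta+1 = \Delta$-colourability of the reduced graph plus a greedy fix, since a class-one graph plus a vertex of small degree relative to the slack is still class one) as a subgraph of a $\Delta$-regular class-one graph: augment $G$ by adding a sparse "filler" graph on the same vertex set making it regular of degree $\Delta$, chosen so the union is still a generalised split graph type structure or at least still satisfies the Chetwynd–Hilton density hypothesis $\delta \geq \tfrac12(n-1) + $ slack; regular graphs of even order with $\delta > \tfrac{n}{2}$ that are "nice enough" are $1$-factorizable. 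The cleanest route is: the union of $G$ with a random perfect matching iterated is, wvhp, $1$-factorizable by a Chetwynd–Hilton / Csaba–Kühn–Lo–Osthus-style theorem for dense regular graphs, since after augmentation $\delta = \Delta \ge (3/4+o(1))n \gg n/2$. Running this argument in both the unipolar and co-unipolar cases and combining via~(\ref{eq:partition_of_gs}) and Theorem~\ref{thm:main} finishes the proof; I expect the co-unipolar case to be the easier of the two, since there $\Delta(G^-) = (1/4+o(1))n$ is small and the graph $\overline{G^-}$ is dense enough that one can instead argue $\overline{G^-}$ contributes a near-$1$-factorization of the complement, but in fact the direct bound $\delta(G^-) \ge (1/4-\epsilon)n$ together with $\Delta(G^-) - \delta(G^-) = O(\sqrt n)$ (wvhp, since both central and side vertices have degree $(1/4+o(1))n$) puts us squarely in the "$\Delta - \delta$ small, graph dense" regime where Chetwynd–Hilton applies immediately — so it is actually the \emph{unipolar} case, with its bimodal degree sequence, that is the real obstacle and where the augmentation-to-regular argument is needed.
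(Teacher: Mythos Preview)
Your analysis of the co-unipolar case contains a factual error that undermines the plan. You claim that in $G^-$ both central and side vertices have degree $(1/4+o(1))n$, so that $\Delta(G^-)-\delta(G^-)=O(\sqrt n)$ and Chetwynd--Hilton applies directly. This is false: in $G^-$ the central set $C$ is stable, so vertices of $C$ have degree $(1/4+o(1))n$ coming only from the bipartite part, but $\overline C$ induces a complete multipartite graph with parts of size $O(\log n)$, so vertices there have degree $(3/4+o(1))n$. The degree sequence of $G^-$ is every bit as bimodal as that of $G^+$ (indeed $G^- \sim \overline{G^+}$), so neither case lands in the easy ``$\Delta-\delta$ small'' regime. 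Your augmentation-to-$\Delta$-regular idea would therefore be needed in both cases, and you have not verified that the required filler $F\subseteq\overline{G}$ with the prescribed degree sequence exists; vertices of $\overline C$ need roughly $n/2$ extra edges each, all avoiding $G$, and showing this is realisable whp is a genuine piece of work you have not supplied.

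The paper sidesteps all of this via a classical observation of Vizing: every class-two graph has at least three vertices of maximum degree. Hence it suffices to show that whp $G^+$ (and $G^-$) has a \emph{unique} vertex of maximum degree. Here the bimodality helps: the maximum-degree vertex must lie on the dense side ($C$ for $G^+$, $\overline C$ for $G^-$), and on that side the degree of $v$ equals a common term plus $d_B(v)$, the degree of $v$ in the random bipartite graph between $C$ and $\overline C$, plus (in the co-unipolar case) an $O(\log n)$ correction from the partition. A short calculation for $G_{n,m,1/2}$ with $m/n\to 1$ (the paper's Lemma~\ref{coro_degree}) gives whp a gap of at least $n^{1/2}/\log n$ between the top two bipartite degrees in one colour class, which swamps the $O(\log n)$ noise. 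Applying Theorem~\ref{thm:main} then finishes the proof in a few lines, with no edge-colouring constructions at all.
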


%\m{\mr{I went through Vizing's proof again, and it seems to me that it suffices to apply the $O(mn)$ algorithm to $\Delta$-colour all edges but one,  and then use the standard Vizing's iteration to colour the last edge  without introducing a new colour.  Every vertex, including the one of max degree misses a colour  throughout all recolourings, so it seems that the argument holds (?).}}
Class one graphs have a natural connection with perfect graphs.
Denote the line graph of a graph $G$ by $L(G)$.
Then, for graphs $G$ with $\Delta(G) \geq 3$,
$\Delta(G) = \omega(L(G))$ and $\chi^\prime(G) = \chi(L(G))$;
and hence $G$ is a class one graph iff $\omega(L(G)) = \chi(L(G))$.
Vizing showed in \cite{vizing2} that each graph of class two
has at least $3$ vertices of maximum degree
(and indeed the vertices of maximum degree induce a subgraph with a cycle).
Erd{\H{o}}s and Wilson proved in \cite{chromatic_index} that almost
all graphs have a unique vertex of maximum degree, and thus are class one, see also~\cite{fjmr1988}.  We shall show that almost all perfect graphs have a unique vertex of maximum degree, and thus are class one
and can be $\Delta$-edge-coloured in $O(n^3)$ time.
(If $G$ has a unique vertex $v_0$ of maximum degree then we can $\Delta$-edge-colour $G$ in $O(nm)$-time as follows.
Pick an edge $e$ incident with~$v_0$, use for example the algorithm of~\cite{vizing_algorithm} to $\Delta(G)$-edge-colour $G\backslash e$, and then use a standard Vizing fan iteration to colour $e$.)

Let $d_j = d_j(G)$ be the $j$th largest degree in the list of all $v(G)$ degrees.
Theorem 15 in Chapter III (The degree sequence) of the book \cite{bollobas_random_graphs}
concerns the first few gaps $d_i - d_{i + 1}$ for a random graph $G_{n, p}$.
The next lemma is a much easier result that may be proved along similar lines;
we do not spell a proof here.

\begin{lemma}
  \label{coro_degree}
  Whp $B := G_{n, m, 1/2}$ with $m/n \rightarrow 1$ is such that
  \[
  d_1(B) - d_2(B) \ge \frac{n^{1/2}}{\log n}.
  \]
  where $d_1(B)$ and $d_2(B)$ are the largest and second largest degrees
  of vertices in the first colour class.
\end{lemma}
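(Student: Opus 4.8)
The plan is to argue directly with the degree sequence, exploiting that the degrees $d(v)$ of the $n$ vertices $v$ in the first colour class of $B$ are \emph{independent and identically distributed} $\mathrm{Bin}(m,\tfrac12)$ random variables (here $m=(1+o(1))n$); this makes the situation easier than for $G_{n,p}$, where the degrees are correlated. Write $X\sim\mathrm{Bin}(m,\tfrac12)$, $p_k=\mathbb{P}(X\ge k)$, $\sigma=\tfrac12\sqrt m$, and $s=\lceil n^{1/2}/\log n\rceil$; the goal is $\mathbb{P}(d_1(B)-d_2(B)<s)=o(1)$. Conditioning on the maximum degree being equal to $D$, the event $d_1(B)-d_2(B)<s$ forces some vertex of degree exactly $D$, all other vertices of degree at most $D$, and at least one of those other vertices of degree in $(D-s,D]$; union bounding over the two distinguished vertices gives
\[
\mathbb{P}(d_1(B)-d_2(B)<s)\ \le\ \sum_{D} n\,\mathbb{P}(X=D)\,(n-1)\,q_D\,(1-p_{D+1})^{\,n-2},
\]
where $q_D=\mathbb{P}(D-s<X\le D)$, and on the relevant range $(1-p_{D+1})^{n-2}\le e^{-(n-2)p_{D+1}}\le e^{-(1-o(1))np_D}$.

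The next step is to bound the summand by elementary binomial right-tail estimates. Only $D$ for which the summand is not super-polynomially small matter; there $D=\tfrac m2+t\sigma$ with $t=\Theta(\sqrt{\log n})$, the ratio of consecutive binomial point probabilities equals $1-\Theta(t/\sqrt n)=1-\Theta(\sqrt{\log n}/\sqrt n)$, hence $\mathbb{P}(X=D)=\Theta(t/\sqrt n)\,p_D$, and since $s\cdot\sqrt{\log n}/\sqrt n=1/\sqrt{\log n}=o(1)$ the point probabilities are essentially flat across the window $(D-s,D]$, so $q_D\le(1+o(1))\,s\,\mathbb{P}(X=D)=O\!\big(p_D/\sqrt{\log n}\big)$. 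Substituting and writing $\lambda=\lambda_D=np_D$, each term is $O\!\big(n^{2}p_D^2 n^{-1/2}e^{-(1-o(1))\lambda}\big)=O\!\big(\lambda^2 e^{-(1-o(1))\lambda}\,n^{-1/2}\big)$.

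The final step is the sum over $D$. As $D$ increases by one, $\lambda_D$ is multiplied by $1-\Theta(\sqrt{\log n}/\sqrt n)$, so for any fixed bounded range of values the number of $D$ with $\lambda_D$ in that range is $\Theta(\sqrt n/\sqrt{\log n})$; splitting into dyadic ranges of $\lambda$, the factor $\lambda^2 e^{-(1-o(1))\lambda}$ gives a geometrically convergent sum over those ranges, whence $\sum_D\lambda_D^2 e^{-(1-o(1))\lambda_D}=O(\sqrt n/\sqrt{\log n})$. Therefore $\mathbb{P}(d_1(B)-d_2(B)<s)=O\!\big(n^{-1/2}\cdot\sqrt n/\sqrt{\log n}\big)=O(1/\sqrt{\log n})=o(1)$, as required.

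The main obstacle is that a cruder union bound does not work: summing over all thresholds $k$ near the top the probability $\binom{n}{2}q_k^2$ that two vertices land in a width-$s$ window above $k$ diverges, precisely because the maximum degree is not concentrated on a single value but spread over a window of width $\Theta(\sqrt{n\log n})$. The resolution is to retain the factor $(1-p_{D+1})^{n-2}$ that forces $D$ genuinely to be the maximum; combined with $\mathbb{P}(X=D)^2$ it yields the summable profile $\lambda^2 e^{-\lambda}/\sqrt n$. The remaining work is routine binomial bookkeeping, entirely parallel to — and simpler than — the analysis of the degree sequence of $G_{n,p}$ referred to just before the lemma.
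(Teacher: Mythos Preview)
The paper does not actually prove this lemma: it simply says ``the next lemma is a much easier result that may be proved along similar lines; we do not spell a proof here,'' referring to Theorem~15 in Chapter~III of Bollob\'as' \emph{Random Graphs}. So there is no ``paper's own proof'' to compare with.

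Your argument is correct and is essentially the standard approach for degree-sequence gaps in random graphs (the one the paper alludes to), made easier by the independence of the degrees in the bipartite setting. The key inequality
\[
\mathbb{P}(d_1-d_2<s)\ \le\ \sum_{D}\, n\,\mathbb{P}(X=D)\,(n-1)\,q_D\,(1-p_{D+1})^{\,n-2}
\]
is a sound union bound, and your estimates $\mathbb{P}(X=D)\asymp (\sqrt{\log n}/\sqrt{n})\,p_D$ and $q_D=O(p_D/\sqrt{\log n})$ are valid in the relevant range $\lambda_D=np_D=\Theta(1)$ to $\Theta(\log n)$; the contributions from outside that range are easily seen to be negligible, as you note. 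The dyadic summation over $\lambda$ then gives $O(1/\sqrt{\log n})$. A reader might appreciate one extra sentence making explicit why the tails in $D$ (both small $D$ with $\lambda_D\gg\log n$ and large $D$ with $\lambda_D\ll 1$) contribute $o(1)$ in total, but this is routine. Your remark that a naive two-point union bound without the factor $(1-p_{D+1})^{n-2}$ fails is exactly right and identifies the point of the argument.
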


Let $(G^+, C)$ be the unipolar arrangement induced by the generation of $Gen^+(n)$,
and let $B$ denote the corresponding bipartite graph
with parts $C$ and $\overline C$.
With high probability,
$d(u) \ge (1 - \epsilon)\frac{3}{4}n$ for all $u$ in $C$,
and $d(v) \le (1 + \epsilon)\frac{1}{4}n$ for all $v$ in $\overline C$.
Hence, by Lemma~\ref{coro_degree}, whp
\[
d_1(G^+) - d_2(G^+) = d_1(B) - d_2(B) \ge \frac{n^{1/2}}{2\log n},
\]
and so whp $G^+$ has a unique vertex of maximum degree.

Let $(G^-, C)$ be the co-unipolar arrangement induced by the generation of $Gen^-(n)$,
and let $B$ denote the corresponding bipartite graph with parts
$\overline C$ and $C$ (in this order).
With high probability, $d(u) \ge (1 - \epsilon)\frac{3}{4}n$ for all $u$ in
$\overline C$, and $d(v) \le (1 + \epsilon)\frac{1}{4}n$ for all $v$ in $C$.
Also, by Lemma~\ref{thm:max_block},
with high probability $|d_{\overline C}(u)-d_{\overline C}(v)| < 10 \ln n$
for each $u, v \in \overline C$, and therefore
\begin{align*}
  d_1(G^-) - d_2(G^-)
  & \geq d_1(B) - d_2(B) - 10 \ln n \\
  &\ge (1 - \epsilon)\frac{n^{1/2}}{2\log n} - 10 \ln n,
\end{align*}
and so whp $G^-$ has a unique vertex of maximum degree.

We have now seen that whp both $G^+$ and $G^-$ have a unique vertex of maximum degree,
which implies they are class one;
Theorem~\ref{thm:chromatic_idx} now follows from Theorem~\ref{thm:main}.

%%%%%%%%%%%%%%%%%%%%%%%%%%%%%%%%%%%%%%%%%%%%%%%%%%%%%%%%%%%%%%%%%

\section{The limit of a sequence of random perfect graphs}
\label{sec:limits}
We show that a sequence of uniformly and independently sampled $n$-vertex
perfect graphs converges with probability one to the graphon
\[
W_P(x, y) = \frac{\mathds{1}[x \le 1/2] + \mathds{1}[y \le 1/2]}{2}
\]
and show how to use this for estimates on the subgraph densities.

\subsection{Notation}
We use the notation for left limits from \cite{lovasz_limits}.
Suppose throughout (as usual) that $P_n$ is a uniformly sampled perfect graph
on vertex set $[n]$.
Let $\lambda$ be the Lebesgue measure on $\mathbb{R}^2$.
A \emph{kernel} $W : [0, 1]^2 \to \mathbb{R}$ is an a.e. bounded symmetric
measurable function.
A \emph{graphon} $W$ is a kernel with $0 \le W \le 1$ a.s.
The \emph{cut norm}, $||.||_{\square}$, is a norm on the vector space of kernels
defined by
\[
||W||_{\square} = \sup_{S, T \subseteq [0, 1]}
\left| \int_{S \times T } W d \lambda \right |.
\]
The supremum is taken over all measurable $S, T \subseteq [0, 1]$.
Let $S_{[0, 1]}$ denote the set of invertible measure preserving maps on $[0, 1]$.
Define $W^\varphi(x, y) := W(\varphi(x), \varphi(y))$ for $\varphi \in S_{[0, 1]}$.
The \emph{cut distance}, $\delta_\square$, is defined by
\[
\delta_\square(U, W) = \inf_{\varphi \in S_{[0, 1]}}||U - W^\varphi||_\square
\] for kernels $U, W$.
For a graph $G$ we define the graphon $W_G$ by partitioning
$[0,1] = S_1 \sqcup \ldots \sqcup S_{v(G)}$, $\lambda(S_i) = 1/v(G)$,
and writing $W(x, y) = \mathds{1}_{ij \in E(G)}$ if $(x, y) \in S_i \times S_j$.
Let $\hom(F, G)$ and $\hom_\text{inj}(F, G)$
be the number of homomorphisms (edge preserving maps),
injective homomorphisms respectively, from graph $F$ to graph $G$.
Let $t(F, G) = \frac{\hom(F, G)}{n^k}$ and let
$t_\text{inj}(F, G) = \frac{\hom_\text{inj}(F, G)} {n(n - 1) \ldots (n -k + 1)}$,
where $k = v(F)$ and $n = v(G)$.
For a graph $F$ and kernel $W$ define
\[
t(F, W) = \int_{[0, 1]^{v(F)}} \prod_{ij \in E(G)} W(x_i, x_j) \prod_{i \in V(G)} d x_i.
\]
We have $t(F, G) = t(F, W_G^\varphi)$
for each pair of graphs $F$ and $G$ and each $\varphi \in S_{[0, 1]}$.
A sequence of graphs, $G_1, G_2, \ldots$,
is said to converge to a graphon $W$,
denoted by $G_n \to W$,
if $W_{G_n}$ converges to $W$ with respect to $\delta_\square$.

\subsection{Convergence results}
\begin{thm}
  \label{thm:limit_main}
  We have
  \begin{align}
    &\delta_{\square}(W_{P_n}, W_P) \le n^{-1/2} \text{ with probability }
      1 - e^{-\Omega(\sqrt{n} \log n)}, \\
    &\delta_{\square}(W_{P_n}, W_P) \le (\log n)^{-2} \text{ with probability }
     1 - e^{-\Omega(n)}. \label{eq:limit_high_prob}
  \end{align}
\end{thm}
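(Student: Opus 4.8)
The plan is to transfer the problem to the generation process, reduce to the unipolar case, and then compare the graphon of a random unipolar graph to $W_P$ through one explicit measure preserving alignment, splitting the cut norm into a clique part, a bipartite part and a side-clique part. First, by Theorem~\ref{thm:main} we may replace $\mathbb{P}_{\mathcal{P}_n}$ by $\mathbb{P}_{Gen(n)}$ at the cost of $e^{-\Theta(n)}$ in the failure probability; and since $\overline{\rho(Gen^-(n))}$ has the same law as $\rho(Gen^+(n))$ up to a relabelling of the vertices, while $\overline{W_P}=1-W_P$ coincides with $W_P$ after the measure preserving swap $x\mapsto 1-x$ (so $\delta_\square(\overline{W_P},W_P)=0$ and $\delta_\square(W_G,W_P)=\delta_\square(W_{\overline G},W_P)$ up to the $O(1/n)$ coming from diagonal blocks), it is enough to bound $\delta_\square(W_{G^+},W_P)$ for $G^+\sim Gen^+(n)$. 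Condition on $k=|C|$ with $|k-n/2|<\sqrt n$ (failure $e^{-\Theta(n)}$ by Theorem~\ref{thm:concentration}), and let $\varphi$ be the bijection sending the central clique $C$ onto $[0,k/n]$ and the side cliques onto $[k/n,1]$.

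Next I would estimate $\|W^\varphi_{G^+}-W_P\|_\square$ by three contributions. (a)~Inside $[0,k/n]^2$, and on the sliver of the off-diagonal block where $W_P\neq\tfrac12$, the two graphons differ only on a set of measure $O(|k/n-1/2|)=O(n^{-1/2})$, so this costs $O(n^{-1/2})$. (b)~On the bulk of the off-diagonal block $W^\varphi_{G^+}$ is the step-function graphon of the random bipartite graph $G(k,n-k,\tfrac12)$ while $W_P\equiv\tfrac12$; since for step functions the cut norm is attained on subsets aligned with the vertex partition, a union bound over the at most $2^{n}$ relevant pairs $(S,T)$ together with Hoeffding's inequality shows this part is at most $n^{-1/2}$ except with probability $2^{n}e^{-\Omega(n)}=e^{-\Omega(n)}$ (using $|C|\,|\overline C|\le n^2/4$). (c)~On $[k/n,1]^2$ we have $W_P=0$ while $W^\varphi_{G^+}$ records the side cliques, so this part is at most $\sum_Q(|Q|/n)^2$ up to an $O(1/n)$ correction, the sum being over the parts $Q$ of $\sigma$.

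It remains to bound the side-clique sum, and here the two claimed bounds need different inputs. For the first inequality it is enough that $L(\sigma)<\sqrt n$: then $\sum_Q|Q|^2\le L(\sigma)\sum_Q|Q|\le \sqrt n\cdot n$, so the term is at most $n^{-1/2}$, and $\mathbb{P}(L(\sigma)\ge\sqrt n)=e^{-\Omega(\sqrt n\log n)}$ by Lemma~\ref{thm:max_block}; combined with (a), (b) and the conditioning on $k$ this gives the $n^{-1/2}$ statement. For the second inequality the crude estimate $\sum_Q(|Q|/n)^2\le L(\sigma)/n\le(\log n)^{-1}$ is too weak, so I would split the sum at $t=n/(\log n)^3$: the parts of size $\le t$ contribute at most $tm/n^2=O((\log n)^{-3})$, and the parts of size $>t$ contribute at most $L(\sigma)N_{>t}/n^2$, where $N_{>t}:=\sum_{j>t} jY_j(\sigma)$ is the number of $\overline C$-vertices lying in parts of size $>t$. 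One then checks that $\mathbb{P}(L(\sigma)\ge n/(2\log n))$ and $\mathbb{P}(N_{>t}\ge n/(2\log n))$ are each $e^{-\Omega(n)}$: the former is Lemma~\ref{thm:max_block} once more, and the latter follows from a direct count of the partitions of $\overline C$ having at least $s$ vertices in parts of size $>t$ (choose the union of those parts, partition it into blocks of size $>t$, partition the remainder arbitrarily), which bounds the probability by $\sum_{u\ge s}(c(\log n)^4/n)^{u}$; at $s=n/(2\log n)$ this is $\exp\{-(1-o(1))\tfrac{n}{2\log n}\log n\cdot\ln 2\}=e^{-\Omega(n)}$, the gain over the naive $L(\sigma)$-bound coming precisely from the identity $\ln n/\log n=\ln 2$, which turns a $\tfrac{n}{\log n}\cdot\log n$ into a $\Theta(n)$. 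On this event $\sum_Q(|Q|/n)^2\le O((\log n)^{-3})+\tfrac14(\log n)^{-2}<(\log n)^{-2}$, and with (a) and (b) the second statement follows; a matching $e^{-\Theta(n)}$ lower bound on the failure probability comes from events such as $\{k>(\tfrac12+\epsilon)n\}$ or $P_n\notin\mathcal{GS}$.

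The main obstacle is part (c) in the $(\log n)^{-2}$ regime. Controlling the side cliques via $L(\sigma)=O(n/\log n)$ only yields a cut-norm bound of order $(\log n)^{-1}$, so the argument genuinely needs the sharper fact that the total mass of $\overline C$ lying in parts larger than $n/(\log n)^3$ is $o(n/\log n)$ with failure probability $e^{-\Omega(n)}$ — a statement sitting right at the edge of what the large-deviation estimates for random partitions (Lemma~\ref{thm:max_block}, Theorem~\ref{thm:pittel_poisson}, and the counting estimates behind Corollary~\ref{thm:good_partitions}) can deliver, and whose proof is where the delicate work lies.
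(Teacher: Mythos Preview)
Your proposal is correct and follows essentially the same route as the paper: reduce to $Gen^+(n)$ via Theorem~\ref{thm:main} and complementation, align the central clique to $[0,k/n)$, then split the cut integral into the clique block (trivially zero), the bipartite block (Hoeffding plus union bound over vertex-aligned $S,T$), and the side-clique block bounded by the edge count of $G^+[\overline C]$. The only notable difference is in the parameters chosen for the side-clique estimate in the $(\log n)^{-2}$ regime: you split at $t=n/(\log n)^3$ and bound $L(\sigma)\le n/(2\log n)$ and $N_{>t}\le n/(2\log n)$, whereas the paper splits at $s=n/\log^2 n$, takes $L(\sigma)\le n/\log n$ from~(\ref{eqn.sigmamax}), and shows at most $n/(3\log n)$ vertices lie in parts of size $\ge s$ via exactly the counting argument you sketch. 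Both choices land on the same $(\log n)^{-2}$ with the same $e^{-\Omega(n)}$ failure, and your remark that the gain comes from $\ln n/\log n=\ln 2$ turning the exponent into $\Theta(n)$ is precisely the point. One caution: the theorem asserts the sharp bounds $n^{-1/2}$ and $(\log n)^{-2}$ with no implied constant, so in a full write-up you would need to track the constants in (a), (b), (c) carefully (the paper does this, taking e.g.\ $x=\tfrac1{12}\sqrt n$ in Lemma~\ref{thm:max_block} and getting $w_2,w_3\le\tfrac5{12}n^{-1/2}$), but your sketch identifies all the right pieces.
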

\noindent
Before we prove Theorem~\ref{thm:limit_main} we discuss a few corollaries.
Suppose each $P_n$ is sampled independently.
\begin{cor}
  With probability one we have $P_n \to W_P$.
\end{cor}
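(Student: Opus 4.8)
The plan is to derive this corollary from the high-probability bound in Theorem~\ref{thm:limit_main} by a routine application of the first Borel--Cantelli lemma. For each $n$ let $A_n$ denote the event $\{\delta_{\square}(W_{P_n}, W_P) > (\log n)^{-2}\}$. By the second estimate in Theorem~\ref{thm:limit_main} we have $\mathbb{P}(A_n) = e^{-\Omega(n)}$, and hence $\sum_{n \ge 1} \mathbb{P}(A_n) < \infty$.

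First I would apply Borel--Cantelli: since the probabilities $\mathbb{P}(A_n)$ are summable, almost surely only finitely many of the events $A_n$ occur. Thus there is almost surely a (random) index $n_0$ with $\delta_{\square}(W_{P_n}, W_P) \le (\log n)^{-2}$ for every $n \ge n_0$. Note that the independence of the $P_n$ is not actually needed for this implication, although it is of course available here.

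Since $(\log n)^{-2} \to 0$ as $n \to \infty$, it follows that $\delta_{\square}(W_{P_n}, W_P) \to 0$ almost surely; by the definition of convergence of a graph sequence to a graphon recalled in Section~\ref{sec:limits}, this is precisely the statement that $P_n \to W_P$ with probability one.

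There is no real obstacle here. The only point worth keeping in mind is that $\delta_{\square}$-convergence is exactly the notion of graph convergence used in the paper, so the conclusion is immediate once $\delta_{\square}(W_{P_n}, W_P) \to 0$ almost surely; one could equally well argue from the weaker $n^{-1/2}$ bound of Theorem~\ref{thm:limit_main}, but the $(\log n)^{-2}$ bound already supplies the required summability with room to spare.
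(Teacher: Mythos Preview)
Your proof is correct and matches the paper's approach: the paper likewise invokes Theorem~\ref{thm:limit_main} together with a Borel--Cantelli argument. The paper additionally cites Theorem~11.22 of \cite{lovasz_limits}, but since the paper has already defined $G_n \to W$ to mean $\delta_\square(W_{G_n},W)\to 0$, your appeal to that definition is enough.
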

\begin{proof}
This follows from Theorem~\ref{thm:limit_main} (either part),
a
Borel-Cantelli lemma, and Theorem 11.22 of \cite{lovasz_limits}.
\end{proof}

\begin{cor}
  \label{thm:sugbraphs}
  For a fixed graph $F$
  \[
  |t_{\text{inj}}(F, P_n) - t(F, W_P)|
  \le {v(F) \choose 2}n^{-1} + e(F)n^{-1/2}
  \]
  with probability at least
  $1 - e^{-\Omega(\sqrt{n} \log n)}$.
\end{cor}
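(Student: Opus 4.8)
The plan is to combine the first part of Theorem~\ref{thm:limit_main} with two standard facts from the theory of graph limits (see~\cite{lovasz_limits}): the elementary comparison between $t_{\text{inj}}(F,\cdot)$ and $t(F,\cdot)$, and the counting lemma relating homomorphism densities to cut distance. The probabilistic content is entirely contained in Theorem~\ref{thm:limit_main}; the rest is deterministic bookkeeping.

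First I would recall the deterministic estimate $|t(F,G) - t_{\text{inj}}(F,G)| \le \binom{k}{2} n^{-1}$, valid for any graph $G$ on $n$ vertices with $k = v(F)$: the quantities $\hom(F,G)$ and $\hom_{\text{inj}}(F,G)$ differ only by homomorphisms that identify some pair of vertices of $F$, of which there are at most $\binom{k}{2} n^{k-1}$, and after the respective normalisations by $n^k$ and $n(n-1)\cdots(n-k+1)$ this yields the stated bound (this is a standard inequality, e.g.\ in~\cite{lovasz_limits}). Applying it with $G = P_n$ and using $t(F,P_n) = t(F,W_{P_n})$ gives
\[
|t_{\text{inj}}(F,P_n) - t(F,W_{P_n})| \le \binom{v(F)}{2} n^{-1}.
\]

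Next I would invoke the counting lemma: for graphons $U$ and $W$ one has $|t(F,U) - t(F,W)| \le e(F)\,\|U - W\|_\square$. Since $t(F,\cdot)$ is invariant under relabelling by measure preserving maps (recall $t(F,G) = t(F, W_G^\varphi)$ for every $\varphi \in S_{[0,1]}$, and the same holds for arbitrary graphons), we may replace $W$ by $W^\varphi$ and take the infimum over $\varphi$ to obtain $|t(F,U) - t(F,W)| \le e(F)\,\delta_\square(U,W)$. With $U = W_{P_n}$ and $W = W_P$ this gives $|t(F,W_{P_n}) - t(F,W_P)| \le e(F)\,\delta_\square(W_{P_n}, W_P)$.

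Finally I would combine the two displays by the triangle inequality,
\[
|t_{\text{inj}}(F,P_n) - t(F,W_P)| \le \binom{v(F)}{2} n^{-1} + e(F)\,\delta_\square(W_{P_n}, W_P),
\]
and apply the first part of Theorem~\ref{thm:limit_main}, which says $\delta_\square(W_{P_n}, W_P) \le n^{-1/2}$ with probability $1 - e^{-\Omega(\sqrt n \log n)}$; on that event the right-hand side is at most $\binom{v(F)}{2} n^{-1} + e(F) n^{-1/2}$, which is exactly the claim. There is no genuine obstacle here; the only point that needs a little care is to pass from the cut \emph{norm} in the counting lemma to the cut \emph{distance} $\delta_\square$, which is handled by the measure preserving invariance of $t(F,\cdot)$ noted above.
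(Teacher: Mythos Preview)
Your proposal is correct and follows essentially the same approach as the paper: the paper also combines the elementary bound $|t_{\text{inj}}(F,G)-t(F,G)|\le\binom{v(F)}{2}/v(G)$ with the counting lemma $|t(F,W_1)-t(F,W_2)|\le e(F)\,\delta_\square(W_1,W_2)$ (cited directly from~\cite{lovasz_limits}) and part~1 of Theorem~\ref{thm:limit_main}. The only difference is cosmetic: the paper quotes the counting lemma already in its $\delta_\square$ form, whereas you spell out the passage from cut norm to cut distance via invariance of $t(F,\cdot)$.
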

\begin{proof}
  It is easy to see that for every graph $G$ we have
\[ |t_{\text{inj}}(F, G) - t(F, G)| \le {v(F) \choose 2} \frac1{v(G)}.\]
  Theorem 10.23 in \cite{lovasz_limits} states that
\[ |t(F, W_1) - t(F, W_2)| \le e(F) \cdot \delta_{\square}(W_1, W_2)\]
  for every two graphons $W_1$ and $W_2$.
  The rest follows from part 1 of Theorem~\ref{thm:limit_main}.
\end{proof}

\noindent
The notation in the second corollary may be misleading,
in the sense that the statement has nothing to do with graph limits.
Indeed, the value $t(F, W_P)$ can be calculated explicitly
and then used as an estimate on the number of subgraphs $F$ of $P_n$.
For example, if $F$ is the single edge $K_2$ then $t(F,W_P)=\frac12$ so
the random perfect graph $P_n$ contains $(1 + O(n^{-1/2}))\frac{n^2}{4}$ edges
with probability at least $1 - e^{-\Omega(\sqrt{n} \log n)}$;
and similarly $P_n$ contains $(1 + O(n^{-1/2}))\frac{5n^3}{96}$
triangles with probability at least $1 - e^{-\Omega(\sqrt{n} \log n)}$.
Using an inclusion-exclusion argument we can estimate the densities
of induced graphs and in particular
Corollary~\ref{thm:sugbraphs} implies that the split graphs are
the only graphs with induced subgraph density bounded away from zero.

We can be more precise for example concerning the number $e(P_n)$
of edges of $P_n$.
Note that the expected value is exactly $ \frac12 \binom{n}{2}$.
Also, whp $e(G_n^+) = \frac14 n^2 +(\frac14 +o(1))n \ln n$
and so whp $e(G_n^-) = \frac14 n^2 -(\frac14 +o(1))n \ln n$;
and hence whp $e(P_n) = \frac14 n^2 + O(n \log n)$, with a bimodal distribution.

\subsection{Proof of Theorem~\ref{thm:limit_main}}
We consider the unipolar case $G^+ \sim Gen^+(n)$ and
the co-unipolar case $G^- \sim Gen^-(n)$ separately, and start with the former.
Suppose $Gen^+(n) = (1, E, (k, \sigma), \pi)$.
Let $\varphi \in S_{[0, 1]}$ be a measure preserving map,
mapping each vertex of $G^+$ to an interval of the form
$V_i = [\frac{i-1}{n}, \frac{i}{n})$ where $i \in [n]$,
and in addition let $\varphi$ map the central clique of $W_{G^+}$
to $[0, k/n)$ and the side cliques to $[k/n, 1)$.
Pick arbitrary measurable $S, T \subseteq [0, 1]$.
To prove that $\delta_\square(G^+, W_P) \le \epsilon$ it is sufficient to show
that
\begin{align*}
  \left | \int_{S \times T} (W^\varphi_{G^+} - W_P) \right | \le \epsilon.
\end{align*}

\begin{claim}
  \label{thm:allign}
  We can find $S^\prime, T^\prime \subseteq [0, 1]$
  of the form $S^\prime = \cup_{i \in I_s}V_i$ and
  $T^\prime = \cup_{i \in I_t}V_i$,
  where $I_s, I_t \subseteq [n]$,
  such that
\[
  \left | \int_{S \times T} (W^\varphi_{G^+} - W_P) \right | \le
  \left | \int_{S^\prime \times T^\prime} (W^\varphi_{G^+} - W_P) \right |.
\]
\end{claim}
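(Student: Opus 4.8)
The plan is to show that, for fixed $W^\varphi_{G^+}$ and $W_P$, the functional $(S,T) \mapsto \int_{S\times T}(W^\varphi_{G^+}-W_P)$ can be maximised in absolute value over sets $S,T$ that are unions of the atoms $V_i$. The crucial structural fact is that both $W^\varphi_{G^+}$ and $W_P$ are \emph{block functions} with respect to the partition $[0,1]=V_1 \sqcup \cdots \sqcup V_n$: by the choice of $\varphi$, the graphon $W^\varphi_{G^+}$ is constant on each rectangle $V_i\times V_j$ (it equals $\mathds{1}[ij\in E(G^+)]$), and $W_P(x,y)=\tfrac12(\mathds{1}[x\le 1/2]+\mathds{1}[y\le 1/2])$ is constant on each $V_i\times V_j$ provided $n$ is even; if $n$ is odd, one of the $V_i$ straddles $1/2$ and $W_P$ is constant on $V_i \times V_j$ for all but this one index $i$ in each coordinate, which will have to be handled separately (see below). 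So write $D := W^\varphi_{G^+}-W_P$, a kernel that is constant, say with value $d_{ij}$, on $V_i\times V_j$ (assume $n$ even for now).

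First I would reduce to the one-variable statement: fixing $T$ arbitrary, consider $g(S):=\int_{S\times T} D = \int_S h_T$ where $h_T(x):=\int_T D(x,y)\,dy$. Since $D$ is a block function in $x$, $h_T$ is constant on each $V_i$, taking value $c_i := n\int_{V_i}h_T$. Then $g(S)=\sum_i c_i \lambda(S\cap V_i)$, and over all measurable $S$ this is maximised by taking $S\cap V_i = V_i$ when $c_i>0$ and $S\cap V_i=\emptyset$ when $c_i<0$ (and either choice when $c_i=0$); it is minimised by the opposite rule. Hence there is a set $S'$ which is a union of some of the $V_i$ with $g(S')\ge g(S)$, and likewise a union $S''$ with $g(S'')\le g(S)$; since $\bigl|\int_{S\times T} D\bigr|\le \max\{g(S'),-g(S'')\}$, we may replace $S$ by a union of atoms $V_i$ without decreasing the absolute value of the integral. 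Now I would apply the identical argument in the second coordinate with $S$ fixed (to the now $V_i$-aligned $S$), obtaining $T'$ a union of atoms $V_i$ with $\bigl|\int_{S\times T'}D\bigr|\ge \bigl|\int_{S\times T}D\bigr|$. Combining the two steps yields $I_s,I_t\subseteq[n]$ with $S'=\bigcup_{i\in I_s}V_i$, $T'=\bigcup_{i\in I_t}V_i$ and the claimed inequality. Note the separate treatment of maximiser versus minimiser is needed precisely because we want a bound on $|\,\cdot\,|$ rather than on the signed integral; one simply takes whichever of the two aligned sets achieves the larger modulus.

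The one place requiring care is the parity of $n$. If $n$ is odd there is a unique index, say $i_0$, with $1/2\in V_{i_0}$, so $W_P$ (and hence $D$) is not a block function on rows or columns indexed by $i_0$. I would handle this by noting $\lambda(V_{i_0})=1/n$, so the contribution of $V_{i_0}\times[0,1]$ and $[0,1]\times V_{i_0}$ to any cut integral is at most $2/n$ in absolute value; alternatively, and more cleanly, I would simply split $V_{i_0}$ into $V_{i_0}\cap[0,1/2)$ and $V_{i_0}\cap[1/2,1]$ and run the argument with respect to this refined partition with $n+1$ cells, then observe that the resulting aligned sets differ from $V_i$-aligned sets only within $V_{i_0}$, absorbing the discrepancy — but since the claim as stated only asks for sets of the form $\bigcup_{i\in I}V_i$, the honest route is the first one: prove the inequality up to an additive $O(1/n)$ error coming from $V_{i_0}$, which is harmless in the applications, or assume $n$ even (the general case following by a trivial monotonicity in $n$). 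In the write-up I expect the cleanest presentation assumes $n$ is even, which loses nothing since Theorem~\ref{thm:limit_main} is an asymptotic statement; the main (and only real) obstacle is thus bookkeeping the parity, the core argument being the elementary observation that a signed measure integrated against an indicator is extremised on a level set of its density.
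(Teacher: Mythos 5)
Your plan is essentially the same as the paper's. The paper also performs an alignment/rounding step: it greedily sweeps through $i = 1, \ldots, n$, and for each $i$ either adds all of $V_i$ to $S'$ or removes it, according to whether $\int_{T'} (W^\varphi_{G^+} - W_P)(\tfrac{i-1}{n}, y)\, dy$ has the same sign as the running integral $\int_{S'\times T'}(W^\varphi_{G^+} - W_P)$; then it repeats for $T'$. This is a sequential version of your global ``level set'' argument, and the two are equivalent once you note that when $D := W^\varphi_{G^+} - W_P$ is constant in $x$ on each $V_i$, replacing $S'\cap V_i$ by $V_i$ or $\emptyset$ changes the integral by a quantity of the same sign as the running total, so it cannot decrease the modulus.

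You are right to flag the parity issue, and this is the one point where your write-up is more careful than the paper's. The paper's sweep tests the sign of $D(\tfrac{i-1}{n}, y)$ at the \emph{left endpoint} of $V_i$; for the single index $i_0$ whose cell straddles $1/2$ (which occurs iff $n$ is odd), $D$ is not constant in $x$ on $V_{i_0}$, and the left-endpoint sign can disagree with the sign of the cell's true contribution. In that case the greedy step for $i_0$ can actually shrink the modulus (by at most $O(1/n)$, since $\lambda(V_{i_0}) = 1/n$ and $|D| \le 1$). So strictly speaking neither your argument nor the paper's proves the claim as stated when $n$ is odd; the paper silently glosses over this, whereas you correctly observe it is harmless because Theorem~\ref{thm:limit_main} is asymptotic and the claim is used only to bound the cut norm up to polynomially small error. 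Your proposed fixes --- absorb the $O(1/n)$ discrepancy from $V_{i_0}$, or refine the partition at $1/2$ --- are both standard and adequate; the cleanest is probably to fold the $O(1/n)$ loss into the $n^{-1/2}/24$ slack already present in the surrounding argument, or simply add a sentence restricting to $n$ even.

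One small point on the mechanics of your one-variable step: you split into a maximizer $S'$ and a minimizer $S''$ and take whichever achieves the larger modulus. The paper's sequential version avoids needing this case split because it keeps track of the current sign of the running integral; either is fine.
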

\begin{proof}
Initially set $S^\prime = S$ and $T^\prime = T$.
For $i = 1, \ldots, n$ in turn,
check if $\int_{T^\prime} (W^\varphi_{G^+} - W_P)(\frac{i-1}n, y)dy$
has the same sign as
$\int_{S^\prime \times T^\prime} (W^\varphi_{G^+} - W_P) d\lambda$,
and if so, add the entire interval $V_i$ to $S^\prime$,
or remove $V_i$ from $S^\prime$ otherwise.
These operations can only increase the absolute value of the integral.
Now repeat the same procedure for $T^\prime$.
It is clear that in the end we obtain $S^\prime$ and $T^\prime$
with the desired properties.
\end{proof}
\noindent
Every such pair $(S^\prime, T^\prime)$ is measurable.
We may assume that initially we are given $S$ and $T$ of this form.

Let $A_1 = [0, 1/2)$, $B_1 = [1/2, 1)$, $A_2 = [0, k/n)$, $B_2 = [k/n, 1)$,
$A = A_1 \cap A_2$, $B = B_1 \cap B_2$ and $I = A \cup B$.
By Theorem~\ref{thm:concentration},
wvhp both $\lambda(A)$ and $\lambda(B)$ are contained in
$\left[\frac12 - \frac{n^{-1/2}}{24}, \frac12\right]$.
Condition on this event, and now we have
\begin{align*}
  \left | \int_{S \times T}( W^\varphi_{G^+} - W_P) \right | \le
  \left | \int_{(S \times T) \cap (I \times I)} (W^\varphi_{G^+} - W_P) \right |
  + \frac{n^{-1/2}}{12}.
\end{align*}
Let
$O_1 = (S \times T) \cap (A \times A)$,
$O_2 = (S \times T) \cap (A \times B)$,
$O_3 = (S \times T) \cap (B \times A)$,
$O_4 = (S \times T) \cap (B \times B)$,
and $w_i = \left | \int_{O_i} (W^\varphi_{G^+} - W_P) \right |$.

We have $W^\varphi_{G^+} = W_P = 1$ over $O_1$ , so clearly $w_1 = 0$.
The restriction of $W^\varphi_{G^+}$ over $O_2$ is a step function corresponding
to a random bipartite graph with density $1/2$, while $W_P$ is uniformly $1/2$.
This classical example motivated the study of the cut distance,
but we give full details here.

\begin{claim}
  \label{thm:matrix}
Suppose $M$ is a random $\{-1, 1\}$-valued $m \times m$ matrix,
where each entry has expected value $0$ and is independent from the others.
Then the maximum absolute value of a rectangular sum in $M$ is at most
$\frac53 m^{3/2}$ wvhp.
\end{claim}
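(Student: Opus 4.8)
The plan is a one-shot union bound over all $4^m$ rectangles, using a sharp Hoeffding/Chernoff tail bound for each individual rectangular sum; the constant $\tfrac53$ is essentially forced, so the only real work is keeping the numerical bookkeeping honest.

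First I would fix $S, T \subseteq [m]$ with $|S| = s$, $|T| = t$, and set $X_{S,T} = \sum_{i \in S,\, j \in T} M_{ij}$. This is a sum of $st$ independent $\{-1,1\}$-valued random variables, each of mean $0$, so by the standard Hoeffding bound
\[
\mathbb{P}\bigl(|X_{S,T}| \ge \lambda\bigr) \le 2\exp\!\left(-\frac{\lambda^2}{2st}\right)
\]
for every $\lambda > 0$. Since $st \le m^2$, taking $\lambda = \tfrac53 m^{3/2}$ gives
\[
\mathbb{P}\bigl(|X_{S,T}| \ge \tfrac53 m^{3/2}\bigr) \le 2\exp\!\left(-\frac{25 m^3/9}{2m^2}\right) = 2\exp\!\left(-\tfrac{25}{18}\,m\right),
\]
uniformly over all pairs $(S,T)$.

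Next I would take a union bound over the at most $2^m$ choices of $S$ and $2^m$ choices of $T$, so over at most $4^m = 2^{2m}$ rectangles in total. This yields that the maximum absolute value of a rectangular sum in $M$ exceeds $\tfrac53 m^{3/2}$ with probability at most
\[
2 \cdot 2^{2m}\exp\!\left(-\tfrac{25}{18}\,m\right) = 2\exp\!\left(-\bigl(\tfrac{25}{18} - 2\ln 2\bigr) m\right).
\]
The crucial numerical fact is that $\tfrac{25}{18} = 1.3\overline{8} > 1.3862\ldots = 2\ln 2$ (equivalently $(\tfrac53)^2 > 4\ln 2$), so the exponent is negative and the failure probability is $e^{-\Omega(m)}$, which is exactly the wvhp conclusion claimed.

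The main obstacle is not conceptual but arithmetic: the slack $\tfrac{25}{18} - 2\ln 2$ is only about $0.0026$, so one must use the tight form of the concentration inequality (with $2st$, not $4st$, in the exponent's denominator) for the union bound to close; any constant exceeding $2\sqrt{\ln 2} \approx 1.665$ would work, and $\tfrac53$ was chosen as a clean number just above this threshold.
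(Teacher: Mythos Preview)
Your proof is correct and is essentially the same as the paper's: both apply the Hoeffding/Chernoff bound $2\exp(-\lambda^2/(2st)) \le 2\exp(-\lambda^2/(2m^2))$ to each rectangular sum and then take a union bound over the $2^{2m}$ rectangles, exploiting $\tfrac{25}{18} > 2\ln 2$. The paper phrases the tail bound via $X \sim \mathrm{Bin}(st,\tfrac12)$ rather than Rademacher sums, but this is cosmetic; your additional remark explaining why the constant $\tfrac53$ (just above $2\sqrt{\ln 2}$) is essentially forced is a nice clarification.
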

\begin{proof}
  Let $R \subseteq [m]$ be a subset of the rows of $M$,
  and let $Q \subseteq [m]$
  be a subset of the columns of $M$.
  Clearly $M_{Q, R} := \sum_{r \in R, q \in Q}M_{r,q} \sim 2 X - |Q||R|$,
  where $X \sim Bin(|Q||R|, 1/2)$;  so by Chernoff's inequality
\[
  \mathbb{P}(|M_{Q, R}| \ge x) =
  \mathbb{P}\left(\left|X - \frac{|Q||R|}2\right| \ge \frac x 2\right)
  \le 2e^{-\frac{x^2}{2m^2}}.
\]
Taking $x = \frac53m^{3/2}$, and the union bound over all $2^{2m}$ possible choices
for $R, Q$ completes the proof.
\end{proof}
\noindent
Setting $m=\lceil n/2 \rceil$ and rescaling in Claim~\ref{thm:matrix}
shows that $w_2$ and $w_3$ are bounded by $\frac5{12}n^{-1/2}$ wvhp.

Finally, $w_4$ is bounded by twice the number of edges in the partition part
of $G^+$ divided by $n^2$. Lemma~\ref{thm:max_block}
states that the maximum size of a part in a random partition is at most $x$ with
probability at least $1 - (1 + o(1)) e^{-x(\ln x - \ln \ln n - 2) + \ln n}$.
The maximum size of a part bounds the maximum degree.
Here we have to make a compromise between low cut distance and high probability.
To favour the former, use Lemma~\ref{thm:max_block} with
$x = \frac1{12} \sqrt n$ and deduce that
\begin{align*}
  \left | \int_{S \times T} (W^\varphi_{G^+} - W_P) \right | \le \frac 1 {\sqrt{n}}
\end{align*}
with probability $1 - e^{-\Omega(\sqrt n \log n)}$.

Now let us aim for a bound that holds wvhp.
Let $\log n \leq x \leq n/2$.
Observe that if $x$ vertices are contained in parts of $\sigma$ each of size at least $s$, then $x$ vertices are contained in at most $\lceil x/s \rceil$ parts. Thus, assuming $s \leq x$ and so $\lceil x/s \rceil \leq 2x/s$, the probability that some set of $x$ points is contained in parts of $\sigma$ each of size at least $s$ is at most
\begin{eqnarray*}
  && \binom{n}{x} \left(\left\lceil \frac{x}{s} \right\rceil \right)^{x} n
  \, \frac{B_{n-x}}{B_n}\\
  & \leq &
  \left( \frac{en}{x} \cdot \frac{2x}{s} \right)^{x} (1+o(1))
  \left(\frac{r_n}{r_{n-x}}\right)^{\frac12} n \, \exp\{-x(r_n-1+1/r_n)\}\\
  & = &
  \left( \frac{2en}{s} \right)^{x} \exp  \{-x( \ln n - (1+o(1))\log\log n)\}\\
  & = &
  \exp \{-x( \ln s - (1+o(1))\log\log n)\}.
\end{eqnarray*}
The factor $n$ in the first bound above arises since the `last part'
for the $x$-set may need to be amalgamated with some part for the
$(n\!-\!x)$-set.
Set $x= \lceil n/ (3 \log n) \rceil$ and $s=n/\log^2 n$,
to see that wvhp at most $n/ (3\log n)$
points are contained in parts of $\sigma$ each of size at least $n/\log^2 n$.
Since also wvhp no part has size greater than $n / \log n$ by
(\ref{eqn.sigmamax}),
we see that wvhp the number of edges in $G^+[\overline{S}]$ is at most
\[
(\frac12 +o(1))n \cdot \frac{n}{\log^2 n} +  \frac{n}{3\log n} \frac{n}{\log n}
\]
so wvhp $w_4 \leq \frac56 (\log n)^{-2}$.
Hence wvhp $\delta_\square(G^+, W_P) \le (\log n)^{-2}$.

Let $J$ be the graphon with $J(x,y)=1$ for each $x,y \in [0,1]$;
and note that  $J-W_{P}$ may be written as $W_{P}^{\psi}$
for some $\psi \in S_{[0, 1]}$.
Then
\[ \int_{S \times T} (W^\varphi_{G^-} - W_{P^{\psi}})
= - \int_{S \times T} (W^\varphi_{G^+} - W_{P}). \]
Hence, the bounds for $G^+$ transfer to $G^-$.
Theorem~\ref{thm:main} completes the proof.

Using~(\ref{eqn.sigmamax2}) we may see that the bound in
(\ref{eq:limit_high_prob}) is best possible, in the sense that,
if $\epsilon(n)=o(1)$,
then $\delta_{\square}(W_{P_n}, W_P) \ge \epsilon(n) (\log n)^{-2}$
holds with probability $e^{-o(n)}$.

%%%%%%%%%%%%%%%%%%%%%%%%%%%%%%%%%%
%%%%%%%%%%%%%%%%%%%%%%%%%%%%%%%%%%

\section{Proofs for the generation model}
\label{sec:genlemma}

We conclude this paper with the proofs of Theorems~\ref{thm:main}
and \ref{thm:concentration}, which were deferred to here.
Of course, the proofs of these theorems do not depend on any of the earlier work which used them.

\subsection{Proof of Theorem~\ref{thm:main}}
We shall prove (\ref{eq:double_counting}) and (\ref{eq:dtv_gs_gen})
and complete the proof of Theorem \ref{thm:main}.
For real numbers $p \geq 1$
we denote the $\mathcal{L}_p$ norm of $X$ in the probability space
$(\mathcal{G}_n, 2^{\mathcal{G}_n}, \mu)$ by $\|X\|_p^\mu$.
Recall that $\|X\|_p^\mu$ is non-decreasing in $p$.
(To see this, let $1 \leq p<q$ and note that $f(x)=x^{q/p}$ is convex for $x>0$: hence by Jensen's inequality,
\[ \|X\|_p^{\mu} = (\E |X|^p)^{1/p} = (f(\E|X|^p))^{1/q} \leq (\E f(|X|^p))^{1/q} = (\E|X|^q)^{1/q} = \|X\|_q^{\mu}. )  \]
Let $\nu$ and $\mu$ be discrete probability measures on (all subsets of)
${\mathcal G}_n$ such that $\nu(G)>0$ implies $\mu(G)>0$.
In this case $\nu$ is said to be \emph{absolutely continuous}
with respect to $\mu$, written $\nu \ll \mu$.
The Radon--Nikodym derivative $\frac{d\nu}{d\mu}$ is a random variable given by
$\frac{d\nu}{d\mu}(G) = \frac{\nu(G)}{\mu(G)} \mathds{1}_{\mu(G)>0}$.
We may express the total variation distance between $\nu$ and $\mu$
in terms of $\frac{d\nu}{d\mu}$ and the $\mathcal{L}_1$ norm
$\left\| \cdot \right\|_{1}^{\mu}$: we have
\[
2d_{TV}(\nu, \mu) = \sum_G | \nu(G)-\mu(G)|
= \sum_G \left|\frac{d\nu}{d\mu}(G)-1\right| \mu(G)
= \left\| \frac{d\nu}{d\mu} -1  \right\|_{1}^{\mu}.
\]
(The same holds for general, not necessarily discrete, probability measures,
provided that $\nu \ll \mu$.)
Also note that for each $G$
\[
\frac{d \mathbb{P}_{Gen(n)}}{d \mathbb{P}_{\mathcal{P}_n}}(G)
= \mathbb{P}( \rho(Gen(n)) =G) \, |{\mathcal P}_n|. \:
\]

The main result in this section is stronger than Theorem~\ref{thm:main}.
\begin{thm}
  \label{thm:strong_main}
  For each real $p \ge 1$ we have
\begin{align*}
  2d_{TV}(\mathbb{P}_{Gen(n)}, \mathbb{P}_{\mathcal{P}_n})
  = \left \| \frac{d \mathbb{P}_{Gen(n)}}{d \mathbb{P}_{\mathcal{P}_n}}
  - 1 \right\|_1^{\mathbb{P}_{\mathcal{P}_n}}
  \le \left \| \frac{d \mathbb{P}_{Gen(n)}}{d \mathbb{P}_{\mathcal{P}_n}}
  - 1 \right\|_p^{\mathbb{P}_{\mathcal{P}_n}}
  = e^{-\Theta(n)}.
\end{align*}
\end{thm}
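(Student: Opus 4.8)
The plan is to compute $\mathbb{P}_{Gen(n)}$ exactly, read off the Radon--Nikodym derivative $d:=d\mathbb{P}_{Gen(n)}/d\mathbb{P}_{\mathcal{P}_n}$, and then reduce the whole statement to a single tail estimate on how many unipolar arrangements a typical generated graph has. First I would let $\mathcal{U}_n$ be the set of all unipolar arrangements $(G,C_0)$ on vertex set $[n]$. Counting by $k=|C_0|$ (choose the $k$-set, the side-clique partition of the remaining $n-k$ vertices, then the bipartite edges in between) gives $|\mathcal{U}_n|=\sum_k\binom{n}{k}2^{k(n-k)}B_{n-k}=\mathscr{L}_n$. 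A short calculation using the identity $\ell_{n,k}=\binom{n}{k}2^{k(n-k)}B_{n-k}$ shows that the arrangement induced by $Gen^+(n)$ is \emph{uniformly} distributed on $\mathcal{U}_n$: for a target arrangement $(H,D)$ with $|D|=k$ the factors $\ell_{n,k}/\mathscr{L}_n$ (choosing $k$), $k!(n-k)!/n!$ (the permutations $\pi$ with $\pi([k])=D$), $1/B_{n-k}$ (the partition) and $2^{-k(n-k)}$ (the bipartite edges) multiply to exactly $1/\mathscr{L}_n$. Writing $u(G)$, $\bar u(G)$ for the numbers of unipolar, resp.\ co-unipolar, arrangements of $G$, and handling the $B=-1$ half by complementation, this gives $\mathbb{P}_{Gen(n)}(G)=a(G)/(2\mathscr{L}_n)$ with $a(G):=u(G)+\bar u(G)$; in particular $\sum_{G\in\mathcal{GS}_n}a(G)=2\mathscr{L}_n$, and $d(G)=a(G)\,|\mathcal{P}_n|/(2\mathscr{L}_n)$ on $\mathcal{GS}_n$ while $d(G)=0$ on $\mathcal{P}_n\setminus\mathcal{GS}_n$.

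This reduces the theorem to the claim that, for each fixed real $p\ge 1$,
\[
\sum_{G\in\mathcal{GS}_n}a(G)^p \;=\; 2\mathscr{L}_n\bigl(1+e^{-\Omega(n)}\bigr).
\]
Indeed, the $p=2$ instance gives $\sum_G a(G)(a(G)-1)=e^{-\Omega(n)}\mathscr{L}_n$, hence $|\{G:a(G)\ge 2\}|\le\sum_{a(G)\ge 2}(a(G)-1)=2\mathscr{L}_n-|\mathcal{GS}_n|=e^{-\Omega(n)}\mathscr{L}_n$; this is (\ref{eq:double_counting}), and with (\ref{eq:promel_steger}) it yields $|\mathcal{P}_n|/(2\mathscr{L}_n)=1+e^{-\Theta(n)}$. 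Granting the claim, I would split $\|d-1\|_p^p=\mathbb{E}_{\mathbb{P}_{\mathcal{P}_n}}|d-1|^p$ into three pieces: on $\mathcal{P}_n\setminus\mathcal{GS}_n$ the integrand is $1$ and the mass is $e^{-\Theta(n)}$ by (\ref{eq:dtv_perfect_gs}); on $\{a(G)=1\}$ one has $|d-1|=\bigl|\,|\mathcal{P}_n|/(2\mathscr{L}_n)-1\,\bigr|=e^{-\Theta(n)}$; and on $\{a(G)\ge 2\}$ one bounds $|d-1|^p\le\bigl(O(1)\,a(G)\bigr)^p$ and uses $\sum_{a(G)\ge 2}a(G)^p=\sum_G a(G)^p-|\{a(G)=1\}|=e^{-\Omega(n)}\mathscr{L}_n$ together with $\mathscr{L}_n/|\mathcal{P}_n|=\Theta(1)$. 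The matching lower bound is immediate from $\|d-1\|_p\ge\|d-1\|_1\ge\mathbb{P}_{\mathcal{P}_n}(\mathcal{P}_n\setminus\mathcal{GS}_n)=e^{-\Omega(n)}$, and the same computations deliver (\ref{eq:dtv_gs_gen}) and hence Theorem~\ref{thm:main}.

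To establish the displayed identity I would expand $a(G)^p=\sum_{j=0}^{p}\binom{p}{j}u(G)^j\bar u(G)^{p-j}$. The two extreme terms sum to $2\sum_G u(G)^p$, since complementation bijects $\mathcal{GS}_n$ with itself and swaps $u$ and $\bar u$, and $\sum_G u(G)^p=\sum_{(G,C_0)\in\mathcal{U}_n}u(G)^{p-1}=\mathscr{L}_n\,\mathbb{E}_{\mathcal{U}_n}[u(G)^{p-1}]$, where $\mathbb{E}_{\mathcal{U}_n}$ denotes expectation under the uniform measure on $\mathcal{U}_n$. A middle term ($1\le j\le p-1$) receives contributions only from graphs in $\mathcal{GS}^+\cap\mathcal{GS}^-$, and by H\"older and the complementation symmetry $\sum_G u^j\bar u^{p-j}\le\sum_{G\in\mathcal{GS}^+\cap\mathcal{GS}^-}u(G)^p=\mathscr{L}_n\,\mathbb{E}_{\mathcal{U}_n}\bigl[u(G)^{p-1}\mathds{1}[G\in\mathcal{GS}^+\cap\mathcal{GS}^-]\bigr]$; Cauchy--Schwarz together with (\ref{eqn.smallcap}), which gives $\mathbb{P}_{\mathcal{U}_n}(G\in\mathcal{GS}^+\cap\mathcal{GS}^-)=e^{-\Omega(n)}$, makes this $e^{-\Omega(n)}\mathscr{L}_n$. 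So the identity follows once we know $\mathbb{E}_{\mathcal{U}_n}[u(G)^{p-1}]=1+e^{-\Omega(n)}$ for each fixed $p$.

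The hard part will be that last point, which I would get from a tail bound: for $(G,C_0)$ uniform on $\mathcal{U}_n$, $\mathbb{P}(u(G)\ge t)\le\mathrm{poly}(n)\cdot t^{-\Omega(n)}$ for all $t\ge 2$. This suffices, since $u(G)-1\le a(G)\le 2^{n+1}$ always and $\sum_{t\ge 2}\bigl(t^{p-1}-(t-1)^{p-1}\bigr)\mathbb{P}(u(G)\ge t)\le\mathrm{poly}(n)\sum_{t\ge2}p\,t^{p-2-\Omega(n)}=e^{-\Omega(n)}$ for fixed $p$. To prove the tail bound I would argue structurally. For uniform $(G,C_0)\in\mathcal{U}_n$ one has $|C_0|=\tfrac n2+O(\sqrt n)$ (Theorem~\ref{thm:concentration}), every side clique has size $O(\log n)$ (Lemma~\ref{thm:max_block}), and wvhp by Chernoff every vertex of $C_0$ has degree $\approx\tfrac34 n$ and every other vertex degree $\approx\tfrac14 n$. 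These facts constrain any alternative central clique $C_0'$ of $G$: a vertex of $C_0$ omitted from $C_0'$ would have to make $\overline{C_0}\cup\{v\}$ a disjoint union of cliques, which is impossible since $v$ has $\approx n/4$ neighbours spread across many distinct side cliques; hence $C_0\subseteq C_0'$, and then $C_0'=C_0\cup S$ where $S\subseteq\overline{C_0}$ is a clique of $G[\overline{C_0}]$ every vertex of which is adjacent to all of $C_0$. Since the bipartite edges are independent, the set of vertices of $\overline{C_0}$ universal to $C_0$ is a sum of independent indicators each of probability $2^{-|C_0|}\approx 2^{-n/2}$, so the probability that a side clique contains $j$ of them is at most $\mathrm{poly}(n)\cdot 2^{-jn/2}$; as $u(G)$ is at most $n$ times $2$ raised to this universal-vertex count (outside a super-exponentially rare event on which one falls back on $u(G)\le 2^n$), this yields $\mathbb{P}(u(G)\ge t)\le\mathrm{poly}(n)\cdot t^{-\Omega(n)}$. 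Assembling these steps proves Theorem~\ref{thm:strong_main}, and with it (\ref{eq:double_counting}), (\ref{eq:dtv_gs_gen}) and Theorem~\ref{thm:main}.
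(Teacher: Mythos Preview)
Your overall framework coincides with the paper's: writing $\mathbb{P}_{Gen(n)}(G)=a(G)/(2\mathscr{L}_n)$ with $a=u+\bar u$, and reducing everything to moment bounds of the form $\sum_G u(G)^j=\mathscr{L}_n(1+e^{-\Omega(n)})$ for fixed integers $j$ (the paper's Lemma~\ref{thm:decomposition}). Two minor points: the binomial expansion of $a^p$ you write is only valid for integer $p$, so you should say you prove the bound for each integer $p$ and then use monotonicity $\|\cdot\|_p\le\|\cdot\|_{\lceil p\rceil}$; and your justification for ``$C_0\subseteq C_0'$'' is garbled as written (omitting $v$ from $C_0'$ does \emph{not} force $\overline{C_0}\cup\{v\}$ to be a cluster graph), though the idea can be repaired: if $v\in C_0\setminus C_0'$ then $N(v)\cap\overline{C_0}\cap\overline{C_0'}$ and $N(v)\cap\overline{C_0}\cap C_0'$ are both cliques inside $\overline{C_0}$, hence each sits in a single side clique, forcing $|N(v)\cap\overline{C_0}|\le 2L(\sigma)$.

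The substantive gap is your claim that the structural ``good event'' is super-exponentially rare. It is not. After conditioning on $|k-n/2|\le\sqrt n$ and $L(\sigma)\le n/\log n$ (both of which do fail with probability $\le 2^{-n+o(n)}$), you still need every $v\in C_0$ to satisfy $d_{\overline{C_0}}(v)>2L(\sigma)$. This is a Chernoff event for each $v$, and the best bound a union over $v\in C_0$ gives is $2^{-n/2+o(n)}$, certainly not $e^{-\omega(n)}$. On the complementary bad event your only control is $u(G)\le 2^n$, so its contribution to $\mathbb{E}_{\mathcal{U}_n}[u^{p-1}]$ is at most $2^{(p-1)n}\cdot 2^{-n/2+o(n)}$, which blows up already at $p=2$. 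Thus the tail bound $\mathbb{P}(u\ge t)\le\mathrm{poly}(n)\,t^{-\Omega(n)}$ does not follow from your argument, and without it you cannot conclude $\mathbb{E}[u^{p-1}]=1+e^{-\Omega(n)}$ for $p\ge 2$. The paper sidesteps this entirely: it proves $\sum_G u(G)^j\le \mathscr{L}_n(1+2^{-n/2+o(n)})$ by directly counting $j$-tuples $(G,C_1,\dots,C_j)$ of arrangements via a reconstruction argument (encode $G$ by the induced arrangement on $\bigcap C_i\cup\bigcap\overline{C_i}$ plus bounded auxiliary data on the small residual set $Q$), never splitting into good/bad events and never invoking a pointwise bound on $u(G)$.
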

\noindent
We have just noted the first two (in)equalities in the statement of the theorem:
the non-trivial part is the final equality.
To prove it we start from the triangle inequality
\[  \left \| \frac{d \mathbb{P}_{Gen(n)}}{d \mathbb{P}_{\mathcal{P}_n}}
  - 1 \right\|_p^{\mathbb{P}_{\mathcal{P}_n}} \leq
  \left \| \frac{d \mathbb{P}_{Gen(n)}}{d \mathbb{P}_{\mathcal{P}_n}}
- \frac{d \mathbb{P}_{\mathcal{GS}_n}}{d \mathbb{P}_{\mathcal{P}_n}}
\right\|_p^{\mathbb{P}_{\mathcal{P}_n}} +
\left \| \frac{d \mathbb{P}_{\mathcal{GS}_n}}{d \mathbb{P}_{\mathcal{P}_n}}
- 1 \right\|_p^{\mathbb{P}_{\mathcal{P}_n}}.\]
It is straightforward to show from (\ref{eq:promel_steger})
that the second term on the right is  $e^{-\Omega(n)}$.
For the first term on the right, routine manipulations yield
\[
\left \| \frac{d \mathbb{P}_{Gen(n)}}{d \mathbb{P}_{\mathcal{P}_n}}
- \frac{d \mathbb{P}_{\mathcal{GS}_n}}{d \mathbb{P}_{\mathcal{P}_n}}
\right\|_p^{\mathbb{P}_{\mathcal{P}_n}} =
\left \| \frac{d \mathbb{P}_{Gen(n)}}{d \mathbb{P}_{\mathcal{GS}_n}} - 1
\right\|_p^{\mathbb{P}_{\mathcal{GS}_n}}
{\left( \frac{|\mathcal{P}_n|}{|\mathcal{GS}_n|} \right)^{1- 1/p}}.\]
Since $\frac{|\mathcal{P}_n|}{|\mathcal{GS}_n|} = 1 + e^{-\Omega(n)}$
by (\ref{eq:promel_steger}), it
follows that it is sufficient to show that
$\left \| \frac{d \mathbb{P}_{Gen(n)}}{d \mathbb{P}_{\mathcal{GS}_n}} - 1
\right\|_p^{\mathbb{P}_{\mathcal{GS}_n}} = e^{-\Omega(n)}$
to complete the proof of Theorem \ref{thm:strong_main}.

We need a few definitions to continue.
\begin{define}
  Let $\mathcal{CGS}^+_n$ (from coloured generalised split graphs)
  be the set of all unipolar arrangements of order $n$
  and let $\mathcal{CGS}_n = \mathcal{CGS}^+_n \times \{-1, 1\}$.
\end{define}

Recall that $\mathscr{L}_n$ was introduced just before Definition~\ref{def:ln}.
We see that $|\mathcal{CGS}_n^+| = \mathscr{L}_n$
and $|\mathcal{CGS}_n| = 2\mathscr{L}_n$.
A closer look at the definitions reveals that
if $((G, C), B) \in_u \mathcal{CGS}_n$ is uniformly selected,
if we set $H_1 = G$ if $B = 1$ and $H_1 = \overline{G}$ otherwise,
and set $H_2 \sim \rho(Gen(n))$,
then $H_1$ and $H_2$ are equal in distribution.
Therefore
\[
\mathbb{P}_{Gen(n)}(G)
= \frac1{|\mathcal{CGS}_n|}
\sum_{C \subseteq [n]}
\big\{
\mathds{1}[(G, C) \in \mathcal{CGS}^+_n] +
\mathds{1}[(\overline{G}, C) \in \mathcal{CGS}^+_n]
\big\}.
\]
To simplify the notation we define
$R_+(G) = \sum_{C \subseteq [n]} \mathds{1}[(G, C) \in \mathcal{CGS}^+_n]$,
$R_-(G) = R_+(\overline{G})$ and $R(G) = R_+(G) + R_-(G)$,
so that
$\sum_G R(G) = |\mathcal{CGS}_n|$ and
\[
\mathbb{P}_{Gen(n)}(G) = \frac{R(G)}{|\mathcal{CGS}_n|}.
\]
Note that $R(G) > 0$ iff $G \in \mathcal{GS}_n$.
Now we see that (for each $G$)
\[
\frac{d \mathbb{P}_{Gen(n)}}{d \mathbb{P}_{\mathcal{GS}_n}}
= \frac{\frac{R}{|\mathcal{CGS}_n|}}{\frac1{|\mathcal{GS}_n|}}
= R \frac{|\mathcal{GS}_n|}{|\mathcal{CGS}_n|}.
\]
It is clear from the definitions that
$\|R - 1\|_1^{\mathbb{P}_{\mathcal{GS}_n}}
= \frac{|\mathcal{CGS}_n|}{|\mathcal{GS}_n|} - 1$.
We have
\begin{align*}
  \left \| \frac{d \mathbb{P}_{Gen(n)}}{d \mathbb{P}_{\mathcal{GS}_n}} - 1
  \right\|_p^{\mathbb{P}_{\mathcal{GS}_n}}
  &= \left \| R \frac{|\mathcal{GS}_n|}{|\mathcal{CGS}_n|} - 1
  \right\|_p^{\mathbb{P}_{\mathcal{GS}_n}}
  = \left \| R - \frac{|\mathcal{CGS}_n|}{|\mathcal{GS}_n|}
  \right\|_p^{\mathbb{P}_{\mathcal{GS}_n}}
  \frac{|\mathcal{GS}_n|}{|\mathcal{CGS}_n|} \\
  &\le \left \| R - 1 + 1 - \frac{|\mathcal{CGS}_n|}{|\mathcal{GS}_n|}
  \right\|_p^{\mathbb{P}_{\mathcal{GS}_n}}
  \le \left \| R - 1
  \right\|_p^{\mathbb{P}_{\mathcal{GS}_n}}
  + \left| 1 - \frac{|\mathcal{CGS}_n|}{|\mathcal{GS}_n|} \right| \\
  &= \left \| R - 1
  \right\|_p^{\mathbb{P}_{\mathcal{GS}_n}}
  + \left \| R - 1
  \right\|_1^{\mathbb{P}_{\mathcal{GS}_n}}
  \le 2 \left \| R - 1 \right\|_p^{\mathbb{P}_{\mathcal{GS}_n}}.
\end{align*}
Thus
\begin{equation} \label{eqn.normnew}
 \left \| \frac{d \mathbb{P}_{Gen(n)}}{d \mathbb{P}_{\mathcal{GS}_n}} - 1 \right\|_p^{\mathbb{P}_{\mathcal{GS}_n}}
\leq 2 \left \| R - 1 \right\|_p^{\mathbb{P}_{\mathcal{GS}_n}}.
\end{equation}

\noindent
Further simplifications lead us to
\begin{align}
  \label{eq:gs_to_unipolar}
  \|R-1\|_p^{\mathbb{P}_{\mathcal{GS}_n}}
  &= \|(R_+ - \mathds{1}_{\mathcal{GS}_n^+})
  + (R_- - \mathds{1}_{\mathcal{GS}_n^-})
  + (\mathds{1}_{\mathcal{GS}_n^+} + \mathds{1}_{\mathcal{GS}_n^-} - 1)
  \|_p^{\mathbb{P}_{\mathcal{GS}_n}} \nonumber \\
  &\le 2\|R_+ - \mathds{1}_{\mathcal{GS}_n^+}\|_p^{\mathbb{P}_{\mathcal{GS}_n}}
  + \|\mathds{1}_{\mathcal{GS}_n^+} \mathds{1}_{\mathcal{GS}_n^-}
  \|_p^{\mathbb{P}_{\mathcal{GS}_n}} \nonumber \\
  &= 2 \left \|R_+ - 1\right \|_p^{\mathbb{P}_{\mathcal{GS}_n^+}}
 \left( \frac{|\mathcal{GS}_n^+|}{|\mathcal{GS}_n|} \right)^{1/p}
  + \|\mathds{1}_{\mathcal{GS}_n^+} \mathds{1}_{\mathcal{GS}_n^-}
  \|_p^{\mathbb{P}_{\mathcal{GS}_n}} \nonumber \\
  &\le 2 \left \|R_+ - 1 \right \|_p^{\mathbb{P}_{\mathcal{GS}_n^+}}
  + \|\mathds{1}_{\mathcal{GS}_n^+} \mathds{1}_{\mathcal{GS}_n^-}
  \|_1^{\mathbb{P}_{\mathcal{GS}_n}}.
\end{align}

The only combinatorial argument we need to complete this proof
is phrased as a lemma with a proof in the next section:
\begin{lemma}
  \label{thm:decomposition}
  For every integral $j \ge 1$ we have
\[
  \|R_+^j\|_1^{\mathbb{P}_{\mathcal{GS}_n^+}}
  \le \|R_+\|_1^{\mathbb{P}_{\mathcal{GS}_n^+}}(1 + 2^{-n/2 + o(n)}).
\]
\end{lemma}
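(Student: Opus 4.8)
The plan is to recast the norm inequality as a counting statement, interpret it probabilistically via the generation process, and then bound the resulting probabilities using the concentration of the central‑clique size.

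\emph{Reduction.} Since $R_+$ vanishes outside $\mathcal{GS}^+_n$ and $\sum_G R_+(G)=|\mathcal{CGS}^+_n|=\mathscr L_n$, we have $\|R_+^j\|_1^{\mathbb P_{\mathcal{GS}^+_n}}=\frac1{|\mathcal{GS}^+_n|}\sum_G R_+(G)^j$ and $\|R_+\|_1^{\mathbb P_{\mathcal{GS}^+_n}}=\mathscr L_n/|\mathcal{GS}^+_n|$, so the lemma is equivalent to $\sum_G R_+(G)^j\le\mathscr L_n\bigl(1+2^{-n/2+o(n)}\bigr)$. Let $(\mathbf G,\mathbf C)$ be uniform on $\mathcal{CGS}^+_n$; generating it by first choosing $k=|\mathbf C|$ with $\mathbb P(k=\cdot)=\ell_{n,\cdot}/\mathscr L_n$, then a uniform $k$‑subset $\mathbf C$, a uniform partition $\sigma$ of $[n]\setminus\mathbf C$ into side cliques and a uniform bipartite part, we see that $\mathbf G$ has the law of $\rho(Gen^+(n))$ and that $\mathbb P(\mathbf G=G)=R_+(G)/\mathscr L_n$. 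Hence $\sum_G R_+(G)^j/\mathscr L_n=\E\bigl[R_+(\mathbf G)^{j-1}\bigr]$, and writing $X:=R_+(\mathbf G)-1\ge0$ for the number of valid central cliques of $\mathbf G$ other than $\mathbf C$, it suffices to show $\E[X^i]\le2^{-n/2+o(n)}$ for $i=1,\dots,j-1$, since then $\E[(1+X)^{j-1}]=1+\sum_{i=1}^{j-1}\binom{j-1}i\E[X^i]\le1+2^{-n/2+o(n)}$.

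\emph{Estimating $\E[X^i]$.} Condition on $k$ and, as $R_+$ is isomorphism‑invariant, take $\mathbf C=[k]$, so that $\mathbf G$ consists of a clique on $[k]$, a uniform cluster graph on $W:=\{k+1,\dots,n\}$ determined by $\sigma$, and a uniform bipartite graph between $[k]$ and $W$. By Theorem~\ref{thm:concentration} the event $|k-n/2|\ge\sqrt n\log n$ has probability at most $n^{-n}$, and there the trivial bound $X\le2^n$ makes the contribution to $\E[X^i]$ negligible, so fix a typical $k$ with $k,n-k=\tfrac n2+o(n)$. Expand $X^i=\sum_{C'_1,\dots,C'_i\ne[k]}\prod_l\mathds 1\bigl[(\mathbf G,C'_l)\text{ is a unipolar arrangement}\bigr]$. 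The diagonal terms ($C'_1=\dots=C'_i$) contribute $\E[X\mid k]=\sum_{C'\ne[k]}\mathbb P((\mathbf G,C')\text{ valid}\mid k)$; writing $A=C'\cap[k]$, $P=[k]\setminus C'$, $Q=C'\cap W$, $R'=W\setminus C'$, validity forces $Q$ to lie in one part of $\sigma$, all $A$–$Q$ edges present, and the $P$–$R'$ edges to be ``all or nothing'' on each $\sigma$‑part with at most one part joined to $P$, giving $\mathbb P((\mathbf G,C')\text{ valid}\mid k)\le(n+1)\,2^{-|A||Q|-|P||R'|}$ with exponent $k|Q|+(n-k)|P|-2|P||Q|$. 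The dominant contributions are the single‑vertex moves $C'=[k]\cup\{w\}$ (probability $2^{-k}$; $n-k$ of them) and $C'=[k]\setminus\{v\}$ (probability $\le(n+1)2^{-(n-k)}$; $k$ of them), totalling $(n-k)2^{-k}+k(n+1)2^{-(n-k)}=2^{-n/2+o(n)}$; every other $C'$ has a strictly larger exponent or is suppressed by the factor $B_{n-k-|Q|+1}/B_{n-k}$ bounding $\mathbb P_\sigma(Q\text{ in one part})$, so $\E[X\mid k]\le 2^{-n/2+o(n)}$. For the off‑diagonal terms one groups by the coincidence pattern of $C'_1,\dots,C'_i$; a pattern with $s\ge2$ distinct values forces $\mathbf G$ to carry $s$ extra arrangements, and the same local analysis of how these jointly meet $[k]$ and the side cliques shows the joint probability is $2^{-n+o(n)}$ after summing the $n^{O(1)}$ relevant tuples — comfortably within $2^{-n/2+o(n)}$.

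\emph{Main obstacle.} The crux is the second step: showing \emph{uniformly over all} $C'\ne[k]$ (and over tuples, for $i\ge2$) that a deviation from the canonical arrangement costs a factor $2^{-n/2+o(n)}$. This needs (i) the structural description of when $\mathbf G[[n]\setminus C']$ is a disjoint union of cliques once $C'$ cuts across several side cliques, which gets fussier when several side cliques — or several $C'_l$ — are involved at once, and (ii) control of the combinatorial multiplicities (how many $C'$ of each shape, and the Bell‑number and binomial factors from the partition and bipartite parts), so that the sum is genuinely governed by the $\pm$one‑vertex moves rather than by rare extreme graphs such as $K_n$, where $R_+=2^n$ but which $Gen^+(n)$ produces only with probability $2^{-\Theta(n^2)}$. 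The remainder is routine bookkeeping with Theorem~\ref{thm:concentration} and the quoted estimates on random partitions.
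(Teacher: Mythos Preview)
Your reduction to showing $\E\bigl[(1+X)^{j-1}\bigr]\le 1+2^{-n/2+o(n)}$ for $X=R_+(\mathbf G)-1$ with $\mathbf G\sim Gen^+(n)$ is correct and is a clean reformulation. The subsequent analysis, however, has a real gap.

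For $i=1$ the sketch is plausible but not complete. You assert that the sum of $(n+1)\,2^{-|A||Q|-|P||R'|}$ over all $C'\ne[k]$ is dominated by the single-vertex moves, but this sum has exponentially many terms, and you have not actually carried out the summation; nor have you handled the interaction between the part of $\sigma$ containing $Q$ and the vertices of $P$ when both are non-empty. These points can be fixed with more work, so this is a matter of detail rather than a fatal flaw.

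The genuine gap is the case $i\ge2$. The sentence ``the same local analysis of how these jointly meet $[k]$ and the side cliques shows the joint probability is $2^{-n+o(n)}$ after summing the $n^{O(1)}$ relevant tuples'' is not a proof. The number of tuples $(C'_1,\dots,C'_i)$ is exponential, not polynomial, and the events $\{(\mathbf G,C'_l)\text{ is an arrangement}\}$ are highly dependent: their interaction is governed by how the sets $C'_l$ overlap one another and $[k]$, and there is no reason the $i=1$ case-analysis extends termwise. Controlling this interaction is exactly the combinatorial content of the lemma, and you have not supplied it.

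The paper's proof takes a different, deterministic route that handles all $j$ at once. Given any $j$-tuple $(C_1,\dots,C_j)$ of central cliques for $G$, it sets $C=\bigcap_i C_i$, $\widetilde C=\bigcap_i\overline{C_i}$ and $Q=V(G)\setminus(C\cup\widetilde C)$, and proves three structural facts: $(G[C\cup\widetilde C],C)$ is itself a unipolar arrangement; $(G[Q],C_1\setminus C)$ is a unipolar arrangement with at most $j-1$ side cliques; and the edges from $Q$ to $\widetilde C$ are determined by a function with at most $(n+1)^{2(j-1)}$ values (while $Q$ is complete to $C$). Thus the entire $(j+1)$-tuple $(G,C_1,\dots,C_j)$ is encoded by a unipolar arrangement on $n-l$ vertices (where $l=|Q|$) together with polynomially bounded side information, and the count reduces to comparing $|\mathcal{CGS}^+_{n-l}|$ with $|\mathcal{CGS}^+_n|$, which loses a factor $2^{-n/2+o(n)}$ for each $l\ge1$. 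This structural lemma --- that the discrepancy among several arrangements is confined to a small set $Q$ of very constrained shape --- is precisely what your off-diagonal step would need, and it is the heart of the argument.
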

\noindent
From now on until equation~(\ref{eq:gs_to_unipolar_1}), all norms are for $\mu= \mathbb{P}_{\mathcal{GS}_n^+}$,
and we drop the superscript $\mathbb{P}_{\mathcal{GS}_n^+}$ for legibility. By combining the lemma for $j=2$ with Jensen's inequality we get
\[
\|R_+\|^2_1 \le \|R_+^2\|_1 \le \|R_+\|_1(1 + e^{-\Omega(n)}),
\]
and so $\|R_+\|_1 \le 1 + e^{-\Omega(n)}$.
%Substituting $\|R_+\|_1$ with the right hand side of the last inequality in the statement of the lemma yields 
Hence, by the last lemma, we have $\|R_+^j\|_1 \le 1 + e^{-\Omega(n)}$.
Now $1 \le R_+^j \le R_+^{j+1}$ a.s.
(that is, for all $G \in \mathcal{GS}_n^+$), and therefore
\[
\|R_+^{j+1} - R_+^j\|_1 = \|R_+^{j+1}\|_1 - \|R_+^j\|_1 = e^{-\Omega(n)}.
\]
Finally, we see that for each $j \ge 1$
\begin{align*}
  %\label{eq:gs_to_unipolar_1}
  \|(R_+ - 1)^{j+1}\|_1
  &= \left\|(R_+ - 1)\sum_{i=0}^j {j \choose i} R_+^i(-1)^{j-i}\right\|_1
  \le \sum_{i=0}^j {j \choose i} \left\|(R_+ - 1) R_+^i\right\|_1 \nonumber \\
  & \le 2^j \left\|(R_+ - 1) R_+^j\right\|_1 \le e^{-\Omega(n)}.
\end{align*}
\noindent
Therefore
\begin{equation}  \label{eq:gs_to_unipolar_1}
\|R_+ - 1\|_p^{\mathbb{P}_{\mathcal{GS}_n^+}} = e^{-\Omega(n)}
\end{equation}
for each real $p \ge 1$.

By Lemma~\ref{thm:intersect} we have
$\mathbb{P}_{\mathcal{GS}^+_n}(\alpha(G) \ge \omega(G)) = e^{-\Omega(n)}$ and
$\mathbb{P}_{\mathcal{GS}^-_n}(\alpha(G) \le \omega(G)) = e^{-\Omega(n)}$.
Partition the set of graphs which are both unipolar and co-unipolar into
$U_1 = \{ G \in \mathcal{GS}^+_n \cap \mathcal{GS}^-_n : \alpha(G) > \omega(G) \}$ and
$U_2 = \{ G \in \mathcal{GS}^+_n \cap \mathcal{GS}^-_n : \alpha(G) \le \omega(G) \}$.
We see from Lemma~\ref{thm:intersect} that $U_1$ consists of rare unipolar graphs and $U_2$ consists of rare co-unipolar graphs; or more formally,
$|U_1| = e^{-\Omega(n)}|\mathcal{GS}_n^+|$ and $|U_2| = e^{-\Omega(n)} |\mathcal{GS}_n^-|$.
Hence
\begin{align}
  \label{eq:gs_to_unipolar_2}
  \|\mathds{1}_{\mathcal{GS}_n^+} \mathds{1}_{\mathcal{GS}_n^-}
  \|_1^{\mathbb{P}_{\mathcal{GS}_n}}
  = |\mathcal{GS}^+_n \cap \mathcal{GS}^-_n| / |\mathcal{GS}_n|  = e^{-\Omega(n)}.
\end{align}

\noindent
Now  (\ref{eq:gs_to_unipolar}), (\ref{eq:gs_to_unipolar_1}) and (\ref{eq:gs_to_unipolar_2}) imply that
\begin{equation}\label{eqn.Rnorm}
 \|R-1\|_p^{\mathbb{P}_{\mathcal{GS}_n}} = e^{-\Omega(n)}.
\end{equation} 

We now have all the pieces to complete the story, assuming Lemma~\ref{thm:decomposition}.
%prove~(\ref{eq:double_counting}) and (\ref{eq:dtv_gs_gen}).
By~(\ref{eq:gs_to_unipolar_1}), 
%\frac{{\mathcal L}_n}{|\mathcal{GS}_n^+|}
$ {\mathcal L}_n / |\mathcal{GS}_n^+| \: - 1 = \| R^+-1\|_1^{\mathbb{P}_{\mathcal{GS}_n^+}} =  e^{-\Omega(n)}$,
so using also~(\ref{eq:gs_to_unipolar_2})
\[ |\mathcal{GS}_n| = 2 |\mathcal{GS}_n^+| - |\mathcal{GS}^+_n \cap \mathcal{GS}^-_n| = 2 {\mathcal L}_n (1-e^{-\Omega(n)}),
\]
and we have proved~(\ref{eq:double_counting}).
Also, by~(\ref{eqn.normnew}) and~(\ref{eqn.Rnorm}),
\[ %2 d_{TV}(\mathbb{P}_{\mathcal{GS}_n}, \mathbb{P}_{Gen(n)}) = 
\left \| \frac{d \mathbb{P}_{Gen(n)}}{d \mathbb{P}_{\mathcal{GS}_n}} - 1 \right\|_p^{\mathbb{P}_{\mathcal{GS}_n}} =  e^{-\Omega(n)},\]
which yields~(\ref{eq:dtv_gs_gen}), and completes the proof of Theorem \ref{thm:strong_main}.
Note finally that by the last equation and~(\ref{eq:gs_to_unipolar_2}), for $G \sim Gen(n)$
\[ \mathbb{P}( G \in \mathcal{GS}_n^+ \cap \mathcal{GS}_n^-)
\leq (1+ e^{-\Omega(n)}) \,   |\mathcal{GS}^+_n \cap \mathcal{GS}^-_n| / |\mathcal{GS}_n| =  e^{-\Omega(n)}, \]
proving~(\ref{eqn.smallcap}).
%so by~(\ref{eq:gs_to_unipolar_2}) 
%\m{made this explicit}
%\begin{equation} \label{eqn.smallcap}
%\mathbb{P}( Gen(n) \in \mathcal{GS}_n^+ \cap \mathcal{GS}_n^-) =  e^{-\Omega(n)}.
%\end{equation}

\subsection{Proving Lemma~\ref{thm:decomposition}}
Fix $j \ge 1$.
Suppose throughout this section that
$G \in \mathcal{GS}^+_n$, $C_1, \ldots, C_j \subseteq V(G)$ and $(G, C_i)$
is a unipolar arrangement of order $n$ for each $i \in [j]$.
(For this given $G$, the number of choices for $C_1,\ldots,C_j$ is $R^+(G)^j$,
which we want to upper bound.)
Further, let $C = \cap_{i=1}^j C_i$,
$\widetilde{C} = \cap_{i=1}^j \overline{C_i}$,
and let $l = n - |C| - |\widetilde{C}|$.
The following claim is easy to verify.

\begin{claim}
  The pair $(G[C \cup \widetilde{C}], C)$ is a unipolar arrangement of
  order $|C \cup \widetilde{C}|$.
\end{claim}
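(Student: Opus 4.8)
The plan is to verify directly the two defining conditions of a unipolar arrangement for the pair $(G[C\cup\widetilde C],C)$: that $C$ induces a clique, and that the remaining vertex set $\widetilde C = (C\cup\widetilde C)\setminus C$ induces a disjoint union of cliques with no edges between distinct cliques. Nothing needs to be checked about edges between $C$ and $\widetilde C$, since in the definition of a unipolar arrangement the central clique may be joined arbitrarily to the side cliques; and the ``order'' of the arrangement is by definition just its number of vertices, namely $|C\cup\widetilde C| = |C|+|\widetilde C|$ (the union is disjoint). A useful observation, which simplifies matters, is that it suffices to work with a single index: I would use $C_1$ throughout.

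First, since $C = \bigcap_{i=1}^j C_i \subseteq C_1$ and $C_1$ is the central clique of the unipolar arrangement $(G,C_1)$, the set $C_1$ induces a clique in $G$; hence so does its subset $C$, and therefore $C$ induces a clique in the induced subgraph $G[C\cup\widetilde C]$ as well. Second, since $\widetilde C = \bigcap_{i=1}^j \overline{C_i} \subseteq \overline{C_1}$, and since $(G,C_1)$ being a unipolar arrangement means precisely that $G[\overline{C_1}]$ is a disjoint union of cliques, I would invoke the elementary fact that being a disjoint union of cliques (equivalently, being $P_3$-free) is a hereditary property: every induced subgraph of a disjoint union of cliques is again a disjoint union of cliques. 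Applying this to $G[\widetilde C]$, which is an induced subgraph of $G[\overline{C_1}]$, gives that $\widetilde C$ induces a disjoint union of cliques in $G$, hence also in $G[C\cup\widetilde C]$.

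Putting the two parts together, $(G[C\cup\widetilde C],C)$ satisfies the definition of a unipolar arrangement, and its order is $|C\cup\widetilde C|$, as claimed. There is no genuine obstacle in this argument; the only two points requiring a moment's care are recognising that one may pass from the intersection over all $j$ arrangements to just $C_1$ (so that $C\subseteq C_1$ and $\widetilde C\subseteq\overline{C_1}$), and recalling the hereditary nature of the class of cluster graphs.
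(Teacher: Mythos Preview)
Your proof is correct and is precisely the direct verification the paper has in mind: the paper itself offers no proof beyond noting that the claim ``is easy to verify.'' Your reduction to the single arrangement $(G,C_1)$ via $C\subseteq C_1$ and $\widetilde C\subseteq\overline{C_1}$, together with the hereditary nature of cluster graphs, is exactly the intended argument.
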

\noindent
Now let $C^\prime_i = C_i \setminus C$ and
$Q = V(G) \setminus (C \cup \widetilde{C})$,
so that $Q = \cup_{i=1}^j C^\prime_i$.

\begin{lemma}
Let $i \in [j]$. Then the pair $(G[Q], C^\prime_i)$ is a unipolar arrangement of order $l$ with at most $j-1$ side cliques.
\end{lemma}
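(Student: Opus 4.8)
The plan is to check separately the two assertions in the statement: that $(G[Q], C'_i)$ is a unipolar arrangement of order $l$, and that it has at most $j-1$ side cliques. Throughout, recall that $Q \cap C = \emptyset$ and $Q \cap \widetilde{C} = \emptyset$, and that $C'_i = C_i \setminus C = C_i \cap \overline{C}$, so that $Q \setminus C'_i = Q \cap \overline{C_i'} = Q \cap (\overline{C_i} \cup C) = Q \cap \overline{C_i}$.

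For the first assertion: $|Q| = n - |C| - |\widetilde{C}| = l$ by definition, so the order is $l$. Since $C'_i \subseteq C_i$ and $C_i$ is a clique in $G$, the set $C'_i$ is a clique in $G[Q]$. And $G[Q \setminus C'_i] = G[Q \cap \overline{C_i}]$ is an induced subgraph of $G[\overline{C_i}]$, which is a disjoint union of cliques (the side cliques of $(G,C_i)$) with no edges between distinct ones; any induced subgraph of such a graph is again of this form. Hence $(G[Q], C'_i)$ is a unipolar arrangement.

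For the bound on the number of side cliques, I would first identify the side cliques of $(G[Q], C'_i)$: they are the connected components of $G[Q \cap \overline{C_i}]$. Since distinct side cliques of $(G, C_i)$ are non-adjacent in $G$, each such component lies inside a single side clique of $(G, C_i)$; conversely each non-empty trace $S \cap Q$, for $S$ a side clique of $(G,C_i)$, is a clique and hence connected. So the side cliques of $(G[Q], C'_i)$ are exactly the non-empty sets $S \cap Q$ with $S$ ranging over side cliques of $(G, C_i)$, and distinct ones arise from distinct $S$. Now I would define an assignment from the side cliques of $(G[Q], C'_i)$ into $[j] \setminus \{i\}$: given a side clique $S \cap Q$, pick any $v \in S \cap Q$; since $v \in Q$ we have $v \notin \widetilde{C} = \bigcap_m \overline{C_m}$, so $v \in C_m$ for some $m$, and $m \neq i$ because $v \in S \subseteq \overline{C_i}$; assign this $m$. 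To see the assignment is injective, note that $C_m \cap \overline{C_i}$ is a clique of $G$ (a subset of the clique $C_m$) contained in $\overline{C_i}$, hence contained in a single side clique of $(G, C_i)$; since $v \in C_m \cap \overline{C_i}$ and $v \in S$, that side clique is $S$, so $C_m \cap \overline{C_i} \subseteq S$. If two distinct side cliques $S \cap Q$ and $S' \cap Q$ received the same $m$, then $C_m \cap \overline{C_i} \subseteq S \cap S' = \emptyset$, contradicting that it contains the witness $v$. Hence there are at most $|[j] \setminus \{i\}| = j-1$ side cliques.

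The delicate point — the part that really uses the structure — is the identification of the side cliques of $(G[Q], C'_i)$ with the traces $S \cap Q$, together with the observation that a clique $C_m$ meeting $\overline{C_i}$ does so within a single side clique of $(G, C_i)$; these are exactly what force the injectivity and hence the bound $j-1$. Everything else is routine unwinding of the definitions of $C$, $\widetilde{C}$ and $Q$.
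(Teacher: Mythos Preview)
Your proof is correct and rests on the same underlying observation as the paper's: since $Q=\bigcup_{m}C'_m$, every vertex of $Q\setminus C'_i$ lies in some $C'_m$ with $m\neq i$, and each such $C'_m$, being a clique inside $\overline{C_i}$, meets at most one side clique of $(G,C_i)$. The paper phrases this more directly---``$G[Q\setminus C'_1]$ is a disjoint union of cliques, covered by $j-1$ cliques, so there are at most $j-1$ side cliques''---whereas you spell out the injection explicitly; the content is the same.
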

\begin{proof}
  Without loss of generality suppose that $i = 1$.
  It is easy to see that $G[Q]$ is a generalised split graph
  with a central clique $C^\prime_1$,
  since $C^\prime_1 \subseteq C_1$,
  $Q \setminus C^\prime_1 \subseteq \overline{C_1}$
  and $(G, C_1)$ is a unipolar arrangement.
  Further, $Q = \cup_{i=1}^j C^\prime_i$,
  so every vertex in $Q \setminus C^\prime_1$ is covered by a clique
  $C^\prime_i$, where $2 \le i \le j$.
  Hence $G[Q \setminus C^\prime_1]$ is a disjoint union of cliques,
  covered by $j-1$ cliques,
  so the unipolar arrangement $(G[Q], C^\prime_1)$ contains at most $j-1$ side cliques.
\end{proof}
\noindent
Since each $C^\prime_i$ is complete to $C$ and $Q = \cup_{i=1}^j C^\prime_i$,
it follows that $Q$ is complete to $C$.

We now focus on $C^\prime_1$.
Observe that, for each vertex $v \in C^{\prime}_1$,
since $v \not\in C$ we can find $C^{\prime}_i$
such that $v \not\in C^{\prime}_i$.
Thus $C^\prime_1$ can be expressed as a disjoint union
$C^{\prime\prime}_2 \cup \cdots \cup C^{\prime\prime}_j$
where $C^{\prime\prime}_i \cap C^{\prime}_i = \emptyset$
for each $i=2,\ldots,j$.
Let $\mathcal C$ denote the family of sets
$\{C^\prime_i \setminus C^\prime_1\}_{i=2}^j$
$\cup \{C^{\prime\prime}_i\}_{i=2}^j$.
Note that the union of these sets is $Q$.

\begin{lemma}
For each $S \in {\mathcal C}$, either $S$ is complete to a side clique of $(G[C \cup \widetilde{C}], C)$ and non-adjacent to all other side cliques, or $S$ is non-adjacent to $\widetilde{C}$.
\end{lemma}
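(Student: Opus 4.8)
The plan is to show that each $S \in \mathcal{C}$ is a clique contained in $\overline{C_b}$ for a suitable index $b \in [j]$, and then to read off the dichotomy from the fact that $G[\overline{C_b}]$ is a disjoint union of cliques.

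First I would locate $S$ inside $C_a \cap \overline{C_b}$ for two indices $a, b \in [j]$. If $S = C''_i$ with $2 \le i \le j$, then $S \subseteq C'_1 \subseteq C_1$, so take $a = 1$; and since $S \cap C'_i = \emptyset$ and $S \cap C = \emptyset$ (as $S \subseteq C'_1$), we get $S \cap C_i = \emptyset$, so $S \subseteq \overline{C_i}$ and we take $b = i$. If instead $S = C'_i \setminus C'_1$ with $2 \le i \le j$, then $S \subseteq C'_i \subseteq C_i$, so take $a = i$; and $S$ is disjoint from $C'_1$ and from $C$, hence from $C_1 = C \cup C'_1$, so $S \subseteq \overline{C_1}$ and we take $b = 1$. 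In either case $S$ is a clique, being a subset of the central clique $C_a$, and $S \subseteq \overline{C_b}$; also recall that $\widetilde{C} \subseteq \overline{C_b}$.

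Next I would exploit the unipolar arrangement $(G, C_b)$. Since $G[\overline{C_b}]$ is a disjoint union of cliques and $S$ is a connected clique inside it, $S$ lies in a single side clique $D^*$ of $(G, C_b)$. Moreover $G[\widetilde{C}]$ is an induced subgraph of the disjoint union of cliques $G[\overline{C_b}]$, so it is itself a disjoint union of cliques whose components are exactly the nonempty sets $\widetilde{C} \cap D$ with $D$ a side clique of $(G, C_b)$; by definition these are precisely the side cliques of the arrangement $(G[C \cup \widetilde{C}], C)$. Because $S$ and $\widetilde{C} \cap D^*$ are disjoint subsets of the clique $D^*$, the set $S$ is complete to $\widetilde{C} \cap D^*$; and because $D^*$ has no edges to the other side cliques of $(G, C_b)$, the set $S$ is non-adjacent to $\widetilde{C} \setminus D^*$. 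If $\widetilde{C} \cap D^* \ne \emptyset$ then $D := \widetilde{C} \cap D^*$ is a side clique of $(G[C \cup \widetilde{C}], C)$ and the remaining side cliques partition $\widetilde{C} \setminus D$, giving case (a); if $\widetilde{C} \cap D^* = \emptyset$ then $S$ is non-adjacent to all of $\widetilde{C}$, giving case (b). (When $S = \emptyset$ or $\widetilde{C} = \emptyset$ the statement is trivial.)

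I do not anticipate a genuine obstacle here; the only care needed is the two-case bookkeeping that places each $S \in \mathcal{C}$ inside some $C_a \cap \overline{C_b}$, and the identification of the side cliques of $(G[C \cup \widetilde{C}], C)$ with the traces on $\widetilde{C}$ of the side cliques of $(G, C_b)$ — it is this identification that converts ``$S$ lies in one side clique of $(G, C_b)$'' into the required statement about the arrangement on $C \cup \widetilde{C}$.
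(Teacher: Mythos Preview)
Your proposal is correct and follows essentially the same approach as the paper: for $S = C'_i \setminus C'_1$ you use the arrangement $(G, C_1)$ (your $b = 1$), and for $S = C''_i$ you use $(G, C_i)$ (your $b = i$), exactly as the paper does. Your write-up is more detailed --- in particular you make explicit the identification of the side cliques of $(G[C \cup \widetilde{C}], C)$ with the nonempty traces $\widetilde{C} \cap D$ of the side cliques $D$ of $(G, C_b)$ --- but the underlying argument is the same.
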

\begin{proof}
In the unipolar arrangement $(G, C_1)$, each $C^\prime_i \setminus C^\prime_1$ is contained in a side clique, so it is either complete to a side clique of $(G[C \cup \widetilde C], C)$, and non-adjacent to the others, or non-adjacent to any vertex of $\widetilde C$. Similarly, in the unipolar arrangement $(G, C_i)$, $C^{\prime\prime}_i$ is contained in a side clique, hence it has the same property.
\end{proof}

Let $f$ be the function mapping each set $S$ in $\mathcal C$ to the side clique $S^\prime$ of $(G[C \cup \widetilde C], C)$
such that $S$ is complete to $S^\prime$,
or to $\emptyset$ if no vertex in $S$ is adjacent to $\widetilde C$.

\begin{lemma}
The edge set $E(Q, \widetilde C)$ can be reconstructed from $f$.
\end{lemma}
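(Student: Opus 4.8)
The plan is to show that, given the unipolar arrangement $(G[C \cup \widetilde C], C)$ and the set system $\mathcal C$ on $Q$ as data, the map $f$ pins down, for every vertex $v \in Q$, the set $N(v) \cap \widetilde C$ of neighbours of $v$ in $\widetilde C$; since $E(Q,\widetilde C) = \bigcup_{v \in Q} \{\, vw : w \in N(v)\cap\widetilde C \,\}$, this is exactly what we need.

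First I would use the covering property $\bigcup \mathcal C = Q$ noted just before the previous lemma: every $v \in Q$ lies in at least one member $S$ of $\mathcal C$. Fix such an $S$. By the previous lemma, either $S$ is complete to the side clique $f(S)$ of $(G[C \cup \widetilde C], C)$ and non-adjacent to every other side clique, or $f(S) = \emptyset$ and $S$ is non-adjacent to all of $\widetilde C$. Recall that the side cliques of $(G[C\cup\widetilde C],C)$ partition $\widetilde C$. In the first case $v$ is adjacent to every vertex of $f(S)$ and to no vertex of any other side clique, so $N(v)\cap\widetilde C = f(S)$; in the second case $N(v)\cap\widetilde C = \emptyset$. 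Adopting the convention that $f(S)=\emptyset$ denotes the empty subset of $\widetilde C$, in both cases $N(v)\cap\widetilde C = f(S)$.

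Next I would observe that this prescription is consistent: if $v$ lies in two members $S_1,S_2$ of $\mathcal C$, then $f(S_1)$ and $f(S_2)$ both equal $N(v)\cap\widetilde C$, so the recipe ``set $N(v)\cap\widetilde C := f(S)$ for any $S\in\mathcal C$ with $v\in S$'' is well-defined. Applying it to every $v \in Q$ reconstructs $E(Q,\widetilde C)$ as $\{\, vw : v \in Q,\ w \in f(S_v)\,\}$ for any choice $S_v$ of a member of $\mathcal C$ containing $v$.

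There is no real obstacle here: the substance was already carried out in the two preceding lemmas, which guarantee that each $S \in \mathcal C$ behaves uniformly with respect to the side cliques of $\widetilde C$. The only point to be careful about is bookkeeping, namely that $f$ is defined on the sets of $\mathcal C$ rather than on individual vertices, so I would emphasise the covering identity $\bigcup\mathcal C = Q$ together with the consistency check above, and spell out once that ``complete to the side clique $f(S)$'' forces every $v \in S$ to be adjacent to all of $f(S)$ and to nothing else in $\widetilde C$.
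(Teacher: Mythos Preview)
Your proof is correct and follows essentially the same approach as the paper: both argue that every $v\in Q$ lies in some $S\in\mathcal C$ and that the previous lemma forces $N(v)\cap\widetilde C=f(S)$. The only minor difference is that the paper picks a canonical $S$ for each $v$ (according to whether $v\in C'_1$ or not) and so sidesteps your consistency check, but your version is just as valid.
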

\begin{proof}
For each $v \in Q$ either $v \in C^{\prime}_1$ and then $v$ lies in some set $S \in \{C^{\prime\prime}_i\}_{i=2}^j$, or $v$ lies in some set $S \in \{C^\prime_i \setminus C^\prime_1\}_{i=2}^j$.  In either case, $S$ is in $\mathcal C$ and the set of neighbours of $v$ in $\widetilde C$ is precisely the set $f(S)$.
\end{proof}

The edges of $G$ can be reconstructed
from $G[C \cup \widetilde C]$ -- a unipolar graph,
$G[Q]$ -- a unipolar graph with at most $k-1$ side cliques
and $E(Q, C \cup \widetilde C)$,
which in turn can be reconstructed from $f$,
since $Q$ is complete to $C$.
It will turn out that the number of choices for
$G[C \cup \widetilde C]$ dominates the rest,
and this number is maximised whenever $Q = \emptyset$.
However, the vertex labels are a nuisance,
and additional information is required to correctly recover $G$.
The precise statement is given below.

\begin{lemma}
The $j+1$ tuple $(G, C_1, \ldots, C_j)$ can be uniquely reconstructed from:
\begin{enumerate}
  \item
    the pair $(G[C \cup \widetilde C], C)$ seen as a unipolar
    $(n - l)$-arrangement over the vertex set $[n - l]$
    so that the order of the labels of the vertices is preserved;
  \item
    the pair $(G[Q], C^\prime_1)$ seen as a unipolar $l$-arrangement
    over the vertex set $[l]$ with at most $j-1$ side cliques
    so that the order of the labels of the vertices is preserved;
  \item
    the sets $C^\prime_2, \ldots, C^\prime_j \subseteq [l]$;
  \item
    the $l$-subset of $V(G)$ specifying the original labels of $Q$;
  \item
    the function $f$.
  \end{enumerate}
\end{lemma}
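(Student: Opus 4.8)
The plan is to prove uniqueness by exhibiting an explicit inversion procedure: from the five listed items I would rebuild, in order, the vertex set partition of $[n]$, the sets $C_1,\ldots,C_j$, and finally every edge of $G$, checking at each step that the outcome is forced. No probabilistic input is needed; this is purely a bookkeeping argument whose only delicate point is that the family $\mathcal C$ and hence the function $f$ must be made to depend only on the recovered data.

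First I would recover the partition of $[n]$ into $Q$ and $C\cup\widetilde C$. Item~(4) gives $Q$ as a set of true labels, so $C\cup\widetilde C=[n]\setminus Q$ is known, and the usual orderings of $[n]$ restricted to $Q$ and to $C\cup\widetilde C$ determine the order-preserving bijections with $[l]$ and $[n-l]$ implicit in items~(1)--(3). Inverting these bijections, item~(1) yields $G[C\cup\widetilde C]$ on its true labels together with its central clique $C$ (hence $\widetilde C$); item~(2) yields $G[Q]$ on its true labels together with $C'_1$; and item~(3) yields $C'_2,\ldots,C'_j$ as subsets of $Q$ with true labels. Since $C\subseteq C_i$ gives $C_i=C\cup C'_i$ for every $i$, all the sets $C_1,\ldots,C_j$ are now determined, and it remains only to reconstruct $E(G)$.

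The edges split by the location of their endpoints. Those inside $C\cup\widetilde C$ and those inside $Q$ were recovered above. Every vertex of $Q$ lies in some $C'_i\subseteq C_i$, and $C\subseteq C_i$ with $C_i$ a clique, so $Q$ is complete to $C$ and all $Q$--$C$ edges are present. The only remaining edges are those of $E(Q,\widetilde C)$, and here I would invoke the earlier lemma that $E(Q,\widetilde C)$ is reconstructible from $f$. To apply it I must first re-derive the domain $\mathcal C$ of $f$ from the already recovered data: the sets $C'_i\setminus C'_1$ are known, and the $C''_i$ can be pinned down by committing once and for all to a canonical rule, say assigning each $v\in C'_1$ to $C''_i$ for the least $i\in\{2,\ldots,j\}$ with $v\notin C'_i$ --- such an $i$ exists and is at least $2$ because $v\in C'_1$ forces $v\notin C=\bigcap_i C_i$, so $v$ misses some $C_i$, necessarily with $i\neq1$. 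This rule uses only $C'_1,\ldots,C'_j$, so $\mathcal C$ is determined; then for each $v\in Q$ I pick any $S\in\mathcal C$ with $v\in S$ and declare its neighbourhood in $\widetilde C$ to be $f(S)$, which by the earlier lemmas is consistent and gives the correct edge set. At this point $G$ and all the $C_i$ have been rebuilt, proving uniqueness.

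The routine parts are the relabelling bookkeeping and the identity $C_i=C\cup C'_i$. The one place that genuinely needs care --- and the hard point of the argument --- is the treatment of the $C''_i$: the decomposition $C'_1=C''_2\cup\cdots\cup C''_j$ was not canonical when introduced, so the proof must fix the canonical rule above and verify that $\mathcal C$, and therefore $f$, becomes a function of the recovered tuple rather than of the original $(G,C_1,\ldots,C_j)$; once this is done every step of the reconstruction is forced, which is exactly the claimed uniqueness, and it is this lemma that will then feed the counting bound in Lemma~\ref{thm:decomposition}.
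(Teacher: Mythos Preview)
Your proposal is correct and follows the same route as the paper, which in fact gives no formal proof of this lemma: the argument is sketched in the paragraph immediately preceding the statement (``The edges of $G$ can be reconstructed from $G[C \cup \widetilde C]$ \ldots\ since $Q$ is complete to $C$''), and your write-up simply supplies the missing bookkeeping. Your explicit attention to fixing a canonical rule for the $C''_i$ is a genuine improvement over the paper's presentation, which introduces the decomposition $C'_1 = C''_2 \cup \cdots \cup C''_j$ only existentially; either your least-index rule or reading $\mathcal C$ off the domain of $f$ (which is part of item~(5)) resolves the ambiguity, and both are adequate for the counting bound that follows.
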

\noindent
Observe that there are at most $j^l 2^{\frac{l^2}{4}}$ choices for the pair
$(G[Q], C^\prime_1)$ above.

Recall that $\mathscr{L}_n = |\mathcal{CGS}^+_n|$.
Let $T_n = 2^{\frac{n^2}{4} + \frac{n}{2} \log n -
  \frac{n}{2} \log \ln n - \frac{n}{2} \log \frac{e}{2}}$ for $n \geq 3$,
and let $T_n=1$ for $n =0,1,2$.
Pr{\"o}mel and Steger show in \cite{promelsteger} that
$\mathscr{L}_n \le T_n \times 2^{O\left(\frac{n}{\ln \ln (n + 3)}\right)}$,
hence for some $c$ we have
$|\mathcal{CGS}^+_n| \le T_n \times 2^{\frac {c n}{\ln \ln (n+3)}}$.
It follows that
\[
U_{n, l}
:= \left (T_{n-l} \times 2^{\frac {cn}{\ln \ln (n+3)}} \right)
\times \left(j^l 2^{\frac{l^2}{4}}\right)
\times \left(2^{(j-1)l}\right)
\times {n \choose l}
\times \left((n+1)^{2(j-1)}\right)
\]
is an upper bound on the number of choices for $(G, C_1, \ldots, C_j)$
when $|Q| = l$, and $U_n = \sum_{l=1}^n U_{n, l}$
is an upper bound on the number of choices for $(G, C_1, \ldots, C_j)$
when $Q \neq \emptyset$.

\begin{lemma}
The number $U_{n, l}$ is maximised  subject to $1 \le l  \le n$ when $l = 1$.
\end{lemma}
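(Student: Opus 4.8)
The plan is to take base-$2$ logarithms and reduce the claim to a comparison between a convex quadratic in $l$ and some lower-order terms. The two factors $2^{cn/\ln\ln(n+3)}$ and $(n+1)^{2(j-1)}$ of $U_{n,l}$ do not depend on $l$, so it suffices to show that $V_{n,l}:=T_{n-l}\,j^{l}\,2^{l^{2}/4}\,2^{(j-1)l}\binom{n}{l}$ is maximised at $l=1$. Put $W_{n,l}=\log_{2}V_{n,l}$. For $m\ge 3$ we have $\log_{2}T_{m}=\tfrac{m^{2}}{4}+\tfrac{m}{2}\log m-\tfrac{m}{2}\log\ln m-\tfrac{m}{2}\log\tfrac{e}{2}$, and the leading contribution to $W_{n,l}$ is $\log_{2}T_{n-l}+\tfrac{l^{2}}{4}$, whose quadratic part is $\tfrac{(n-l)^{2}}{4}+\tfrac{l^{2}}{4}=\tfrac{n^{2}}{8}+\tfrac12\bigl(l-\tfrac{n}{2}\bigr)^{2}$ --- a strictly convex parabola in $l$ with minimum at $l=n/2$. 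So morally $W_{n,l}$ is U-shaped and its maximum over $\{1,\dots,n\}$ sits at one of the endpoints $l=1,n$; the work is to make this precise against the lower-order terms and then to check that $l=1$ wins.

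I would prove $W_{n,l}\le W_{n,1}$ for $2\le l\le n$ by two cases. When $l\le n-3$ the displayed formula for $T_{n-l}$ applies; subtracting, the quadratic parts combine to $-\tfrac{(l-1)(n-l-1)}{2}$, and the remaining ``error'' is
\[
\underbrace{\tfrac{n-l}{2}\log(n-l)-\tfrac{n-1}{2}\log(n-1)}_{\le 0}
+\underbrace{\tfrac{n-1}{2}\log\ln(n-1)-\tfrac{n-l}{2}\log\ln(n-l)}_{\ge 0}
+O_{j}(l)+\log_{2}\tbinom{n}{l}-\log_{2}n .
\]
For $2\le l\le n/2$ one has $n-l-1=\Omega(n)$, so $-\tfrac{(l-1)(n-l-1)}{2}=-\Omega(ln)$, and crude bounds ($\log_{2}\binom{n}{l}\le l\log_{2}\tfrac{en}{l}$, concavity of $m\log\ln m$) give error $=O_{j}(l\log n)$, which is dominated. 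For $n/2<l\le n-3$ the quadratic is only non-positive, but now $\tfrac{n-l}{2}\log(n-l)\le\tfrac{n}{4}\log n$ (since $n-l<n/2$) while $\tfrac{n-1}{2}\log(n-1)$ is of order $\tfrac{n}{2}\log n$, so the first error bracket is already $\le-\tfrac{n}{4}\log n\,(1+o(1))$, which swamps the remaining $O_{j}(n\log\ln n)$ terms and makes the error negative on its own. When $l\in\{n-2,n-1,n\}$ we have $T_{n-l}=1$, so $W_{n,l}-W_{n,1}=-\log_{2}T_{n-1}+\tfrac{l^{2}-1}{4}+O_{j}(n)+\log_{2}\binom{n}{l}$; using $\log_{2}T_{n-1}=\tfrac{(n-1)^{2}}{4}+\tfrac{n-1}{2}\log(n-1)-O(n\log\ln n)$ and $\tfrac{l^{2}-1}{4}-\tfrac{(n-1)^{2}}{4}\le\tfrac{n-1}{2}$, this is $\le-\tfrac{n-1}{2}\log(n-1)\,(1+o(1))<0$.

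The decisive endpoint comparison is quick: since $\tfrac{(n-1)^{2}}{4}+\tfrac{1-n^{2}}{4}=\tfrac{1-n}{2}$,
\[
W_{n,1}-W_{n,n}=\log_{2}T_{n-1}+\tfrac{1-n^{2}}{4}+O_{j}(n)+\log_{2}n=\tfrac{n-1}{2}\log_{2}\tfrac{n-1}{\ln(n-1)}+O_{j}(n),
\]
which is positive for large $n$ because $\tfrac{n-1}{\ln(n-1)}\to\infty$. Hence for all sufficiently large $n$ (which is all that is needed, as $U_{n}$ only ever feeds into an $e^{-\Omega(n)}$ estimate) we obtain $W_{n,1}>W_{n,n}\ge W_{n,l}$ for $2\le l\le n$, proving the lemma.

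I expect the only real obstacle to be the bookkeeping in the regime where $l$ is close to $n$: there the ``signal'' guaranteeing $W_{n,l}<W_{n,1}$ switches from the quadratic term to the single logarithm $\tfrac{n-l}{2}\log(n-l)-\tfrac{n-1}{2}\log(n-1)$ buried in $\log_{2}T_{n-l}-\log_{2}T_{n-1}$, and one must resist replacing it by the far weaker bound $-\tfrac{l-1}{2}\log(n-l)$, which is only linear in $n$ and would lose to the $(j-1)l$ contribution once $j\ge 2$. Choosing where to split (here $l=n/2$) and pairing each error term with whichever of the two available signals controls it is the one genuinely fiddly step; the rest is routine estimation, using throughout that $j$ is a fixed constant.
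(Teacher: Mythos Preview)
Your argument is correct. The paper's own ``proof'' is merely a one-line hint (treat $l$ as continuous, differentiate, details left to reader), so you have in fact done considerably more than the paper. The two approaches share the same underlying observation---that $\log_2 U_{n,l}$ is, up to lower-order terms, the convex parabola $\frac{(n-l)^2}{4}+\frac{l^2}{4}$ with minimum at $l=n/2$---but diverge in execution: the paper would have you compute $\partial_l\log_2 U_{n,l}$ and locate its single sign change, whereas you compare $W_{n,l}$ to $W_{n,1}$ directly via a three-range split ($2\le l\le n/2$; $n/2<l\le n-3$; $l\in\{n-2,n-1,n\}$). Your route avoids any interpolation of the factorial and Bell-number terms to non-integer arguments, at the cost of having to identify in each range which ``signal'' (the quadratic $-\tfrac{(l-1)(n-l-1)}{2}$ for small $l$, the log term $\tfrac{n-l}{2}\log(n-l)-\tfrac{n-1}{2}\log(n-1)$ for $l$ near $n$) dominates the $O_j(l\log n)$ or $O_j(n\log\log n)$ error; you do this correctly, and your closing remark about not using the weaker bound $-\tfrac{l-1}{2}\log(n-l)$ near $l=n$ shows you saw the one genuine trap. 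Your caveat that the argument only gives the conclusion for sufficiently large $n$ is harmless here, since the lemma is used solely to feed the asymptotic corollary $U_n=|\mathcal{CGS}^+_n|\,2^{-n/2+o(n)}$.
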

\begin{proof}
  We may prove this by thinking of $l$ as a continuous variable and taking
  the derivative of $U_{n, l}$ with respect to $l$.
  The analysis is straightforward but unpleasant:
  we leave the details to the reader.
\end{proof}

\begin{cor}
  We have
\[
  U_n = |\mathcal{CGS}^+_n| \, 2^{-n/2 + o(n)}.
\]
\end{cor}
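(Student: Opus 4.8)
The plan is to pin down $\log U_n$ and $\log\mathscr{L}_n$ separately, both in terms of the explicit quantity $T_n$, and in particular to exploit the discrete derivative $\log T_n-\log T_{n-1}$, which is where the decisive $-n/2$ comes from. First I would invoke the preceding lemma: since $U_{n,l}\le U_{n,1}$ for all $1\le l\le n$, we have $U_{n,1}\le U_n\le n\,U_{n,1}$, so $\log U_n=\log U_{n,1}+O(\log n)$. Then, because $j$ is a fixed constant, every factor in the definition of $U_{n,1}$ other than $T_{n-1}$ — namely $2^{cn/\ln\ln(n+3)}$, $j\,2^{1/4}$, $2^{j-1}$, $\binom{n}{1}$ and $(n+1)^{2(j-1)}$ — is at least $1$ and is $2^{o(n)}$; hence $T_{n-1}\le U_{n,1}\le T_{n-1}\,2^{o(n)}$, and combining with the previous line, $\log U_n=\log T_{n-1}+o(n)$.

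The heart of the argument is the discrete derivative of $\log T_n$. Writing out the definition of $T_n$ gives
\[
\log T_n-\log T_{n-1}=\frac{n^2-(n-1)^2}{4}+\Big(\tfrac n2\log n-\tfrac{n-1}2\log(n-1)\Big)-\Big(\tfrac n2\log\ln n-\tfrac{n-1}2\log\ln(n-1)\Big)-\tfrac12\log\tfrac e2 .
\]
The first term equals $\tfrac n2-\tfrac14$. For the first bracket I would write $\tfrac n2\log n-\tfrac{n-1}2\log(n-1)=\tfrac n2\log\tfrac{n}{n-1}+\tfrac12\log(n-1)$ and use $\log(1+x)=O(x)$ to see it is $\tfrac12\log n+O(1)$; the $\ln$-bracket is $\tfrac12\log\ln n+O(1)$ by the same trick. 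Hence $\log T_n-\log T_{n-1}=\tfrac n2+O(\log n)$, so $\log T_{n-1}=\log T_n-\tfrac n2+o(n)$, and therefore $\log U_n=\log T_n-\tfrac n2+o(n)$.

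It remains to show $\log\mathscr{L}_n=\log T_n+o(n)$. The upper bound $\mathscr{L}_n\le T_n\,2^{O(n/\ln\ln(n+3))}$ is the bound of Pr\"omel and Steger quoted above. For the matching lower bound I would keep just the single term $\ell_{n,\lfloor n/2\rfloor}=\binom{n}{\lfloor n/2\rfloor}2^{\lfloor n/2\rfloor\lceil n/2\rceil}B_{\lceil n/2\rceil}$ of the sum defining $\mathscr{L}_n$: using $\binom{n}{\lfloor n/2\rfloor}=2^{n+o(n)}$, $\lfloor n/2\rfloor\lceil n/2\rceil=\tfrac{n^2}4+O(1)$, the asymptotic~(\ref{eq:bell_asymp}) together with $r(m)=\ln m-(1+o(1))\ln\ln m$ to obtain $\log B_{\lceil n/2\rceil}=\tfrac n2\log n-\tfrac n2\log\ln n-\tfrac n2\log e-\tfrac n2+o(n)$, and the identity $1-\log e=-\log\tfrac e2$, one checks that $\log\ell_{n,\lfloor n/2\rfloor}=\log T_n+o(n)$, whence $\mathscr{L}_n\ge T_n\,2^{-o(n)}$. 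Combining this with $\log U_n=\log T_n-\tfrac n2+o(n)$ yields $\log U_n=\log\mathscr{L}_n-\tfrac n2+o(n)$, i.e.\ $U_n=|\mathcal{CGS}^+_n|\,2^{-n/2+o(n)}$, as desired.

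I expect the obstacle to be bookkeeping rather than conceptual. The discrete-derivative computation of $\log T_n$ is the step that actually produces the $-n/2$, and together with the one-term lower bound for $\mathscr{L}_n$ it requires keeping careful track of the $\log n$, $\log\ln n$ and the constant $\log\tfrac e2$ contributions, but it introduces no new ideas. (Alternatively one could quote a matching lower bound for $\mathscr{L}_n$ directly from \cite{promelsteger}; I prefer the self-contained one-term estimate.)
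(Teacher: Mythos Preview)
Your proposal is correct and follows essentially the same route as the paper: sandwich $U_n$ between $U_{n,1}$ and $nU_{n,1}$, observe $U_{n,1}=T_{n-1}\,2^{o(n)}$, compute the discrete derivative $T_{n-1}=T_n\,2^{-n/2+o(n)}$, and identify $T_n$ with $|\mathcal{CGS}^+_n|$ up to a $2^{o(n)}$ factor. The only difference is that you explicitly supply the lower bound $\mathscr{L}_n\ge T_n\,2^{-o(n)}$ via the single term $\ell_{n,\lfloor n/2\rfloor}$, whereas the paper tacitly treats $|\mathcal{CGS}^+_n|=T_n\,2^{o(n)}$ as known from~\cite{promelsteger}; your self-contained derivation is a harmless (and arguably cleaner) addition.
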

\begin{proof}
  Since $T_{n-1} = T_n \, 2^{-n/2 + o(n)}$,
  we see that $\frac{U_{n, 1}}{|\mathcal{CGS}^+_n|} = 2^{-n/2 + o(n)}$,
  and hence
  \[
  |\mathcal{CGS}^+_n| \, 2^{-n/2 + o(n)}
  = U_{n, 1} \le U_n \le nU_{n, 1}
  = |\mathcal{CGS}^+_n| \, 2^{-n/2 + o(n)}. \qedhere
  \]
\end{proof}

Finally, the number of choices for the $j+1$ tuple $(G, C_1, \ldots, C_j)$ when
$Q = \emptyset$ is clearly $|\mathcal{CGS}^+_n|$, so we have
\begin{align*}
  \|R^j_+\|^{\mathbb{P}_{\mathcal{GS}^+_n}}_1
  &= \sum_{G \in \mathcal{GS}^+_n} R_+^j (G) \frac1{|\mathcal{GS}^+_n|}
  \le \left(U_n + |\mathcal{CGS}^+_n|\right) \frac1{|\mathcal{GS}^+_n|}\\
  &= |\mathcal{CGS}^+_n|(1 + 2^{-n/2 + o(n)}) \frac1{|\mathcal{GS}^+_n|}
  = \|R_+\|^{\mathbb{P}_{\mathcal{GS}^+_n}}_1(1 + 2^{-n/2 + o(n)}).
\end{align*}
This completes the proof of Lemma~\ref{thm:decomposition},
Theorem~\ref{thm:strong_main} and Theorem~\ref{thm:main}.

\subsection{Concentration: Proof of Theorem~\ref{thm:concentration}}
\label{sec:concentration}
Let $\mu = (n - \log n + \log \ln n) / 2$ as in the statement
of the theorem (for $n \geq 3$),
let $\hat \mu = \lceil \mu \rceil$
and let $\check \mu = \hat \mu - 1$.
As before, let $\ell_{n, k} = {n \choose k}2^{k(n-k)}B_{n-k}$.
We fix $n$ and drop it in the subscript for legibility, i.e.
write $\ell_{k}$ instead of $\ell_{n, k}$.

\begin{lemma}
Using the notation above, there is an $n_0$ such that
$\ell_{k}/\ell_{\hat \mu} \le n^{-n-1}$ as long as $|k- \hat \mu| \ge n^{2/3}$
for $n \ge n_0$.
\end{lemma}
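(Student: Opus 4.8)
The plan is to pass to logarithms (recall $\log=\log_2$) and compare $\ell_k$ with $\ell_{\hat\mu}$ term by term. Write
\[ g(k) \;=\; \log \ell_k \;=\; \log\tbinom{n}{k} \;+\; k(n-k) \;+\; \log B_{n-k}, \qquad 0\le k\le n, \]
so that the assertion $\ell_k/\ell_{\hat\mu}\le n^{-n-1}$ is exactly $g(\hat\mu)-g(k)\ge (n+1)\log n$. The driving observation is that $\hat\mu$ lies within $O(\log n)$ of $n/2$: indeed $\mu-\tfrac n2=-\tfrac12\log(n/\ln n)$, so $|\hat\mu-\tfrac n2|\le \tfrac12\log(n/\ln n)+1=\Theta(\log n)$. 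Consequently, for any $k$ with $|k-\hat\mu|\ge n^{2/3}$ we have $|k-\tfrac n2|\ge n^{2/3}-O(\log n)\ge\tfrac12 n^{2/3}$ for large $n$, and since $k(n-k)=\tfrac{n^2}4-(k-\tfrac n2)^2$,
\[ \hat\mu(n-\hat\mu)-k(n-k)\;=\;\Big(k-\tfrac n2\Big)^2-\Big(\hat\mu-\tfrac n2\Big)^2\;\ge\;\tfrac14 n^{4/3}-O(\log^2 n)\;\ge\;\tfrac18 n^{4/3}. \]
It then remains to check that neither of the other two terms can undo this gain. For the binomial term: since $|k-\tfrac n2|>|\hat\mu-\tfrac n2|$ for large $n$ and $j\mapsto\tbinom nj$ is unimodal and symmetric about $n/2$, we get $\tbinom n{\hat\mu}\ge\tbinom nk$, hence $\log\tbinom n{\hat\mu}-\log\tbinom nk\ge0$.

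For the Bell term: if $k\ge\hat\mu$ then $n-k\le n-\hat\mu$, and since Bell numbers increase, $\log B_{n-\hat\mu}-\log B_{n-k}\ge0$, which already gives $g(\hat\mu)-g(k)\ge\tfrac18 n^{4/3}$. If $k<\hat\mu$, write $s=\hat\mu-k\ge n^{2/3}$; then
\[ \log\frac{B_{n-k}}{B_{n-\hat\mu}}\;=\;\sum_{m=n-\hat\mu}^{\,n-k-1}\log\frac{B_{m+1}}{B_m}. \]
By Harper's estimate (Theorem~\ref{thm:harper_normal}), $B_{m+1}/B_m=(1+o(1))\,m/\ln m$, so for $n$ large every index $m$ in the range $[\,n-\hat\mu,\ n-k-1\,]\subseteq[\tfrac n2-O(\log n),\,n]$ (whose lower endpoint still tends to infinity) satisfies $B_{m+1}/B_m\le m\le n$; thus $\log\tfrac{B_{n-k}}{B_{n-\hat\mu}}\le s\log n$. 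Using also $|k-\tfrac n2|\ge s-O(\log n)$ we obtain
\[ g(\hat\mu)-g(k)\;\ge\;\Big(k-\tfrac n2\Big)^2-\Big(\hat\mu-\tfrac n2\Big)^2-s\log n\;\ge\;s^2-O(\log^2 n)-s\log n\;\ge\;\tfrac12 s^2\;\ge\;\tfrac12 n^{4/3} \]
for $n$ large. In every case $g(\hat\mu)-g(k)\ge\tfrac18 n^{4/3}$, which exceeds $(n+1)\log n$ once $n$ is large enough, and that fixes the value of $n_0$.

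The argument is essentially bookkeeping: the identity $\mu-\tfrac n2=-\tfrac12\log(n/\ln n)$ and the unimodality of $j\mapsto\tbinom nj$ are standard, and the quadratic term $n^{4/3}$ comfortably dominates both the target $(n+1)\log n$ and the error $s\log n$, since $n^{1/3}\gg\log n$. The only step needing a moment's thought is the Bell ratio bound, where Harper's asymptotic must be applied uniformly across the growing window of indices $m\in[\tfrac n2-O(\log n),n]$ — legitimate precisely because the lower end of the window still tends to infinity, so the $(1+o(1))$ is uniform there. (Alternatively one could argue via concavity of $g$: its second difference equals $-2$ plus a nonpositive binomial contribution plus an $O(1/n)$ Bell contribution, hence is $\le-1$ in the core range, giving quadratic decay away from the maximizer; but the term-by-term comparison above avoids having to first locate that maximizer.)
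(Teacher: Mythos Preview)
Your proof is correct and follows essentially the same strategy as the paper: decompose $\log(\ell_k/\ell_{\hat\mu})$ into a dominant quadratic piece $-x^2$ (with $x=k-\hat\mu$) and subsidiary terms of size $O(|x|\log n)$, then observe that $|x|\ge n^{2/3}$ forces $x^2$ to overwhelm everything else. The only difference is cosmetic: the paper bounds each of the binomial-ratio, Bell-ratio, and $2^{x(n-2\hat\mu)}$ factors uniformly by $n^{|x|}$ in one stroke, whereas you split into sign cases and invoke unimodality of $\tbinom nj$ and monotonicity of $B_m$ to extract non-negative contributions where possible---slightly sharper but longer.
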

\begin{proof}
  Let $x = k - \hat \mu$, we have
  \begin{align*}
    \frac{\ell_k}{\ell_{\hat \mu}}
    = \frac{{n \choose k}2^{k(n-k)}B_{n-k}}
    {{n \choose \hat \mu}2^{\hat \mu(n - \hat \mu)}B_{n-\hat \mu}}
    = \frac{(n - \hat \mu)!}{(n - \hat \mu - x)!}
    \left( \frac{(\hat \mu + x)!}{\hat \mu!} \right)^{-1}
    2^{x(n - 2\hat \mu)}
    2^{-x^2}
    \frac{B_{n - \hat \mu - x}}{B_{n - \hat \mu}}.
  \end{align*}
  We can bound each of the terms
  $\frac{(n - \hat \mu)!}{(n - \hat \mu - x)!}$,
  $\left( \frac{(\hat \mu + x)!}{\hat \mu!} \right)^{-1}$,
  $2^{x(n - 2\hat \mu)}$ and
  $\frac{B_{n - \hat \mu - x}}{B_{n - \hat \mu}}$ with $n^{|x|}$,
  so we get
\begin{align*}
    \frac{\ell_{k}}{\ell_{\hat \mu}}
    <   n^{|x|} n^{|x|} 2^{-|x|^2} n^{|x|}
    =    2^{3|x| \log n - |x|^2}
    \le  2^{3n^{2/3} \log n - n ^{4/3}}
    \le n^{-n-1}
\end{align*}
  for large $n$.
\end{proof}

The bound is rather crude, and it can be improved,
but we do not need a better bound for our purposes.
From now we focus on $k = \hat \mu + x$ with $|x| \le n^{2/3}$.
This implies $k \sim n/2$, which allows us to use asymptotics
for $k$ as $n$ grows,
for instance $(n - k) / (k + 1) \rightarrow 1$ and more importantly
\begin{align*}
  \frac{B_{n-k-1}}{B_{n-k}}
  &= (1 + o_n(1)) \frac{\ln (n-k)}{n-k}
  = (1 + o_n(1))\frac{2 \ln n}{n},
\end{align*}
where the error term is uniform over $k$, provided that $|x| \le n^{2/3}$.

\begin{lemma}
  \label{thm:l_bnds}
  There is an $n_0$ such that the following holds for each $n \ge n_0$.
  Suppose $0 \le x \le n^{2/3}$ is integral and
  $r \in \{\frac{\ell_{\check \mu - x}}{ \ell_{\check \mu}}
  ,\frac{\ell_{\hat \mu + x}}{ \ell_{\hat \mu}}\}$.
  Then $2^{-x^2 - 2x} \le r \le 2^{-x^2 + 2x}$.
\end{lemma}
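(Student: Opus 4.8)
The plan is to expand $\ell_k=\binom nk 2^{k(n-k)}B_{n-k}$ into its three factors and estimate each separately; the two cases of the lemma are symmetric, so I would present $r=\ell_{\hat\mu+x}/\ell_{\hat\mu}$ in detail (for $x=0$ the claim is trivial, $r=1$). Cancelling in the exponent of $2$,
\[
r=\frac{\binom{n}{\hat\mu+x}}{\binom{n}{\hat\mu}}\cdot 2^{\,x(n-2\hat\mu)-x^{2}}\cdot\frac{B_{n-\hat\mu-x}}{B_{n-\hat\mu}},
\]
so it suffices to show that $\log r+x^{2}=\log\!\big(\binom{n}{\hat\mu+x}/\binom{n}{\hat\mu}\big)+x(n-2\hat\mu)+\log\!\big(B_{n-\hat\mu-x}/B_{n-\hat\mu}\big)$ lies in $[-2x,2x]$. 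Two preliminary observations drive the rest: since $\hat\mu=\lceil\mu\rceil$ with $\mu=\tfrac12(n-\log n+\log\ln n)$, we have $n-2\hat\mu=\log(n/\ln n)-2\delta$ with $\delta:=\hat\mu-\mu\in[0,1)$; and, because $x\le n^{2/3}$, every index occurring above lies in $[\hat\mu,\hat\mu+x]$ and is therefore $\tfrac n2\,(1+O(n^{-1/3}))$.

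For the binomial factor, each of the $x$ consecutive quotients $\binom nj/\binom n{j-1}=(n-j+1)/j$ with $\hat\mu<j\le\hat\mu+x$ equals $1+O((\log n+x)/n)$, so the whole factor contributes $O\!\big(x(\log n+x)/n\big)=o(x)$. For the Bell factor I would use~(\ref{eq:bell_asymp}): writing $h(N)=N\big(r_N-1+\tfrac1{r_N}\big)$ where $r_N$ is the root of $r_N e^{r_N}=N$, that estimate gives $\ln B_N=h(N)-1-\tfrac12\ln r_N+o(1)$. The crucial identity is $h'(N)=r_N$: differentiating $r_N e^{r_N}=N$ yields $r_N'=r_N/\big((1+r_N)N\big)$, and substituting this into $h'(N)=r_N+Nr_N'-1+\tfrac1{r_N}-Nr_N'/r_N^{2}$ makes everything but $r_N$ cancel. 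Hence, with $N=n-\hat\mu\sim n/2$,
\[
\ln\!\frac{B_{n-\hat\mu}}{B_{n-\hat\mu-x}}=h(N)-h(N-x)+o(1)=\int_{N-x}^{N}r_t\,dt+o(1)=x\,r_N+o(x),
\]
the last step because $r_t$ varies by only $O(x/n)=o(1)$ on $[N-x,N]$, so the integral differs from $x r_N$ by $O(x^2/n)=o(x)$. Finally the fixed-point relation $r_N=\ln N-\ln r_N$ iterates, for $N\sim n/2$, to $r_N=\ln n-\ln\ln n-\ln2+o(1)$, i.e.\ $r_N/\ln2=\log(n/\ln n)-1+o(1)$, so $\log\!\big(B_{n-\hat\mu-x}/B_{n-\hat\mu}\big)=-x\big(\log(n/\ln n)-1\big)+o(x)$.

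Adding the three pieces, the $x\log(n/\ln n)$ contributions cancel and $\log r+x^{2}=x(1-2\delta)+o(x)$, which for all large $n$ and every $1\le x\le n^{2/3}$ lies in $[-2x,2x]$ because $|1-2\delta|\le1$. The ratio $\ell_{\check\mu-x}/\ell_{\check\mu}$ is treated the same way, now with $n-2\check\mu=\log(n/\ln n)+O(1)$ and with the Bell-number indices increasing rather than decreasing, yielding a bound of the same shape. The one genuine obstacle is uniformity of the error bookkeeping: each of the three factors has to be pinned down to additive accuracy $o(x)$ \emph{uniformly over the whole window} $0\le x\le n^{2/3}$ — a naive ``$o(1)$ per step'' bound would accumulate to order $n^{1/3}$ and be useless — so every error term must be shown to be of the form $x\cdot o(1)$; the Bell-number step, which leans on the derivative identity $h'=r_N$ and on the uniform form of~(\ref{eq:bell_asymp}), is where this is most delicate.
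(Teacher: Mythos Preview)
Your proof is correct, but it takes a genuinely different route from the paper's. You split $r$ into its three factors and estimate the Bell-number block $B_{n-\hat\mu-x}/B_{n-\hat\mu}$ in one shot, using the full asymptotic~(\ref{eq:bell_asymp}) together with the derivative identity $h'(N)=r_N$ to reduce the difference $h(N)-h(N-x)$ to an integral. The paper instead writes $\ell_{\hat\mu+x}/\ell_{\hat\mu}$ as a telescoping product $\prod_{i=0}^{x-1}\ell_{\hat\mu+i+1}/\ell_{\hat\mu+i}$ and uses only the one-step Bell ratio $B_{m-1}/B_m=(1+o(1))(\ln m)/m$; each step ratio is $(1+o(1))\,d(\hat\mu+i)$ with $d(y)=2^{n-2y}(\ln n)/n$, and since $d(\mu+i)=2^{-2i}$ and $\mu\le\hat\mu\le\mu+1$, each factor sits in $[2^{-2i-2},2^{-2i}]$ up to a multiplicative $(1+o(1))\in[\tfrac12,2]$. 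Multiplying $x$ such bounds gives exactly $2^{-x^2\pm 2x}$.

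The paper's argument is more elementary --- it never touches~(\ref{eq:bell_asymp}) or any calculus, and the uniformity issue you flag as ``the one genuine obstacle'' evaporates because each of the $x$ step errors is crudely bounded by a factor of $2$ rather than tracked additively. Your approach, on the other hand, actually identifies the main term $\log r+x^2=x(1-2\delta)+o(x)$, which shows the stated bound $\pm2x$ is loose; this extra precision is not needed here but is informative.
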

\begin{proof}
  Define the continuous, strictly decreasing function
  $d(y) = 2^{n -2y}\frac{\ln n}{n}$, so that for $k \sim n/2$
  \begin{align*}
    \frac{\ell_{k+1}}{\ell_k}
    =\frac{n - k}{k + 1}
    2^{n -2k -1}
    (1 + o_n(1))\frac{2\ln n}{n}
    = (1 + o_n(1)) d(k).
  \end{align*}
  Observe that
  $d(k) = 1 \iff 2k = n + \log \ln n - \log n \iff k = \mu$.
  We have
  $d(\mu + x) =  d(\mu) \cdot 2^{-2x} = 2^{-2x}$,
  and
  \begin{align*}
    \ell_{\hat \mu + x}
    & = \ell_{\hat \mu}
    \prod_{i=0}^{x - 1}
    \frac{\ell_{\hat \mu + i + 1}}
         {\ell_{\hat \mu + i}}
         = \ell_{\hat \mu}
         \prod_{i=0}^{x - 1} (1 + o_n(1)) d(\hat \mu + i).
  \end{align*}
  For each $i$ we use the bounds
  \[
  2^{-2i - 2} = d(\mu + 1 + i)
  \le d(\hat \mu + i)
  \le d(\mu + i) = 2^{-2i},
  \]
  while the error terms, $1 + o_n(1)$, we bound uniformly by $\frac12$ and $2$,
  provided $n \ge n_0$.
  By multiplying the terms we get $-2{x \choose 2} - 3x$
  and $-2{x \choose 2} + x$ in the exponent for the lower and upper bounds
  respectively,
  which matches the statement of the lemma for the case
  $r=\ell_{\hat \mu + x}/\ell_{\hat \mu}$.
  The second case is analogous.
\end{proof}

\begin{claim}
  \label{thm:integral}
  For $a > 1$ we have
  $\sum_{x=a}^\infty 2^{-x^2} < \frac{2^{-(a-1)^2}}{2(a-1) \ln 2}$.
\end{claim}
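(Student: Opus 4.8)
The plan is to bound the sum by an integral and then estimate that integral via the tangent-line inequality for $t^2$ at the point $t = a-1$.

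First I would note that since $a>1$ the function $g(t) = 2^{-t^2}$ is strictly decreasing on $[a-1,\infty)$ (there $t \ge a-1 > 0$, so $t^2$ is increasing). Consequently, for each $x \in \{a, a+1, a+2, \dots\}$ we have $2^{-x^2} < \int_{x-1}^{x} 2^{-t^2}\,dt$, because on the interior of $[x-1,x]$ the integrand exceeds $2^{-x^2}$. The intervals $[x-1,x]$ with $x = a, a+1, \dots$ tile $[a-1,\infty)$ exactly, so summing gives
\[
\sum_{x=a}^{\infty} 2^{-x^2} \;<\; \int_{a-1}^{\infty} 2^{-t^2}\,dt .
\]

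Next I would apply the elementary bound $t^2 \ge (a-1)^2 + 2(a-1)\bigl(t-(a-1)\bigr)$, which holds for all real $t$ since the difference between the two sides is $\bigl(t-(a-1)\bigr)^2 \ge 0$; this is just the fact that a parabola lies above its tangent line at $t = a-1$. Therefore
\[
\int_{a-1}^{\infty} 2^{-t^2}\,dt \;\le\; 2^{-(a-1)^2}\int_{a-1}^{\infty} 2^{-2(a-1)(t-(a-1))}\,dt \;=\; \frac{2^{-(a-1)^2}}{2(a-1)\ln 2},
\]
where the last step is the substitution $u = t-(a-1)$ combined with $\int_0^{\infty} 2^{-cu}\,du = 1/(c\ln 2)$ for $c = 2(a-1) > 0$ (this is where convergence uses $a>1$). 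Chaining the two displays gives the claim, and the strict inequality is already supplied by the first step.

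The calculations are all routine and there is no genuine obstacle; the only points requiring a moment's attention are that $2^{-t^2}$ is monotone on the range used (ensured by $a-1>0$) and that the exponential integral converges (again $a-1>0$). An alternative, essentially equivalent, route is to bound $\sum_{x\ge a} 2^{-x^2}$ directly by the geometric series $2^{-(a-1)^2}\sum_{y\ge 1} 2^{-2(a-1)y} = 2^{-(a-1)^2}\,q/(1-q)$ with $q = 2^{-2(a-1)}$, and then check $q/(1-q) < 1/(2(a-1)\ln 2)$ using $e^{s} > 1+s$ with $s = 2(a-1)\ln 2$; I would present the integral version as the cleaner one.
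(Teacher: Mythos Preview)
Your proof is correct and follows essentially the same two-step outline as the paper: bound the sum by the integral $\int_{a-1}^{\infty} 2^{-t^2}\,dt$, then bound that integral by $\frac{2^{-(a-1)^2}}{2(a-1)\ln 2}$. The only difference is cosmetic: for the integral estimate the paper uses the Mills-ratio trick $\int_{a}^{\infty} 2^{-x^2}\,dx \le \frac{1}{a}\int_{a}^{\infty} x\,2^{-x^2}\,dx$ (which has an explicit antiderivative), whereas you use the tangent-line inequality $t^2 \ge (a-1)^2 + 2(a-1)(t-(a-1))$. Both devices yield the identical bound, so there is no substantive divergence.
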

\begin{proof}
  Since $-x^2$ is decreasing, we have
  \begin{align*}
    \sum_{x=a+1}^\infty 2^{-x^2}
    &< \int_a^\infty 2^{-x^2} dx
    \le \frac 1a \int_a^\infty x2^{-x^2} dx\\
    &= \frac 1{-2a \ln 2} \left( 2^{-x^2} \bigg\vert_{x=a}^{x=\infty} \right)
    = \frac{2^{-a^2}}{2a \ln 2}.
    \qedhere
  \end{align*}
\end{proof}

In the claim below and its proof,
for simplicity we have written $n^{2/3}$
rather than $\lfloor n^{2/3} \rfloor$.
\begin{claim}
  For each integer $k \ge 2$ we have
  \[
  \sum_{i = k}^{n^{2/3}}\ell_{\check \mu - i}
  \le \ell_{\check \mu} 2^{-(k-1)^2} (2+ \frac{1}{(k-1) \ln 2})
  \hspace{10pt} \text{and} \hspace{10pt}
  \sum_{i = k}^{n^{2/3}}\ell_{\hat \mu + i}
  \le \ell_{\hat \mu} 2^{-(k-1)^2} (2+ \frac{1}{(k-1) \ln 2}).
  \]
\end{claim}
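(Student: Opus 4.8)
The plan is to reduce both sums directly to the Gaussian-type tail estimated in Claim~\ref{thm:integral}, using the two-sided bound from Lemma~\ref{thm:l_bnds}.

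First I would fix $n \ge n_0$ so that Lemma~\ref{thm:l_bnds} applies, and record that for every integer $i$ with $0 \le i \le n^{2/3}$ we have $\ell_{\hat\mu + i} \le \ell_{\hat\mu} 2^{-i^2 + 2i}$ and $\ell_{\check\mu - i} \le \ell_{\check\mu} 2^{-i^2 + 2i}$. The key elementary observation is the identity $-i^2 + 2i = -(i-1)^2 + 1$, so that $2^{-i^2+2i} = 2\cdot 2^{-(i-1)^2}$. Summing over $i$ from $k$ to $n^{2/3}$ and extending the range to infinity (all terms are positive) gives
\[
\sum_{i=k}^{n^{2/3}} \ell_{\hat\mu+i} \le \ell_{\hat\mu} \sum_{i=k}^{\infty} 2\cdot 2^{-(i-1)^2} = 2\,\ell_{\hat\mu}\sum_{j=k-1}^{\infty} 2^{-j^2},
\]
and the same bound with $\hat\mu$ replaced by $\check\mu$ throughout, since Lemma~\ref{thm:l_bnds} controls $\ell_{\check\mu - i}/\ell_{\check\mu}$ by the same quantity $2^{-i^2+2i}$.

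Next I would split off the first term of the tail sum: $\sum_{j=k-1}^{\infty} 2^{-j^2} = 2^{-(k-1)^2} + \sum_{j=k}^{\infty} 2^{-j^2}$. Since $k \ge 2$ we have $k > 1$, so Claim~\ref{thm:integral} applied with $a = k$ gives $\sum_{j=k}^{\infty} 2^{-j^2} < \tfrac{2^{-(k-1)^2}}{2(k-1)\ln 2}$. Hence
\[
2\sum_{j=k-1}^{\infty} 2^{-j^2} < 2^{-(k-1)^2}\left(2 + \frac{1}{(k-1)\ln 2}\right),
\]
and combining this with the display above yields both inequalities in the claim. I would present the $\hat\mu$ case in full and simply remark that the $\check\mu$ case is word-for-word identical.

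As for obstacles: there really isn't a substantive one — the content already lives in Lemma~\ref{thm:l_bnds} and Claim~\ref{thm:integral}, and what remains is bookkeeping. The only points requiring a little care are (i) checking that the summation range $k \le i \le n^{2/3}$ stays inside the validity range $0 \le i \le n^{2/3}$ of Lemma~\ref{thm:l_bnds}, which is immediate, and (ii) getting the index shift $j = i-1$ together with the extra factor $2 = 2^1$ from completing the square exactly right, so that the isolated $j = k-1$ term reproduces the leading $2$ in the stated bound and the remaining tail contributes the $\tfrac{1}{(k-1)\ln 2}$ term.
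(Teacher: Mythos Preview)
Your proof is correct and essentially identical to the paper's own argument: both apply Lemma~\ref{thm:l_bnds}, complete the square via $-i^2+2i=-(i-1)^2+1$, split off the leading term $2^{-(k-1)^2}$, and then bound the remaining tail using Claim~\ref{thm:integral} with $a=k$. The only cosmetic difference is that the paper treats the $\check\mu$ case in full and calls the $\hat\mu$ case ``similar'', while you do the reverse.
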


\begin{proof}
  Using Lemma~\ref{thm:l_bnds} we see that
  \begin{align*}
  \sum_{i = k}^{n^{2/3}}\ell_{\check \mu - i}
  &\le \ell_{\check \mu} \sum_{i = k}^{n^{2/3}} 2^{-i^2 + 2i}
  = 2\ell_{\check \mu} \sum_{i = k}^{n^{2/3}} 2^{-(i - 1)^2}
  \le 2\ell_{\check \mu}
  \left (2^{-(k - 1)^2} +  \sum_{i = k}^{\infty} 2^{-i^2} \right) \\
  &=2\ell_{\check \mu}
  \left (2^{-(k - 1)^2} +  \frac{2^{-(k-1)^2}}{2(k-1) \ln 2} \right).
  \end{align*}
  The last inequality follows from Claim~\ref{thm:integral}.
  The second part of the statement is similar.
\end{proof}

Now we put everything together to complete the proof of
Theorem~\ref{thm:concentration}.
Suppose $n \geq 3$, $X \sim L(n)$ and $x > 2$,
then (noting the trivial bound
${\mathcal L}_n \geq \ell_{\hat \mu} + \ell_{\check \mu}$)
\begin{align*}
  \mathbb{P}(|X - \mu| \ge x)
  \le\hspace{5pt}
  &\mathbb{P}(\check \mu - n^{2/3} \le X \le \check \mu - x + 1) \\
  + &\mathbb{P}(\hat \mu + x - 1 \le X \le \hat \mu + n^{2/3}) \\
  + &\mathbb{P}(|X - \mu| \ge n^{2/3}) \\
  \le\hspace{5pt}
  & \frac{\ell_{\hat \mu}}{\ell_{\hat \mu} + \ell_{\check \mu}}
  \cdot 2^{-(x-2)^2} \cdot 4
  + \frac{\ell_{\check \mu}}{\ell_{\hat \mu} + \ell_{\check \mu}}
  \cdot 2^{-(x-2)^2} \cdot 4  + n^{-n} \\
  = \hspace{5pt}
  &2^{-(x-2)^2 + 2} +n^{-n}, \text{ and} \\
  \mathbb{P}(|X - \mu| \ge x)
  \ge\hspace{5pt}
  & \mathbb{P}(X = \hat \mu + x)
  + \mathbb{P}(X = \check \mu - x) \\
  \ge\hspace{5pt}
  & \frac{2^{-x^2-2x} (\ell_{\hat \mu} + \ell_{\check \mu})}
            {(3+\frac1{2\ln2})(\ell_{\hat \mu} + \ell_{\check \mu})
              + n^{-n}\hat \mu} \\
  \ge\hspace{5pt}
  & 2^{-x^2-2x - 2}.
\end{align*}

\section{Concluding remarks}
\label{sec:concl}

In this paper we presented a generation model for perfect graphs
which yields such graphs almost uniformly, with error $e^{-\Theta(n)}$
(in terms of total variation distance);
and we used this approach to investigate several questions about such graphs.

We aimed to investigate the most natural questions about random perfect graphs,
but of course there are further natural open problems.
For example, concerning induced subgraphs $H$,
we found the limiting probability that $P_n$ has an induced subgraph $H$,
but we did not discuss the distribution of the number of such induced subgraphs.

Our methods do not let us approach the following question about automorphisms.
It is well known that almost all graphs $G_n$ on vertex set $[n]$
have no non-trivial automorphisms,
and easy to see that in fact the probability that $G_n$ has a non-trivial automorphism
is $2^{-(1+o(1))n}$.
It is also not hard to see that,
if $R_n \sim Gen(n)$,
then the probability that $R_n$ has a non-trivial automorphism is $2^{-(\frac12+o(1))n}$.
It is natural to expect that the probability for $P_n$ is similar to that for $R_n$,
but we do not know: we would need a finer result than~(\ref{eq:promel_steger}).
Similarly, if $H$ is a fixed graph in $\mathcal{GS}^+ \cap \mathcal{GS}^-$
and $R_n \sim Gen(n)$,
then by Lemma~\ref{thm:elaborate_subgraph} the probability that $R_n$ fails to have
an induced copy of $H$ is $e^{-\Omega(n \log n)}$.
Does such a result hold for $P_n$?

A long standing open problem is to describe the structure of a random perfect graph $G$ with a prescribed number of edges, say $e(G) \sim v(G) \log v(G)$.
This problem has been discussed before~\cite{promelsteger},
and even directly approached~\cite{taraz},
but to the best of our knowledge a complete answer is a distant possibility.

Let $C_5$-{free} denote the class of graphs with no induced subgraph $C_5$.
Then ${\mathcal GS} \subset {\mathcal P} \subset C_5{\rm -free}$,
and the key result~(\ref{eq:pre_promel_steger})
from Theorem~2.4 of Pr{\"o}mel and Steger \cite{promelsteger}
in fact extends as follows:
\begin{equation}
  \label{eq:pre_promel_steger2}
  |\mathcal{GS}_n| = (1 - e^{-\Omega(n)})|(C_5{\rm -free})_n|.
\end{equation}
Thus our results refer also to random $C_5$-free graphs.
For related general results see~\cite{h_free_1, h_free_2} and references there.

\bibliographystyle{alpha}
\bibliography{perfect}

\end{document}